\let\overfence\overbrace 
\let\downfencefill\downbracefill 
\patchcmd{\overfence}{\downbracefill}{\downfencefill}{}{}
\patchcmd{\downfencefill}{\braceru \bracelu}{}{}{}
\apptocmd{\sloppy}{\hbadness 10000\relax}{}{}
\newcommand{\comment}[1]{} 
\newcommand\n{\mathfrak{n}}
\def\N{\mathbb{N}} 
\def\Z{\mathbb{Z}} 
\def\Q{\mathbb{Q}} 
\def\C{\mathbb{C}}
\def\topdf{\texorpdfstring}
\theoremstyle{plain}
\newtheorem{thm}[equation]{Theorem}
\newtheorem{lem}[equation]{Lemma}
\newtheorem{coro}[equation]{Corollary} 
\newtheorem{prop}[equation]{Proposition}
\theoremstyle{definition}
\newtheorem{ex}[equation]{Example}
\theoremstyle{remark} 
 \newtheorem{rem}[equation]{Remark}
 \newtheorem{nota}[equation]{Notation}
\newtheorem{stan}[equation]{Assumption}
  \numberwithin{equation}{section}
\newtheorem*{ack}{Acknowledgements}
\newcommand{\cA}{\mathcal A}
\newcommand{\cB}{\mathcal B}
\newcommand{\cC}{\mathcal C}
\newcommand{\cE}{\mathcal E}
\newcommand{\cF}{\mathcal F}
\newcommand{\cG}{\mathcal G}
\newcommand{\cI}{\mathcal I}
\newcommand{\cK}{\mathcal K}
\newcommand{\cM}{\mathcal M}
\newcommand{\cO}{\mathcal O}
\newcommand{\cP}{\mathcal P}
\newcommand{\cS}{\mathcal S}
\newcommand{\cT}{\mathcal T}
\newcommand{\cU}{\mathcal U}
\newcommand{\cZ}{\mathcal Z}
\def\fA{\mathfrak{A}}
\def\fa{\mathfrak{a}}
\def\fB{\mathfrak{B}}
\def\fC{\mathfrak{C}}
\def\fF{\mathfrak{F}}
\def\fX{\mathfrak{X}}
\def\fW{\mathfrak{W}}
\newcommand{\BF}{\fB\fF}
\newcommand{\vBF}{\check{\BF}}
\def\can{\operatorname{can}}
\def\gr{\operatorname{gr}}
\def\ann{\operatorname{Ann}}
\def\alg{\mathrm{Alg}}
\newcommand{\aha}{{\alg_{\ell}}}
\newcommand{\ahas}{{{\rm Alg}^*_{\ell}}}
\newcommand{\lra}{\longrightarrow}
\newcommand{\iso}{\overset{\sim}{\lra}}
\newcommand{\onto}{\twoheadrightarrow}
\newcommand{\fib}{\overset{\sim}{\onto}}
\newcommand{\into}{\hookrightarrow}
\newcommand{\ol}{\overline}
\def\reg{\operatorname{reg}}
\def\sing{\operatorname{sing}}
\def\sink{\operatorname{sink}}
\def\inf{\operatorname{inf}}
\def\triqui{\vartriangleleft}
\def\mspan{\operatorname{span}}
\def\supp{\operatorname{supp}}
\def\inc{\operatorname{inc}}
\def\diag{\operatorname{diag}}
\def\ad{\operatorname{ad}}
\def\id{\operatorname{id}}
\newcommand{\coker}{{\rm Coker}}
\renewcommand{\ker}{{\rm Ker}}
\newcommand{\im}{\mathrm{Im}}
\newcommand{\dom}{\mathrm{Dom}}
\newcommand{\op}{\mathrm{op}}
\DeclareMathOperator*{\colim}{colim}
\def\tor{\operatorname{Tor}}
\newcommand{\bu}{\bullet}
\def\ho{\operatorname{Ho}}
\def\spt{\operatorname{Spt}}
\renewcommand{\top}{\operatorname{top}}
\begin{document}
\hfuzz=22pt
\vfuzz=22pt
\hbadness=2000
\vbadness=\maxdimen

\author{Guillermo Cortiñas}

\title{Exel-Pardo algebras with a twist}
\address{Departamento de Matem\'atica/IMAS\\ Facultad de Ciencias Exactas y Naturales\\ Universidad de Buenos Aires\\ Ciudad Universitaria \\ (1428) Buenos Aires}

\email{gcorti@dm.uba.ar}
\begin{abstract} 
Takeshi Katsura associated a $C^*$-algebra $C^*_{A,B}$ to a pair of matrices $A\ge 0$ and $B$ of the same size with integral coefficients, and gave sufficient conditions on $(A,B)$ to be simple purely infinite (SPI). We call such a pair a KSPI pair. It follows from a result of Katsura that any separable $C^*$-algebra $\fA$ which is a cone of a map $\xi:C(S^1)^n\to C(S^1)^n$ in Kasparov's triangulated category $KK$ is $KK$-isomorphic to $C^*_{A,B}$ for some KSPI pair $(A,B)$. In this article we introduce, for the data of a commutative ring $\ell$, matrices $A,B$ as above and a matrix $C$ of the same size with coefficients in the group $\cU(\ell)$ of invertible elements, an $\ell$-algebra $\cO_{A,B}^C$, the \emph{twisted Katsura algebra} of the triple $(A,B,C)$. When $C$ is trivial we recover the Katsura $\ell$-algebra first considered by Enrique Pardo and Ruy Exel. We show that if $\ell\supset\Q$ is a field and $(A,B)$ is KSPI, then $\cO_{A,B}^C$ is SPI, and that any $\ell$-algebra which is a cone of a map $\xi:\ell[t,t^{-1}]^n\to \ell[t,t^{-1}]^n$ in the triangulated bivariant algebraic $K$-theory category $kk$ is $kk$-isomorphic to $\cO_{A,B}^C$ for some $(A,B,C)$ as above so that $(A,B)$ is KSPI.  Katsura $\ell$-algebras are examples of the Exel-Pardo algebras $L(G,E,\phi)$ associated to a group $G$ acting on a directed graph $E$ and a $1$-cocycle $\phi:G\times E^1\to G$. Similarly, twisted Katsura algebras are examples of the twisted Exel-Pardo $\ell$-algebras $L(G,E,\phi_c)$ we introduce in the current article; they are associated to data $(G,E,\phi)$ as above twisted by a $1$-cocycle 
 $c:G\times E^1\to \cU(\ell)$. The algebra $L(G,E,\phi_c)$ can be variously described by generators and relations, as a quotient of a twisted semigroup algebra, as a twisted Steinberg algebra, as a corner skew Laurent polynomial algebra, and as a universal localization of a tensor algebra. We use each of these guises of $L(G,E,\phi_c)$ to study its $K$-theoretic, regularity and (purely infinite) simplicity properties. For example we show that if $\ell\supset \Q$ is a field, $G$ and $E$ are countable and $E$ is regular, then $L(G,E,\phi_c)$ is simple whenever the Exel-Pardo $C^*$-algebra $C^*(G,E,\phi)$ is, and is SPI if in addition the Leavitt path algebra $L(E)$ is SPI. 
\end{abstract}
 
 \subjclass[2020]{\emph{Primary}: 46L55,19K35; \emph{Secondary}: 16S88}
\thanks{CONICET researcher partially supported by grants UBACyT 20020220300206BA, PIP 11220200100423CO and PICT 2021-2021-I-A-00710}
\maketitle

\section{Introduction}

An \emph{Exel-Pardo tuple} $(G,E,\phi)$ consists of a (directed) graph $$E:E^1\overset{r}{\underset{s}{\rightrightarrows}} E^0$$ together with an action of $G$ by graph automorphisms and a $1$-cocycle $\phi:G\times E^1\to G$ satisfying $\phi(g,e)(v)=g(v)$ for all $g\in G$, $e\in E^1$ and $v\in E^0$. To an Exel-Pardo tuple
$(G,E,\phi)$ and a commutative ring $\ell$ one associates a $C^*$-algebra $C^*(G,E,\phi)$ and an $\ell$-algebra $L(G,E,\phi)$. For example the Katsura $C^*$-algebra $C^*_{A,B}$ associated to a pair of matrices $A\in\N_0^{m\times n}$ and $B\in \Z^{m\times n}$ with $m\le n$ such that 
\begin{equation}\label{intro:a0b0}
    A_{i,j}=0\Rightarrow B_{i,j}=0,
\end{equation}
and its purely algebraic counterpart, the $\ell$-algebra $\cO_{A,B}=\cO_{A,B}(\ell)$ are Exel-Pardo algebras, where $E=E_{A}$ is the graph with reduced incidence matrix $A$ and where both the action of $G=\Z$ and the cocycle $\phi$ are determined by $B$. Katsura showed that $C^*_{A,B}$ is separable, nuclear and in the UCT class and that its (topological) $K$-theory is completely determined by the kernel and cokernel of the matrices $I-A^t$ and $I-B^t$. He further proved that under certain conditions on $(A,B)$, which we call KSPI, $C^*_{A,B}$ is simple purely infinite. Moreover his results show that every Kirchberg algebra $\fA$ with finitely generated $K$-theory is isomorphic, as a $C^*$-algebra, to $C^*_{A,B}$ for some KSPI pair $(A,B)$.  

In the current paper we consider Exel-Pardo tuples as above further \emph{twisted} by a $1$-cocyle $c:G\times E^1\to \cU(\ell)$ with values in the invertible elements of the ground ring $\ell$, to which we associate an algebra $L(G,E,\phi_c)$, the \emph{twisted Exel-Pardo algebra}. As a particular case, we obtain twisted Katsura algebras $\cO_{A,B}^C$ where $A,B$ are as above and $C\in\cU(\ell)^{m\times n}$ is such that 
\begin{equation}\label{intro:a0c1}
A_{i,j}=0\Rightarrow C_{i,j}=1.
\end{equation}

We study several properties of twisted Exel-Pardo algebras in general and of twisted Katsura algebras in particular. We define $L(G,E,\phi_c)$ by generators and relations (Section \ref{subsec:lgefdefi}) and show it can variously be regarded as a twisted groupoid algebra (Proposition \ref{prop:lgec=stein}), a corner skew Laurent polynomial ring (Section \ref{sec:twistlaure}) and a universal localization (Lemma \ref{lem:pgestolge}). For example, using the twisted groupoid picture and building upon the results of \cites{ep,steintwist,nonhau,steinprisimp,steiszak} we obtain the following simplicity criterion. Recall that a vertex $v\in E^0$ is \emph{regular} if it emits a nonzero finite number of edges. We write $\reg(E)\subset E^0$ for the subset of regular vertices and call $E$ \emph{regular} if $\reg(E)=E^0$.

\begin{thm}\label{thm:introsimp}
Let $\ell$ be a field of characteristic zero and $(G,E,\phi_c)$ a twisted Exel-Pardo tuple with $G$ countable and $E$ countable and regular.  If the Exel-Pardo $C^*$-algebra
$C^*(G,E,\phi)$ is simple, then $L(G,E,\phi_c)$ is simple. If furthermore $L(E)$ is simple purely infinite, then so is $L(G,E,\phi_c)$.
\end{thm}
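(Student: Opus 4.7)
The plan is to identify $L(G,E,\phi_c)$ with a twisted Steinberg algebra over the Exel-Pardo groupoid $\mathcal{G}=\mathcal{G}(G,E,\phi)$, extract the needed dynamical properties of $\mathcal{G}$ from the simplicity of $C^*(G,E,\phi)$, and then feed them into the simplicity (resp.\ pure infiniteness) criteria for twisted Steinberg algebras from \cites{steintwist,nonhau,steinprisimp,steiszak}. Concretely, Proposition~\ref{prop:lgec=stein} gives $L(G,E,\phi_c)\cong A_\ell(\mathcal{G};\sigma)$ for a continuous normalized $2$-cocycle $\sigma$ determined by $c$, and the countability and regularity hypotheses ensure that $\mathcal{G}$ is an ample, second countable, étale groupoid with unit space $\mathcal{G}^{(0)}\cong\partial E$ such that $C^*(\mathcal{G})=C^*(G,E,\phi)$.

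The first step is to deduce that $\mathcal{G}$ is minimal and effective. Minimality follows because each (saturated) open invariant subset of $\mathcal{G}^{(0)}$ gives rise to an ideal of $C^*(\mathcal{G})$, so simplicity of $C^*(\mathcal{G})$ forces $\mathcal{G}^{(0)}$ to have no nontrivial open invariant subset; effectiveness (trivial interior of the isotropy bundle) is the standard necessary condition for $C^*$-simplicity of an étale groupoid. Neither deduction requires Hausdorffness of $\mathcal{G}$. With $\mathcal{G}$ minimal and effective, the characteristic-zero algebraic uniqueness and simplicity theorems of \cites{steintwist,nonhau,steinprisimp} apply to the twisted Steinberg algebra $A_\ell(\mathcal{G};\sigma)$ and yield simplicity of $L(G,E,\phi_c)$.

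For the SPI conclusion, the key observation is that the graph groupoid $\mathcal{G}_E$ embeds in $\mathcal{G}$ as the kernel of the canonical $G$-grading and that the restriction $\sigma|_{\mathcal{G}_E}$ is trivial (since $c(1_G,\cdot)=1$ by the cocycle identity), so $A_\ell(\mathcal{G}_E;\sigma|_{\mathcal{G}_E})\cong L(E)$. The hypothesis that $L(E)$ is SPI translates, via the usual graph/groupoid dictionary, into local contractivity of $\mathcal{G}_E$; since every compact open bisection of $\mathcal{G}$ is a finite disjoint union of pieces of the form $g\cdot U$ with $g\in G$ and $U$ a compact open bisection of $\mathcal{G}_E$, local contractivity lifts from $\mathcal{G}_E$ to $\mathcal{G}$. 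The twisted pure-infiniteness criterion for ample groupoid algebras from \cite{steiszak} then implies that $L(G,E,\phi_c)$ is SPI.

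The main obstacle will be the possible non-Hausdorffness of $\mathcal{G}(G,E,\phi)$, which makes the algebraic simplicity and SPI criteria delicate: one must be careful to invoke the versions from \cite{nonhau} and \cite{steiszak} that accommodate non-Hausdorff ample groupoids, and verify that effectiveness together with the characteristic-zero hypothesis rules out ``singular'' ideals supported on the non-Hausdorff locus. The char $0$ assumption should be exactly what allows the twisted Kumjian-Renault/uniqueness machinery to go through in the presence of the cocycle $\sigma$ and of potentially nontrivial isotropy coming from torsion in $G$.
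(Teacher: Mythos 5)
Your overall strategy for the simplicity half coincides with the paper's: identify $L(G,E,\phi_c)$ with the twisted Steinberg algebra $\cA(\cG(G,E,\phi),\tilde\omega)$ via Proposition \ref{prop:lgec=stein}, and extract minimality and effectiveness of $\cG(G,E,\phi)$ from simplicity of $C^*(G,E,\phi)=C^*(\cG(G,E,\phi))$. But there are two gaps. First, you appeal to ``the characteristic-zero algebraic uniqueness and simplicity theorems of \cites{steintwist,nonhau,steinprisimp}'' for the \emph{twisted} algebra of a possibly \emph{non-Hausdorff} groupoid; no such combined statement exists in those references (\cite{steintwist}*{Theorem 6.2} assumes Hausdorffness, and \cite{nonhau} is untwisted). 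The paper must prove the twisted non-Hausdorff criteria itself (Propositions \ref{prop:gpdunique} and \ref{prop:gpdsimpstein}), adapting the arguments of \cite{nonhau} with some care about where the cocycle enters (Lemma \ref{lem:1Lf}, and the refinement of the bisections $B_x$ so that $\tilde\omega$ is locally constant on them). Second, your claim that ``effectiveness together with the characteristic-zero hypothesis rules out singular ideals'' is not correct: in the non-Hausdorff setting effectiveness does not kill the singular ideal. The input that does is again $C^*$-simplicity, via \cite{nonhau}*{Theorem 4.10}, which forces every nonzero function in Connes' algebra to have support with nonempty interior; this is first obtained over subrings of $\C$ (Proposition \ref{prop:c*simpgpdsimp}(ii)) and then transported to an arbitrary field of characteristic zero using \cite{steiszak}*{Theorems A and 5.9} (Corollary \ref{coro:c*simpgpdsimp}). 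Your write-up never secures this hypothesis for the twisted algebra over a general $\ell\supset\Q$.

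The SPI half of your argument does not go through as stated. There is no ``twisted pure-infiniteness criterion for ample groupoid algebras'' in \cite{steiszak}, which concerns simplicity only, and the claim that every compact open bisection of $\cG(G,E,\phi)$ is a finite disjoint union of translates $g\cdot U$ of bisections of $\cG_E$ is not justified (nor is the passage from ``$L(E)$ is SPI'' to ``$\cG_E$ is locally contracting'' and back at the algebraic level). The paper's route is more elementary and avoids local contractivity altogether: by Proposition \ref{prop:lesubsetlge} the Leavitt path algebra $L(E)$ embeds as a subalgebra of $L(G,E,\phi_c)$, so each idempotent $\chi_L$ ($L\subset\cG^{(0)}$ compact open), being infinite in $L(E)$, remains infinite in $L(G,E,\phi_c)$; combining this with the twisted uniqueness statement that every nonzero ideal of $\cA(\cG,\tilde\omega)$ contains some $\chi_L$ (Proposition \ref{prop:gpdunique}, refined in Proposition \ref{prop:gpdsimppis} to produce $\chi_L$ in the ideal generated by any nonzero element) yields pure infinite simplicity via \cite{lpabook}*{Proposition 3.1.7}. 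If you want to salvage your approach you would need to either prove the twisted pure-infiniteness criterion you invoke or switch to the subalgebra argument.
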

The above result specializes to twisted Katsura algebras as follows.
\begin{coro}\label{coro:introsimp}
Let $(A,B,C)$ be as above and $\ell$ a field of characteristic zero. Assume that $(A,B)$ is a KSPI pair. Then $\cO_{A,B}^C$ is simple purely infinite. 
\end{coro}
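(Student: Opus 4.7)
The plan is to derive this corollary as a direct application of Theorem \ref{thm:introsimp} to the Exel-Pardo tuple $(G,E,\phi)=(\Z,E_A,\phi)$ canonically associated to the pair $(A,B)$, twisted by the cocycle $c\colon \Z\times E_A^1\to\cU(\ell)$ determined by the matrix $C$. Under this identification one has
\[
\cO_{A,B}^C \;=\; L(\Z,E_A,\phi_c), \qquad C^*_{A,B} \;=\; C^*(\Z,E_A,\phi),
\]
so the corollary amounts to verifying each hypothesis of the theorem.

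First I would check the structural hypotheses. The group $\Z$ is countable. Because $A\in\N_0^{m\times n}$ is a finite matrix with entries in $\N_0$, the graph $E_A$ has finitely many vertices and finitely many edges, so it is countable. The KSPI hypothesis on $(A,B)$ forces $A$ to have no zero rows, hence every vertex of $E_A$ emits at least one edge and, since the entries of $A$ are finite, only finitely many; thus $E_A$ is regular.

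Second, I would establish the two remaining hypotheses. Simplicity of $C^*(\Z,E_A,\phi)=C^*_{A,B}$ is precisely Katsura's original theorem, built into the definition of a KSPI pair. For the second hypothesis, that $L(E_A)$ is simple purely infinite, I would appeal to the standard combinatorial simplicity/purely-infinite criterion for Leavitt path algebras of finite regular graphs: it is enough that $E_A$ be cofinal, that every cycle in $E_A$ have an exit, and that every vertex reach a cycle. These graph-theoretic statements are encoded in the KSPI conditions on $A$ (they are precisely the ingredients that make the graph $C^*$-algebra $C^*(E_A)$, and thus $C^*_{A,B}$, a Kirchberg algebra), so the verification is just a dictionary lookup between Katsura's matrix conditions and the hypotheses of the Abrams--Aranda Pino--Siles Molina type theorem.

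Having supplied both inputs, Theorem \ref{thm:introsimp} delivers that $L(\Z,E_A,\phi_c)=\cO_{A,B}^C$ is simple purely infinite. The only mildly technical step is the translation of the KSPI conditions into the Leavitt path algebra simplicity/pure-infiniteness criterion; everything else is formal once the identification $\cO_{A,B}^C=L(\Z,E_A,\phi_c)$ has been set up in the body of the paper.
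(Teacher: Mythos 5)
Your proposal is correct and follows essentially the same route as the paper: the paper proves this corollary as Theorem \ref{thm:katpis}, citing Katsura's Proposition 2.10 for simplicity of $C^*_{A,B}$, the standard Leavitt-path-algebra criterion (\cite{lpabook}*{Theorem 3.1.10}) applied to the first two KSPI conditions for pure infinite simplicity of $L(E)$, and then invoking Theorem \ref{thm:spistein} (= Theorem \ref{thm:introsimp}). Your additional remarks on countability and regularity of $E_A$ are exactly the hypotheses the paper builds into the statement of Theorem \ref{thm:katpis}.
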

The description of $L(G,E,\phi_c)$ by generators and relations provides an algebra extension akin to the Cohn extension of a Leavitt path algebra or the Toeplitz extension of a graph algebra. It has the form
\begin{equation}\label{intro:cohnext}
0\to \cK(G,E,\phi_c)\to C(G,E,\phi_c)\to L(G,E,\phi_c)\to 0    
\end{equation}

We use this extension to study $L(G,E,\phi_c)$ in terms of the bivariant algebraic $K$-theory category $kk$. 
We show that 
$\cK(G,E,\phi_c)$ and $C(G,E,\phi_c)$ are canonically $kk$-isomorphic to the crossed products $\ell^{\reg(E)}\rtimes G$ and $\ell^{E^0}\rtimes G$ (Proposition 
\ref{prop:kkkge} and Theorem \ref{thm:cohnkk}). Because the canonical functor $j:
\ahas\to kk$ sends algebra extensions to distinguished triangles, we get that if $E^0$ is finite, the Cohn extension above gives rise to a distinguished triangle in $kk$
\begin{equation}\label{intro:tria1}
j(\ell^{\reg(E)}\rtimes G)\overset{f}{\lra }j(\ell^{E^0}\rtimes G)\lra j(L(G,E,\phi_c))    
\end{equation}
We explicity compute $f$ in the case of twisted Katsura algebras. In this case $G=\Z$ acts trivially on $E^0$ so for $L_1=\ell[t,t^{-1}]$, $\rtimes G=\otimes L_1$ above. 
Since the kernel of the evaluation map $\sigma=\ker(ev_1:L_1\to\ell)$ represents the suspension in $kk$, we have $j(L_1)=j(\ell)\oplus j(\ell)[-1]$ in $kk$. We show in \ref{thm:katkkh} that for the reduced incidence matrix $A\in \N_0^{\reg(E)\times E^0}$ and $B\in\Z^{\reg(E)\times E^0}$ and $C\in\cU(\ell)^{\reg(E)\times E^0}$ satisfying \eqref{intro:a0b0} and \eqref{intro:a0c1} the map of \eqref{intro:tria1} has the following matricial form
\begin{equation}\label{intro:mapf}
\xymatrix{f:j(\ell)^{\reg(E)}\oplus(j(\ell)[-1])^{\reg(E)}\ar[rrr]^(.55){\begin{bmatrix} I-A^t& C^*\\ 0& I-B^t\end{bmatrix}}&&&j(\ell)^{E^0}\oplus j(\ell)[-1]^{E^0}}
\end{equation}
Here $C^*_{v,w}=C_{w,v}^{-1}$.
Recall from \cite{ct} that $\hom_{kk}(j(\ell)[-n],j(\ell))=KH_n(\ell)$ is Weibel's homotopy algebraic $K$-theory. The coefficients of $I-A^t$, $I-B^t$ and $C^*$ are regarded as elements of $kk(\ell,\ell)=KH_0(\ell)$ and $kk(\sigma,\ell)=KH_1(\ell)$ via the canonical maps $\Z\to KH_0(\ell)$ and $\cU(\ell)\to KH_1(\ell)$, which are isomorphisms when $\ell$ is a field or a PID, for example. In the latter cases we also have $kk(\ell,\sigma)=KH_{-1}(\ell)=K_{-1}(\ell)=0$; thus any element of 
$kk(L_1^m,L_1^n)$ is represented by a matrix with a zero block in the lower left corner. In fact we prove

\begin{thm}\label{thm:intromain} 
Let $\ell$ be a field or a PID, $n\ge 1$, and let $R\in\aha$ such that there is a distinguished triangle
\begin{equation}\label{intro:kat}
\xymatrix{j(\ell)^n\oplus j(\ell)[-1]^n\ar[rr]^{g}&& j(\ell)^n\oplus j(\ell)[-1]^n\ar[r]& j(R)}.
\end{equation}
Then there exist matrices $A\in M_{2n}(\N_0)$, $B\in M_{2n}(\Z)$ and $C\in M_{2n}(\cU(\ell))$ satisfying \eqref{intro:a0b0} and \eqref{intro:a0c1} with $(A,B)$ KSPI and an isomorphism
\[
j(\cO_{A,B}^C)\cong j(R).
\] 
\end{thm}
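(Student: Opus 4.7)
The plan is to reduce the statement to a matrix realization question for integer matrices and then adapt Katsura's construction to produce the sought KSPI triple. First, using that $\ell$ is a field or PID, we invoke the identifications $kk(\ell,\ell)=\Z$, $kk(\sigma,\ell)\cong \cU(\ell)$, $kk(\ell,\sigma)=KH_{-1}(\ell)=0$ and $j(L_1)\cong j(\ell)\oplus j(\ell)[-1]$ recalled before the theorem statement; so any $g\in kk(L_1^n,L_1^n)$ is represented by a block upper triangular matrix
\[
g=\begin{bmatrix} X & Y \\ 0 & Z\end{bmatrix},\qquad X,Z\in M_n(\Z),\ Y\in M_n(\cU(\ell)).
\]
The theorem then reduces to finding, for each such triple $(X,Y,Z)$, a KSPI triple $(A,B,C)\in M_{2n}(\N_0)\times M_{2n}(\Z)\times M_{2n}(\cU(\ell))$ satisfying \eqref{intro:a0b0} and \eqref{intro:a0c1}, together with integer changes of basis $\alpha_i,\beta_i\in \GL_{2n}(\Z)$ ($i=1,2$) such that the map $f$ of \eqref{intro:mapf} coincides, after the base changes, with $g\oplus \mathrm{id}_{j(\ell)^n\oplus j(\ell)[-1]^n}$. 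Since identities have zero cone, this will give $\mathrm{cone}(f)\cong \mathrm{cone}(g)$, and Theorem~\ref{thm:katkkh} identifies $\mathrm{cone}(f)$ with $j(\cO_{A,B}^C)$, yielding $j(\cO_{A,B}^C)\cong j(R)$.

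Next, I would carry out the substantive step: realizing the $\GL_{2n}(\Z)$-equivalence classes of $X\oplus I_n$ and $Z\oplus I_n$ as $I_{2n}-A^t$ and $I_{2n}-B^t$ for a single KSPI pair $(A,B)$. Using Smith normal form one first reduces $X\oplus I_n$ and $Z\oplus I_n$ to diagonal shape, and then a Katsura-style padding---in which a block of $A$ is taken of the form $NJ+\ast$ with $N\gg 0$ and $J$ the all-ones matrix---simultaneously ensures $A\in M_{2n}(\N_0)$, primitivity of $A$, absence of sinks, and the remaining Katsura conditions defining KSPI; taking the support of $B$ inside that of $A$ secures \eqref{intro:a0b0}. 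For the twist I would set $C^{*}:=\beta_1^{-1}(Y\oplus 0)\alpha_2\in \cU(\ell)^{2n\times 2n}$, interpreting $Y\oplus 0$ in the additive group $kk(\sigma,\ell)^{2n\times 2n}$ with $1\in \cU(\ell)$ in the zero block; all entries of $C^{*}$ are then units because the base change is integral, and taking the padding large enough makes $A$ strictly positive in every position where $C^{*}$ may be nontrivial, so that \eqref{intro:a0c1} also holds. The matrix $C$ is then recovered from $C_{v,w}=(C^{*}_{w,v})^{-1}$.

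The hardest part will be this simultaneous matrix realization: producing a single pair $(A,B)$ that meets all of Katsura's conditions for KSPI and such that $I_{2n}-A^t$ and $I_{2n}-B^t$ belong to prescribed $\GL_{2n}(\Z)$-equivalence classes, while leaving enough nonzero entries in $A$ to accommodate the twist $C$ under \eqref{intro:a0c1}. The $C^{*}$-algebraic counterpart of the first aspect is Katsura's original construction of Kirchberg algebras with prescribed $K$-theory; adapting it to the algebraic and twisted setting demands extra bookkeeping to keep the two matrix realizations and the twist data mutually compatible.
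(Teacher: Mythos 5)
Your plan is correct and follows essentially the same route as the paper: reduce $g$ to block upper triangular form using $kk(\ell,\ell[-1])=K_{-1}(\ell)=0$, pad with an identity triangle, and apply degree-preserving integral base changes to bring the matrix to the form $\begin{bmatrix}I_{2n}-A^t& C^*\\ 0& I_{2n}-B^t\end{bmatrix}$ with $(A,B)$ KSPI, then identify cones via Theorem \ref{thm:katkkh}. The only difference is that where you sketch the realization step via Smith normal form plus Katsura-style padding, the paper simply imports the matrices $U$, $V$ and the KSPI realization directly from \cite{kat}*{Lemma 3.1} and reads off $C^*$ as the resulting upper-right block.
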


Putting Theorems \ref{thm:introsimp} and \ref{thm:intromain} together we get that if $\ell$ is a field of characteristic zero, then any $\ell$-algebra $R$ admitting a presentation \eqref{intro:kat} is $kk$-isomorphic to a simple purely infinite twisted Katsura algebra. An analogous result in the Kasparov $KK$-category of $C^*$-algebras follows from work of Katsura \cite{kat}*{Proposition 3.2} showing that any $C^*$-algebra with a $KK$-presentation of the form \eqref{intro:kat} is $KK$-isomorphic to a KSPI untwisted Katsura $C^*$-algebra. This uses the fact that for Kasparov's $KK$ and Banach algebraic $K$-theory, we have $KK(\C[-1],\C)=K_1^{\top}(\C)=0$. Note however that in the purely algebraic context, it is useful to introduce the twist $C$, since as $kk(\ell[-1],\ell)=\ell^*$ is nonzero, the matricial form of the map $g$ in \eqref{intro:kat} need not have a trivial block in the upper right corner.  

\goodbreak

Observe that applying $KH_*(-)$ to the triangle \eqref{intro:tria1} gives a long exact sequence
\begin{multline}\label{intro:longseq}
KH_{n+1}(L(G,E,\phi_c))\to KH_n(\ell^{(\reg(E))}\rtimes G)\overset{f}{\lra}\\
KH_n(\ell^{(E^0)}\rtimes G)\to KH_{n}(L(G,E,\phi_c))
\end{multline}
One may ask whether a similar sequence holds for Quillen's $K$-theory. This will be the case if every ring $R$ appearing in \eqref{intro:longseq} is $K$-regular, for in this case the comparison map $K_*(R)\to KH_*(R)$ is an isomorphism. In Section \ref{sec:twistlaure} we observe that when $E$ is finite without sources, the $\Z$-graded algebra $L(G,E,\phi_c)$ can be regarded as a Laurent polynomial ring twisted by certain corner isomorphism $\psi$ of the homogeneous component of degree zero. We use this to give a sufficient condition for the $K$-regularity of $L(G,E,\phi_c)$ where $E$ is any row-finite graph and $G$ acts trivially on $E^0$; 
see Theorem \ref{thm:Kreg}. As a particular case we obtain that if $\ell[G]$ is regular supercoherent and the untwisted Exel-Pardo tuple $(G,E,\phi)$ is pseudo-free in the sense of \cite{ep}*{Definition 5.4}, then $L(G,E,\phi_c)$ is $K$-regular.     
For the specific case of twisted Katsura algebras, we have

\begin{prop}\label{intro:KatKreg}
Let $\ell$ be a field and $(A,B,C)$ a twisted Katsura triple. If either of the following holds  then $\cO_{A,B}^C$ is $K$-regular. 
\item[i)] $B_{v,w}=0\iff A_{v,w}=0$.
\item[ii)] If $v\in\reg(E)$ is such that $B_{v,w}=0$ for some $w\in r(s^{-1}\{v\})$, then $B_{v,w'}=0$ for all $w'\in r(s^{-1}\{v\})$. 
\end{prop}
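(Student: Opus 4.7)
The plan is to apply Theorem~\ref{thm:Kreg} to the twisted Katsura algebra viewed as the twisted Exel-Pardo algebra $L(\Z, E_A, \phi_c)$, in which $G=\Z$ acts trivially on $E_A^0$. Since $\ell$ is a field, the group ring $\ell[\Z]=\ell[t,t^{-1}]$ is regular noetherian and hence regular supercoherent, so the coefficient hypothesis of Theorem~\ref{thm:Kreg} is automatic. In both cases the proposition thus reduces to checking the combinatorial hypothesis of that theorem.

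For (i), I would show that the untwisted tuple $(\Z,E_A,\phi)$ is pseudo-free in the sense of \cite{ep}*{Definition 5.4}, so the pseudo-free consequence of Theorem~\ref{thm:Kreg} quoted in the paragraph preceding Proposition~\ref{intro:KatKreg} applies directly. Unwinding Katsura's construction, the edges from $v$ to $w$ form a set of size $A_{v,w}$ indexed by $k\in\{1,\dots,A_{v,w}\}$, and $n\in\Z$ acts by the shift $k\mapsto k+nB_{v,w}\pmod{A_{v,w}}$ with cocycle $\phi(n,\cdot)$ recording the integer quotient. Pseudo-freeness asks that $n$ fixing an edge $e$ together with $\phi(n,e)=0$ force $n=0$; fixing the edge gives $A_{v,w}\mid nB_{v,w}$, whereupon $\phi(n,e)=0$ forces $nB_{v,w}=0$. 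Hypothesis (i), which (combined with \eqref{intro:a0b0}) says $A_{v,w}\neq 0 \iff B_{v,w}\neq 0$, then yields $n=0$.

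For (ii) the tuple is in general \emph{not} pseudo-free: at any regular vertex $v$ with $B_{v,w}=0$ for every $w\in r(s^{-1}\{v\})$, each $n\in\Z$ fixes every edge emitted by $v$ with trivial cocycle. Here I would appeal to Theorem~\ref{thm:Kreg} directly rather than to its pseudo-free corollary. Condition (ii) partitions $\reg(E_A)$ into two classes of vertices: a \emph{pseudo-free} class, where every $B_{v,w}$ with $w\in r(s^{-1}\{v\})$ is nonzero and the argument of (i) works edge-by-edge, and a \emph{trivial} class, where $\Z$ acts trivially on all edges out of $v$ with trivial cocycle. This vertex-wise uniformity is exactly the type of local rigidity that the hypothesis of Theorem~\ref{thm:Kreg} is designed to accommodate, the trivial-class contribution behaving locally like an untwisted Leavitt piece and the pseudo-free-class contribution being handled by the argument for (i).

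The main obstacle is the careful translation of (ii) into the exact combinatorial hypothesis of Theorem~\ref{thm:Kreg}: one must verify that the Leavitt-type vertices do not obstruct the $K$-regularity criterion even though they break pseudo-freeness, and this reduction to the uniform local picture at each regular vertex is the one nontrivial step. Everything else is a routine unpacking of the $(A,B,C)$ encoding of the twisted Katsura datum into the $(G,E,\phi_c)$ language in which Theorem~\ref{thm:Kreg} is stated.
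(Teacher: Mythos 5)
Your treatment of case (i) is correct and essentially matches the paper's route: the paper verifies directly (Lemma \ref{lem:flat2}(i)) that each $X_{v,w}$ with $B_{v,w}\neq 0$ is a free $\ell[t,t^{-1}]$-module, while you deduce the same flatness condition from pseudo-freeness via Corollary \ref{coro:kregpseudo}; your computation that strong fixing of an edge forces $A_{v,w}\mid nB_{v,w}$ and then $nB_{v,w}=0$ is the right one. (One small imprecision: the coefficient hypothesis of Theorem \ref{thm:Kreg} is that the quotients $R_v=\ell[G]/I_v$ are regular supercoherent, not $\ell[\Z]$ itself; in case (ii) the $R_v$ at the ``trivial'' vertices are proper quotients of $\ell[t,t^{-1}]$, so this is not literally automatic from regularity of $\ell[\Z]$, though it does hold.)

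Case (ii) has a genuine gap: you correctly reduce to verifying flatness of the modules $X_{v,w}$ at the vertices where $B_{v,w}=0$ for all $w\in r(s^{-1}\{v\})$, but you then defer exactly this verification as ``the one nontrivial step,'' and your heuristic for it is off. These vertices do \emph{not} behave like an untwisted Leavitt piece, because the twist $C$ is still present: by \eqref{map:katwist} the generator $t$ acts on the distinguished edge $e_0\in vE^1w$ by the unit $\pm C_{v,w}$ and trivially on the remaining edges, so $I_v\neq 0$ and $R_v\cong \ell[t,t^{-1}]/\prod_i(t-c_i)$, where the $c_i$ run over the distinct elements of $\{1\}\cup\{C_{v,w}\}$. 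The content of the paper's Lemma \ref{lem:flat2}(iii) is that, because $\ell$ is a field, the differences $c_i-c_j$ are invertible, so $R_v$ splits as a product of fields by CRT and each $X_{v,w}$ is a direct sum of some of the factors, hence projective and in particular flat. This is where the hypothesis that $\ell$ is a field enters, and it is also the computation that shows the mixed case ($B_{v,w}=0$ but $B_{v,w'}\neq 0$ at the same $v$) genuinely fails — there $R_v=\ell[t,t^{-1}]$ and $X_{v,w}$ is a torsion module, killed by $(t-1)(t-C_{v,w})$, hence not flat (Lemma \ref{lem:flat2}(ii)). Without this computation your argument for (ii) is an outline rather than a proof.
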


In Section \ref{sec:uniloc} we show that if $E$ is finite, then $L(G,E,\phi_c)$ can be described as a universal localization of certain tensor algebra (Lemma \ref{lem:pgestolge}), much in the same way as the Leavitt path algebra $L(E)$ is a universal localization of the usual path algebra $P(E)=T_{\ell^{E^0}}(\ell^{E^1})$. 
We use this to show, in Proposition \ref{prop:lgereg}, that under rather mild conditions, $L(G,E,\phi_c)$ is a regular ring, in the sense that every module over it has finite projective dimension. In particular, we get

\begin{prop}\label{prop:introKatreg}
Let $\ell$ be field and $(A,B,C)$ finite square matrices satisfying \eqref{intro:a0b0} and \eqref{intro:a0c1}. Then $\cO_{A,B}^C$ is a regular ring.
\end{prop}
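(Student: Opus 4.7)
The plan is to derive this as a direct consequence of Proposition \ref{prop:lgereg}, the general regularity result for twisted Exel-Pardo algebras established via the universal localization picture of Section \ref{sec:uniloc}. First, I would rephrase the assertion in Exel-Pardo language: the twisted Katsura triple $(A,B,C)$ of size $n\times n$ corresponds to the twisted Exel-Pardo tuple $(\Z,E_A,\phi_c)$, where $E_A$ is the finite graph whose reduced incidence matrix is $A$, the group $G=\Z$ acts trivially on $E_A^0$, and the cocycles $\phi$ and $c$ on $\Z\times E_A^1$ are determined by $B$ and $C$ respectively. Since $A,B,C$ are square, every vertex of $E_A$ is regular, so $\reg(E_A)=E_A^0$ and $E_A$ is a finite regular graph. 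In particular $\cO_{A,B}^C=L(\Z,E_A,\phi_c)$.

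Next I would check the hypotheses of Proposition \ref{prop:lgereg}. Because $\Z$ acts trivially on the vertex set, the crossed product appearing as the base ring simplifies to $\ell^{E_A^0}\rtimes\Z\cong L_1^{E_A^0}$, a finite direct product of copies of the Laurent polynomial ring $L_1=\ell[t,t^{-1}]$. For $\ell$ a field, $L_1$ is a PID, hence hereditary, so the base ring has global dimension at most one. This is the key input that will propagate through the construction.

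The third step is to invoke Lemma \ref{lem:pgestolge}, which realises $L(\Z,E_A,\phi_c)$ as a universal localization of a tensor algebra $T_R(M)$, where $R=L_1^{E_A^0}$ and $M$ is a finitely generated projective $R$-bimodule built from the edges of $E_A$ twisted by $c$ and by the action of $\Z$ through $\phi$. Since $R$ is hereditary and $M$ is a finitely generated projective bimodule, $T_R(M)$ is hereditary (a standard result in the spirit of Bergman and Dicks), and by Schofield's theorem universal localizations of hereditary rings are hereditary. Therefore $\cO_{A,B}^C$ has global dimension at most one, and in particular every module over it has finite projective dimension, which is the definition of regularity used in the statement.

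The main obstacle I expect is verifying that the $c$-twisting does not interfere with either the projectivity of the bimodule $M$ or the applicability of Schofield's theorem to the specific morphism being inverted; this should be benign because $c$ takes values in $\cU(\ell)$ and so merely rescales the bimodule generators by units, but it has to be checked explicitly against the construction of Lemma \ref{lem:pgestolge}. Once that is in place, the finiteness of $E_A$ plus the regularity of $L_1^{E_A^0}$ is what makes Proposition \ref{prop:lgereg} apply, and the conclusion follows with no further work.
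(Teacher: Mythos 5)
Your overall strategy---reduce to the universal localization picture of Section \ref{sec:uniloc} and deduce regularity of $\cO_{A,B}^C=L(\Z,E_A,\phi_c)$ from regularity of the tensor algebra $P(\Z,E_A,\phi_c)=T_RX$---is the same as the paper's (which proves this statement as Corollary \ref{coro:lgereg} via Proposition \ref{prop:lgereg} and Corollary \ref{coro:pgetolgeflat}). But there is a genuine gap in the middle step. First, a secondary point: the base ring of the tensor algebra is not $\ell^{E^0}\rtimes\Z\cong L_1^{E^0}$ but $R=\bigoplus_v R_v$ with $R_v=\ell[\Z]/\ann_{\ell[\Z]}(v)$, which in the Katsura case is either $L_1$ or a finite product of fields (proof of Lemma \ref{lem:flat2}); this is harmless since $R$ is hereditary either way. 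The real problem is your claim that $M=X$ is a finitely generated \emph{projective bimodule}. The bimodule $X=\bigoplus X_{v,w}$ is free as a right $R$-module, but as a left $R$-module it can fail even to be flat: Lemma \ref{lem:flat2}(ii) shows that whenever $A_{v,w}\ne 0$, $B_{v,w}=0$ and $B_{v,w'}\ne 0$ for some other $w'$ --- a configuration permitted by the hypotheses \eqref{intro:a0b0} and \eqref{intro:a0c1} --- the left $R_v$-module $X_{v,w}$ is not flat (it is annihilated by the non-zero-divisor $(t-1)(t-C_{v,w})$ while $R_v=L_1$). So $X$ is not a projective $R$-bimodule, the Bergman--Dicks-type statement you invoke does not apply, and $T_RX$ need not be hereditary; consequently the appeal to Schofield's theorem on universal localizations of hereditary rings has nothing to act on.

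The paper gets around exactly this obstruction by aiming lower: Lemmas \ref{lem:hhreg} and \ref{lem:treg} only require $X$ to be projective \emph{on one side} together with a bound $d$ on the projective dimension of $R$ over $R\otimes_kR^{\op}$, and yield global dimension $\le 2d+1$ for $T_RX$ (finite, not necessarily $\le 1$). Regularity is then transported across the universal localization not by Schofield but by Stenstr\"om's theory of perfect localizations (Lemmas \ref{lem:pgestolge} and \ref{lem:pgetolgeflat}, Corollary \ref{coro:pgetolgeflat}), which preserves regularity rather than hereditariness. If you want to salvage your argument you must either restrict to triples where $X$ is left projective (essentially condition (ii) of Lemma \ref{lem:flat1}, which is \emph{not} implied by \eqref{intro:a0b0} and \eqref{intro:a0c1}) or replace the hereditariness claim by a finite-global-dimension bound along the lines of Lemma \ref{lem:treg}.
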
 

The rest of this article is organized as follows. Section \ref{sec:prelis} starts by introducing some notation and recalling basic facts on Exel-Pardo tuples (Subsections \ref{subsec:alg} and \ref{subsec:eptuples}). Let $\cP(E)$ be the set of all finite paths in a graph $E$. Lemma \ref{lem:epextend} recalls that if $(G,E,\phi)$ is an $EP$-tuple, then the $G$-action on $E$ extends to an action on $\cP(E)$ and that $\phi$ extends to a $1$-cocycle $G\times\cP(E)\to G$. Subsection \ref{subsec:twistep} introduces twisted $EP$-tuples.  Lemma \ref{lem:twistextend} shows that there is an essentially unique way to extend a $1$-cocycle $c:G\times E^1\to G$ to a $1$-cocycle $G\times \cP(E)\to \cU(\ell)$ that is compatible with $\phi$ and with the $G$-action. In Subsection \ref{subsec:twistsemi} we recall the Exel-Pardo pointed inverse semigroup $\cS(G,E,\phi)$ and show that $c$ induces a semigroup $2$-cocycle $\omega:\cS(G,E,\phi)^2\to\cU(\ell)_\bu=\cU(\ell)\cup\{0\}$. Section \ref{sec:viacohn} is devoted to introducing all the algebras of the extension \eqref{intro:cohnext}.
In Subsection \ref{subsec:cohn} we define $C(G,E,\phi_c)$ by generators and relations and prove in Proposition \ref{prop:cohnpres} that it is isomorphic to the twisted semigroup algebra $\ell[\cS(G,E,\phi),\omega]$. This automatically gives an $\ell$-linear basis $\cB$ for $C(G,E,\phi_c)$ (Corollary \ref{coro:cohnpres}). In Subsection \ref{subsec:kgecp} we define $\cK(G,E,\phi_c)$ as the two-sided ideal of $C(G,E,\phi_c)$ generated by certain elements and prove in Proposition \ref{prop:kge} that it is isomorphic to the crossed-product of $G$ with certain ultramatricial algebra; this again gives an $\ell$-linear basis $\cB'$ of $\cK(G,E,\phi_c)$ (see \eqref{B'}). The next subsection introduces a subset $\cB"\subset\cB$ such that, under certain hypothesis,  $\cB'\cup\cB"$ is an $\ell$-linear basis for $C(G,E,\phi_c)$ (Proposition \ref{prop:basisunion}). The hypothesis are satisfied both when the $G$-action on $E$ and the $1$-cocycle $\phi$ are trivial, and when the action satisifies a weak version of the notion of pseudo-freeness introduced in \cite{ep}*{Definition 5.4} which we call partial pseudo-freeness.
Subsection \ref{subsec:lgefdefi} defines $L(G,E,\phi_c)$ by the extension \eqref{intro:cohnext}. Then we show that the  canonical map $L(E)\to L(G,E,\phi_c)$ from the Leavitt path algebra is injective (Proposition \ref{prop:lesubsetlge}) and use  Proposition \ref{prop:basisunion} to give a linear basis for $L(G,E,\phi_c)$ in Corollary \ref{coro:basisunion} under the hypothesis of that proposition.
Section \ref{sec:epstei} describes $L(G,E,\phi_c)$ as a twisted groupoid algebra in the sense of \cites{steintwist,resteintwist} and uses this description to give simplicity criteria. Subsection \ref{subsec:twistgpd} deals with the general set-up of an inverse semigroup $\cS$ acting on a space, explains how to go from a semigroup 2-cocycle $\nu:\cS\times \cS\to \cU(\ell)_\bu$ to a groupoid $2$-cocycle $\tilde{\nu}:\cG\times\cG\to \cU(\ell)$ on the groupoid $\cG$ of germs, and expresses the twisted Steinberg algebra $\cA(\cG,\tilde{\nu})$
as a quotient of the twisted semigroup algebra $\ell[\cS,\nu]$ (Lemma \ref{lem:ellStostein}). Subsection \ref{subsec:twistgpd} applies the above to the cocycle $\omega:\cS(G,E,\phi)^2\to\cU(\ell)_\bu$ and the tight groupoid $\cG(G,E,\phi_c)$ and shows that $L(G,E,\phi_c)\cong \cA(\cG(S(G,E,\phi_c),\tilde{\omega})$ (Proposition \ref{prop:lgec=stein}). 
Theorem \ref{thm:introsimp} is proved as Theorem \ref{thm:spistein}. Section \ref{sec:kat} introduces twisted Katsura algebras. Corollary \ref{coro:introsimp} is proved as Theorem \ref{thm:katpis}. A version of the latter theorem valid over fields of arbitrary characteristic, but with additional hypothesis on the matrices $A$ and $B$ is proved in Proposition
\ref{prop:katsimp}. Section \ref{sec:kk} is concerned with bivariant algebraic $K$-theory. Subsection \ref{subsec:prelikk} recalls some basic facts about $kk$. The next two subsections are concerned with the algebras $\cK(G,E,\phi_c)$ and $C(G,E,\phi_c)$ in the extension \eqref{intro:cohnext}. Proposition \ref{prop:kkkge} shows that $\cK(G,E,\phi_c)$ is $kk$-isomorphic to $\ell^{(\reg(E))}\rtimes G$ and Theorem \ref{thm:cohnkk} that $C(G,E,\phi_c)$ is $kk$ -isomorphic to $\ell^{(E^0)}\rtimes G$. Section \ref{sec:kkat} is about twisted Katsura algebras in $kk$; the map $f$ of \eqref{intro:tria1} is computed in this section; see Theorem \ref{thm:katkkh} and Corollary \ref{coro:katkkh}. A short exact sequence computing homotopy $K$-theory $KH_*(\cO_{A,B}^C)$ is obtained in Corollary \ref{coro:katkh}. Theorem \ref{thm:intromain} is a particular case of Theorem \ref{thm:katkat}; see Corollary \ref{coro:katkat}. Section \ref{sec:twistlaure} is concerned with $K$-regularity of $L(G,E,\phi_c)$. It starts by observing that if $E$ is finite without sources, 
$L(G,E,\phi_c)$ can be regarded as a corner skew Laurent polynomial ring in the sense of \cite{fracskewmon}. This is then used in Theorem \ref{thm:Kreg} to give a general criterion for the $K$-regularity of the algebra of a twisted $EP$-tuple
$(G,E,\phi_c)$ where $E$ is row-finite and $G$ acts trivially on $E^0$. It applies in particular if $(G,E,\phi_c)$ is pseudo-free and $\ell[G]$ is regular supercoherent (Corollary \ref{coro:kregpseudo}). The next section investigates $K$-regularity of twisted Katsura algebras; Proposition \ref{prop:introKatreg} is proved as Proposition \ref{prop:KatKreg}.
Finally in Section \ref{sec:uniloc}, still under the assumption that $G$ acts trivially on $E^0$, we describe $L(G,E,\phi_c)$ as a universal localization of a certain tensor algebra (Lemma \ref{lem:pgestolge}) and use this to show in Proposition \ref{prop:lgereg} that under rather mild conditions, $L(G,E,\phi_c)$ is a regular ring. In particular,
$\cO_{A,B}^C$ is regular (Corollary \ref{coro:lgereg}).

\begin{ack}
I wish to thank Becky Armstrong, Guido Arnone and Enrique Pardo for carefully reading an earlier draft of this article and making useful comments. Thanks also to Marco Farinati for a fruitful discussion leading to Lemmas \ref{lem:hhreg} and \ref{lem:treg}. 
\end{ack}

\section{Preliminaries}\label{sec:prelis}

\subsection{Algebras}\label{subsec:alg}

We fix a commutative unital ring $\ell$. 
By an \emph{algebra} we shall mean a symmetric $\ell$-bimodule $A$ together with an associative product $A\otimes_\ell A\to A$. For a set $X$ and an algebra $R$, let 
\begin{gather*}
\Gamma_X^wR=\{A:X\times X\to R: |\supp(A(x,-))|<\infty>|\supp(A(-,x))|,\ \ \forall x\in X\}\\
\Gamma_X^aR=\{A\in\Gamma^w_X R: \exists N\in\N \ \ |\supp(A(x,-))|,|\supp(A(-,x))|\le N\ \ \forall x\in X\}\\
\Gamma_XR=\{A\in\Gamma_X^aR: |\im(A)|<\infty\}.
\end{gather*}
We regard an element of any  of the sets above as an $X\times X$ matrix; observe that matricial sum and product make all three sets above into $\ell$-algebras. When $R=\ell$, we drop it from the notation; thus $\Gamma_X=\Gamma_X(\ell)$, etc. Let $\cI(X)$ be the inverse semigroup of all partially defined injections $X\supset \dom(\sigma)\overset{\sigma}{\lra} X$. If $\sigma\in\cI(X)$ and $\mathrm{Graph}(\sigma)\subset X\times X$ is its graph, then its characteristic function lives in $\Gamma_X$
\[
U_f:=\chi_{\mathrm{Graph}(\sigma)}\in\Gamma_X.
\]
Let $R^X$ be the set of all functions $X\to R$. For $a\in R^X$, let 
\[
\diag(a)_{x,y}=\delta_{x,y}a_x.
\] 
Observe that for all $a\in R^X$ and $\sigma\in\cI(X)$, 
\[
\Gamma^a_XR\owns \diag(a)U_\sigma.
\]
If $a$ takes finitely many distinct values, then $\diag(a)U_\sigma\in\Gamma_XR$; in fact by \cite{hcsmall}*{Lemma 2.1}
 the latter elements generate $\Gamma_XR$ as an $\ell$-module, at least when $X$ is countable.

\subsection{Exel-Pardo tuples}\label{subsec:eptuples}
\numberwithin{equation}{subsection}
Let $G$ be a group and $X$ a set, together with a $G$-action
\[
G\times X\to X,\ \ (g,x)\mapsto g(x).
\]
Let $H$ be a group; a \emph{$1$-cocycle} with values in $H$ for the $G$-set $X$ is a function $\psi:G\times X\to H$ such that, for every $g,h\in G$ and $x\in X$, we have
\[
\psi(gh,x)=\psi(g,h(x))\psi(h,x).
\]
A (directed) \emph{graph} $E$ consists of sets $E^0$ and $E^1$ of vertices and edges and \emph{range} and \emph{source} maps $r,s:E^1\to E^0$. If $e\in E^1$, then $s(e)$ and $r(e)$ are respectively the \emph{source} and the \emph{range} of $e$. Let $n\ge 1$; a sequence of edges $\alpha=e_1\cdots e_n$ such that $r(e_i)=s(e_{i+1})$ for all $1\le i\le n-1$ is called a \emph{path of length} $|\alpha|=n$, with source $s(\alpha)=s(e_1)$ and range $r(\alpha)=r(e_n)$. Vertices are regarded as paths of length $0$. We write $\cP(E)$ for the set of all paths of finite length, and, abusing notation, also for the graph with vertices $E^0$, edges $\cP(E)$ and range and source maps as just defined. The set $\cP(E)$ is partially ordered by
\begin{equation}\label{pathorder}
\alpha\ge\beta\iff (\exists \gamma)\, \beta=\alpha\gamma.     
\end{equation}

If $E$ and $F$ are graphs, then by a homomorphism $h:E\to F$ we understand a pair
of functions $h^{i}:E^{i}\to F^{i}$, $i=0,1$ such that $r\circ h^{1}=h^{0}\circ r$ and $s\circ h^{1}=h^{0}\circ s$. A \emph{$G$-graph} is a graph $E$ equipped with an action of $G$ by graph automorphisms. An \emph{Exel-Pardo $1$-cocycle} for $E$ with values in $G$ is a $1$-cocycle $\phi:G\times E^1\to G$ such that 
\begin{equation}\label{eq:epcoci}
\phi(g,e)(v)=g(v)
\end{equation}
 for all $g\in G$, $e\in E^1$ and $v\in E^0$. An \emph{Exel-Pardo tuple} is a tuple $(G,E,\phi)$ consisting of a group $G$, a $G$-graph $E$ and a $1$-cocycle as above. 

\begin{lem}[\cite{ep}*{Proposition 2.4}]\label{lem:epextend}
Let $(G,E,\phi)$ be an Exel-Pardo tuple. Then the $G$-action on $E$ and the cocycle $\phi$ extend respectively to a
$G$-action and a $1$-cocycle on the path graph $\cP(E)$ satisfying all four conditions below.
\item[i)] $\phi(g,v)=g$ for all $v\in E^0$.
\item[ii)] $|g(\alpha)|=|\alpha|$ for all $\alpha\in\cP(E)$. 
The next two conditions hold for all concatenable $\alpha$, $\beta\in \cP(E)$. 
\item[iii)] $g(\alpha\beta)=g(\alpha)\phi(g,\alpha)(\beta)$ 
\item[iv)] $\phi(g,\alpha\beta)=\phi(\phi(g,\alpha),\beta)$. 

\goodbreak

Moreover, such an extension is unique.
\end{lem}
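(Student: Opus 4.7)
The plan is to define the extensions recursively on path length, observe that the recursion is forced by conditions (iii) and (iv) (yielding uniqueness for free), and then verify the action and cocycle axioms by induction on length.

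First I would set $g(v)=g(v)$ (the given action) and $\phi(g,v)=g$ for vertices $v$, consistent with condition (i); for an edge $e$ the data $g(e)$ and $\phi(g,e)$ are already given. For a path $\alpha=e\beta$ with $|\beta|=n-1$, conditions (iii) and (iv) \emph{force}
\[
g(e\beta)=g(e)\,\phi(g,e)(\beta),\qquad \phi(g,e\beta)=\phi(\phi(g,e),\beta),
\]
and I take these as the definition, with the right-hand sides interpreted inductively. Before calling this well-defined I need to check that the concatenation on the right is legal: $r(g(e))=g(r(e))$ because $g$ is a graph automorphism, while the inductive hypothesis plus \eqref{eq:epcoci} give
\[
s(\phi(g,e)(\beta))=\phi(g,e)(s(\beta))=\phi(g,e)(r(e))=g(r(e)),
\]
so the two endpoints match. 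Condition (ii) is then immediate from the recursion, and condition (i) holds by fiat.

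Next I would verify, by induction on $|\alpha|$, the three structural properties: (a) $(gh)(\alpha)=g(h(\alpha))$, (b) $\phi(gh,\alpha)=\phi(g,h(\alpha))\phi(h,\alpha)$, and (c) the general forms of (iii) and (iv) for concatenable $\alpha,\beta$ of arbitrary length. In each case I would write $\alpha=e\alpha'$ and expand both sides via the recursive definition together with the corresponding identity for edges (which is part of the given data); every expression then reduces to the inductive hypothesis applied to $\alpha'$, and the cocycle identity for $\phi$ on edges feeds exactly the composite group elements that appear. For (c) one further uses associativity of concatenation, writing $(e\alpha')\beta=e(\alpha'\beta)$ and unfolding both sides one edge at a time.

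Uniqueness is automatic: any extension satisfying (i), (iii) and (iv) must agree with the one just constructed on paths of length $\le 1$, and conditions (iii)--(iv) then determine it inductively on all of $\cP(E)$. The main obstacle I anticipate is purely bookkeeping: when verifying (a), (b) and (c) one must keep careful track of \emph{which} iterate of $\phi$ acts on which tail of a path, so the three inductions are best run in parallel, with (c) restated as a single statement $g(\alpha\beta)=g(\alpha)\phi(g,\alpha)(\beta)$ and $\phi(g,\alpha\beta)=\phi(\phi(g,\alpha),\beta)$ that is proved simultaneously. Once the recursion is set up and typechecks, every identity unravels by peeling off one edge at a time.
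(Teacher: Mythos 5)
Your proposal is correct. The paper gives no proof of this lemma at all --- it simply cites \cite{ep}*{Proposition 2.4} --- and your argument (define the extension by the recursion forced by (iii) and (iv) on the first edge, check concatenability via \eqref{eq:epcoci}, then verify the action axiom, the cocycle identity and the general forms of (iii)--(iv) by simultaneous induction on path length, with uniqueness falling out of the forced recursion) is exactly the standard proof given in that reference, so there is nothing to add.
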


\subsection{Twisted Exel-Pardo tuples}\label{subsec:twistep}

Write $\cU(\ell)$ for the group of inverible elements of our ground ring $\ell$.  A \emph{twisted Exel-Pardo tuple} is an Exel-Pardo tuple $(G,E,\phi)$ together with a $1$-cocyle $c:G\times E^1\to \cU(\ell)$. Remark that 
\[
\phi_c:G\times E^1\to \cU(\ell)G\subset \cU(\ell[G]), \ \phi_c(g,e)=c(g,e)\phi(g,e)
\]
is a $1$-cocycle with values in the multiplicative group of the group algebra $\ell[G]$. We write $(G,E,\phi_c)$ for the twisted EP-tuple $(G,E,\phi,c)$. 

\begin{lem}\label{lem:twistextend} Let $(G,E,\phi_c)$ be a twisted Exel-Pardo tuple. Then $c$ extends uniquely to a $1$-cocycle $G\times\cP(E)\to \cU(\ell)$ satisfying
\begin{equation}\label{eq:conditwist}
c(g,v)=1,\,\, c(g,\alpha\beta)=c(g,\alpha)c(\phi(g,\alpha),\beta)
\end{equation}
for all concatenable $\alpha,\beta\in\cP(E)$. 
\end{lem}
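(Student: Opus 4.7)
The strategy is to define $c$ on $\cP(E)$ by induction on path length using the very identity it must satisfy, then check that the resulting function is a $1$-cocycle for the extended $G$-action on $\cP(E)$.

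First I would observe that the conditions \eqref{eq:conditwist} force uniqueness: if $\alpha=e_1\cdots e_n$ with $n\ge 1$, then iterating the second identity in \eqref{eq:conditwist} and using Lemma \ref{lem:epextend}(iv) to compute $\phi(g,e_1\cdots e_i)$ forces
\[
c(g,\alpha)=\prod_{i=1}^{n}c\bigl(\phi(g,e_1\cdots e_{i-1}),\,e_i\bigr),
\]
with the convention that $e_1\cdots e_0=s(\alpha)$ and $\phi(g,s(\alpha))=g$ by Lemma \ref{lem:epextend}(i). I would take this product as the definition of $c(g,\alpha)$ for $|\alpha|\ge 1$, and set $c(g,v)=1$ for vertices $v\in E^0$. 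The associativity identity $c(g,\alpha\beta)=c(g,\alpha)c(\phi(g,\alpha),\beta)$ then follows immediately from the product formula and Lemma \ref{lem:epextend}(iv).

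Next I would verify that the extended $c$ really is a $1$-cocycle, i.e.\ $c(gh,\alpha)=c(g,h(\alpha))c(h,\alpha)$, by induction on $|\alpha|$. The cases $|\alpha|=0$ and $|\alpha|=1$ hold trivially and by hypothesis, respectively. For the inductive step I write $\alpha=\beta e$ and apply the multiplicativity already proved:
\[
c(gh,\alpha)=c(gh,\beta)\,c(\phi(gh,\beta),e).
\]
The inductive hypothesis gives $c(gh,\beta)=c(g,h(\beta))c(h,\beta)$. For the second factor, the cocycle property of $\phi$ on $\cP(E)$ yields $\phi(gh,\beta)=\phi(g,h(\beta))\phi(h,\beta)$, and the hypothesis that $c$ is a cocycle on edges, combined with Lemma \ref{lem:epextend}(iii) (which identifies $\phi(h,\beta)(e)$ as the last edge of $h(\alpha)=h(\beta)\phi(h,\beta)(e)$), gives
\[
c(\phi(gh,\beta),e)=c\bigl(\phi(g,h(\beta)),\,\phi(h,\beta)(e)\bigr)\,c(\phi(h,\beta),e).
\]
Rearranging the four factors and invoking multiplicativity again collapses them to $c(g,h(\alpha))\,c(h,\alpha)$, as required.

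The only real content beyond bookkeeping is ensuring that $\phi$'s cocycle and composition properties from Lemma \ref{lem:epextend} interact cleanly with the multiplicativity of $c$, and I expect this to be the main (minor) obstacle: one must carefully track which arguments of $\phi$ and $c$ change at each step so that the telescoping products line up. Once this is done, uniqueness, multiplicativity, and the cocycle identity are all established.
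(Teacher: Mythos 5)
Your proposal is correct and follows essentially the same route as the paper: define $c$ on paths by the forced telescoping product, get uniqueness for free, verify multiplicativity via Lemma \ref{lem:epextend}(iv), and verify the cocycle identity using Lemma \ref{lem:epextend}(iii) together with the cocycle properties of $\phi$ on $\cP(E)$ and of $c$ on $E^1$. The only difference is presentational: you organize the cocycle verification as an induction on $|\alpha|$, whereas the paper expands the full product directly, but these are the same computation.
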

\begin{proof} The prescriptions \eqref{eq:conditwist} together with Lemma \ref{lem:epextend} dictate that we must have
\begin{equation}\label{map:cext}
c(g,v)=1,\ \ c(g,e_1\cdots e_n)=c(g,e_1)\prod_{i=1}^{n-1}c(\phi(g,e_1\cdots e_i),e_{i+1})
\end{equation}
for every vertex $v\in E^0$ and every path $e_1\cdots e_n\in\cP(E)$. 
We have to check that the formulas \eqref{map:cext} define a $1$-cocycle satisfying \eqref{eq:conditwist}. It is clear that \eqref{eq:conditwist} is satisfied by \eqref{map:cext} whenever $\alpha$ or $\beta$ are vertices. Let $n,m\ge 1$ and let $e_1\cdots e_{n+m}\in\cP(E)$. Then using \eqref{map:cext} and using part iv) of Lemma \ref{lem:epextend} at the second identity, we have 
\begin{align*}
c(\phi(g,e_1\dots e_n),e_{n+1}\cdots e_{n+m})=& c(\phi(g,e_1\dots e_n), e_{n+1})\cdot\\
&\prod_{j=1}^{m-1}c(\phi(\phi(g,e_1\dots e_n),e_{n+1}\cdots e_{n+j}),e_{n+j+1})\\
=& \prod_{j=0}^{m-1}c(\phi(g,e_1\dots e_{n+j}), e_{n+j+1}).
\end{align*}
Hence using \eqref{map:cext} and the identity we have just proved, we obtain
\begin{align*}
c(g,e_1\cdots e_n)c(\phi(g,e_1\dots e_n),e_{n+1}\cdots e_{n+m})=& c(g,e_1)\prod_{i=1}^{n+m-1}c(\phi(g,e_1\cdots e_i),e_{i+1})\\
=&c(g,e_1\cdots e_{n+m}). 
\end{align*}
We have thus established that \eqref{map:cext} satisfies \eqref{eq:conditwist}; it remains to show that it is a $1$-cocycle. Let $g,h\in G$, $n\ge 1$, and $e_1\cdots e_n\in\cP(E)$. Then by (iii) of Lemma \ref{lem:epextend}
\[
h(e_1\cdots e_n)=h(e_1)\phi(h,e_1)(e_2)\cdots \phi(h,e_{1}\cdots e_{n-1})(e_n).
\]
Hence using the above identity and the facts that $\phi$ is a cocycle on $\cP(E)$ and that $c$ is a cocycle on $E^1$ we obtain
\begin{gather*}
c(gh,e_1\dots e_n)= c(gh,e_1)\prod_{i=1}^{n-1}c(\phi(gh,e_1\cdots e_i),e_{i+1})\\
                  = c(g,h(e_1))c(h,e_1)\prod_{i=1}^{n-1}c(\phi(g,h(e_1\cdots e_i))\phi(h,e_1\cdots e_i),e_{i+1})\\
									=\left(c(h,e_1)\prod_{i=1}^{n-1}c(\phi(h,e_1\cdots e_i),e_{i+1})\right)\cdot\\ 
									\left(c(g,h(e_1))\prod_{i=1}^{n-1}c(\phi(g,h(e_1)\phi(h,e_1)(e_2)\cdots \phi(h,e_{1}\cdots e_{i-1})(e_i)),\phi(h,e_{1}\cdots e_{i})(e_{i+1}))\right)\\
									=c(h,e_1\cdots e_n)c(g,h(e_1)\phi(h,e_1)(e_2)\cdots \phi(h,e_1\dots e_{n-1}(e_n))\\
									= c(h,e_1\cdots e_n)c(g, h(e_1\cdots e_n)).
\end{gather*}
\end{proof}
\begin{rem}\label{rem:phic}
Lemma \ref{lem:twistextend} together with part (iv) of Lemma \ref{lem:epextend} imply that, for extension of $\phi_c$ to a map $\ell[G]\times\cP(E)\to \ell[G]$, $\ell$-linear on the first variable, we have
\[
\phi_c(g,\alpha\beta)=\phi_c(\phi_c(g,\alpha),\beta).
\]
\end{rem}
\subsection{Twists and semigroups}\label{subsec:twistsemi}
Let $\cS$ be an inverse semigroup with inverse operation $s\mapsto s^*$, pointed by a zero element $\emptyset$. A (normalized) \emph{$2$-cocycle} with values in the pointed inverse semigroup $\cU(\ell)_\bu=\cU(\ell)\cup\{0\}$ is a map $\omega:\cS\times\cS\to \cU(\ell)_\bu$ such that for all $s,t,u\in\cS$ and $x\in\cS\setminus\{\emptyset\}$, 
\begin{gather}\label{eq:2cocy1}
\omega(st,u)\omega(s,t)=\omega(s,tu)\omega(t,u),\\ 
\omega(\emptyset,s)=\omega(s,\emptyset)=0,\ \ \omega(x,x^*)=\omega(xx^*,x)=\omega(x,x^*x)=1.\label{eq:2cocy2}
\end{gather}
The \emph{twisted semigroup algebra} $\ell[\cS,\omega]$ is the $\ell$-module $\ell[\cS]=(\bigoplus_{s\in S}\ell s)/\ell \emptyset$ equipped with the $\ell$-linear multiplication $\cdot_\omega$ induced by
\begin{equation}\label{semitwist}
s\cdot_\omega t=st\omega(s,t).    
\end{equation}
A straightforward calculation shows that $\ell[\cS,\omega]$ is an associative algebra and that for every $s\in \cS$,
\begin{equation*}
s\cdot_\omega s^*\cdot_\omega s=s.
\end{equation*}

Let $(G,E,\phi_c)$ be a twisted Exel-Pardo tuple. Recall from \cite{ep}*{Definition 4.1} that there is a pointed inverse semigroup
\[
\cS(G,E,\phi)=\{(\alpha,g,\beta)\colon \alpha,\beta\in\cP(E), \thickspace r(\alpha)=g(r(\beta))\}\cup\{0\}.
\]
Multiplication in $\cS(G,E,\phi)$ is defined by

\begin{equation}\label{prod:sge}
(\alpha,g,\beta)(\gamma,h,\theta)=\left\{\begin{matrix}(\alpha g(\gamma_1),\phi(g,\gamma_1)h,\theta)& \text{ if } \gamma=\beta\gamma_1\\
                                                       (\alpha, g\phi(h,h^{-1}(\beta_1)),\theta h^{-1}(\beta_1))&\text{ if }\beta=\gamma\beta_1\\
																											0& \text{ otherwise.}\end{matrix}\right.
\end{equation}
The inverse of an element of $\cS(G,E,\phi)$ is defined by $(\alpha,g,\beta)^*=(\beta,g^{-1},\alpha)$. 
Define a map 
\begin{gather}\label{map:omega}
\omega:\cS(G,E,\phi)\times \cS(G,E,\phi)\to \cU(\ell)_\bu\\
\omega(0,s)=\omega(s,0)=0\ \ \forall s\in \cS(G,E,\phi),\nonumber\\
\omega((\alpha,g,\beta),(\gamma,h,\theta))=\left\{\begin{matrix}\\
c(h,h^{-1}(\beta_1))& \text{ if } \beta=\gamma\beta_1\\ c(g,\gamma_1)& \text{ if } \gamma=\beta\gamma_1\\ 
0& \text{ otherwise.}\end{matrix}\right. \nonumber
\end{gather}

\begin{lem}\label{lem:omega}
The map $\omega$ of \eqref{map:omega} is a $2$-cocycle. 
\end{lem}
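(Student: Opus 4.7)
The plan is to verify the two defining properties of a normalized $2$-cocycle directly from the formula \eqref{map:omega}, relying crucially on the extension of $c$ to $\cP(E)$ established in Lemma \ref{lem:twistextend} together with the compatibility identities of Lemma \ref{lem:epextend}.

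First I would dispose of the normalization conditions \eqref{eq:2cocy2}. The equations involving $\emptyset$ are built into the definition of $\omega$. For $x=(\alpha,g,\beta)\ne 0$, the products $xx^{*}, x^{*}x$ and $(xx^{*})x$ each fall into the case $\beta=\gamma$ of \eqref{prod:sge}, so the ``leftover'' path $\beta_{1}$ or $\gamma_{1}$ appearing in \eqref{map:omega} is a vertex. By the first part of \eqref{eq:conditwist} we then get $c(h,v)=1$ and the three normalizations follow.

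Next, for the cocycle identity \eqref{eq:2cocy1}, write $s=(\alpha,g,\beta)$, $t=(\gamma,h,\theta)$, $u=(\lambda,k,\eta)$. If any of $s,t,u,st,tu$ equals $\emptyset$, both sides of \eqref{eq:2cocy1} vanish; thus I may assume all are nonzero. Nonvanishing of $st$ yields two cases (A: $\gamma=\beta\gamma_{1}$, or B: $\beta=\gamma\beta_{1}$) and similarly for $tu$, giving four principal subcases. In each I would compute $st$, $tu$, $(st)u$, $s(tu)$ explicitly using \eqref{prod:sge}, writing the two sides of \eqref{eq:2cocy1} as explicit products of values of $c$. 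The ``telescoping'' identity $c(g,\alpha\beta)=c(g,\alpha)c(\phi(g,\alpha),\beta)$ from \eqref{eq:conditwist}, together with the $1$-cocycle property $c(gh,\delta)=c(g,h(\delta))c(h,\delta)$ proved at the end of Lemma \ref{lem:twistextend}, plus parts (iii)-(iv) of Lemma \ref{lem:epextend} (which describe the interaction of the $G$-action and $\phi$ with concatenation of paths) are exactly the tools that let one reduce both sides to a single product.

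The main obstacle I anticipate is bookkeeping in the mixed subcases, in particular the case $\gamma=\beta\gamma_{1}$, $\eta=\theta_{1}\eta_{1}$ where the middle path of $st$ is itself $\alpha g(\gamma_{1})$; here the cocycle factor arising from the second multiplication has the form $c(\phi(g,\gamma_{1})h,\,\cdot)$, and one must split this via the $1$-cocycle identity of $c$ and then recombine using the path-concatenation identity \eqref{eq:conditwist}, keeping track of the image paths under the $G$-action via Lemma \ref{lem:epextend}(iii). Once the identifications $\phi(g,\alpha\beta)=\phi(\phi(g,\alpha),\beta)$ and $g(\alpha\beta)=g(\alpha)\phi(g,\alpha)(\beta)$ are used to rewrite all arguments of $c$ in a common normal form, the two sides become identical. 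No new conceptual input is needed beyond Lemmas \ref{lem:epextend} and \ref{lem:twistextend}; the argument is a careful, finite, case-by-case verification.
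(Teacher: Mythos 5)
Your proposal is correct and follows essentially the same route as the paper: dispose of the normalization conditions \eqref{eq:2cocy2} using $c(g,v)=1$, reduce to the case where $s,t,u,st,tu$ are all nonzero, and then split into the four combinations of which alternative of \eqref{prod:sge} applies to $st$ and to $tu$, closing each case with the concatenation identity \eqref{eq:conditwist}, the $1$-cocycle property of $c$, and Lemma \ref{lem:epextend}(iii)--(iv). The only detail your plan leaves implicit, which the paper spells out, is that the case $\beta=\gamma\beta_1$, $\xi=\theta\xi_1$ requires a further three-way subdivision according to whether $\xi_1$ and $h^{-1}(\beta_1)$ are comparable in the path order (both sides of \eqref{eq:2cocy1} vanish when they are not); this surfaces automatically once $(st)u$ and $s(tu)$ are computed explicitly, as you propose.
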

\begin{proof} It is clear that $\omega$ satisfies the identities \eqref{eq:2cocy2}. We have to check that the identity \eqref{eq:2cocy1} holds for all $s,t,u\in\cS(G,E,\phi)$. If any of $s,t,u$ is $0$, this is clear. Observe that for nonzero $s,t$, $c(s,t)=0$ exactly when $st=0$. Hence \eqref{eq:2cocy1} is also clear whenever $st=0$ or $tu=0$. To check the remaining cases, put $s=(\alpha,g,\beta)$, $t=(\gamma,h,\theta)$, $u=(\xi, k, \eta)$. 
\begin{enumerate}
\item $\beta=\gamma\beta_1$, $\xi=\theta\xi_1$. Then
\[
st=(\alpha, g\phi(h,h^{-1}(\beta_1)),\theta h^{-1}(\beta_1)),
 \ \ tu=(\gamma h(\xi_1), \phi(h,\xi_1)k,\eta).
\]
We divide this case in 3 subcases:
\begin{enumerate}
\item $\xi_1$ and $h^{-1}(\beta_1)$ are incomparable with respect to the path order \eqref{pathorder}. Then $ h(\xi_1)$ and $\beta_1$ are incomparable also, hence
$\omega(s,tu)=\omega(st,u)=0$, so \eqref{eq:2cocy1} holds in this case. 
\item $\xi_1=h^{-1}(\beta_1)\xi_2$. Then $h(\xi_1)=\beta_1\phi(h,h^{-1}(\beta_1))(\xi_2)$ and we have
\begin{align*}
\omega(st,u)\omega(s,t)=&c(g\phi(h,h^{-1}(\beta_1)),\xi_2)c(h,h^{-1}(\beta_1))\\
&=c(g,\phi(h,h^{-1}(\beta_1))(\xi_2))c(\phi(h,h^{-1}(\beta_1)),\xi_2)c(h,h^{-1}(\beta_1))\\
&=\omega(s,tu)c(h,h^{-1}(\beta_1)\xi_2)\\
&=\omega(s,tu)\omega(t,u).
\end{align*}
\item $h^{-1}(\beta_1)=\xi_1\beta_2$. Then $\beta_1=h(\xi_1)\phi(h,\xi_1)(\beta_2)$ and we have
\begin{align*}
\omega(s,tu)\omega(t,u)=&c(\phi(h,\xi_1)k, k^{-1}(\beta_2))c(h,\xi_1)\\
=&c(\phi(h,\xi_1),\beta_2)c(k,k^{-1}(\beta_2))c(h,\xi_1)\\
=&c(k,k^{-1}(\beta_2))c(h,\xi_1\beta_2)\\
=&\omega(st,u)\omega(s,t)
\end{align*}
\end{enumerate}
\item $\gamma=\beta\gamma_1$ and $\xi=\theta\xi_1$. Then
\[
st=(\alpha g(\gamma_1),\phi(g,\gamma_1)h,\theta),\ \ tu=(\gamma h(\xi_1),\phi(h,\xi_1)k,\eta)
\]
and we have
\begin{align*}
\omega(st,u)\omega(s,t)=&c(\phi(g,\gamma_1)h,\xi_1)c(g,\gamma_1)\\
=&c(\phi(g,\gamma_1),h(\xi_1))c(h,\xi_1)c(g,\gamma_1)\\
=&c(g,\gamma_1h(\xi_1))c(h,\xi_1)\\
=&\omega(s,tu)w(t,u).
\end{align*}

\item $\beta=\gamma\beta_1$, $\theta=\xi\theta_1$. Then 
\begin{align*}
\omega(st,u)\omega(s,t)=&c(k,k^{-1}(\theta_1 h^{-1}(\beta_1)))c(h,h^{-1}(\beta_1))\\
=&c(k,k^{-1}(\theta_1)(\phi(k^{-1},\theta_1)h^{-1})(\beta_1))c(h,h^{-1}(\beta_1))\\
=&c(k,k^{-1}(\theta_1))c(\phi(k, k^{-1}(\theta_1)), (\phi(k^{-1},\theta_1)h^{-1})(\beta_1))c(h,h^{-1}(\beta_1))\\
=&\omega(t,u)c(\phi(k, k^{-1}(\theta_1)),(\phi(k, k^{-1}(\theta_1))^{-1}h^{-1})(\beta_1))c(h,h^{-1}(\beta_1))\\
=&\omega(t,u)c(h\phi(k,k^{-1}(\theta_1)),(\phi(k,k^{-1}(\theta_1))^{-1}h^{-1})(\beta_1)))\\
=&\omega(t,u)\omega(s,tu).
\end{align*}

\item $\gamma=\beta\gamma_1$, $\theta=\xi\theta_1$. Then
\begin{align*}
\omega(st,u)\omega(s,t)=& c(k,k^{-1}(\theta_1))c(g,\gamma_1)\\
=&\omega(t,u)\omega(s,tu).
\end{align*}
\end{enumerate}
\end{proof}

\section{Twisted Exel-Pardo algebras via the Cohn extension}\label{sec:viacohn}
\subsection{Cohn algebras}\label{subsec:cohn}
The \emph{Cohn algebra} of the twisted Exel-Pardo tuple $(G,E,\phi_c)$ is the quotient $C(G,E,\phi_c)$ of the free algebra on 
the set $$\{v,\ vg,\ e, \ eg,\ e^*,\ ge^*:\ \  v\in E^0,\ \ g\in G,\  \ e\in E^1\}$$ modulo the following relations
\begin{gather}
v=v1,\ \ e=e1,\ \ 1e^*=e^*,\ eg=s(e)e(r(e)g),\ ge^*=(g(r(e))g)e^*s(e),  \label{eq:cohn0}\\
 e^*f=\delta_{e,f}r(e),\ (vg)wh=\delta_{v,g(w)}vgh, \label{eq:cohn1}\\
(vg)e=\delta_{v,g(s(e))}g(e)\phi_c(g,e),\ \ e^*vg=\delta_{v,s(e)}\phi_c(g,g^{-1}(e))(g^{-1}(e))^*.\label{eq:cohn2}
\end{gather} 
If $E^0$ happens to be finite, then $C(G,E,\phi_c)$ is unital with unit $1=\sum_{v\in E^0}v$, and $g\mapsto \sum_{v\in E^0}vg$ is a group homomorphism $G\to \cU(C(G,E,\phi_c))$. It will follow from Proposition \ref{prop:cohnpres} below that the latter is a monomorphism, and we will identify $G$ with its image in $\cU(C(G,E,\phi_c))$. Thus if $E^0$ is finite, then writing $\cdot$ for the product in $C(G,E,\phi_c)$, we have $vg=v\cdot g$ and $eg=e\cdot g$. However if $E^0$ is infinite, we do not identify $G$ with any subset of $C(G,E,\phi_c)$.   

\begin{prop}\label{prop:cohnpres}
Let $\omega$ be as in \eqref{map:omega}. There is an isomorphism of algebras $C(G,E,\phi_c)\iso \ell[\cS(G,E,\phi),\omega]$ mapping $vg\mapsto (v,g,g^{-1}(v))$, $eg\mapsto (e,g,g^{-1}(r(e)))$, $ge^*\mapsto (g(r(e)),g,e)$. 
\end{prop}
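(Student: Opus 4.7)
My plan is to construct maps in both directions and show they are mutually inverse. For the forward map $\Phi : C(G,E,\phi_c) \to \ell[\cS(G,E,\phi),\omega]$, I would first note that the relations $v = v\cdot 1$, $e = e\cdot 1$ and $1\cdot e^* = e^*$ in \eqref{eq:cohn0} force $v \mapsto (v,1,v)$, $e \mapsto (e,1,r(e))$, and $e^* \mapsto (r(e),1,e)$, extending the prescribed values on $vg$, $eg$ and $ge^*$ to all generators of the free algebra. The bulk of the work is then to check that each defining relation of $C(G,E,\phi_c)$ holds in $\ell[\cS,\omega]$ after applying $\Phi$. The relations \eqref{eq:cohn0} and \eqref{eq:cohn1} reduce to cases in which one of the two factors has the form $(\beta,1,r(\beta))$, $(r(\beta),1,\beta)$ or $(v,1,v)$, so $\omega$ evaluates to $1$ on vertices and at the identity and the verifications are immediate. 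The real content is in \eqref{eq:cohn2}: for $(vg)e = \delta_{v,g(s(e))}\,g(e)\,\phi_c(g,e)$, one computes
\[
(v,g,g^{-1}(v)) \cdot_\omega (e,1,r(e)) = c(g,e)\,(g(e),\phi(g,e),r(e))
\]
(nonzero exactly when $g^{-1}(v) = s(e)$), and matches this with the image of the right-hand side using $r(g(e)) = \phi(g,e)(r(e))$ from Lemma \ref{lem:epextend}. The relation $e^*(vg) = \delta_{v,s(e)}\,\phi_c(g,g^{-1}(e))(g^{-1}(e))^*$ is symmetric.

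For the inverse $\Psi$, I would define it on the basis $\cS(G,E,\phi) \setminus \{0\}$ by
\[
\Psi(\alpha,g,\beta) = \alpha_1\alpha_2\cdots\alpha_n \cdot (r(\alpha)g) \cdot \beta_m^* \cdots \beta_1^*,
\]
where $\alpha = \alpha_1\cdots\alpha_n$ and $\beta = \beta_1\cdots\beta_m$ are the edge factorizations (vertices treated as empty words). Well-definedness is automatic; the issue is multiplicativity $\Psi(s\cdot_\omega t) = \Psi(s)\Psi(t)$. As a preparation I would extend \eqref{eq:cohn2} by induction on $|\alpha|$, using Lemmas \ref{lem:epextend} and \ref{lem:twistextend}, to arbitrary paths:
\[
(vg)\alpha = \delta_{v,g(s(\alpha))}\,g(\alpha)\,\phi_c(g,\alpha), \qquad \alpha^*(vg) = \delta_{v,s(\alpha)}\,\phi_c(g,g^{-1}(\alpha))(g^{-1}(\alpha))^*.
\]
The case analysis for $\Psi(s\cdot_\omega t)$ then mirrors precisely the split of \eqref{prod:sge} and the branches of \eqref{map:omega}, and the required compatibilities for the twist are exactly the cocycle identity $c(g,\alpha\beta) = c(g,\alpha)c(\phi(g,\alpha),\beta)$ established in Lemma \ref{lem:twistextend} --- the same ingredients used in the proof of Lemma \ref{lem:omega}.

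Finally, $\Phi \circ \Psi = \id$ follows from a short telescoping calculation: one checks $\Phi(\alpha_1\cdots\alpha_n) = (\alpha,1,r(\alpha))$ and $\Phi(\beta_m^*\cdots\beta_1^*) = (r(\beta),1,\beta)$ by induction (all intervening $\omega$-values are $1$ because they involve a vertex or the identity of $G$), and then sandwiching by the middle factor $(r(\alpha),g,g^{-1}(r(\alpha)))$ gives $(\alpha,g,\beta)$. The identity $\Psi\circ\Phi = \id$ is checked on generators, e.g.\ $\Psi\Phi(eg) = e\cdot (r(e)g) = s(e)\,e\,(r(e)g) = eg$, where the middle equality uses $s(e)e = e$ (the $g=1$ instance of the extended relation above) and the last is \eqref{eq:cohn0}.

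The principal obstacle is multiplicativity of $\Psi$: although it is a finite case analysis, keeping track of the twist factors in the subcases $\gamma = \beta\gamma_1$ and $\beta = \gamma\beta_1$ from \eqref{prod:sge} requires applying the path cocycle identity of Lemma \ref{lem:twistextend} together with the extended form of \eqref{eq:cohn2}, essentially re-running the computations of Lemma \ref{lem:omega} inside $C(G,E,\phi_c)$.
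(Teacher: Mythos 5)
Your proposal is correct and follows essentially the same route as the paper: define the forward map on generators by verifying the Cohn relations in $\ell[\cS(G,E,\phi),\omega]$, define the candidate inverse by $(\alpha,g,\beta)\mapsto \alpha g\beta^*$, establish the extended identities $(vg)\gamma=\delta_{v,g(s(\gamma))}g(\gamma)\phi_c(g,\gamma)$ and its starred analogue by induction on path length, and then check multiplicativity by the case analysis dictated by \eqref{prod:sge} and \eqref{map:omega}. The only ingredient you leave implicit that the paper states explicitly is the untwisted formula \eqref{eq:plaincohn} for $\beta^*\gamma$, but this is subsumed in your case analysis.
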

\begin{proof} One checks that the elements
\[(v,1,v),\ (v,g,g^{-1}(v)),\ (e,1,r(e)),\ (e,g,g^{-1}(r(e))),\ (r(e),1,e)\ \text{and } (g(r(e)),g,e)\]
of $\ell[\cS(G,E,\phi),\omega]$ satisfy the relations \eqref{eq:cohn0}, \eqref{eq:cohn1} and \eqref{eq:cohn2}; thus there is a homomorphism as in the proposition. Next we define its inverse. Because $e=s(e)er(e)$ in $C(G,E,\phi_c)$, we have a multiplication preserving map $\cP(E)\to C(G,E,\phi_c)$ sending a path $\alpha=e_1\cdots e_n$ to the product of its edges in $C(G,E,\phi_c)$. If $\alpha,\beta\in\cP(E)$ and $g\in G$, we write 
\[
\alpha g\beta^*:=\alpha (r(\alpha)g)\beta^*\in C(G,E,\phi_c)
\]
If $\alpha$ or $\beta$ is a vertex, we abbreviate the above as $\alpha g$ or $g\beta^*$.
The set-theoretic map 
\begin{equation}\label{map:sgetocohn}
a:\cS(G,E,\phi)\to C(G,E,\phi_c),\ \ a(0)=0,\ \ a(\alpha,g,\beta)=\alpha g\beta^*
\end{equation} 
induces a homomorphism of $\ell$-modules $\ell[\cS(G,E,\phi),\omega]\to C(G,E,\phi_c)$. We shall show that this is in fact a homomorphism of algebras. The usual proof that the Cohn algebra $C(E)=C(1,E,1)$ is the semigroup algebra of $\cS(E)=\cS(1,E)$ shows that the relations \eqref{eq:cohn0} and
\eqref{eq:cohn1} imply that, if $\beta,\gamma\in \cP(E)$, then
\begin{equation}\label{eq:plaincohn}
\beta^*\gamma=\left\{\begin{matrix}\beta_1^* &\text{ if } \beta=\gamma\beta_1\\
                                   \gamma_1 &\text{ if } \gamma=\beta\gamma_1\\
																	         0 &\text{ otherwise.}\end{matrix}\right.
\end{equation}
Next we show, by induction on $|\gamma|$, that 
\begin{equation}\label{eq:ggamma}
(vg)\gamma=\delta_{v,g(s(\gamma))}g(\gamma)\phi_c(g,\gamma).
\end{equation}
If $|\gamma|\le 1$ this is clear from the second equation of \eqref{eq:cohn1} and the first in \eqref{eq:cohn2}. For the inductive step, let $n\ge 1$, $|\gamma|=n+1$, $\gamma=e_{n+1}\cdots e_1$, $w=s(\gamma)$, $u=r(\gamma)$. Then using the first equation of \eqref{eq:cohn2} in the first line, the inductive step in the second, and \eqref{eq:conditwist} and part iii) of Lemma \ref{lem:epextend} and Remark \ref{rem:phic} in the last, we obtain
\begin{gather*}
(vg)\gamma=\delta_{v,g(w)}g(e_{n+1})\cdot (g(r(e_{n+1}))\phi_c(g,e_{n+1}))\cdot (e_n\cdots e_1)\\
=\delta_{v,g(w)}c(g,e_{n+1})
g(e_{n+1})\phi(g,e_{n+1})(e_n\cdots e_1)
\phi_c(\phi(g,e_{n+1}), e_n\cdots e_1)\\
=\delta_{v,g(w)}g(\gamma)\phi_c(g,\gamma).
\end{gather*}
A similar argument shows that 
\begin{equation}\label{eq:ggamma*}
\gamma^*(vg)=\delta_{v,s(\gamma)}\phi_c(g,g^{-1}(\gamma))g^{-1}(\gamma)^*.
\end{equation}
Next we use \eqref{eq:plaincohn}, \eqref{eq:ggamma} and \eqref{eq:ggamma*}  to show that \eqref{map:sgetocohn} induces an algebra homomorphism $\ell[\cS(G,E,\phi),\omega]\to C(G,E,\phi_c)$. 
If, for example, $(\alpha,g,\beta)$, $(\gamma,h,\theta)\in\cS(G,E,\phi)$ and $\gamma=\beta\gamma_1$, then using \eqref{eq:plaincohn}, \eqref{eq:ggamma} and \eqref{prod:sge} we obtain
\begin{align*}
a(\alpha,g,\beta)a(\gamma,h,\theta)=&(\alpha g\beta^*)(\gamma h\theta)\\
=&\alpha g\cdot\gamma_1 h\theta\\
                                 =&c(g,\gamma_1)\alpha g(\gamma_1)\phi(g,\gamma_1)h\theta^*\\
																 =&\omega((\alpha,g,\beta),(\gamma,h,\theta))a((\alpha,g,\beta)(\gamma,h,\theta))
\end{align*}
The case $\beta=\gamma\beta_1$ is similar, using \eqref{eq:ggamma*} in place of \eqref{eq:ggamma}. 
Thus $a$ is an algebra homomorphism. It is clear that the composite of $a$ with the map of the lemma is the identity on $C(G,E,\phi_c)$. To see that the reverse composition is the identity also, one checks that if $\alpha=e_1\cdots e_n$ and $\beta=f_1\cdots f_m$ and $g\in G$ are such that $g(r(\beta))=r(\alpha)$, then the following identity holds in $\cS(G,E,\phi)$
\[
(\alpha,g,\beta)=(e_1,1,r(e_1))\cdots (e_n,1,r(e_n))(r(e_n),g,r(f_m))(r(f_m),1,f_m)\cdots (r(f_1),1,f_1).
\]
This completes the proof.
\end{proof}
\begin{coro}\label{coro:cohnpres}
$C(G,E,\phi_c)$ is a free $\ell$-module with basis $\cB=\{\alpha g\beta^*:\alpha,\beta\in\cP(E),\ \ g\in G,\ \ g(r(\beta))=r(\alpha)\}$. 
\end{coro}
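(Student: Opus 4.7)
The plan is essentially a direct readoff from the preceding proposition. By Proposition \ref{prop:cohnpres}, the map $a:\cS(G,E,\phi)\to C(G,E,\phi_c)$ sending $(\alpha,g,\beta)\mapsto \alpha g\beta^*$ (and $0\mapsto 0$) extends to an $\ell$-algebra isomorphism $\ell[\cS(G,E,\phi),\omega]\iso C(G,E,\phi_c)$. So the corollary reduces to identifying a free $\ell$-basis of the twisted semigroup algebra.

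By the very definition of the twisted semigroup algebra given in Subsection \ref{subsec:twistsemi}, we have $\ell[\cS,\omega]=(\bigoplus_{s\in\cS}\ell s)/\ell\emptyset$ as an $\ell$-module, where the twist $\omega$ only affects the multiplication $\cdot_\omega$, not the underlying module structure. Hence the set of non-zero elements $\cS\setminus\{\emptyset\}$ is tautologically a free $\ell$-basis of $\ell[\cS,\omega]$. Specializing to $\cS=\cS(G,E,\phi)$, the non-zero elements are exactly the triples $(\alpha,g,\beta)$ with $\alpha,\beta\in\cP(E)$, $g\in G$ and $g(r(\beta))=r(\alpha)$.

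Transporting this free basis across the isomorphism of Proposition \ref{prop:cohnpres}, one gets that the image $a(\cS(G,E,\phi)\setminus\{0\})=\{\alpha g\beta^*:\alpha,\beta\in\cP(E),\ g\in G,\ g(r(\beta))=r(\alpha)\}=\cB$ is a free $\ell$-basis of $C(G,E,\phi_c)$. There is no real obstacle here: the only thing one might pause to check is that the map $a$ is genuinely $\ell$-linear and bijective on the indexing set (not merely surjective onto the algebra as a set of generators), but both are already explicit in the construction of $a$ in the previous proof.
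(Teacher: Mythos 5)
Your proposal is correct and is exactly the argument the paper intends: the corollary is stated without proof as an immediate consequence of Proposition \ref{prop:cohnpres}, since $\ell[\cS(G,E,\phi),\omega]$ is by construction free on the nonzero elements of $\cS(G,E,\phi)$ and the isomorphism carries that basis to $\cB$. Nothing is missing.
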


\begin{rem}\label{rem:ggammas1}
If $E^0$ is finite, then we may add up \eqref{eq:ggamma} over all $v\in E^0$ to obtain
\[
g\gamma=g(\gamma)\phi_c(g,\gamma).
\]
Similarly, from \eqref{eq:ggamma*} we get
\[
\gamma^*g=\phi_c(g,g^{-1}(\gamma))g^{-1}(\gamma)^*.
\]
\end{rem}
\subsection{The algebra \topdf{$\cK(G,E,\phi_c)$}{K(G,E,phic)}}\label{subsec:kgecp}

For each regular vertex $v$ of $E$ and each $g\in G$ consider the following element of $C(G,E,\phi_c)$
\begin{equation}\label{eq:qv}
q_vg:=vg-\sum_{s(e)=v}e\phi_c(g,g^{-1}(e))g^{-1}(e)^*.
\end{equation}
As usual we set $q_v=q_v1$; observe that
\[q_vg=q_v\cdot (vg).\]
Recall that a vertex $v\in E^0$ is \emph{regular} if $0<|s^{-1}\{v\}|<\infty$; write $\reg(E)\subset E^0$ for the subset of regular vertices. Define a two-sided ideal
\begin{equation}\label{eq:kgec}
C(G,E,\phi_c)\vartriangleright \cK(G,E,\phi_c)=\langle q_v:v\in \reg(E)\rangle.
\end{equation}
For each $v\in E^0$ let 
\begin{equation}\label{parribajo}
\cP_v=\{\alpha\in\cP(E):r(\alpha)=v\},\ \ \cP^v=\{\alpha\in\cP(E):s(\alpha)=v\}.
\end{equation}
The action of $G$ on $\cP(E)$ induces one on $\bigoplus_{v\in E^0} M_{\cP_v}$, which maps $\bigoplus_{v\in\reg(E)} M_{\cP_v}$ to itself. In particular we can form the crossed (or smash) product algebra 
\[
(\bigoplus_{v\in\reg(E)} M_{\cP_v})\rtimes G.
\] 
Multiplication in the crossed product is defined by
\begin{equation}\label{eq:crosp1}
(\epsilon_{\alpha,\beta}\rtimes g)(\epsilon_{\gamma,\theta}\rtimes h)=\delta_{\beta,g(\gamma)}\epsilon_{\alpha,g(\theta)}\rtimes gh
\end{equation}

\begin{prop}\label{prop:kge} There is an algebra isomorphism
\[
\upsilon:(\bigoplus_{v\in\reg(E)} M_{\cP_v})\rtimes G\to\cK(G,E,\phi_c),\ \ \epsilon_{\alpha,\beta}\rtimes g \mapsto \alpha (q_{r(\alpha)}g) (g^{-1}(\beta))^*.
\]
In particular, $\cK(G,E,\phi_c)$ is independent of $\phi_c$ up to canonical algebra isomorphism.
\end{prop}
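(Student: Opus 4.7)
The plan is to construct $\upsilon$, verify it is an algebra homomorphism landing in $\cK(G,E,\phi_c)$, compute its image, and finally use the basis $\cB$ of Corollary \ref{coro:cohnpres} to establish injectivity. Two auxiliary identities in $C(G,E,\phi_c)$ do most of the work. Expanding $q_v g$ using \eqref{eq:cohn1}--\eqref{eq:cohn2} yields
\[
q_v g\ =\ q_v\cdot(vg)\ =\ (vg)\cdot q_{g^{-1}(v)}.
\]
Since $\phi_c(1,\cdot)=1$ we have $q_v=v-\sum_{s(e)=v}ee^*$, so the relation $e^*f=\delta_{e,f}r(e)$ gives the \emph{killing property}: if $s(\gamma)=v$ and $|\gamma|\ge 1$ then $q_v\gamma=0$, and symmetrically $\gamma^*q_v=0$.

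To check multiplicativity, I would expand $\upsilon(\epsilon_{\alpha,\beta}\rtimes g)\cdot\upsilon(\epsilon_{\gamma,\theta}\rtimes h)$ and reduce the middle factor $(g^{-1}(\beta))^*\gamma$ by \eqref{eq:plaincohn}. In every branch producing a leftover path of length $\ge 1$ (forward or starred), rewriting $q_{r(\alpha)}g=(vg)q_{g^{-1}(v)}$ and applying the killing property forces zero; the incomparable case is immediate. The single surviving branch is $\gamma=g^{-1}(\beta)$, i.e.\ $\beta=g(\gamma)$, and the product collapses to $\alpha\cdot q_v g\cdot q_{g^{-1}(v)}h\cdot(h^{-1}(\theta))^*$ with $v=r(\alpha)$. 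A direct calculation then produces
\[
q_v g\cdot q_{g^{-1}(v)}h\ =\ q_v(gh),
\]
whose crucial step is the amalgamated cocycle identity $\phi_c(g,f)\phi_c(h,h^{-1}(f))=\phi_c(gh,h^{-1}(f))$; this follows from the cocycle properties of $\phi$ on $\cP(E)$ and of $c$ (Lemma \ref{lem:twistextend}). Since $h^{-1}(\theta)=(gh)^{-1}(g(\theta))$, the product equals $\upsilon(\epsilon_{\alpha,g(\theta)}\rtimes gh)$, matching \eqref{eq:crosp1}.

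Any element of $\cK(G,E,\phi_c)$ is an $\ell$-linear combination of expressions $b_1 q_v b_2$ with $b_i=\alpha_i g_i\beta_i^*\in\cB$ and $v\in\reg(E)$; the killing property forces $\beta_1=v=\alpha_2$, whence identity (i) rewrites $b_1 q_v b_2=\alpha_1 q_{g_1(v)}(g_1g_2)\beta_2^*=\upsilon(\epsilon_{\alpha_1,(g_1g_2)(\beta_2)}\rtimes g_1g_2)$. The reverse inclusion $\im\upsilon\subseteq\cK(G,E,\phi_c)$ is immediate from the definition of $\cK(G,E,\phi_c)$.

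For injectivity, substituting the formula for $q_v$ into $\upsilon(\epsilon_{\alpha,\beta}\rtimes g)$ and using \eqref{eq:cohn2} expresses this element as the basis vector $\alpha g(g^{-1}(\beta))^*$, of left-length $|\alpha|$, plus correction terms of left-length $|\alpha|+1$. Distinct triples $(\alpha,\beta,g)$ yield distinct leading basis elements, so filtering $\cB$ by left-length and looking at the minimal left-length in any putative $\ell$-linear dependence forces all coefficients to vanish inductively. The final assertion of the proposition is then immediate, as the crossed product $\bigl(\bigoplus_{v\in\reg(E)}M_{\cP_v}\bigr)\rtimes G$ does not involve $c$. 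The main obstacle is the bookkeeping of twisting scalars in the identity $q_v g\cdot q_{g^{-1}(v)}h=q_v(gh)$: once the amalgamated cocycle identity for $\phi_c$ is recognised, the remaining case analysis is straightforward.
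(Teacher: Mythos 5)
Your proposal is correct and follows essentially the same route as the paper: multiplicativity via the key identity $(q_vg)(q_wh)=\delta_{v,g(w)}q_v(gh)$ combined with the killing property $\gamma^*q_v=q_v\gamma=0$ for $|\gamma|\ge 1$, spanning of $\cK(G,E,\phi_c)$ by the elements $\alpha q_vg\beta^*$ for the same reason, and linear independence by a leading-term/length-filtration argument relative to the basis $\cB$ (which the paper delegates to the untwisted case in \cite{lpabook}*{Proposition 1.5.11}).
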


\begin{proof}
One checks that for $v,w\in \reg(E)$ and $g,h\in G$, we have
\begin{equation}\label{eq:qvqw}
(q_vg)(q_wh)=\delta_{v,g(w)}q_v(gh). 
\end{equation}
Thus if $\alpha,\beta\in\cP_v$ and $\gamma,\theta\in\cP_w$, 
\begin{gather*}
\upsilon(\epsilon_{\alpha,\beta}\rtimes g)\upsilon(\epsilon_{\gamma,\theta}\rtimes h)=(\alpha (q_vg) g^{-1}(\beta)^*)(\gamma (q_wh) h^{-1}(\theta)^*)\\
=\delta_{g(\gamma),\beta}\alpha q_v(gh) h^{-1}(\theta)^*=\upsilon(\epsilon_{\alpha,g(\theta)}\rtimes gh).
\end{gather*}
Hence $\upsilon$ is a homomorphism of algebras. To prove that $\upsilon$ is bijective, it suffices to show that 
\begin{equation}\label{B'}
\cB'=\{\alpha q_vg\beta^*:\ \ v\in \reg(E),\ \ g\in G,\ \ \alpha\in\cP_v,\ \ \beta\in \cP_{g^{-1}(v)}\}
\end{equation}
is an $\ell$-module basis of $\cK(G,E,\phi_c)$. By Corollary \ref{coro:cohnpres}, $\cK(G,E,\phi_c)$ is generated by the products $x (q_vg)y$ with $x,y\in\cB$. One checks that if $|\alpha|\ge 1$, then
$\alpha^*(q_vg)=(q_vg)\alpha=0$. It follows that $\cB'$ generates $\cK(G,E,\phi_c)$ as an $\ell$-module. Linear independence of $\cB'$ is derived from that of $\cB$, just as in the case $G=1$ (\cite{lpabook}*{Proposition 1.5.11}; see also \cite{notas}*{Proposici\'on 4.3.3}). 
\end{proof}

\begin{coro}\label{coro:kge} The $*$-algebra $\cK(G,E,\phi_c)$ carries a canonical $G$-grading, where for each $g\in G$, the homogeneous component of degree $g$ is 
\[
\cK(G,E,\phi_c)_g=\mspan\{\alpha (q_vg)\beta^*:v\in \reg(E),\ \ g\in G,\ \ \alpha\in\cP_v,\ \ \beta\in \cP_{g^{-1}(v)}\}.
\]
\end{coro}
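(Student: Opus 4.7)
The plan is to transport the canonical $G$-grading of the crossed product $A\rtimes G$ (with $A=\bigoplus_{v\in\reg(E)}M_{\cP_v}$) through the algebra isomorphism $\upsilon$ of Proposition \ref{prop:kge}. The only point requiring verification is that the resulting homogeneous components match the ones stated.

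First I would recall that for any $G$-algebra $A$ the crossed product $A\rtimes G$ carries a canonical $G$-grading with
\[
(A\rtimes G)_g=\{a\rtimes g:a\in A\}.
\]
Indeed, as an $\ell$-module $A\rtimes G=\bigoplus_{g\in G} (A\rtimes g)$, and the multiplication rule \eqref{eq:crosp1} immediately gives $(A\rtimes g)\cdot(A\rtimes h)\subseteq A\rtimes gh$. In our setting $A=\bigoplus_{v\in\reg(E)}M_{\cP_v}$, the degree-$g$ component is the $\ell$-span of the elements $\epsilon_{\alpha,\beta}\rtimes g$ where $v\in\reg(E)$ and $\alpha,\beta\in\cP_v$.

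Second, I would apply $\upsilon$ to transfer this grading to $\cK(G,E,\phi_c)$, declaring $\cK(G,E,\phi_c)_g:=\upsilon((A\rtimes G)_g)$. Since $\upsilon$ is an algebra isomorphism by Proposition \ref{prop:kge}, the direct sum decomposition and the multiplicative property $\cK(G,E,\phi_c)_g\cdot\cK(G,E,\phi_c)_h\subseteq\cK(G,E,\phi_c)_{gh}$ are inherited for free. It remains to identify the spanning set: by definition of $\upsilon$, the component $\cK(G,E,\phi_c)_g$ is spanned by the elements $\alpha(q_{r(\alpha)}g)(g^{-1}(\beta))^*$ with $v=r(\alpha)=r(\beta)\in\reg(E)$ and $\alpha,\beta\in\cP_v$. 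Performing the change of variable $\beta':=g^{-1}(\beta)\in\cP_{g^{-1}(v)}$ yields exactly the set displayed in the statement.

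There is essentially no obstacle: the linear basis $\cB'$ produced in the proof of Proposition \ref{prop:kge} already splits naturally as the disjoint union over $g\in G$ of the spanning sets described in the corollary, and the crossed product multiplication shows these subspaces multiply correctly. So the corollary reduces to observing that $\upsilon$ respects the obvious $G$-decomposition on both sides.
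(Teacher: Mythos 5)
Your proposal is correct and is essentially the argument the paper intends: the corollary is stated without proof precisely because it follows by transporting the canonical $G$-grading of $(\bigoplus_{v\in\reg(E)}M_{\cP_v})\rtimes G$ through the isomorphism $\upsilon$ of Proposition \ref{prop:kge}, with the basis $\cB'$ already exhibiting the decomposition into the stated homogeneous components. Your change of variable $\beta'=g^{-1}(\beta)\in\cP_{g^{-1}(v)}$ correctly identifies the spanning set, so nothing is missing.
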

\subsection{Extending \topdf{$\cB'$}{B'} to a basis of \topdf{$C(G,E,\phi_c)$}{C(G,E,phic)}}\label{subsec:basextend}
Let $(G,E,\phi_c)$ be a twisted EP-tuple and let $e\in E^1$. Consider the map
\begin{equation}\label{map:nablae}
\nabla_e:G\to E^1\times G,\ \ g\mapsto (g^{-1}(e),\phi(g,g^{-1}(e))).
\end{equation}
In Lemma \ref{lem:ann=0} below we show equivalent conditions to the injectivity of the map $\nabla_e$ of \eqref{map:nablae}. First recall some definitions and notations. 
Let $R$ be a ring, $M$ a right $R$-module and $x\in M$. Write 
$$\ann_R(x)=\{a\in R:xa=0\}$$
for the \emph{annihilator} of $x$. 
Let $(G,E,\phi_c)$ be a twisted EP-tuple. The $\ell$-module $\ell[\cS(G,E,\phi)]$ has two different right $\ell[G]$-module structures, induced respectively by multiplication in $\cS(G,E,\phi)$ and by that in $\ell[\cS(G,E,\phi),\omega]$:
\begin{gather*}
(\alpha,g,\beta)\cdot h=(\alpha,g,\beta)(s(\beta),h,h^{-1}(s(\beta))\text{ and }\\
(\alpha,g,\beta)\cdot_\omega h=(\alpha,g,\beta)\cdot_\omega(s(\beta),h,h^{-1}(s(\beta)).
\end{gather*}
Observe that if $e\in E^1$, then an element $\sum_{g\in G}\lambda_gg\in\ell[G]$ is in the annihilator of $(e,1,e)$ with respect to the first structure if and only if $\sum_{g\in G}\lambda_gc(g,g^{-1}(e))^{-1}g$ is in the annihilator with respect to the second structure. Hence in view of Proposition \ref{prop:cohnpres}, the condition
\begin{equation}\label{eq:ann=0}
\ann_{\ell[G]}(ee^*)=0
\end{equation}
does not depend on whether we regard $ee^*$ as an element of $C(G,E,\phi)$ or of $C(G,E,\phi_c)$. Recall from \cite{ep}*{Section 5} that a finite path $\alpha\in\cP(E)$ is \emph{strongly fixed} by an element $g\in G$ if $g(\alpha)=\alpha$ and $\phi(g,\alpha)=1$.

\begin{lem}\label{lem:ann=0} Let $(G,E,\phi_c)$ be a twisted EP-tuple and $e\in E^1$. Then the following are equivalent.
\item{i)} $\ann_{\ell[G]}(ee^*)=0$. 
\item[ii)] $\{g\in G: g$ fixes $e$ strongly $\}=\{1\}$ 
\item[iii)] The map $\nabla_e$ of \eqref{map:nablae} is injective. 
\end{lem}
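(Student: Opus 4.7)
The plan is to deduce the three-way equivalence by separately proving (ii)$\iff$(iii) and (i)$\iff$(iii), with (iii) serving as the bridge between the semigroup-algebraic condition (i) and the dynamical condition (ii).

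First I would handle (ii)$\iff$(iii). Unwinding the definition of $\nabla_e$, the condition $\nabla_e(g)=\nabla_e(h)$ amounts to $g^{-1}(e)=h^{-1}(e)$ together with $\phi(g,g^{-1}(e))=\phi(h,h^{-1}(e))$. The first part is equivalent to $hg^{-1}(e)=e$. Applying the $1$-cocycle identity to the factorization $h=(hg^{-1})\cdot g$ gives
\[
\phi(h,g^{-1}(e))=\phi(hg^{-1},e)\,\phi(g,g^{-1}(e));
\]
after substituting $g^{-1}(e)=h^{-1}(e)$ on the left, the second equality is equivalent to $\phi(hg^{-1},e)=1$. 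So $\nabla_e(g)=\nabla_e(h)$ holds iff $hg^{-1}$ strongly fixes $e$, which gives (ii)$\iff$(iii).

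For (i)$\iff$(iii), I would switch to the untwisted semigroup algebra $\ell[\cS(G,E,\phi)]$ with semigroup multiplication, as permitted by the remark preceding the lemma (the annihilator is unchanged). A single application of the product rule \eqref{prod:sge}, in the subcase $\beta=\gamma\beta_1$ with $\gamma=s(e)$ and $\beta_1=e$, yields
\[
(e,1,e)\cdot h=(e,1,e)(s(e),h,h^{-1}(s(e)))=(e,\phi(h,h^{-1}(e)),h^{-1}(e))
\]
for every $h\in G$. The right hand side is a basis element of $\ell[\cS(G,E,\phi)]$ whose pair of last-two coordinates is exactly $\nabla_e(h)$. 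Hence $\sum_h\lambda_h h\in\ell[G]$ annihilates $(e,1,e)$ iff the fiber sums $\sum_{h\in\nabla_e^{-1}(x)}\lambda_h$ vanish for every $x\in\mathrm{Im}(\nabla_e)$; this forces all $\lambda_h=0$ precisely when every fiber of $\nabla_e$ is a singleton.

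The only real obstacle is selecting the correct subcase of \eqref{prod:sge} and tracking the source/range conditions for the resulting element of $\cS(G,E,\phi)$; once the product $(e,1,e)\cdot h$ is written down, both equivalences reduce to a short bookkeeping with the cocycle identity.
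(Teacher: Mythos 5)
Your argument is correct and follows essentially the same route as the paper: the equivalence of (i) and (iii) is obtained exactly as in the text, by expanding the $\ell[G]$-action on $(e,1,e)$ via the product rule \eqref{prod:sge} and reading off the fibers of $\nabla_e$ from the linear independence of the basis $\cB$ of Corollary \ref{coro:cohnpres}. The only difference is that you supply a direct cocycle computation for (ii)$\iff$(iii), which the paper simply quotes from \cite{ep}*{Proposition 5.6}; your computation is a correct proof of that cited fact.
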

\begin{proof} The equivalence between ii) and iii) is \cite{ep}*{Proposition 5.6}. We show that i)$\Rightarrow$ii) and that iii)$\Rightarrow$i). For the purpose of this proof, we shall regard $ee^*$ as an element of $C(G,E,\phi)$. If $g\in G\setminus\{1\}$ fixes $e$ strongly, then $0\ne g-1\in \ann_{\ell[G]}(ee^*)$. Next assume that 
there is a nonzero element $x=\sum_{g\in G}\lambda_gg\in \ann_{\ell[G]}(ee^*)$. Then
\[
0=\sum_{g\in G}\lambda_gg\cdot ee^*=\sum_{\{h\in G, f\in G\cdot e\}}(\sum_{\{g| g(e)=f, \phi(g,e)=h\}}\lambda_g) fhe^*.
\]
By Corollary \ref{coro:cohnpres}, we must have 
\[
\sum_{\{g| g(e)=f,\, \phi(g,e)=h\}}\lambda_g=0\ \ (\forall f\in G\cdot e, h\in G).
\]
Since $x\ne 0$, this implies that there are $g\ne g'\in G$ such that $g(e)=g'(e)$ and $\phi(g,e)=\phi(g',e)$, which precisely means that $\nabla_e$ is not injective.
\end{proof}

 Consider the following subsets of $\reg(E)$
\begin{gather}
\reg(E)_0=\{v\,\colon\, \im(\nabla_e)=\{(e,1)\}\, \forall e\in s^{-1}\{v\}\}\label{eq:strongfix}\\
\reg(E)_1= \{v\,\colon\,(\exists e\in s^{-1}\{v\})\,\nabla_e \text{ is injective }\}\label{eq:pseudofree}
 \end{gather}
Let 
\begin{equation}\label{map:ev}
 \reg(E)\to E^1, v\mapsto e_v 
\end{equation} 
be a section of $s$ such that $\nabla_{e_v}$ is injective for all $v\in\reg(E)_1$.   
Set
\begin{gather*}
\cB\supset \cA=\{\alpha e_v\phi(g,g^{-1}(e_v))(\beta g^{-1}(e_v))^*,\ v\in\reg(E),\  g\in G\}\\
\cup\{\alpha vg\beta^*\,\colon\, v\in\reg(E)_0,\, g\in G\setminus\{1\}\}
    \end{gather*}

\begin{prop}\label{prop:basisunion}
Let $\cB'$ be as in \eqref{B'}. Assume that $\reg(E)=\reg(E)_0\sqcup \reg(E)_1$. Set $\cB"=\cB\setminus\cA$. 
  Then $\cB'\cup\cB"$ is an $\ell$-module basis of $C(G,E,\phi_c)$.
\end{prop}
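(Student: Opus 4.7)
The strategy is to construct an explicit bijection $\Theta\colon \cB'\to \cA$ and then use the defining formula \eqref{eq:qv} of the $q_v$'s to exhibit $\cB'\cup\cB''$ as a triangular change of basis from $\cB$. Define $\Theta$ by cases on the $\cB'$-parameters:
\begin{align*}
\Theta(\alpha q_vg\beta^*) &= \alpha e_v\,\phi(g,g^{-1}(e_v))\,(\beta g^{-1}(e_v))^* && (v\in\reg(E)_1),\\
\Theta(\alpha q_vg\beta^*) &= \alpha g\beta^* && (v\in\reg(E)_0,\ g\neq 1),\\
\Theta(\alpha q_v\beta^*) &= \alpha e_v(\beta e_v)^* && (v\in\reg(E)_0,\ g=1).
\end{align*}
The first rule is injective in $(\alpha,g,\beta)$ by the injectivity of $\nabla_{e_v}$ (Lemma \ref{lem:ann=0}) for $v\in\reg(E)_1$. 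For $v\in\reg(E)_0$, every $g\in G$ strongly fixes every edge out of $v$, so $g^{-1}(e_v)=e_v$ and $\phi(g,e_v)=1$; hence the first-type $\cA$-element collapses to $\alpha e_v(\beta e_v)^*$ independently of $g$, which is exactly what the third rule produces. Inspection of $\cA=\cA_1\cup\cA_2$ then confirms that $\Theta$ is bijective.

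Expanding each $\Theta^{-1}(x)\in\cB'$ in $\cB$ via \eqref{eq:qv} gives
\[
\Theta^{-1}(x) \;=\; \epsilon_x\,x \;+\;\sum_{y\in\cB\setminus\{x\}} \lambda_{x,y}\,y, \qquad \epsilon_x\in\cU(\ell),
\]
with $\epsilon_x$ equal to $-c(g,g^{-1}(e_v))$, $+1$, or $-1$ respectively. A case-by-case check classifies the other $\cA$-terms appearing in the sum: in the two $v\in\reg(E)_0$ cases, the only non-$x$ $\cA$-contribution has strictly smaller total path length $|\gamma|+|\delta|$ than $x$; in the $v\in\reg(E)_1$ case, besides a strictly shorter $\cA_1$-contribution $\alpha g\beta^*$, there may also appear same-length $\cA_2$-contributions $\alpha e\,\phi(g,g^{-1}(e))\,(\beta g^{-1}(e))^*$ with $e\neq e_v$, $s(e)=v$, and $r(e)\in\reg(E)_0$. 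The hypothesis $\reg(E)=\reg(E)_0\sqcup\reg(E)_1$ is crucial: it guarantees that each such same-length $\cA_2$-contribution is reduced, via a composite $(v\in\reg(E)_0,\ g\neq 1)\to (v\in\reg(E)_0,\ g=1)$ step, to terms of strictly shorter path length, so there are no stuck same-length cycles.

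With this triangular structure in hand, the spanning assertion $\cA\subseteq\mspan(\cB'\cup\cB'')$ follows by strong induction on total path length, treating the composite reduction above as a single length-decreasing step. For linear independence, assume $\sum_i\mu_iz_i+\sum_j\nu_jy_j=0$ with $z_i\in\cB'$ distinct and $y_j\in\cB''$; expanding each $z_i=\Theta^{-1}(x_i)$ in $\cB$ and projecting onto the $\cA$-coordinates yields a vanishing $\cA$-combination. By the triangular structure above, iteratively extracting a maximum-path-length $x_i$ forces $\mu_i\epsilon_{x_i}=0$ and hence $\mu_i=0$; then $\nu_j=0$ follows from the linear independence of $\cB\supset\cB''$. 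The main obstacle is precisely the termination bookkeeping: a naive path-length induction fails because the $\cA_2$-reduction first lengthens the path before the subsequent $g=1$ reduction shortens it again, and folding these two reductions into a single composite, strictly length-decreasing step is the crux of closing the induction; verifying that the hypothesis $\reg(E)=\reg(E)_0\sqcup\reg(E)_1$ is exactly what enables this composite is the technical heart of the argument.
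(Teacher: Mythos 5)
Your overall idea---trading the subset $\cA\subset\cB$ for $\cB'$ by means of the relation \eqref{eq:qv}---is the same one underlying the paper's proof, but two structural claims your argument rests on fail in general. First, $\Theta$ need not be injective. Suppose $v\in\reg(E)_1$, $w:=r(e_v)\in\reg(E)_0$ and $k:=\phi(g,g^{-1}(e_v))\ne 1$ for some $g$; nothing in the hypothesis $\reg(E)=\reg(E)_0\sqcup\reg(E)_1$ forbids an edge from a $\reg(E)_1$-vertex into a $\reg(E)_0$-vertex, and such $g$ exist as soon as $G$ is larger than the orbit of $e_v$. Then $x=\alpha e_v k(\beta g^{-1}(e_v))^*$ lies in both parts of $\cA$ and equals both $\Theta(\alpha q_vg\beta^*)$ (your first rule) and $\Theta\bigl((\alpha e_v)q_wk(\beta g^{-1}(e_v))^*\bigr)$ (your second rule). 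Two distinct elements of $\cB'$ then share the same ``leading term'' $x$, so in the independence step extracting a maximum-length $x_i$ only shows that a sum of two coefficients vanishes, not that each does; the triangular change of basis collapses exactly there. Second, the length bookkeeping is not as you state it: in the second rule the non-$x$ $\cA$-contribution is $\alpha e_v(\beta e_v)^*$, of total path length strictly \emph{larger} than $x=\alpha g\beta^*$, and your composite reduction sends an $\cA_2$-element of length $L$ to an $\cA_1$-element $\gamma\delta^*$ of the \emph{same} length $L$, not a strictly smaller one. (The rewriting does in fact terminate, because one further step lands the offending terms in $\cB''$ rather than in $\cA$, but that is not what your argument establishes.)

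The paper's proof avoids both issues. Spanning is handled by two short identities expressing $\alpha vg\beta^*$ (for $v\in\reg(E)_0$) and $\alpha e_v\phi(g,g^{-1}(e_v))(\beta g^{-1}(e_v))^*$ (for any $v\in\reg(E)$) in terms of $\cB'\cup\cB''$. For independence it takes $0\ne x\in\cK(G,E,\phi_c)\cap\mspan\cB''$ and a term $\alpha q_vg\beta^*$ with $|\alpha|$ \emph{maximal} in the $\cB'$-expansion of $x$; maximality is precisely what rules out cancellation of the $e_v$-term by longer elements such as $(\alpha e_v)q_wk(\cdots)^*$, forcing the cancelling term to be some $\alpha q_vh\beta^*$ with $\nabla_{e_v}(h)=\nabla_{e_v}(g)$, whence $v\in\reg(E)_0$ and $\alpha g\beta^*$ or $\alpha h\beta^*$ lies in $\cA$, a contradiction. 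To repair your argument you would need either to restrict to the unmixed cases $\reg(E)=\reg(E)_1$ or $\reg(E)=\reg(E)_0$ (where your $\Theta$ really is a bijection), or to replace the global triangular order by a local maximality argument of this kind.
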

\begin{proof}
Put $\cB'''=\cB'\cup\cB"$ and let $M$ be the $\ell$-submodule generated by $\cB'''$. Let $v\in E^0$ and $g\in G$, such that $r(\alpha)=v=g(r(\beta))$. If $v\in\reg(E)_0$, then
\[
\alpha vg\beta^*=\alpha q_vg\beta^*-\alpha q_v\beta^*+\alpha\beta^*\in M.
\]
For any $v\in\reg(E)$
\begin{gather*}
\alpha e_v\phi(g,g^{-1}(e_v))(\beta g^{-1}(e_v))^*=\\
c(g^{-1},e_v)\left(\alpha vg\beta^*-\alpha q_vg\beta^*-
\sum_{s(e)=v, e\ne e_v}\alpha e\phi_c(g,g^{-1}(e))(\beta g^{-1}(e))^*\right)\in M.
\end{gather*}
Thus $M=C(G,E,\phi_c)$. Next we show that $\cB'''$ is linearly independent. Let $N$ be the linear span of $\cB"$. By Proposition \ref{prop:kge} and Corollary \ref{coro:cohnpres},
$\cB'$ is a basis of $\cK=\cK(G,E,\phi_c)$ and $\cB"$ is a basis of $N$. Hence it suffices to show that $\cK\cap N=0$. Suppose otherwise that $0\ne x\in \cK\cap N$. Let 
$\alpha q_vg\beta^*$ be an element of $\cB'$ with $d=|\alpha|$ maximum among those appearing with a nonzero coefficient in the unique expression of $x$ as an $\ell$-linear 
combination of $\cB'$. Because $x$ is also a linear combination of $\cB"$, and $\alpha e_ve_v^*g\beta^*=\alpha e_v\phi(g,g^{-1}(e_v))g^{-1}(e_v)^*\beta^*\notin \cB"$, there must be 
another element of $\cB'$ appearing in the expression of $x$ that cancels the latter term. Because $d$ is maximum, that other element must be of the form $\alpha q_vh\beta^*$ for 
some $h\ne g\in G$ such that $h^{-1}(e_v)=g^{-1}(e_v)$  and $\phi(h,h^{-1}(e_v))=\phi(g,g^{-1}(e_v))$. By our hypothesis on $E$, this implies that $v\in\reg(E)_0$. It follows that $\alpha g\beta^*$ and $\alpha h\beta^*$ appear in the unique 
expression of $x$ as a linear combination of $\cB$, and because we are assuming that  $x\in N$, both basis elements must be in $\cB"$, a contradiction. This completes the proof. 
\end{proof}
We say that a twisted EP tuple $(G,E,\phi_c)$ is \emph{pseudo-free} if the associated untwisted EP-tuple $(G,E,\phi)$ is pseudo-free in the sense of \cite{ep}*{Definition 5.4}, which means precisely that the conditions of Lemma \ref{lem:ann=0} hold for every $e\in E^1$. We call $(G,E,\phi_c)$ \emph{partially pseudo-free} if $\reg(E)=\reg(E)_1$. In this case we have
\begin{equation}\label{eq:b''pseudo}
\cB"=\cB\setminus\{\alpha e_v\phi(g,g^{-1}(e_v))(\beta g^{-1}(e_v))^*,\ v\in\reg(E),\  g\in G\}.
\end{equation}

\subsection{The twisted Exel-Pardo algebra \topdf{$L(G,E,\phi_c)$}{L(G,E,phic)}}\label{subsec:lgefdefi}

Let $(G,E,\phi_c)$ be a twisted EP-tuple. Set
\[
L(G,E,\phi_c)=C(G,E,\phi_c)/\cK(G,E,\phi_c).
\]
The following is a corollary of Proposition \ref{prop:basisunion}. 
\begin{coro}\label{coro:basisunion}
Let $(G,E,\phi_c)$ be a twisted EP tuple such that $\reg(E)=\reg(E)_0\sqcup \reg(E)_1$ and let $\cB"\subset C(G,E,\phi_c)$ be as in Proposition \ref{prop:basisunion}. Then the image of $\cB"$ is a basis of $L(G,E,\phi_c)$. In particular the algebra extension
\[
\cK(G,E,\phi_c)\into C(G,E,\phi_c)\onto L(G,E,\phi_c)
\]
is $\ell$-linearly split. 
\end{coro}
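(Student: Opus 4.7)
The plan is to observe that this corollary is essentially a formal consequence of Proposition \ref{prop:basisunion}: once we have a basis of $C(G,E,\phi_c)$ that decomposes as $\cB'\cup\cB''$ with $\cB'$ spanning precisely the kernel $\cK(G,E,\phi_c)$ of the quotient map, the image of $\cB''$ in $L(G,E,\phi_c)$ is automatically a basis.

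More concretely, first I would invoke Proposition \ref{prop:basisunion}, whose hypothesis is exactly that $\reg(E)=\reg(E)_0\sqcup\reg(E)_1$, to obtain that $\cB'\cup\cB''$ is an $\ell$-linear basis of $C(G,E,\phi_c)$. Since $\cB'\cap\cB''=\emptyset$ (the elements of $\cB'$ involve the symbols $q_v$ while those in $\cB''\subset\cB$ do not) and the two sets together are linearly independent, this yields an internal $\ell$-module decomposition
\[
C(G,E,\phi_c)=\mspan_\ell(\cB')\oplus \mspan_\ell(\cB'').
\]
By Proposition \ref{prop:kge} (or rather the proof that $\cB'$ is a basis of $\cK(G,E,\phi_c)$ given there), the first summand coincides with $\cK(G,E,\phi_c)$. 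Hence the projection $\pi\colon C(G,E,\phi_c)\onto L(G,E,\phi_c)$ restricts to an $\ell$-linear isomorphism $\mspan_\ell(\cB'')\iso L(G,E,\phi_c)$, which is exactly the statement that $\pi(\cB'')$ is an $\ell$-linear basis of $L(G,E,\phi_c)$.

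For the second assertion, the inverse of this restriction is an $\ell$-linear section $L(G,E,\phi_c)\to C(G,E,\phi_c)$ of $\pi$, so the short exact sequence $\cK(G,E,\phi_c)\into C(G,E,\phi_c)\onto L(G,E,\phi_c)$ splits as a sequence of $\ell$-modules. There is no substantive obstacle here; the only care required is making sure $\cB'$ and $\cB''$ are disjoint and that $\cB'$ really is a basis of the kernel, both of which are already in place from the cited results.
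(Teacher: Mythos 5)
Your proposal is correct and is exactly the argument the paper intends: the paper states this as an immediate corollary of Proposition \ref{prop:basisunion} without writing out a proof, and your deduction — $\cB'\cup\cB''$ is a basis of $C(G,E,\phi_c)$ with $\cB'$ a basis of $\cK(G,E,\phi_c)$, hence $\mspan_\ell(\cB'')$ maps isomorphically onto the quotient and provides the $\ell$-linear splitting — is precisely the formal step being left to the reader.
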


\begin{prop}\label{prop:lesubsetlge}
Let $(G,E,\phi_c)$ be any twisted EP tuple. Then the canonical homomorphism $L(E)\to L(G,E,\phi_c)$ is injective. 
\end{prop}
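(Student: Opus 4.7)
The plan is to work at the level of Cohn algebras. The generator assignment $v\mapsto v$, $e\mapsto e$, $e^*\mapsto e^*$ defines an algebra homomorphism $\iota\colon C(E)\to C(G,E,\phi_c)$, because the defining relations of $C(E)$ are exactly the specialisation of \eqref{eq:cohn0}--\eqref{eq:cohn2} to the trivial group. Combining Proposition~\ref{prop:cohnpres} applied to $(1,E,1)$ with Corollary~\ref{coro:cohnpres}, the image of $\iota$ is the $\ell$-span in $C(G,E,\phi_c)$ of the subset $\{\alpha\beta^*\colon r(\alpha)=r(\beta)\}\subseteq\cB$, so $\iota$ is injective. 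Since $\iota(q_v)=q_v$, it factors through $L(E)$, and the proposition reduces to the inclusion
\[
C(E)\cap\cK(G,E,\phi_c)\subseteq \cK(E):=\langle q_v\colon v\in\reg(E)\rangle_{C(E)}
\]
(the reverse one being immediate because $q_v\in C(E)$).

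To verify this I would take $x$ in the left-hand side, expand it uniquely as $x=\sum_j \lambda_j\,\alpha_j q_{v_j} g_j \beta_j^*$ in the basis $\cB'$ of $\cK(G,E,\phi_c)$ given by \eqref{B'}, and then substitute the definition \eqref{eq:qv} of $q_{v_j}g_j$. This rewrites $x$ as an $\ell$-combination of elements of $\cB$ in which each index $j$ contributes $+\lambda_j$ to the coefficient of $\alpha_j g_j \beta_j^*$, together with, for every $e$ with $s(e)=v_j$, the term $-\lambda_j\, c(g_j,g_j^{-1}(e))$ to the coefficient of $(\alpha_j e)\,\phi(g_j,g_j^{-1}(e))\,(\beta_j g_j^{-1}(e))^*$. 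The hypothesis $x\in C(E)$ forces the $\cB$-coefficient of every $\mu g \nu^*$ with $g\ne 1$ to vanish, and the target is then $\lambda_j=0$ whenever $g_j\ne 1$, which immediately yields $x\in\cK(E)$.

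The observation that makes this work is that the $1$-cocycle identity applied with $g=h=1$ gives $\phi(1,e)=1$ for every $e\in E^1$; hence a ``minus'' term above can hit a $\cB$-element with nontrivial group component only if its originating $g_k$ is itself nontrivial. An induction on $|\alpha_j|$ then closes the argument: when $|\alpha_j|=0$ there is no last edge, so no ``minus'' contribution is available to cancel $+\lambda_j$; and when $|\alpha_j|\ge 1$, any cancelling contribution must come from some $\lambda_k\,\alpha_k q_{v_k} g_k \beta_k^*$ with $|\alpha_k|=|\alpha_j|-1$ and $g_k\ne 1$, which vanishes by the inductive hypothesis.

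The main obstacle will be bookkeeping. For a fixed $\cB$-element $\mu g \nu^*$ with $|\mu|\ge 1$, there may be several $g_k$ satisfying $g_k(e')=e$ and $\phi(g_k,e')=g$ (where $e,e'$ are the last edges of $\mu,\nu$), hence several candidate $k$'s whose cocycle coefficients in the $\cB$-expansion must in principle be tracked. The remark $\phi(1,\cdot)=1$ ensures, however, that all such $g_k$ are simultaneously nontrivial, so the inductive hypothesis disposes of them at once and no potential cancellation among the distinct $c(g_k,g_k^{-1}(e))$ ever needs to be analysed.
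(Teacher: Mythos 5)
Your proof is correct, and its engine is the same as the paper's: expand an element of $\cK(G,E,\phi_c)$ uniquely in the basis $\cB'$ of \eqref{B'}, compare with its $\cB$-expansion via Corollary \ref{coro:cohnpres}, and use that membership in (the image of) $C(E)$ kills every $\cB$-coefficient of a term $\mu g\nu^*$ with $g\ne 1$ to force all group components $g_j$ in the $\cB'$-expansion to be trivial. There are two points where your route genuinely diverges, both to its advantage. First, you reformulate injectivity as the ideal-intersection statement $C(E)\cap\cK(G,E,\phi_c)\subseteq\cK(E)$, which makes the endgame trivial once all $g_j=1$; the paper instead works with the subset $\cB_0\subset\cB$ mapping to a basis of $L(E)$ and must invoke the Leavitt-path-algebra fact $\cK(1,E,1)\cap\mspan\cB_0=0$ (cited from the LPA book) a second time to finish, so your version is more self-contained. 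Second, the paper asserts the key implication (nonzero $\cB'$-coefficients force $g=1$) with only a pointer to the proof of Proposition \ref{prop:basisunion}; your upward induction on $|\alpha_j|$, resting on the observation that $\phi(1,e)=1$ and $c(1,e)=1$ so that a ``minus'' term can carry a nontrivial group component only if its source $g_k$ is already nontrivial, supplies exactly the missing justification, including the correct dismissal of possible cancellations among several contributing indices $k$. The one step you leave implicit — that the ``plus'' term $\alpha_j g_j\beta_j^*$ cannot be cancelled by the plus term of a different index, since $(\alpha,g,\beta)$ determines the element of $\cB'$ — is immediate, so nothing is missing.
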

\begin{proof}
Let \eqref{map:ev} be any section of $s$. By \cite{lpabook}*{Corollary 1.5.12} the following subset of $C(E)$
$$\cB_0=\{\alpha\beta^*:r(\alpha)=r(\beta)\}\setminus\{\alpha e_v(\beta e_v)^*:r(\alpha)=s(\alpha)=v\in\reg(E)\}$$
maps to a basis of $L(E)$. Hence it suffices to show that $\cB_0\cup\cB'\subset C(G,E,\phi_c)$ is linearly independent. As in the proof of Proposition \ref{prop:basisunion}, this amounts to showing that
the $\ell$-submodule $C(G,E,\phi_c)\supset N=\cK(G,E,\phi_c)\cap \mspan \cB_0$ is zero. If $0\ne x\in N$ then it is a linear combination of elements of $\cB_0\subset\cB$, so any element $\alpha q_vg\beta^*$ appearing with a nonzero coefficient in the unique expression of $x$ as a linear combination of $\cB'$ must have $g=1$. So $x\in\cK(1,E,1)\cap\mspan (\cB_0)$ which is zero by \cite{lpabook}*{Corollary 1.5.12}.
\end{proof}

\section{Twisted Exel-Pardo algebras as twisted Steinberg algebras}\label{sec:epstei}

\subsection{From semigroup twists to groupoid twists}\label{subsec:twistgrpd}

Let $(\cS,\emptyset)$ be a pointed inverse semigroup, and let $\omega:\cS\times\cS\to \cU(\ell)_\bu$ be a $2$-cocycle as in Subsection \ref{subsec:twistsemi}. Equip the smash product set
\begin{align*}
\cU(\ell)_\bu\land \cS=&\ \ \negthickspace \cU(\ell)_\bu\times \cS/\cU(\ell)_\bu\times\{\emptyset\}\cup \{0\}\times \cS\\
                            =&\ \ \negthickspace \cU(\ell)\times \cS/\cU(\ell)\times\{\emptyset\}.
\end{align*}
with the product
\[
(u\land s)(v\land t)=uv\omega(s,t)\land st.
\]
The result is a pointed inverse semigroup $\tilde{\cS}$, with inverse 
\[
(u\land s)^*=u^{-1}\land s^*.
\]
We write $0$ for the class of the zero element in $\tilde{\cS}$. The coordinate projection $\cU(\ell)\times \cS\to \cS$ induces a surjective semigroup homomorphism $\pi:\tilde{S}\to S$. Observe that $\pi^{-1}(\{\emptyset\})=\{0\}$ and that if $p\in\cS$ is a nonzero idempotent, we have a group isomorphism
\[
\pi^{-1}(\{p\})=\cU(\ell)\land p\cong \cU(\ell).
\]
Next let $X$ be a locally compact, Hausdorff space, and let 
\[
\cI_{\mathrm{cont}}(X)=\{f:U\to X\ | \ U\subset X \text{ open, } f \text{ injective and continuous}\}
\]
be the inverse semigroup of partially defined injective maps with open domains. Recall from \cite{exel}*{Definition 4.3} that an 
\emph{action} of $\cS$ on $X$ is a semigroup homomorphism $\theta:\cS\to \cI_{\mathrm{cont}}(X)$, $s\mapsto\theta_s$, such that 
\[
X=\bigcup_{s\in \cS}\dom(\theta_s).
\]
To alleviate notation, for $s\in S$ we write 
\[
\dom (s)=\dom(\theta_s), \text{ and for } x\in\dom (s),\ \ s(x)=\theta_s(x).
\]
A \emph{germ} for the action of $\cS$ on $X$ is the class $[s,x]$ of a pair $(s,x)\in\cS\times X$ with $x\in\dom(s)$, where $[s,x]=[t,y]$ if and only if $x=y$ and there exists an idempotent $p\in\cS$ such that $p(x)=x$ and $sp=tp$. The set of all germs forms an \'etale groupoid $\cG=\cG(\cS,X)$ \cite{exel}*{Proposition 4.17}, topologized by the basis of open sets
\[
[s,U]=\{[s,x]:x\in U\}, \ \ s\in\cS, \ U\subset X \text{ open such that }\dom(s)\supset U .
\]
A germ $[s,x]$ has domain $x$ and range $s(x)$ and the multiplication map $\cG^{(2)}\to \cG$ is given by
\[
[s,t(x)][t,x]=[st,x].
\]
Assume an action of $\cS$ on $X$ is given; then $\tilde{\cS}$ also acts on $X$  via $\theta\circ\pi$, and so we can consider the groupoids of germs 
\[
\cG=\cG(\cS,X), \ \ \tilde{\cG}=\cG(\tilde{\cS},X).
\]
Equip $\cU(\ell)$ with the discrete topology and regard it as a groupoid over the one point space. 
The map 
\begin{equation}\label{map:trivbund}
\tilde{\cG}\to \cU(\ell)\times \cG, \ \ [u\land s,x]\mapsto (u,[s,x])
\end{equation}
is a homeomorphism mapping a basic open set $[u\land s,U]$ to the basic open $\{u\}\times[s,U]$, and the sequence
\begin{equation}\label{seq:distwist}
\cU(\ell)\times\cG^{0}\to \tilde{\cG}\to \cG
\end{equation}
is a \emph{discrete twist} over the (possibly non-Hausdorff) \'etale groupoid $\cG$, in the sense of \cite{resteintwist}*{Section 2.3}. Under the homeomorphism \eqref{map:trivbund}  multiplication of $\tilde{\cG}$ corresponds to
\[
(u,[s,t(x)])(v,[t,x])=(uv\omega(s,t),[st,x]).
\]
The map
\begin{equation}\label{map:tildomega}
\tilde{\omega}:\cG^{(2)}\to\cU(\ell), \ \ \tilde{\omega}([s,t(x)],[t,x])=\omega(s,t)
\end{equation}
is a continuous, normalized $2$-cocycle in the sense of \cite{steintwist}*{page 4}. 

Recall that a \emph{slice} (or \emph{local bisection}) is an open subset $U\subset\cG$ such that the domain and codomain functions restrict to injections on $U$. For example, the basic open subsets $[s,U]$ are slices \cite{exel}*{Proposition 4.18}. We observe moreover that the domain map induces a homeomorphism $[s,U]\iso U$ \cite{exel}*{Proposition 4.15}. The groupoid $\cG$ is \emph{ample} if its topology has a basis of compact slices. Such is the case, for example, if $X$ has a basis of clopen subsets, in which case we say that $X$ is \emph{Boolean}. If $\cG$ is ample, its \emph{Steinberg} algebra $\cA(\cG)$ \cite{steinappr}*{Proposition 4.3} is the $\ell$-module spanned by all  characteristic functions of compact slices, equipped with the convolution product
\begin{equation}\label{convo}
f\star g([s,x])=\sum_{t_1t_2=s}f[t_1,t_2(x)]g[t_2,x].
\end{equation}

Following \cite{steinappr}*{Definition 5.2}, we say that the action of $\cS$ on $X$ is \emph{ample} if $X$ is Boolean and $\dom(s)$ is a compact clopen subset for all $s\in\cS$. In this case the characteristic function $\chi_{[s,\dom(s)]}\in\cA(\cG)$ for every $s\in\cS$, and the $\ell$-module map
\begin{equation}\label{map:ellstoag}
\ell[\cS]\to \cA(\cG),\ \ s\mapsto \chi_{[s,\dom(s)]}
\end{equation}
is a homomorphism of algebras. One may also equip the $\ell$-module $\cA(\cG)$ with the \emph{twisted convolution product}
\begin{align}\label{convotwist}
f\star_{\omega} g([s,x])=&\sum_{t_1t_2=s}\tilde{\omega}([t_1,t_2(x)], [t_2,x])f[t_1,t_2(x)]g[t_2,x]\\
=&\sum_{t_1t_2=s}\omega(t_1,t_2)f[t_1,t_2(x)]g[t_2,x].\nonumber
\end{align}
The result is an algebra $\cA(\cG,\tilde{\omega})$. By \cite{steinappr}*{Proposition 4.3}, this is the same as the twisted Steinberg algebra defined in \cite{steintwist}*{Proposition 3.2} under the assumption that $\cG$ is Hausdorff. Moreover, by \cite{steintwist}*{Corollary 4.25}, the latter also agrees with the Steinberg algebra of the discrete twist \eqref{seq:distwist}. 
Comparison of the formulas \eqref{convo}, \eqref{convotwist} and \eqref{semitwist} tells us that whenever the $\ell$-linear map \eqref{map:ellstoag} is an algebra homomorphism $\ell[\cS]\to \cA(\cG)$, it is also an algebra homomorphism $\ell[\cS,\omega]\to \cA(\cG,\tilde{\omega})$. 

\begin{lem}\label{lem:ellStostein}
Let $\cS$ be an inverse semigroup, $\omega:\cS\times\cS\to \cU(\ell)$ a $2$-cocycle, $\cS\to \cI_{\mathrm{cont}}(X)$ an ample action and $\cG$ the groupoid of germs. Let $\tilde{\omega}:\cG^{(2)}\to\cU(\ell)$ be as \eqref{map:tildomega}. Assume that the algebra homomorphism \eqref{map:ellstoag} is surjective with kernel $\cK$. Then $\cK\triqui\ell[\cS,\omega]$ is an ideal, and $\ell[\cS,\omega]/\cK\cong \cA(\cG,\tilde{\omega})$.
\end{lem}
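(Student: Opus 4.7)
The proof is essentially a bookkeeping argument, since all the nontrivial analytic and combinatorial content has already been packaged into the sentence immediately preceding the lemma (that \eqref{map:ellstoag} is also an algebra homomorphism $\ell[\cS,\omega]\to\cA(\cG,\tilde{\omega})$) and into the surjectivity hypothesis. The plan is to separate the $\ell$-module structure from the multiplicative structure and then observe that the $\ell$-module kernel is intrinsic.

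First I would observe that as $\ell$-modules the source coincides, $\ell[\cS,\omega]=\ell[\cS]=(\bigoplus_{s\in\cS}\ell s)/\ell\emptyset$, and the target coincides, $\cA(\cG,\tilde{\omega})=\cA(\cG)$, the module of compactly supported continuous functions into $\ell$ with Hausdorff open domain. The map \eqref{map:ellstoag}, being $\ell$-linear and defined on generators by $s\mapsto \chi_{[s,\dom(s)]}$, is literally the same $\ell$-linear map $\Phi$ in both the twisted and untwisted settings. Therefore its underlying set-theoretic kernel $\cK=\ker\Phi$ is the same $\ell$-submodule of $\ell[\cS,\omega]$ as of $\ell[\cS]$, and its image is the same $\ell$-submodule of $\cA(\cG,\tilde{\omega})$ as of $\cA(\cG)$. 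In particular, the surjectivity hypothesis (stated for the untwisted map) transfers verbatim to the twisted one.

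Next I would invoke the observation recorded in the paragraph before the lemma: comparing \eqref{convo}, \eqref{convotwist} and \eqref{semitwist}, the same $\ell$-linear map $\Phi$ is also an algebra homomorphism $\ell[\cS,\omega]\to\cA(\cG,\tilde{\omega})$. Concretely, on basis elements one checks
\[
\Phi(s\cdot_\omega t)=\omega(s,t)\chi_{[st,\dom(st)]}=\chi_{[s,\dom(s)]}\star_{\omega}\chi_{[t,\dom(t)]}=\Phi(s)\star_\omega\Phi(t),
\]
the middle equality reducing, by the definition of the twisted convolution \eqref{convotwist}, to the untwisted identity $\chi_{[s,\dom(s)]}\star\chi_{[t,\dom(t)]}=\chi_{[st,\dom(st)]}$ multiplied through by the scalar $\omega(s,t)$, since the only pair $(t_1,t_2)$ contributing to the value of the convolution at a germ $[u,x]$ with $u=st$ yields $\tilde{\omega}=\omega(s,t)$ on the support. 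Thus $\cK=\ker\Phi$ is automatically a two-sided ideal of $\ell[\cS,\omega]$.

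Finally, the induced $\ell$-linear map $\bar{\Phi}:\ell[\cS,\omega]/\cK\to \cA(\cG,\tilde{\omega})$ is an algebra homomorphism by the universal property of quotients, injective by construction, and surjective since $\Phi$ is. Hence $\bar{\Phi}$ is an isomorphism, as required. The main (and only) obstacle worth noting is the verification that $\Phi$ remains multiplicative in the twisted setting; this is where one must use the definition of $\tilde{\omega}$ in \eqref{map:tildomega} and the fact that the twisted convolution inserts exactly the scalar prescribed by the semigroup twist \eqref{semitwist}. Once this is granted, the rest is formal.
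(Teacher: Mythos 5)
Your proof is correct and follows essentially the same route as the paper, whose entire proof is the one-line remark that the lemma ``follows from the discussion immediately above'' — namely that the same underlying $\ell$-linear map \eqref{map:ellstoag} is also an algebra homomorphism $\ell[\cS,\omega]\to\cA(\cG,\tilde{\omega})$, so its kernel (unchanged as an $\ell$-submodule) is an ideal and the induced map on the quotient is an isomorphism. Your expanded verification of multiplicativity is the right computation; the only minor imprecision is the phrase ``the only pair $(t_1,t_2)$ contributing'' — there may be several semigroup factorizations contributing at a given germ, but all of them produce the same pair of germs $([s,t(x)],[t,x])$ and hence the same factor $\tilde{\omega}=\omega(s,t)$, which is what lets the scalar be pulled out of the sum.
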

\begin{proof} This follows from the discussion immediately above the lemma. 
\end{proof}
\subsection{The twisted Exel-Pardo algebra as a twisted groupoid algebra}\label{subsec:twistgpd}

Let $E$ be a graph. Set
\[
\fX(E)=\{\alpha: \text{ infinite path in } E\}\cup\{\alpha\in\cP(E):r(\alpha)\in\sing(E)\}.
\]
For $\alpha\in\cP(E)$, let
\[
\fX(E)\supset \cZ(\alpha)=\{x\in \fX(E)\ :\ x=\alpha y,\  y\in\fX(E)\}=\alpha\fX(E).
\]
For each finite set $F\subset \alpha\cP(E)$, let 
\[
\cZ(\alpha\setminus F)=\cZ(\alpha)\cap(\bigcup_{\beta\in F}\cZ(\alpha\beta))^c.
\]
The sets $\cZ(\alpha\setminus F)$ are a basis of compact open sets for a locally compact, Hausdorff topology on $\fX(E)$ \cite{web}*{Theorem 2.1}. We regard the latter as a topological space equipped with this topology; it is a Boolean space.  
Now  assume an Exel-Pardo tuple $(G,E,\phi)$ is given. Then there is an action of $\cS(G,E,\phi)$ on $\fX(E)$ such that $\dom(\alpha,g,\beta)=\cZ(\beta)$ and 
\[
(\alpha,g,\beta)(\beta x)=\alpha g(x)\in\cZ(\alpha)
\]
for all $x\in\beta\fX(E)$. Let $\cG(G,E,\phi)$ be the groupoid of germs associated to this action; it is an ample groupoid. If moreover $c:G\times E^1\to\cU(\ell)$ is a $1$-cocycle, and $\omega:\cS(G,E,\phi)\times\cS(G,E,\phi)\to\cU(\ell)_\bu$ is the semigroup $2$-cocycle of \eqref{map:omega}, then by \eqref{map:tildomega} we also have a twist 
\begin{equation}\label{map:tildomega2}
\tilde{\omega}:\cG(G,E,\phi)\times\cG(G,E,\phi)\to \cU(\ell).
\end{equation}
\begin{prop}\label{prop:lgec=stein} Let $(G,E,\phi_c)$ be a twisted EP tuple and let \eqref{map:tildomega2} be the associated groupoid cocycle. Let $L(G,E,\phi_c)$ and $\cA(\cG(G,E,\phi),\tilde{\omega})$ be the twisted Exel-Pardo and Steinberg algebras. Then there is an algebra isomorphism 
\[
L(G,E,\phi_c)\iso \cA(\cG(G,E,\phi),\tilde{\omega}),\ \ \alpha g\beta^*\mapsto\chi_{[(\alpha,g,\beta),\cZ(\beta)]}.
\]
\end{prop}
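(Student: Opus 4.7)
The approach combines Proposition \ref{prop:cohnpres} with Lemma \ref{lem:ellStostein} applied to $\cS(G,E,\phi)$ with cocycle $\omega$ acting on $\fX(E)$. This yields an algebra homomorphism $\ell[\cS(G,E,\phi),\omega]\to\cA(\cG(G,E,\phi),\tilde{\omega})$; composing with the isomorphism of Proposition \ref{prop:cohnpres} produces the desired map $\Psi\colon C(G,E,\phi_c)\to\cA(\cG(G,E,\phi),\tilde{\omega})$ sending $\alpha g\beta^*\mapsto\chi_{[(\alpha,g,\beta),\cZ(\beta)]}$.

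To invoke Lemma \ref{lem:ellStostein} I would first verify that the action is ample: $\fX(E)$ is Boolean with basic clopens $\cZ(\alpha\setminus F)$, and each $\dom((\alpha,g,\beta))=\cZ(\beta)$ is compact clopen. Next, the untwisted map $\ell[\cS(G,E,\phi)]\to\cA(\cG(G,E,\phi))$ is surjective: each $\chi_{[s,\cZ(\beta\setminus F)]}$ can be rewritten by inclusion-exclusion as an $\ell$-combination of $\chi_{[s,\cZ(\beta\delta)]}$, each of which lies in the image. Finally the untwisted kernel is generated by the relations $q_v$, $v\in\reg(E)$, coming from the partition $\cZ(v)=\bigsqcup_{s(e)=v}\cZ(e)$ at every regular vertex; this identification, which is the untwisted case of the current proposition, follows from the Exel-Pardo machinery of \cite{ep}.

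It remains to show $\ker\Psi = \cK(G,E,\phi_c)$. The inclusion $\cK(G,E,\phi_c)\subseteq\ker\Psi$ is immediate from $\Psi(q_v)=0$. For the reverse, I would compare $\ell$-module bases: Proposition \ref{prop:kge} supplies a basis $\cB'$ of $\cK(G,E,\phi_c)$ indexed by $\{(\alpha,g,\beta)\colon r(\alpha)=g(r(\beta))\in\reg(E)\}$, and $\ker\Psi$ admits an analogous basis coming from the untwisted case, the two differing only by $c$-dependent coefficients on strictly longer-path terms $(\alpha e,\phi(g,g^{-1}(e)),\beta g^{-1}(e))$. Ordering basis elements by $|\alpha|+|\beta|$ renders the transition between the two families unitriangular with invertible diagonal, so the two $\ell$-submodules coincide. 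This matching of two a priori distinct ideals, arising because the twisted and untwisted multiplications yield different expansions of $\alpha q_v g \beta^*$, is the principal obstacle.
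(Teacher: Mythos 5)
Your overall route---compose the isomorphism of Proposition \ref{prop:cohnpres} with the map \eqref{map:ellstoag}, check ampleness, import surjectivity and the computation of the untwisted kernel from the Exel--Pardo/Steinberg literature, and finish with Lemma \ref{lem:ellStostein}---is exactly the paper's, which cites \cite{cep}*{Theorem 6.4} for the untwisted statement and then invokes Lemma \ref{lem:ellStostein} and Proposition \ref{prop:cohnpres}. You also correctly isolate the one point this scheme leaves open: since \eqref{map:ellstoag} is a single $\ell$-linear map, its kernel is the \emph{same} $\ell$-submodule of $\ell[\cS(G,E,\phi)]$ whether one works twisted or untwisted, and by the untwisted case that submodule is $\cK(G,E,\phi)$; the proposition needs it to be $\cK(G,E,\phi_c)$. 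The paper's own proof passes over this identification in silence, so flagging it as ``the principal obstacle'' is the right instinct.

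Your proposed resolution of that obstacle, however, does not work. A unitriangular transition between two infinite families ordered by $|\alpha|+|\beta|$ does not imply that their spans coincide: inverting such a transition produces infinite sums, which do not exist in $\ell[\cS(G,E,\phi)]$. And the conclusion you are after is in fact false in general. Take $E$ to be a single vertex $v$ with a single loop $e$, $G=\Z=\langle t\rangle$ acting trivially, $\phi\equiv 1$ and $c(t,e)=\lambda\in\cU(\ell)\setminus\{1\}$. In the basis $\cB$ of Corollary \ref{coro:cohnpres} one computes $q_vt=(v,t,v)-\lambda(e,t,e)$ in $C(G,E,\phi_c)$, versus $(v,t,v)-(e,t,e)$ in the untwisted Cohn algebra; writing the former as a (finite!) combination of the elements $(e^k,t^n,e^l)-(e^{k+1},t^n,e^{l+1})$ spanning $\cK(G,E,\phi)$ forces an infinite cascade of nonzero coefficients on $(e^k,t,e^k)$, so $q_vt\in\cK(G,E,\phi_c)\setminus\cK(G,E,\phi)$. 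Thus the two ideals are genuinely distinct $\ell$-submodules, your claimed equality $\ker\Psi=\cK(G,E,\phi_c)$ fails, and even the ``immediate'' inclusion $\cK(G,E,\phi_c)\subseteq\ker\Psi$ is not automatic: it requires knowing that \eqref{map:ellstoag} is multiplicative for the \emph{twisted} products, which in turn rests on $\tilde\omega$ being well defined on germs---something that must be checked, not assumed, since distinct semigroup representatives of one germ can carry distinct values of $\omega$. The repair is to compute the kernel of the twisted homomorphism directly, redoing the germ-versus-basis analysis of \cite{cep} with the coefficients $c(g,\cdot)$ carried along (so that $\Psi(q_vg)=\chi_{[(v,g,g^{-1}(v)),\cZ(g^{-1}(v))]}-\sum_{s(e)=v}c(g,g^{-1}(e))\chi_{[(e,\phi(g,g^{-1}(e)),g^{-1}(e)),\cZ(g^{-1}(e))]}$ is seen to vanish for the correct reason), rather than transferring the untwisted answer by a change of basis.
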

\begin{proof}
By \cite{cep}*{Theorem 6.4 and the discussion preceding it}, for $\cS=\cS(G,E,\phi)$ the homomorphism \eqref{map:ellstoag} (called $\psi$ in \cite{cep}) descends to an isomorphism $L(G,E,\phi)=\ell[\cS(G,E,\phi)]/\cK(G,E,\phi)\to \cA(G,E,\phi)$. Hence by Lemma \ref{lem:ellStostein} and Proposition \ref{prop:cohnpres} the same map also induces an isomorphism $L(G,E,\phi_c)\to \cA(\cG(G,E,\phi),\tilde{\omega})$. This is precisely the isomorphism of the proposition. 
\end{proof}

\subsection{Simplicity of twisted Exel-Pardo algebras}\label{subsec:simptwist}

In the following proposition and elsewhere, by the \emph{support} of a function $f:X\to \ell$ we understand the set
\[
\supp(f)=\{x\in X:f(x)\ne 0\}. 
\]
\begin{prop}\label{prop:c*simpgpdsimp}
Let $(G,E,\phi)$ be an EP-tuple with $G$ countable and $E$ countable and regular. Assume that the Exel-Pardo $C^*$-algebra $C^*(G,E,\phi)$ is simple. Then 
\item[i)] $\cG(G,E,\phi)$ is effective and minimal.
\item[ii)] If $\ell$ is a subring of $\C$, then the support of every nonzero element of $\cA(\cG(G,E,\phi))\cong L(G,E,\phi)$ has nonempty interior. 
\end{prop}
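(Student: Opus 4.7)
The plan is to prove (i) and (ii) in sequence, leveraging the identification $C^*(G,E,\phi) \cong C^*(\cG)$, where $\cG := \cG(G,E,\phi)$ is the ample \'etale groupoid of Subsection \ref{subsec:twistgpd}. Countability of $G$ and $E$ implies $\cG$ is second countable, though not necessarily Hausdorff; the unit space $\cG^{(0)} = \fX(E)$ is always locally compact Hausdorff.

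For (i), I would use the classical dictionary between closed two-sided ideals of $C^*(\cG)$ and the dynamics of $\cG$. Any proper nonempty open invariant subset $U \subsetneq \cG^{(0)}$ generates a proper nontrivial two-sided ideal of $C^*(\cG)$ via $C_0(U) \subset C_0(\cG^{(0)})$, so simplicity of $C^*(\cG)$ forces every $\cG$-orbit to be dense, which is minimality. For effectiveness, the contrapositive: if $\mathrm{Iso}(\cG)^\circ \supsetneq \cG^{(0)}$, a Weyl--Cartan-type construction produces a nontrivial ideal in $C^*(\cG)$, contradicting simplicity. For the possibly non-Hausdorff $\cG$ the relevant construction has been carried out in \cite{nonhau}.

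For (ii), if $\cG$ is Hausdorff then the support of every $f \in \cA(\cG)$ is automatically clopen, so the claim is trivial for $f \ne 0$; hence I may assume $\cG$ is non-Hausdorff. The inclusion $\ell \subseteq \C$ gives $\cA(\cG,\ell) \subseteq \cA(\cG,\C)$, and it suffices to treat $\ell = \C$. Let $J_{\mathrm{sing}} := \{h \in \cA(\cG,\C) : \supp(h)^\circ = \varnothing\}$. By the analysis of non-Hausdorff Steinberg algebras in \cite{steinprisimp,steiszak}, $J_{\mathrm{sing}}$ is a two-sided ideal of $\cA(\cG,\C)$ contained in the kernel of the canonical $\ast$-homomorphism $\cA(\cG,\C) \to C^*(\cG)$. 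Combined with the minimality and effectiveness established in (i) and the simplicity of $C^*(\cG)$, the cited results force $J_{\mathrm{sing}} = 0$, which is precisely the desired nonempty-interior property.

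The main obstacle is part (ii) in the non-Hausdorff case: the hypothesis $\ell \subseteq \C$ is essential because the argument must pass through $C^*(\cG)$ to leverage its simplicity, and for general ground rings $\ell$ the singular ideal of $\cA(\cG,\ell)$ need not vanish even when $\cG$ is minimal and effective. This is the point where the $C^*$-algebraic hypothesis is genuinely used rather than being a mere convenience.
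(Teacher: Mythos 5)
Your overall route is the same as the paper's: identify $C^*(G,E,\phi)$ with the groupoid $C^*$-algebra of $\cG=\cG(G,E,\phi)$ (the paper does this via \cite{eps}*{Theorems 2.5 and 3.2}, which is where countability of $G$ and countability and regularity of $E$ are used) and then invoke the non-Hausdorff simplicity characterization of \cite{nonhau}*{Theorem 4.10}, which simultaneously yields minimality, effectiveness, \emph{and} the nonempty-interior property for supports of nonzero elements of Connes' algebra $\cC(\cG)\supset\cA_\C(\cG)\supset\cA_\ell(\cG)$. So part (i) and the reduction of (ii) to $\ell=\C$ are fine.

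However, the mechanism you offer for (ii) contains a step that would fail: the singular ideal $J_{\mathrm{sing}}$ is \emph{not} contained in the kernel of the canonical $*$-homomorphism $\cA(\cG,\C)\to C^*(\cG)$. That map is injective: an element of Connes' algebra is an actual function on $\cG$, and the left regular representation at a unit $x$ sends $f$ to the operator on $\ell^2(\cG_x)$ with matrix coefficients $f(gh^{-1})$, so a nonzero $f$ (even one whose support has empty interior) acts nontrivially; hence the $*$-algebra embeds in $C^*_r(\cG)$ and a fortiori in $C^*(\cG)$. The way simplicity actually kills the singular ideal in \cite{nonhau} is different: a nonzero singular element generates a closed two-sided ideal of $C^*_r(\cG)$ which is shown to be \emph{proper} (its intersection with $C_0(\cG^{(0)})$ vanishes, via the conditional expectation), so its existence contradicts simplicity. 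This is precisely the content of the ``only if'' direction of \cite{nonhau}*{Theorem 4.10}, and it is what the paper cites; the references \cite{steinprisimp} and \cite{steiszak} that you invoke concern \emph{algebraic} simplicity of Steinberg algebras and enter only in the subsequent Corollary \ref{coro:c*simpgpdsimp}, not here. With the citation corrected and the false ``contained in the kernel'' claim removed, your argument coincides with the paper's.
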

\begin{proof} Because $G$ is countable and $E$ is countable and regular, we have $C^*(G,E,\phi)=C^*(\cG(G,E,\phi))$, by \cite{eps}*{Theorems 2.5 and 3.2}. Hence because $C^*(G,E,\phi)$ is assumed to be simple, $\cG(G,E,\phi)$ is minimal and effective by \cite{nonhau}*{Theorem 4.10}. This proves i). Again by \cite{nonhau}*{Theorem 4.10} we obtain that the interior of the support of every function $f$ in Connes' algebra $\cC(\cG(G,E,\phi))$ spanned by the locally continuous and compactly supported functions (defined e.g. in \cite{eps}*{Section 4.1}) is nonempty. This implies i), since as we are assuming $\ell\subset\C$, we have
\[
\cA(\cG(G,E,\phi))=\cA_\ell(\cG(G,E,\phi))\subset \cA_\C(\cG(G,E,\phi))\subset \cC(\cG(G,E,\phi)).
\]
\end{proof}

\begin{coro}\label{coro:c*simpgpdsimp}
If $C^*(G,E,\phi)$ is simple and $\ell$ is a field of characteristic zero, then $L(G,E,\phi)$ is simple. 
\end{coro}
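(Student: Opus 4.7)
The plan is to combine Propositions \ref{prop:lgec=stein} and \ref{prop:c*simpgpdsimp} with a standard simplicity criterion for Steinberg algebras of effective, minimal ample groupoids, together with a base-change argument from $\Q$ to $\ell$. By Proposition \ref{prop:lgec=stein} applied to the trivial twist, $L(G,E,\phi)\cong \cA_\ell(\cG)$ where $\cG=\cG(G,E,\phi)$; so it suffices to show that the $\ell$-Steinberg algebra $\cA_\ell(\cG)$ is simple.

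First I would apply Proposition \ref{prop:c*simpgpdsimp} with coefficient subring $\Q\subset\C$ to conclude that $\cG$ is effective and minimal and that every nonzero element of $\cA_\Q(\cG)$ has support with nonempty interior. Next I would upgrade the support condition from $\Q$ to $\ell$ via a base-change argument. Using the isomorphism $\cA_\ell(\cG)\cong \ell\otimes_\Q\cA_\Q(\cG)$, any nonzero $f\in\cA_\ell(\cG)$ admits an expression $f=\sum_{i=1}^n\lambda_i f_i$ with $f_i\in\cA_\Q(\cG)\setminus\{0\}$ and $\lambda_1,\dots,\lambda_n\in\ell$ linearly independent over $\Q$. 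Since each $f_i$ takes values in $\Q$, the $\Q$-linear independence of the $\lambda_i$ forces the equivalence $f(x)=0\Longleftrightarrow f_i(x)=0\text{ for all }i$, so $\supp(f)=\bigcup_i\supp(f_i)$. This set has nonempty interior because $\supp(f_1)$ already does, by the previous step.

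At this point $\cG$ is effective and minimal and every nonzero element of $\cA_\ell(\cG)$ has support with nonempty interior. I would then invoke the simplicity criterion for Steinberg algebras of effective, minimal, possibly non-Hausdorff ample groupoids available in \cite{steinprisimp} or \cite{steiszak} to conclude that $\cA_\ell(\cG)$, and hence $L(G,E,\phi)$, is simple.

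The main point to verify carefully is the base-change step, which is the only place where the characteristic-zero hypothesis genuinely enters through the embedding $\Q\hookrightarrow\ell$; apart from that, the argument consists of directly combining already established results, with the only nontrivial piece being the appeal to the non-Hausdorff simplicity criterion in the cited references.
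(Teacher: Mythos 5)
Your argument is correct, and its skeleton coincides with the paper's: both start from Proposition \ref{prop:c*simpgpdsimp} over $\Q$ and both ultimately rest on the Steinberg--Szak\'acs simplicity criterion. The one place you diverge is the transfer from $\Q$ to an arbitrary field of characteristic zero: the paper does this in one stroke by citing \cite{steiszak}*{Theorem 5.9} (simplicity over $\Q$ implies simplicity over any characteristic-zero field), whereas you transfer the \emph{support} condition instead, via the base-change isomorphism $\cA_\ell(\cG)\cong\ell\otimes_\Q\cA_\Q(\cG)$ and the observation that $\Q$-linear independence of the coefficients $\lambda_i$ forces $\supp(f)=\bigcup_i\supp(f_i)$, and only then apply the simplicity criterion over $\ell$. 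Your version is more self-contained (it essentially reproves the special case of Theorem 5.9 that is needed here, and the linear-independence trick is sound, since elements of the Steinberg algebra are genuine functions even in the non-Hausdorff setting), at the cost of a few extra lines; the paper's version is shorter but leans on one more black box. One small correction: \cite{steinprisimp} is only invoked in this paper for Hausdorff groupoids, so for the final step you should cite \cite{steiszak}*{Theorem A}, \cite{nonhau}*{Theorem 3.14}, or the paper's own Proposition \ref{prop:gpdsimpstein} with trivial twist, rather than \cite{steinprisimp}; and, as in the paper, the countability and regularity hypotheses of Proposition \ref{prop:c*simpgpdsimp} are implicitly in force, since second countability of $\cG(G,E,\phi)$ is needed for the criterion.
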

\begin{proof}
By Proposition \ref{coro:c*simpgpdsimp} and \cite{steiszak}*{Theorem A} the corollary holds for $\ell=\Q$. This implies that it holds for any field of characteristic zero, by \cite{steiszak}*{Theorem 5.9}.
\end{proof}

Let $\cG$ be an ample \'etale groupoid with domain function $d:\cG^1\to\cG^0$. Recall from \cite{steintwist}*{page 4} that a continuous $2$-cocycle $\nu:\cG^{(2)}\to \cU(\ell)$ is \emph{normalized } if it satisfies
\begin{equation}\label{eq:normal}
\nu(\xi,d(\xi))=\nu(d(\xi^{-1}),\xi)=1 \ \ \forall\xi\in\cG.
\end{equation}
Multiplication in the twisted Steinberg algebra $\cA(\cG,\nu)$ is defined by the twisted convolution product
\begin{equation}\label{eq:twistconvo}
(f\star_\nu g)(\xi)=\sum_{\{(\xi_1,\xi_2)\in \cG^{(2)}| \xi=\xi_1\xi_2\}}\nu(\xi_1,\xi_2)f(\xi_1)g(\xi_2).
\end{equation}
\begin{lem}\label{lem:1Lf}
Let $\cG$ be an ample groupoid such that $\cG^{(0)}$ is Hausdorff and $\nu:\cG^{(2)}\to \cU(\ell)$ a normalized cocycle. 
Let $L\subset\cG^{(0)}$ be a compact open subset and let $f\in\cA(\cG)$. Then 
\[
f\star_{\nu} \chi_L=f\star\chi_L,\ \chi_L\star_\nu f=\chi_L\star f.
\]
\end{lem}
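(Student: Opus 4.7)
The plan is to reduce both convolution sums to a single term by exploiting the fact that $L\subset\cG^{(0)}$, and then use the normalization of $\nu$.

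First I would compute $(f\star_\nu \chi_L)(\xi)$ directly from the definition \eqref{eq:twistconvo}. Since $\chi_L$ is supported on $L\subset\cG^{(0)}$, any factorization $\xi=\xi_1\xi_2$ contributing a nonzero term must have $\xi_2\in L$, hence $\xi_2$ is a unit. For $\xi_1\xi_2$ to be composable with $\xi_2$ a unit we need $\xi_2=d(\xi_1)$, and then $\xi_1\xi_2=\xi_1=\xi$. Thus the only potentially nonzero summand in the defining sum is the one indexed by $(\xi,d(\xi))$, giving
\[
(f\star_\nu\chi_L)(\xi)=\nu(\xi,d(\xi))\,f(\xi)\,\chi_L(d(\xi)).
\]
By the normalization identity \eqref{eq:normal}, $\nu(\xi,d(\xi))=1$, so this expression equals $f(\xi)\chi_L(d(\xi))$, which is exactly $(f\star\chi_L)(\xi)$ by the same reduction applied to the untwisted convolution \eqref{convo}.

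The second identity is symmetric. In $(\chi_L\star_\nu f)(\xi)$, only factorizations with $\xi_1\in L\subset\cG^{(0)}$ contribute; such $\xi_1$ must equal $r(\xi_2)=r(\xi)=d(\xi^{-1})$, and then $\xi_2=\xi$. So
\[
(\chi_L\star_\nu f)(\xi)=\nu(d(\xi^{-1}),\xi)\,\chi_L(r(\xi))\,f(\xi)=\chi_L(r(\xi))\,f(\xi),
\]
again by \eqref{eq:normal}, and this coincides with $(\chi_L\star f)(\xi)$.

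The main thing to check beyond bookkeeping is that $\chi_L$ is a legitimate element of $\cA(\cG)$ so that both sides of the claimed equalities make sense; this is where the Hausdorff hypothesis on $\cG^{(0)}$ enters, since it guarantees that $L$ is contained in a Hausdorff open subset of $\cG$ and $\chi_L$ is a compactly supported continuous function on such a subset. There is no real obstacle here; the entire argument is a one-line observation once the normalization condition is invoked.
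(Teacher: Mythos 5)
Your proof is correct and is exactly the argument the paper has in mind: the paper's own proof simply declares the lemma ``immediate from \eqref{eq:normal} and \eqref{eq:twistconvo}'', and your reduction of each convolution sum to the single term indexed by the unit $d(\xi)$ (resp.\ $d(\xi^{-1})$), followed by the normalization identity, is the spelled-out version of that. Nothing to add.
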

\begin{proof}
Immediate from \eqref{eq:normal} and \eqref{eq:twistconvo}. 
\end{proof}

The following proposition is a twisted version of \cite{nonhau}*{Theorem 3.11 and Corollary 3.12}.
\begin{prop}\label{prop:gpdunique}
Let $\cG$ be a second countable ample groupoid such that $\cG^0$ is Hausdorff, $\ell$ a field, and $\nu:\cG^{(2)}\to\cU(\ell)$ a normalized $2$-cocycle. Assume
\item[i)] $\cG$ is effective.
\item[ii)] For all $0\ne f\in\cA(\cG,\nu)$, $\supp(f)^{\circ}\ne\emptyset$.

\smallskip

\noindent Then for every nonzero ideal  $0\ne I\triqui\cA(\cG,\nu)$ there exists a nonempty compact open subset $L\subset\cG^{(0)}$ such that $\chi_L\in I$. 
\end{prop}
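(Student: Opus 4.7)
The plan is to adapt the strategy of the untwisted analog \cite{nonhau}*{Theorem 3.11} to the twisted setting, relying on Lemma \ref{lem:1Lf} to pass freely between the twisted convolution $\star_\nu$ and the plain convolution $\star$ whenever one factor is a characteristic function of a compact open subset of $\cG^{(0)}$. Starting from a nonzero $f \in I$, I would write $f = \sum_{i=1}^n a_i \chi_{B_i}$ as a finite $\ell$-linear combination, with $B_i$ pairwise disjoint compact open bisections and $a_i \in \ell^\times$. Since $\nu$ is continuous into the discrete space $\cU(\ell)$ and hence locally constant, after refining the partition we may assume the cocycle values appearing in each of the finitely many relevant products $\chi_{B_i^{-1}}\star_\nu \chi_{B_k}$ are constant on the corresponding bisections.

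The key reduction step is to produce an element $h \in I$ of the form $h = \chi_V + g$, where $V \subset \cG^{(0)}$ is a nonempty compact open subset and $g \in \cA(\cG,\nu)$ is supported on $\cG \setminus \cG^{(0)}$ with $r(\supp g), d(\supp g) \subset V$. By hypothesis (ii) I would pick $\gamma \in \supp(f)^{\circ}$, lying in a unique $B_j$ on which $f$ takes the constant value $a_j$; set $B = B_j$. The summand $\chi_{B^{-1}} \star_\nu \chi_B$ is supported on $d(B) \subset \cG^{(0)}$ and equals the locally constant $\cU(\ell)$-valued function $d(\xi) \mapsto \nu(\xi^{-1},\xi)$ there. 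Refining $B$ so that this function is a constant $u_j \in \cU(\ell)$, I form $\chi_{B^{-1}} \star_\nu f \star_\nu \chi_{d(B)} \in I$, decompose it as $F_0 + F_1$ with $F_0$ supported on $\cG^{(0)}$ and $F_1$ on $\cG \setminus \cG^{(0)}$, and restrict to a compact open subset $V \subset d(B)$ on which the locally constant function $F_0$ equals a nonzero constant (using that $F_0(d(\gamma)) = u_j a_j \ne 0$). Rescaling by the inverse of that constant in $\ell^\times$ yields $h$.

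With $h = \chi_V + g$ at hand, the next step is a support lemma: since $K = \supp g$ is compact, disjoint from $\cG^{(0)}$, and $\cG$ is effective, I claim there exists a nonempty compact open $L \subset V$ with $K \cap r^{-1}(L) \cap d^{-1}(L) = \emptyset$. To prove this, I would write $g$ as a finite sum of scalar multiples of characteristic functions of compact open bisections $B'_k \subset \cG \setminus \cG^{(0)}$. Effectiveness forces the fixed-point set $F_k = \{u \in d(B'_k) : r \circ (d|_{B'_k})^{-1}(u) = u\}$ of the induced partial homeomorphism $\theta_k$ to have empty interior in $\cG^{(0)}$: any interior point would give an open subset of $\mathrm{Iso}(\cG) \setminus \cG^{(0)}$, contradicting effectiveness. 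By Baire in the locally compact Hausdorff space $\cG^{(0)}$ I then pick $\eta \in V \setminus \bigcup_k F_k$, and use Hausdorffness of $\cG^{(0)}$ together with continuity of each $\theta_k$ to find, for each $k$, a compact open neighborhood $L_k \ni \eta$ with $L_k \cap \theta_k(L_k \cap d(B'_k)) = \emptyset$; intersecting the finitely many $L_k$ gives $L$.

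Finally, by Lemma \ref{lem:1Lf},
\[
\chi_L \star_\nu h \star_\nu \chi_L = \chi_L \star \chi_V \star \chi_L + \chi_L \star_\nu g \star_\nu \chi_L = \chi_L + 0 = \chi_L,
\]
so $\chi_L \in I$. The main obstacle will be the reduction of the second paragraph: in the untwisted case $\chi_{B^{-1}} \star \chi_B = \chi_{d(B)}$, making the reduction essentially immediate, while with the twist one must manage the locally constant cocycle factors $\nu(\xi^{-1},\xi)$ by refining to subsets where they are constant and using invertibility in $\ell$ to rescale. Hypothesis (ii) is used implicitly throughout to guarantee that no refinement or multiplication collapses the element being tracked to zero.
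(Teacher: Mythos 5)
Your overall strategy coincides with the paper's: reduce to the untwisted argument of \cite{nonhau}*{Theorem 3.11}, handle the one genuinely twisted product $f\star_\nu\chi_{B^{-1}}$ by hand, and use Lemma \ref{lem:1Lf} to pass between $\star_\nu$ and $\star$ for all compressions by $\chi_L$ with $L\subset\cG^{(0)}$ compact open. Your cocycle bookkeeping (refining so that $\nu(\xi^{-1},\xi)$ is constant, rescaling by units of $\ell$) and the final identity $\chi_L\star_\nu h\star_\nu\chi_L=\chi_L$ are correct. The paper's proof is precisely this reduction: it quotes the first paragraph of \cite{nonhau}*{Theorem 3.11} for a compact open slice $B$ on which $f$ is a nonzero constant, computes $g(u)=\nu(b,b^{-1})f(b)\ne 0$ for $g=f\star_\nu\chi_{B^{-1}}$ and $u=bb^{-1}$, and lets the remainder of the cited proof run verbatim.

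The gap lies in your reconstruction of the untwisted core, which implicitly assumes $\cG$ is Hausdorff, whereas the proposition (and its application to Exel--Pardo groupoids) only assumes that $\cG^{(0)}$ is Hausdorff. First, a nonzero $f\in\cA(\cG,\nu)$ cannot in general be written as $\sum_ia_i\chi_{B_i}$ with the $B_i$ \emph{pairwise disjoint} compact open bisections: when $\cG$ is not Hausdorff, the intersection of two compact open bisections need not be compact, so the usual disjointification fails. If such a decomposition existed, $\supp(f)=\bigcup_iB_i$ would be open and hypothesis (ii) would be vacuous; the content of (ii) is exactly that cancellations between overlapping $\chi_{B_i}$ do not empty the interior of the support. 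Hence ``$\gamma$ lies in a unique $B_j$ on which $f$ takes the constant value $a_j$'' is unjustified. Second, the decomposition $F=F_0+F_1$ with $F_0$ supported on $\cG^{(0)}$ and locally constant, and $F_1$ a sum of $\chi_{B'_k}$ with $B'_k\subset\cG\setminus\cG^{(0)}$, is not available: $\cG^{(0)}$ is open but need not be closed, so for a compact open bisection $C$ the set $C\cap\cG^{(0)}$ is open in $C$ but need not be compact, $\chi_{C\cap\cG^{(0)}}$ need not lie in $\cA(\cG,\nu)$, and $F|_{\cG^{(0)}}$ can fail to be locally constant at a unit lying in $\overline{C_i}\setminus C_i$. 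These are exactly the difficulties the first paragraph of \cite{nonhau}*{Theorem 3.11} is designed to overcome, which is why the paper defers to it rather than rederiving it. Your effectiveness/Baire step and the final compression are sound once an element $h=\chi_V+g$ of the stated form is actually produced, but producing it requires the non-Hausdorff analysis that your argument skips.
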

\begin{proof}
Let $\cA=\cA(\cG,\nu)$ and let $0\ne f\in I\triqui \cA$ be an ideal. By the argument in the first paragraph of the proof of \cite{nonhau}*{Theorem 3.11}, there exists a compact open slice $B\subset\cG$ such that $f$ is a nonzero constant on $B$. Let $g=f\star_\nu \chi_{B^{-1}}$, $b\in B$ and $u=bb^{-1}$. Then $g(u)=\nu(b,b^{-1})f(b)\ne 0$. By Lemma \ref{lem:1Lf}, the rest of the argument of the proof of \cite{nonhau}*{Theorem 3.11} now applies verbatim. 
\end{proof}

The following  proposition is a twisted version of one of the directions of the equivalence established in \cite{nonhau}*{Theorem 3.14}.  

\begin{prop}\label{prop:gpdsimpstein}
Let $\cG$, $\nu$ and $\ell$ be as in Proposition \ref{prop:gpdunique}. Further assume that $\cG$ is minimal. Then $\cA(\cG,\nu)$ is simple.
\end{prop}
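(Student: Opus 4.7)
The plan is to adapt the argument that \cite{nonhau}*{Theorem 3.14} uses for the untwisted case, controlling the extra ambiguity produced by the cocycle $\nu$. Given any nonzero ideal $I\triqui\cA(\cG,\nu)$, I will show $\chi_C\in I$ for every compact open slice $C\subset\cG$, from which $I=\cA(\cG,\nu)$ follows since, by ampleness, such characteristic functions span $\cA(\cG,\nu)$ as an $\ell$-module.

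The starting point will be Proposition \ref{prop:gpdunique}, which yields a nonempty compact open $L\subset\cG^{(0)}$ with $\chi_L\in I$. The key step is a conjugation computation: for a compact open slice $B\subset\cG$ with $d(B)\subset L$, the element $\chi_B\star_\nu\chi_L\star_\nu\chi_{B^{-1}}\in I$ will simplify, via Lemma \ref{lem:1Lf} and the normalization \eqref{eq:normal}, to $\chi_B\star_\nu\chi_{B^{-1}}$, which by \eqref{eq:twistconvo} is the function on $\cG^{(0)}$ supported in $r(B)$ whose value at $r(b)$ is $\nu(b,b^{-1})$, for $b\in B$. Since $\cU(\ell)$ carries the discrete topology and $r\colon B\to r(B)$ is a homeomorphism, this function is locally constant on the compact set $r(B)$ and so has the form $\sum_i\lambda_i\chi_{V_i}$ for a clopen partition $\{V_i\}$ of $r(B)$ with $\lambda_i\in\cU(\ell)$. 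Multiplying on the right by $\chi_{V_j}$, again with Lemma \ref{lem:1Lf}, will isolate the summand $\lambda_j\chi_{V_j}\in I$, hence $\chi_{V_j}\in I$ for each $j$.

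Minimality will then be used pointwise: any $x\in\cG^{(0)}$ has its orbit meeting $L$, so there is $\xi\in\cG$ with $d(\xi)\in L$ and $r(\xi)=x$, and étaleness provides a compact open slice $B\ni\xi$ with $d(B)\subset L$. The previous paragraph produces a clopen neighborhood of $x$ whose indicator lies in $I$. A standard Boolean-algebra refinement (valid because $\cG^{(0)}$ is Hausdorff and ample, so its compact opens form a Boolean algebra) will then upgrade this to $\chi_W\in I$ for every compact open $W\subset\cG^{(0)}$. Finally, for any compact open slice $C$, the normalization \eqref{eq:normal} yields $\chi_C=\chi_{r(C)}\star_\nu\chi_C\in I$, completing the argument. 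The main obstacle relative to the untwisted case is that $\chi_B\star_\nu\chi_{B^{-1}}$ is no longer $\chi_{r(B)}$ but only a $\cU(\ell)$-valued locally constant function on $r(B)$; the Hausdorff hypothesis on $\cG^{(0)}$ is precisely what permits splitting this function over a compact open partition of $r(B)$ and recovering the needed characteristic functions inside $I$.
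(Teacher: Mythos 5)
Your argument is correct and follows essentially the same route as the paper's proof: Proposition \ref{prop:gpdunique} produces $\chi_L\in I$, conjugation by compact open slices with domain in $L$ together with Lemma \ref{lem:1Lf} propagates indicators of compact open subsets of $\cG^{(0)}$ around via minimality, and the local constancy of $\nu$ controls the twist. The only cosmetic difference is that the paper shrinks the slice $B_x$ beforehand so that $\nu$ is constant on $(B_x\times B_x^{-1})\cap\cG^{(2)}$, giving $\chi_{B_x}\star_\nu\chi_{B_x^{-1}}=\nu(\gamma_x,\gamma_x^{-1})\chi_{L_x}$ directly, whereas you conjugate first and then split the resulting locally constant $\cU(\ell)$-valued function over a clopen partition of $r(B)$; the two are interchangeable.
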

\begin{proof}
Let $0\ne I\triqui\cA(\cG,\nu)$ be an ideal. By Proposition \ref{prop:gpdunique} there is a nonempty compact open subset $L\subset \cG^0$ such that $\chi_L\in I$.  As in the proof of \cite{nonhau}*{Theorem 3.14} we may choose, for each $x\in \cG^0$, an element $\cG\owns\gamma_x$ with $d(\gamma_x)\in L$ and $d(\gamma^{-1}_x)=x$ and a compact open slice $\gamma_x\in B_x$ . Upon replacing $B_x$ by $B_xL$, we may assume that 
\begin{equation}\label{eq:B=BL}
B_x=B_xL. 
\end{equation}
 Because $\nu$ is locally constant and $B_x$ is a compact open slice, there are $n\ge 1$ and clopen subsets $B_x(1),\dots,B_x(n)\subset B_x$ such that 
$(B_x\times B^{-1}_x)\cap\cG^{(2)}=\bigsqcup_{i=1}^n(B_x(i)\times B^{-1}_x(i))\cap\cG^{(2)}$ and such that $\nu$ is constant on each
$(B_x(i)\times B_x^{-1}(i))\cap\cG^{(2)}$. Let $i$ be such that $\gamma_x\in B_x(i)$. Upon replacing $B_x$ by $B_x(i)$ we may assume that $\nu$ is constant on $(B_x\times B^{-1}_x)\cap \cG^{(2)}$. Let $L_x=B_xLB^{-1}_x$; then $x\in L_x$ and by \eqref{eq:B=BL} and Lemma \ref{lem:1Lf}
\[
\chi_{B_x}\star_\nu\chi_{L}\star_\nu\chi_{B^{-1}_x}=\chi_{B_x}\star_\nu\chi_{B_x^{-1}}=\nu(\gamma_x,\gamma_x^{-1})\chi_{L_x}.
\]
Hence $\chi_{L_x}\in I$, and the proof now follows as in \cite{nonhau}.
\end{proof}

\begin{prop}\label{prop:gpdsimppis}
In the situation of Proposition \ref{prop:gpdsimpstein} assume that for every nonempty compact open subset $L\subset \cG^{(0)}$, the idempotent $\chi_L\in\cA(\cG,\nu)$ is infinite. Then $\cA(\cG,\nu)$ is simple purely infinite. 
\end{prop}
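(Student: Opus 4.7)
Simplicity of $\cA(\cG,\nu)$ is already established by Proposition \ref{prop:gpdsimpstein}; it remains to verify the purely infinite condition. The plan is to prove the following refinement of Proposition \ref{prop:gpdunique}: for every nonzero $f \in \cA(\cG,\nu)$, there exist $x, y \in \cA(\cG,\nu)$ and a nonempty compact open $L \subset \cG^{(0)}$ with $x \star_\nu f \star_\nu y = \chi_L$. Together with the standing hypothesis that every such $\chi_L$ is infinite, this yields the usual sandwich characterization of simple purely infinite rings.

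To construct $x, y$ and $L$, I would follow the proof of Proposition \ref{prop:gpdunique}. By its hypotheses (i) and (ii), applying the argument of the first paragraph of the proof of \cite{nonhau}*{Theorem 3.11} (transported to the twisted setting via Lemma \ref{lem:1Lf}), there is a compact open slice $B \subset \cG$ on which $f$ is a nonzero constant $\lambda$. Put $g := f \star_\nu \chi_{B^{-1}}$. Since $B$ is a slice, $g$ is supported in $\cG^{(0)}$; for $b \in B$ with $u = bb^{-1}$, the normalization condition \eqref{eq:normal} gives $g(u) = \nu(b,b^{-1}) \lambda = \lambda \ne 0$. Because $g$ is locally constant, compactly supported, and $\cG^{(0)}$ is Hausdorff, some compact open neighborhood $L \subset \cG^{(0)}$ of $u$ satisfies $g|_L \equiv \lambda$.

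By Lemma \ref{lem:1Lf}, one has $\chi_L \star_\nu g \star_\nu \chi_L = \chi_L \star g \star \chi_L = \lambda \chi_L$. Setting $x := \lambda^{-1} \chi_L$ and $y := \chi_{B^{-1}} \star_\nu \chi_L$ then yields $x \star_\nu f \star_\nu y = \chi_L$, which is an infinite idempotent by hypothesis. The main obstacle is strengthening Proposition \ref{prop:gpdunique} from \emph{``$\chi_L$ lies in the ideal generated by $f$''} to \emph{``$\chi_L$ is a single triple product in $f$''}; this is where the explicit production of the slice $B$ and Lemma \ref{lem:1Lf} matter, since they reduce the key computations to the untwisted convolution on the unit space, where the argument is routine.
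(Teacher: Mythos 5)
Your overall strategy is the right one and coincides with the paper's: produce a nonempty compact open $L\subset\cG^{(0)}$ and elements $x,y$ with $x\star_\nu f\star_\nu y=\chi_L$, then combine the hypothesis that $\chi_L$ is infinite with the standard characterization of purely infinite simple rings. However, there is a genuine gap in the middle step. The claim that $g=f\star_\nu\chi_{B^{-1}}$ is supported in $\cG^{(0)}$ ``since $B$ is a slice'' is false in general: one only has $\supp(g)\subset\supp(f)\cdot B^{-1}$, and $\supp(f)$ is typically much larger than $B$, so $g$ acquires terms supported off the unit space (for instance, if $f=\chi_B+\chi_{B'}$ for a disjoint slice $B'$, then $g$ has a summand supported on $B'B^{-1}\not\subset\cG^{(0)}$). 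Consequently $\chi_L\star_\nu g\star_\nu\chi_L=\lambda\chi_L$ does not follow from $g|_L\equiv\lambda$: by Lemma \ref{lem:1Lf} that corner equals $g$ restricted to $\{\xi\colon r(\xi)\in L,\ d(\xi)\in L\}$, which may still contain isotropy elements outside $\cG^{(0)}$. Killing these off-diagonal terms is precisely where hypothesis (i) (effectiveness) and, in the non-Hausdorff case, hypothesis (ii) of Proposition \ref{prop:gpdunique} enter: one must shrink $L$ so that for each compact open slice $D$ occurring in a decomposition of $g$ with $D\not\subset\cG^{(0)}$, the set $\{\xi\in D\colon r(\xi),d(\xi)\in L\}$ contributes nothing. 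This is the content of the body of the proof of \cite{nonhau}*{Theorem 3.11}, which the paper's proof invokes verbatim (after using Lemma \ref{lem:1Lf} to reduce the twisted products with characteristic functions of subsets of the unit space to untwisted ones); your proposal replaces that argument with an incorrect simplification.

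A minor additional point: the normalization \eqref{eq:normal} gives $\nu(\xi,d(\xi))=1$, not $\nu(b,b^{-1})=1$, so $g(u)=\nu(b,b^{-1})\lambda$ is a unit multiple of $\lambda$ but need not equal $\lambda$. This is harmless, since only invertibility of $g(u)$ is needed and the scalar can be absorbed into $x$ at the end, but it should not be asserted as an equality.
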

\begin{proof} Let $0\ne f\in\cA(\cG,\nu)$ and let $B\subset\cG$ and $g=f\star_\nu\chi_{B^{-1}}$ be as in the proof of Proposition \ref{prop:gpdunique}. Following the argument of the proof of \cite{nonhau}*{Theorem 3.11} and taking Lemma \ref{lem:1Lf} into account, one obtains a compact open $L\subset \cG^0$ and elements $h_1,h_2\in\cA(\cG,\nu)$ such that 
$$\chi_L=h_1\star_\nu g\star_\nu h_2=h_1\star_\nu f\star_\nu \chi_B^{-1}\star_\nu h_2.$$ 
 
By hypothesis, $\chi_{L}$ is infinite. Hence $\cA(\cG,\nu)$ is simple purely infinite, by \cite{lpabook}*{Proposition 3.1.7 and Definition 3.1.8}.
\end{proof}

\begin{thm}\label{thm:spistein}
Let $(G,E,\phi_c)$ be a twisted EP-tuple with $G$ countable and $E$ countable and regular and let $\ell\supset\Q$ be a field.
If $C^*(G,E,\phi)$ is simple, then $L(G,E,\phi_c)$ is simple. If furthermore, $L(E)$ is simple purely infinite, then so is $L(G,E,\phi_c)$.
\end{thm}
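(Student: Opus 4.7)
The plan is to transfer the problem to the twisted groupoid-algebra picture and then apply Propositions \ref{prop:gpdsimpstein} and \ref{prop:gpdsimppis}. By Proposition \ref{prop:lgec=stein} there is a canonical isomorphism $L(G,E,\phi_c)\cong \cA(\cG,\tilde\omega)$, where $\cG:=\cG(G,E,\phi)$ is the tight groupoid and $\tilde\omega$ the induced continuous $2$-cocycle. So we only need to verify, on the groupoid side, the effectiveness, minimality, and support-interior hypotheses of Proposition \ref{prop:gpdsimpstein}, and, for the second statement, the infinite-idempotent hypothesis of Proposition \ref{prop:gpdsimppis}.

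Effectiveness and minimality of $\cG$ are provided directly by Proposition \ref{prop:c*simpgpdsimp}(i) from the simplicity of $C^*(G,E,\phi)$. For the support-interior condition (ii) of Proposition \ref{prop:gpdsimpstein}, the key observation is that $\cA(\cG,\tilde\omega)$ and $\cA(\cG)$ share the same underlying $\ell$-module, so ``the support of every nonzero element has nonempty interior'' is a statement about the module structure alone and is insensitive to $\tilde\omega$. For $\ell\subset\C$ (hence for $\ell=\Q$) this is exactly Proposition \ref{prop:c*simpgpdsimp}(ii). The extension to an arbitrary field $\ell\supset\Q$ proceeds by the same change-of-coefficient-field device used in Corollary \ref{coro:c*simpgpdsimp}, namely \cite{steiszak}*{Theorem 5.9}. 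Proposition \ref{prop:gpdsimpstein} then delivers simplicity of $L(G,E,\phi_c)$.

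For the second assertion, assume furthermore that $L(E)$ is SPI. By Proposition \ref{prop:gpdsimppis} it suffices to show that for every nonempty compact open $L\subset\cG^{(0)}=\fX(E)$, the idempotent $\chi_L\in\cA(\cG,\tilde\omega)$ is infinite. Because $\fX(E)$ is Boolean, any such $L$ is a finite disjoint union of basic compact opens $\cZ(\alpha\setminus F)$, and under the isomorphism of Proposition \ref{prop:lgec=stein} a direct computation (using that $(\alpha,1,\alpha)$ acts as the identity on $\cZ(\alpha)$) identifies $\chi_{\cZ(\alpha\setminus F)}$ with $\alpha\alpha^{*}-\sum_{\beta\in F}\alpha\beta(\alpha\beta)^{*}\in L(E)$. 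Hence $\chi_L$ lies in the image of the embedding $L(E)\hookrightarrow L(G,E,\phi_c)$ of Proposition \ref{prop:lesubsetlge}. Since $L(E)$ is SPI, $\chi_L$ is infinite in $L(E)$; the witnessing equivalence $\chi_L=q+r$ with $q\sim\chi_L$ orthogonal to $r$ is implemented by elements of $L(E)$, and thus remains valid in the overring $L(G,E,\phi_c)$, so $\chi_L$ is infinite there as well.

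The main obstacle is really the support-interior condition over a general field of characteristic zero: Proposition \ref{prop:c*simpgpdsimp}(ii) is only proved for $\ell\subset\C$, so the ``change of scalars'' argument via \cite{steiszak}*{Theorem 5.9} is essential and is the one delicate point. Once that is in place, everything else is bookkeeping: the twist $\tilde\omega$ affects neither supports nor membership of $\chi_L$ in the Leavitt subalgebra, so the untwisted results transfer cleanly.
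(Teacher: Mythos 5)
Your proposal is correct and follows essentially the same route as the paper's proof: reduce to the twisted Steinberg picture via Proposition \ref{prop:lgec=stein}, feed Proposition \ref{prop:c*simpgpdsimp} and the characteristic-zero transfer of Corollary \ref{coro:c*simpgpdsimp} into Proposition \ref{prop:gpdsimpstein} for simplicity, and obtain the infinite-idempotent hypothesis of Proposition \ref{prop:gpdsimppis} from the embedding $L(E)\hookrightarrow L(G,E,\phi_c)$ of Proposition \ref{prop:lesubsetlge}. The only (cosmetic) difference is that you verify $\chi_L\in L(E)$ by an explicit formula for $\chi_{\cZ(\alpha\setminus F)}$ and use that nonzero idempotents in an SPI ring are infinite, whereas the paper routes this through the Hausdorff groupoid $\cG(E)$ and \cite{steinprisimp}*{Theorem 3.5}.
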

\begin{proof} By Proposition \ref{prop:lgec=stein}, $L(G,E,\phi_c)=\cA(\cG(G,E,\phi),\tilde{\omega})$. Because $G$ and $E$ are countable by hypothesis, $\cG(G,E,\phi)$ is second countable. Hence $L(G,E,\phi_c)$ is simple, by Propositions \ref{prop:c*simpgpdsimp} and \ref{prop:gpdsimpstein} and Corollary \ref{coro:c*simpgpdsimp}. Now suppose that $L(E)$ is simple purely infinite. Consider the groupoid $\cG(E)=\cG(\{1\},E)$; this is the groupoid of \cite{moritagpd}*{Example 2.1}. By \cite{moritagpd}*{Example 3.2}, $L(E)=\cA(\cG(E))$. Since $\cG(E)$ is Hausdorff and since we are assuming that $L(E)$ is simple, $\cG(E)$ is effective and minimal by \cite{steinprisimp}*{Theorem 3.5}, and so, since we are further assuming that $L(E)$ is purely infinite, $\chi_{L}$ is infinite for every compact open subset $L\subset\cG(E)^{(0)}=\cG(G,E,\phi)^{(0)}$. Because of this and because by Proposition \ref{prop:lesubsetlge}, $L(E)$ 
is a subalgebra of $L(G,E,\phi_c)$, $L(G,E,\phi_c)$ is purely infinite simple by Proposition \ref{prop:gpdsimppis}.
\end{proof}

\section{Twisted Katsura algebras}\label{sec:kat}
\numberwithin{equation}{section}
Let $E$ be a row-finite graph, $A=A_E\in \N_0^{(\reg(E)\times E^0)}$ its reduced incidence matrix and $B\in \Z^{(\reg(E)\times E^0)}$ such that 
\begin{equation}\label{katcond0}
A_{v,w}=0\Rightarrow B_{v,w}=0.
\end{equation}
Under the obvious identification $\Z/\Z A_{v,w}\iso \N_{v,w}:=\{0,\dots,A_{v,w}-1\}$, translation by $B_{v,w}$ defines a bijection 
\[
\tau_{v,w}:\N_{v,w}\to \N_{v,w}, \ \ n\mapsto \overline{B_{v,w}+n}
\]
and therefore a $\Z$-action on $\N_{v,w}$. Here $\bar{m}$ is the remainder of $m$ under division by $A_{v,w}$. A $1$-cocyle $\psi_{v,w}:\Z\times \N_{v,w}\to\Z$ for this action is determined by
\[
\psi_{v,w}(1,n)=\frac{B_{v,w}+n-\tau_{v,w}(n)}{A_{v,w}}.
\]
For every pair of vertices $(v,w)$ with $A_{v,w}\ne 0$, choose a bijection 
\[
\n_{v,w}:vE^1w\iso \{0,\dots, A_{v,w}-1\},
\]
and let
\[
\n=\coprod_{v,w}\n_{v,w}:E^1=\coprod_{v,w}vE^1w\to \N_0. 
\]
Consider the $\Z$-action on $E$ that fixes the vertices and is induced by the bijection $\sigma:E^1\to E^1$ that is determined by
\[
\n(\sigma(e))=\tau_{s(e),r(e)}(\n(e)).
\] 
Write $\Z=\langle t\rangle$ multiplicatively and let $\phi:\Z\times E^1\to \Z$ be the $1$-cocycle determined by
\[
\phi(t,e)=t^{\psi_{s(e),r(e)}(1,\n(e))}.
\]
By \cite{hazep}*{Proposition 1.13}, the algebra $L(\Z,E,\phi)$ associated to the EP-tuple $(\Z,E,\phi)$ is the \emph{Katsura algebra} $\cO_{A,B}=\cO_{A,B}(\ell)$, itself the algebraic analogue of the $C^*$-algebra introduced by Katsura in \cite{kat}. Next we consider a twisted version. For this purpose we start with a row-finite matrix
\[
C\in \cU(\ell)^{(\reg(E)\times E^0)}
\]
such that 
\begin{equation}\label{katcond01}
A_{v,w}=0\Rightarrow C_{v,w}=1.
\end{equation}
 
Consider the $1$-cocycle $c:\Z\times E^1\to \cU(\ell)$ defined by
\begin{equation}\label{map:katwist}
c(t,e)=\left\{\begin{matrix}(-1)^{(A_{s(e),r(e)}-1)B_{s(e),r(e)}}C_{s(e),r(e)}& \text{ if } \n(e)=0\\ 1 &\text{ else}\end{matrix}\right.    
\end{equation}

Write $\cO_{A,B}^C=L(\Z,E, \phi_c)$ for the twisted Exel-Pardo algebra associated to the twisted EP-tuple $(\Z,E,\phi_c)$. 

We say that $(A,B)$ is \emph{KSPI} if $E$ is countable, \eqref{katcond0} holds, and if in addition we have
\begin{itemize}
\item  For any pair $(v,w)\in E^0\times E^0$ there exists $\alpha\in\cP(E)$ such that $s(\alpha)=v$ and $r(\alpha)=w$. 
\item  For every vertex $v$ there are at least two distinct loops based at $v$.
\item $B_{v,v}=1$ for all $v\in E^0$. 
\end{itemize}

Katsura proved in \cite{kat}*{Proposition 2.10} that if $(A,B)$ is a Katsura pair, then the $C^*$-algebra completion $C^*_{A,B}=\overline{\cO_{A,B}(\C)}$ is simple and purely 
infinite. 

\begin{thm}\label{thm:katpis}
Let $\ell\supset\Q$ be a subfield. Let $E$ be a countable regular graph, $A=A_E$ its incidence matrix, and $B\in \Z^{(E^0\times E^0)}$ such that $(A,B)$ is a KSPI pair. Let $C\in (\cU(\ell))^{(E^0\times E^0)}$ be arbitrary. Then the twisted Katsura algebra $\cO_{A,B}^C$  is simple purely infinite. 
\end{thm}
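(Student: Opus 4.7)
The plan is to deduce this from Theorem \ref{thm:spistein} applied to the EP-tuple $(\Z, E, \phi_c)$ whose associated algebra is, by definition, $\cO_{A,B}^C = L(\Z, E, \phi_c)$. Both $G = \Z$ and the graph $E$ are countable, and $E$ is regular by hypothesis, so the hypotheses on $G$ and $E$ in Theorem \ref{thm:spistein} are satisfied. It remains to verify the two simplicity/pure infiniteness inputs: that $C^*(\Z, E, \phi)$ is simple, and that the Leavitt path algebra $L(E)$ is simple purely infinite.

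For the first, I would invoke Katsura's theorem cited in the paragraph following the definition of KSPI: by \cite{kat}*{Proposition 2.10}, if $(A,B)$ is a KSPI pair then $C^*_{A,B}$ is simple (and purely infinite). Since by construction the Exel-Pardo $C^*$-algebra $C^*(\Z, E, \phi)$ coincides with Katsura's $C^*_{A,B}$ (this identification is already used in the introduction and in \cite{hazep}*{Proposition 1.13} in the algebraic setting), this gives simplicity of $C^*(\Z, E, \phi)$.

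For the second, the three KSPI bullets translate directly into the Abrams--Aranda Pino criterion for $L(E)$ to be simple purely infinite: the condition $B_{v,v} = 1$ forces $A_{v,v} \ge 1$ at every vertex, but more importantly the requirement that every vertex supports at least two distinct loops gives Condition~(L) (every cycle has an exit, since any loop at $v$ has the other loop at $v$ as an exit) and also guarantees that every vertex connects to a cycle; the requirement that for every pair $(v,w) \in E^0 \times E^0$ there is a path from $v$ to $w$ is precisely strong connectivity, which in particular implies cofinality. Together with $E$ being regular (hence row-finite and without sinks), these are exactly the hypotheses under which $L(E)$ is known to be simple purely infinite.

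Having verified both inputs, an application of Theorem \ref{thm:spistein} (using $\ell \supset \Q$) yields that $L(\Z, E, \phi_c) = \cO_{A,B}^C$ is simple purely infinite. The only step that requires any attention is the second one, checking that the three KSPI bullets imply the Abrams--Aranda Pino conditions on $E$; everything else is a direct citation or an invocation of Theorem \ref{thm:spistein}. There is no subtlety coming from the twist $C$, precisely because Theorem \ref{thm:spistein} was engineered to swallow an arbitrary cocycle $c$ once its untwisted counterparts satisfy the required simplicity properties.
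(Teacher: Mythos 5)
Your proposal is correct and follows exactly the paper's own route: cite Katsura's Proposition 2.10 for simplicity of $C^*_{A,B}=C^*(\Z,E,\phi)$, check that the first two KSPI conditions give simplicity and pure infiniteness of $L(E)$ (the paper cites \cite{lpabook}*{Theorem 3.1.10}, i.e.\ the Abrams--Aranda Pino criterion you describe), and conclude by Theorem \ref{thm:spistein}. The extra detail you supply on translating the KSPI bullets into Condition (L) and cofinality is a correct expansion of what the paper leaves implicit.
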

\begin{proof} By \cite{kat}*{Proposition 2.10}, $C^*_{A,B}$ is simple. By \cite{lpabook}*{Theorem 3.1.10}, the first two Katsura conditions imply that $L(E)$ simple purely infinite. Hence $\cO_{A,B}^C$ is simple purely infinite by Theorem \ref{thm:spistein}.
\end{proof}

For a general field $\ell$, we have the following more restrictive simplicity criterion.

\begin{prop}\label{prop:katsimp}
Let $E$, $A$ and $B$ be as in Theorem \ref{thm:katpis}, $\ell$ a field and $C\in\cU(\ell)^{(E^0\times E^0)}$ satisfying \eqref{katcond01}. Further assume that

\smallskip

\item[$\bu$] Whenever $A_{v,w}\ne 0=B_{v,w}$, for each $l\ge 1$ there exist finitely many paths $\alpha=e_1\dots e_n\in w\cP(E)$ such that $l\prod_{i=1}^{n-1}B_{s(e_i),r(e_i)}/A_{s(e_i),r(e_i)}\in\Z$.

\smallskip

Then $\cO_{A,B}^C$ is simple.
\end{prop}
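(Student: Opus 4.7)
The plan is to apply Proposition \ref{prop:gpdsimpstein} to the ample groupoid $\cG = \cG(\Z, E, \phi)$ equipped with the normalized $2$-cocycle $\tilde{\omega}$ of \eqref{map:tildomega2}, using the isomorphism $\cO_{A,B}^C \cong \cA(\cG, \tilde{\omega})$ provided by Proposition \ref{prop:lgec=stein}. The groupoid $\cG$ is second countable (since $\Z$ and $E$ are countable) and its unit space $\fX(E)$ is Hausdorff. By \cite{kat}*{Proposition 2.10}, the KSPI hypothesis implies that $C^*_{A,B} = C^*(\Z, E, \phi)$ is simple, and Proposition \ref{prop:c*simpgpdsimp}(i)---whose proof uses no hypothesis on $\ell$---then shows that $\cG$ is effective and minimal. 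Thus it suffices to verify hypothesis (ii) of Proposition \ref{prop:gpdsimpstein}: $\supp(f)^{\circ} \ne \emptyset$ for every nonzero $f \in \cA(\cG, \tilde{\omega})$.

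When $\ell$ has characteristic zero, (ii) is provided by Corollary \ref{coro:c*simpgpdsimp}, but in arbitrary characteristic the Steinberg--Szak\'acs theorem is unavailable and we must invoke condition $\bu$. I would argue by contradiction: supposing $0 \ne f$ has empty-interior support, write $f = \sum_j \lambda_j \chi_{[(\alpha_j, t^{k_j}, \beta_j), U_j]}$ in normal form with each $U_j \subset \cZ(\beta_j)$ a compact open set, and extract, on a topologically dense family of units $x \in \fX(E)$, identifications of germs $[(\alpha_j, t^{k_j}, \beta_j), x] = [(\alpha_{j'}, t^{k_{j'}}, \beta_{j'}), x]$ for pairs $j \ne j'$. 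Translating each such identification via the product \eqref{prod:sge} and part iv) of Lemma \ref{lem:epextend}, one extracts, on a prefix $\alpha = e_1 \cdots e_n$ of $x$, both a path equality and the cocycle identity $\phi(t^{k_j - k_{j'}}, \alpha) = 1$. Substituting the explicit Katsura formulas for $\tau_{v,w}$ and $\psi_{v,w}$, the latter condition becomes the integrality of $(k_j - k_{j'}) \prod_{i=1}^{n-1} B_{s(e_i), r(e_i)}/A_{s(e_i), r(e_i)}$.

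Applying $\bu$ with $l = |k_j - k_{j'}|$ to each of the finitely many pairs $(j, j')$ appearing in $f$, one concludes that only finitely many prefixes $\alpha$ starting at the relevant vertices $w$ with $A_{v,w} \ne 0 = B_{v,w}$ can contribute. Hence some nonempty open subset of $\fX(E)$ avoids all of these finitely many prefix cylinders; above it no identification $[s_j, x] = [s_{j'}, x]$ occurs, and $f$ restricts to a nonzero locally constant function there, contradicting the empty interior of $\supp(f)$. This establishes (ii), and Proposition \ref{prop:gpdsimpstein} yields simplicity of $\cO_{A,B}^C$. The main obstacle in the plan is the combinatorial bookkeeping in the middle paragraph: correctly translating germ identifications in a possibly non-Hausdorff groupoid into the precise arithmetic shape that allows $\bu$ to be invoked with the right value of $l$, and confirming that the resulting finite exclusion genuinely leaves an open set on which $f$ is non-zero.
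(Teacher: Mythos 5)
Your overall frame (effectiveness and minimality of $\cG=\cG(\Z,E,\phi)$ from Katsura's simplicity theorem via Proposition \ref{prop:c*simpgpdsimp}(i), then Proposition \ref{prop:gpdsimpstein}) is sound, but the step where condition $\bu$ actually enters --- verifying hypothesis (ii) of Proposition \ref{prop:gpdsimpstein}, that every nonzero $f\in\cA(\cG,\tilde{\omega})$ has support with nonempty interior --- is only sketched, and the sketch contains a genuine flaw. From ``only finitely many prefixes $\alpha$ can contribute'' you infer that ``some nonempty open subset of $\fX(E)$ avoids all of these finitely many prefix cylinders''; this does not follow, since finitely many cylinders $\cZ(\alpha)$ can perfectly well cover a cylinder $\cZ(\beta_j)$ (for instance the cylinders over all edges leaving a regular vertex cover the cylinder over that vertex). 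The correct use of the finiteness supplied by $\bu$ is different: it shows that the locus $\{x:[s_j,x]=[s_{j'},x]\}$ is a \emph{clopen} subset of $\dom(s_j)\cap\dom(s_{j'})$ --- that is, that $\cG$ is Hausdorff --- after which one disjointifies the normal form of $f$ and observes that the support of a nonzero locally constant compactly supported function on a Hausdorff ample groupoid is automatically open, so (ii) holds for free. Trying to deduce (ii) directly from $\bu$ without passing through Hausdorffness leaves you reproving that equivalence by hand, and your middle paragraph does not yet do so.

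This is in fact how the paper proceeds, and much more economically: condition $\bu$ is, by \cite{ep}*{Theorem 18.6}, equivalent to $\cG(G,E,\phi)$ being Hausdorff; effectiveness and minimality are quoted from \cite{ep}*{Theorems 18.7, 18.8 and 18.12} (no detour through the $C^*$-algebra is needed, though your route to these two properties is also valid); and simplicity of the twisted Steinberg algebra of a Hausdorff, effective, minimal ample groupoid is exactly \cite{steintwist}*{Theorem 6.2}. If you wish to keep your argument, the fix is to recognize $\bu$ as the Hausdorffness criterion explicitly and then either cite the Hausdorff simplicity theorem or note that (ii) is trivial in the Hausdorff case.
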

\begin{proof}
As mentioned above, Katsura proved that the $C^*$-algebra is simple whenever $(A,B)$ is KSPI. By \cite{ep}*{Theorem 18.6}, the additional condition of the proposition implies that (and is in fact equivalent to) $\cG(G,E,\phi)$ being Hausdorff. Thus by \cite{ep}*{Theorems 18.7, 18.8 and 18.12} $\cG(G,E,\phi)$ is effective (essentially principal in the notation of \cite{ep}) and minimal. Hence $\cO_{A,B}^C=\cA(\cG(G,E,\phi),\tilde{\omega})$ is simple by \cite{steintwist}*{Theorem 6.2}.
\end{proof}

\section{Twisted \topdf{$EP$}{EP}-tuples and \topdf{$kk$}{kk}}\label{sec:kk}
\numberwithin{equation}{subsection}
\subsection{Preliminaries on \topdf{$kk$}{kk}}\label{subsec:prelikk}

Let $\fC$ be a category and $F:\aha\to \fC$ a functor. We say that $F$ is \emph{homotopy invariant} if for every algebra $A$, the map $F(A)\to F(A[t])$ induced by the natural inclusion $A\subset A[t]$ is an isomorphism. Let $X$ be an infinite set and $x\in X$. Define a natural transformation $\iota_x:A\to M_XA$, $\iota_x(a)=\epsilon_{x,x}a$. We say that $F$ is \emph{$M_X$-stable} if the natural transformation $F(\iota_x):F\to F\circ M_X$ is an isomorphism. $M_X$-stability implies $M_Y$-stability for every set $Y$ whose cardinality is at most that of $X$. We fix such an $X$ and use the term \emph{matricially stable} for $M_X$-stable. An algebra extension is a sequence of algebra homomorphisms
\begin{equation}\label{exte}
\cE:\, 0\to A\overset{i}{\lra} B\overset{\pi}{\lra} C\to 0    
\end{equation}
which is exact as a sequence of $\ell$-modules. An \emph{excisive} functor with values in a triangulated category $\cT$ with suspension $T\mapsto T[-1]$ is a functor $F:\aha\to\cT$ together with a family of maps $\partial^F_{\cE}:F(C)[+1]\to F(A)$ indexed by the algebra extensions \eqref{exte}, such that 
\[
\xymatrix{F(C)[+1]\ar[r]^(.6){\partial^F_{\cE}}&F(A)\ar[r]^{F(i)}&F(B)\ar[r]^{F(\pi)}&F(C)}
\]
is a distinguished triangle. The maps $\partial^F_{\cE}$ are to satisfy the compatiblity conditions of \cite{ct}*{Section 6.6}. An excisive, homotopy invariant and matricially stable \emph{homology theory} of $\ell$-algebras consists of a triangulated category $\cT$ and a functor $F:\aha\to\cT$ that is excisive, homotopy invariant and matricially stable. Such homology theories form a category, where a \emph{homomorphism} from $F$ to another homology theory $G:\aha\to\cT'$  is a triangulated functor
$H:\cT\to \cT'$, together with a natural isomorphism $\phi:H(-[+1])\to H(-)[+1] $ such that the following diagram commutes for every extension \eqref{exte}
\begin{equation*}
 \xymatrix{H(F(C)[+1])\ar[d]_{\phi}\ar[r]^(.6){H(\partial^F_{\cE})} & G(A)\\
           G(C)[+1]\ar[ur]^{\partial^G_{\cE}}&}
\end{equation*}
It was shown in \cite{ct}*{Theorem 6.6.2} that the category of homotopy invariant, matricially stable and excisive homology theories of $\ell$-algebras has an initial object $j:\aha\to kk$. We shall also to consider homology theories for the category $G-\aha$ of algebras equipped with a $G$-action and $G$-equivariant homomorphisms and $G_{\gr}-\aha$ of $G$-graded algebras and homogeneous homomorphisms. A \emph{$G$-set} is a set $Y$ with a $G$-action $G\times Y\to Y$, $(g,y)\mapsto g(y)$. A \emph{$G$-graded set} is a set $Y$ together with a function $d:Y\to G$. If $A\in G-\aha$ and $Y$ a $G$-set, $M_YA$ is a $G$-algebra with $g(\epsilon_{y_1,y_2}a)=\epsilon_{g(y_1),g(y_2)}g(a)$. If instead $A\in G_{\gr}-\aha$ and $Y$ is equipped with a $d:Y\to G$, then $M_YA$ is $G$-graded with degree function determined by $|\epsilon_{x,y}a|=d(x)|a|d(y)^{-1}$. Let $X$ be the infinite set fixed above in the definition of matricial stability and $\cT$ a triangulated category. A functor $F$ defined on $G-\aha$  is \emph{$G$-stable} if 
for any $G$-algebra $A$ and any two $G$ sets $Y$, $Z$ of cardinality at most that $G\times X$, the map $F(M_YA\to M_{Y\sqcup Z}A)$ induced by the obvious inclusion is an isomorphism. A functor defined on $G_{\gr}-\aha$ is $G$-stable if it satisfies the same condition for $G$-graded sets $Y$ and $Z$ with the same cardinality restrictions. The universal initial homotopy invariant, matricially stable, $G$-stable and excisive homology theories $j_G:G-\aha\to kk_G$ and $j_{G_{\gr}}:G_{\gr}-\aha\to kk_{G_{\gr}}$ where constructed in \cite{kkg}. It was shown there \cite{kkg}*{Theorem 7.6} that the crossed product functors
$\rtimes G:G=\aha\to G_{\gr}-\aha$ and  $G\hat{\ltimes}:G_{\gr}-\aha\to G-\aha$ induce inverse triangulated equivalences $kk_G\leftrightarrows kk_{G_{\gr}}$.

\subsection{The algebra \topdf{$\cK(G,E,\phi_c)$}{K(G,E,phic)}}\label{sec:kge}

\begin{lem}\label{lem:mxlx}
Let $X$ be a $G$-set. Then the $G$-equivariant homomorphism
\[
\iota_X:\ell^{(X)}\to M_X\ell^{(X)},\ \ \chi_x\mapsto \epsilon_{x,x}\otimes \chi_x 
\]
is a $kk_G$-equivalence. 
\end{lem}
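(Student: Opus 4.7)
The plan is to realize $\iota_X$ as a composition of two $kk_G$-equivalences supplied by $G$-stability, modulo checking that a certain pair of $G$-equivariant homomorphisms into an enlargement agrees in $kk_G$. Set $\tilde X=X\sqcup\{*\}$ with $\{*\}$ a trivial $G$-singleton and $B=M_{\tilde X}\ell^{(X)}$. Two applications of $G$-stability, namely $Y=X,Z=\{*\}$ and $Y=\{*\},Z=X$, show that both the inclusion $\lambda\colon M_X\ell^{(X)}\hookrightarrow B$ and the corner embedding $\kappa\colon\ell^{(X)}=M_{\{*\}}\ell^{(X)}\hookrightarrow B$, $\chi_x\mapsto\epsilon_{*,*}\otimes\chi_x$, are $kk_G$-equivalences. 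Since the composite $\lambda\circ\iota_X$ sends $\chi_x\mapsto\epsilon_{x,x}\otimes\chi_x$, it will be enough to establish the identity $[\lambda\circ\iota_X]=[\kappa]$ in $kk_G(\ell^{(X)},B)$, for then $[\iota_X]=[\lambda]^{-1}[\kappa]$ is a composition of $kk_G$-equivalences.

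The images of $\kappa$ and $\lambda\circ\iota_X$ are orthogonal in $B$ since $\epsilon_{*,*}\epsilon_{x,x}=0$ for $x\in X$. To relate them I would introduce the $G$-invariant formal sum
\[
v=\sum_{x\in X}\epsilon_{*,x}\otimes\chi_x,
\]
which defines a two-sided multiplier of $B$ (each basis element of $B$ interacts with only finitely many summands) and verify that it is a partial isometry with $vv^*=\epsilon_{*,*}\otimes 1$, $v^*v=\sum_x\epsilon_{x,x}\otimes\chi_x$, and crucially $v^*\kappa(a)v=(\lambda\circ\iota_X)(a)$ for every $a\in\ell^{(X)}$. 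Lifting to $M_2(\cM(B))=\cM(M_2 B)$, the $G$-invariant involutive unitary
\[
U=\begin{pmatrix}0 & v\\ v^* & 0\end{pmatrix}+\begin{pmatrix}1-vv^* & 0\\ 0 & 1-v^*v\end{pmatrix}
\]
then satisfies $\mathrm{Ad}(U)\circ(\kappa\oplus 0)=0\oplus(\lambda\circ\iota_X)$.

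The remaining step is to show that conjugation by the $G$-invariant multiplier unitary $U$ is trivial in $kk_G$. For this I would apply the Whitehead lemma to $\mathrm{diag}(U,U^{-1})\in\mathrm{GL}_2(\cM(M_2 B))$: this element factors as a product of transvections $I+c\epsilon_{ij}$, each of which admits the polynomial homotopy $I+tc\epsilon_{ij}$ back to the identity. Because every entry $c$ is built from $v$, the whole path is $G$-equivariant, yielding a $G$-equivariant polynomial homotopy in $\mathrm{GL}_2(\cM(M_2 B)[t])$ from the identity to $\mathrm{diag}(U,U^{-1})$; the induced conjugation path then provides a $G$-equivariant polynomial homotopy from $\id_{M_4 B}$ to $\mathrm{Ad}(\mathrm{diag}(U,U^{-1}))$. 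Homotopy invariance and $M_n$-stability, combined with restriction to the upper-left $M_2 B$-block, give $[\mathrm{Ad}(U)]=[\id]$ in $kk_G(M_2 B,M_2 B)$. Post-composing with $\kappa\oplus 0$ and invoking $M_2$-stability yields $[\kappa]=[\lambda\circ\iota_X]$ in $kk_G(\ell^{(X)},B)$, as required.

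The main obstacle is the last paragraph: in the algebraic $kk_G$-framework the continuous unitary rotation of the $C^*$-theory is unavailable, so the triviality of inner conjugation by a $G$-invariant multiplier unitary must be derived from the Whitehead elementary-matrix decomposition combined with the axioms of $kk_G$. Verifying $G$-equivariance of the constructed path is immediate because every ingredient is manufactured from the $G$-invariant multiplier $v$, but keeping careful track of stabilizations and corner restrictions will be the most delicate bookkeeping in the proof.
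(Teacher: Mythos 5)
Your proof is correct and follows essentially the same strategy as the paper: adjoin a $G$-fixed point $\bu$ to $X$, use $G$-stability to identify the two corner embeddings into $M_{X\sqcup\{\bu\}}\ell^{(X)}$ as $kk_G$-equivalences, and transport the stabilized $\iota_X$ to the corner at $\bu$ by conjugating with a $G$-invariant multiplier, whose conjugation action is trivial in $kk_G$. The only difference is cosmetic: the paper conjugates directly by the $G$-invariant involution $\sigma=\prod_{x}\sigma_x\otimes\chi_x$ (fiberwise transposition of $x$ and $\bu$), which avoids your partial-isometry dilation and the $2\times 2$ unitary, and it quotes the argument of \cite{friendly}*{Proposition 2.2.6} for the triviality of $\ad(\sigma)$ instead of rerunning the Whitehead-lemma polynomial homotopy inline as you do.
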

\begin{proof} Let $\{\bu\}$ be a one point $G$-set; consider $X_\bu=X\sqcup\{\bu\}$. By $G$-stability the inclusion $\inc:M_X\ell^{(X)}\to M_{X_\bu}\ell^{(X)}$ is a $kk_G$-equivalence. Hence it suffices to show that $\inc\circ\iota_X$ is one. For each $x\in X$ let $\sigma_x\in M_{X_\bu}$ the matrix associated to the permutation that exchanges $x$ and $\bu$ and fixes the remaining elements. Observe that 
\[
R:=M_{X_\bu}\ell^{(X)}=\bigoplus_{x\in X}M_{X_\bu}\otimes\chi_x\triqui \prod_{x\in X}\Gamma_{X_\bu}\otimes\chi_x=:S.
\]
Let $\sigma=\prod_{x\in X}\sigma_x\otimes\chi_x\in S$. One checks that $\sigma$ is fixed by $G$, hence the conjugation map
$\ad(\sigma):R\to R$ is the identity in $kk_G$, by the argument of \cite{friendly}*{Proposition 2.2.6}. Moreover $\ad(\sigma)\circ\inc\circ\iota_X=\epsilon_{\bu,\bu}\otimes\id_{\ell^{(X)}}$, another $kk_G$-equivalence. Thus $j_G(\inc\circ\iota_X)$ is an isomorphism, finishing the proof. 
\end{proof}

Let $X\subset E^0$ be a set of vertices closed under the $G$-action. Consider the $G$-equivariant homomorphism
\begin{equation}\label{map:eliota}
\iota:\ell^{(X)}\to \bigoplus_{v\in X}M_{\cP_v}\otimes\chi_v,\ \ \iota(\chi_v)=\epsilon_{v,v}\otimes\chi_v.
\end{equation}
\begin{prop}\label{prop:stable} The map \eqref{map:eliota} is a $kk_G$-equivalence.
\end{prop}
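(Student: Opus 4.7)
The plan is to adapt the proof of Lemma \ref{lem:mxlx} to the present block-diagonal setting. The central idea is to enlarge each block $M_{\cP_v}$ of the target of $\iota$ by adjoining a $G$-coherent family of basepoints, to move $\iota$ to those basepoints by conjugation with a $G$-fixed involution in the multiplier algebra, and then to recognise the resulting map as a $kk_G$-equivalence by a second application of $G$-stability.

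Concretely, write $A=\bigoplus_{v\in X}M_{\cP_v}\otimes\chi_v$ for the target of $\iota$. Let $X_\bu=\{\bu_v:v\in X\}$ be a second copy of $X$ turned into a $G$-set by $g(\bu_v)=\bu_{g(v)}$, and form the enlarged $G$-algebra $A_\bu=\bigoplus_{v\in X}M_{\cP_v\sqcup\{\bu_v\}}\otimes\chi_v$ with the obvious $G$-equivariant inclusion $\inc:A\hookrightarrow A_\bu$. The first step is to show that $\inc$ is a $kk_G$-equivalence; this is a blockwise full corner embedding controlled by a $G$-fixed idempotent in the multiplier algebra, and I would justify it by decomposing $X=\bigsqcup_i G v_i$ into $G$-orbits, realising the corresponding summands of $A$ and $A_\bu$ as algebras induced from the stabilisers $H_i=\mathrm{Stab}_G(v_i)$, and applying $H_i$-stability (with $\bu$ viewed as an extra $H_i$-fixed index) inside each orbit.

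Once that is in hand, define $\sigma=\sum_{v\in X}\sigma_v\otimes\chi_v$ in the multiplier algebra of $A_\bu$, where $\sigma_v\in M_{\cP_v\sqcup\{\bu_v\}}$ transposes $v$ and $\bu_v$ and fixes all other indices. Because $g(\sigma_v\otimes\chi_v)=\sigma_{g(v)}\otimes\chi_{g(v)}$, the element $\sigma$ is $G$-fixed, and $\sigma^2=1$, so by the argument of \cite{friendly}*{Proposition 2.2.6} used in Lemma \ref{lem:mxlx} the conjugation $\ad(\sigma)$ represents the identity in $kk_G$. A direct computation gives $\ad(\sigma)(\inc(\iota(\chi_v)))=\epsilon_{\bu_v,\bu_v}\otimes\chi_v$, so in $kk_G$ the composite $\inc\circ\iota$ agrees with $\ell^{(X)}\xrightarrow{\cong}\bigoplus_v\ell\,\epsilon_{\bu_v,\bu_v}\otimes\chi_v\hookrightarrow A_\bu$. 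The second arrow here is again a blockwise full corner embedding via a $G$-fixed idempotent, hence a $kk_G$-equivalence by the same orbit-plus-stabiliser argument used for $\inc$. Chaining everything shows $\inc\circ\iota$ is a $kk_G$-equivalence, and since $\inc$ is one, so is $\iota$.

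The main obstacle I anticipate is precisely the two applications of the ``blockwise corner via $G$-fixed idempotent is a $kk_G$-equivalence'' principle, because the $G$-stability axiom as formulated in \cite{kkg} literally covers algebras of the shape $M_Y A$ with a single $G$-algebra $A$, not direct sums $\bigoplus_v M_{Y_v}\otimes B_v$ with varying $G$-sets $Y_v$ permuted by $G$. I would either handle this via the orbit-plus-induction route sketched above, exploiting that induction from the stabiliser carries $kk_{H_i}$-equivalences to $kk_G$-equivalences, or else isolate once and for all a general lemma asserting that a full corner embedding $eRe\hookrightarrow R$ arising from a $G$-fixed idempotent $e\in M(R)$ is a $kk_G$-equivalence; the latter would dispatch both instances uniformly.
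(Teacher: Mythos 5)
Your overall strategy --- enlarge the target, conjugate by a $G$-fixed element so that $\iota$ lands on a basepoint, and finish by a stability argument --- is the right spirit, and you correctly put your finger on the real difficulty: the target $\bigoplus_{v\in X}M_{\cP_v}\otimes\chi_v$ is a direct sum of matrix algebras over \emph{varying} index sets permuted by $G$, which is not of the shape $M_YA$ covered by the $G$-stability axiom. The problem is that neither of your proposed ways around this actually closes the gap. After your conjugation by $\sigma$ the map you are left with, $\chi_v\mapsto\epsilon_{\bu_v,\bu_v}\otimes\chi_v$ into $\bigoplus_vM_{\cP_v\sqcup\{\bu_v\}}\otimes\chi_v$, is a map of exactly the same ``blockwise corner'' type as the original $\iota$; you have only moved the basepoint, not reduced the difficulty. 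The general lemma you would like (a full corner embedding along a $G$-fixed multiplier idempotent is a $kk_G$-equivalence) is not available in the references in that generality and would itself need a proof. The orbit-plus-induction route has two problems: it invokes an induction functor $kk_{H_i}\to kk_G$ and an identification of the orbit summands as induced algebras that are nowhere set up here, and, more seriously, when $X$ has infinitely many $G$-orbits you would need to pass a $kk_G$-isomorphism through an infinite direct sum; $kk_G$ is not assumed to commute with infinite coproducts (the paper has to impose ``$E^0$-additivity'' as an extra hypothesis precisely where such commutation is needed), and the proposition is stated with no finiteness assumption on $X$.

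The idea that is missing, and that the paper uses, is to trade the varying blocks for a single matrix algebra over the single $G$-set $\cP_X=\bigsqcup_{v\in X}\cP_v$ \emph{with coefficients in the whole algebra}. Writing $S=\bigoplus_{v\in X}M_{\cP_v}\otimes\chi_v$, one embeds $S$ isomorphically onto the corner $pM_{\cP_X}Sp$ of $M_{\cP_X}S$ via $\epsilon_{\alpha,\beta}\otimes\chi_{r(\alpha)}\mapsto\epsilon_{\alpha,\beta}\otimes\epsilon_{\alpha,\beta}\otimes\chi_{r(\alpha)}$, where $p=\sum_{\alpha}\epsilon_{\alpha,\alpha}\otimes\epsilon_{\alpha,\alpha}\otimes\chi_{r(\alpha)}$, and then compresses this corner onto $M_{\cP_X}R$, $R=\bigoplus_v\epsilon_{v,v}\otimes\ell\chi_v\cong\ell^{(X)}$, by conjugating with the $G$-fixed partial isometry $u=\sum_\alpha\epsilon_{\alpha,\alpha}\otimes\epsilon_{r(\alpha),\alpha}\otimes\chi_{r(\alpha)}\in\Gamma_{\cP_X}S$ (so $u^*u=p$); the composite with the inclusion $M_{\cP_X}R\subset M_{\cP_X}S$ is the identity in $kk_G$ by the equivariant version of the corner lemma from the cited reference. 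At that point everything in sight is a full matrix algebra $M_{\cP_X}(-)$ over the single $G$-set $\cP_X$ with coefficients in the single $G$-algebra $\ell^{(X)}$, so Lemma \ref{lem:mxlx} and plain $G$-stability apply and one concludes by a two-out-of-three argument. Without this (or some equally concrete substitute for your ``blockwise corner'' principle), your proof does not go through.
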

\begin{proof} We adapt the argument of the proof of \cite{arcor2}*{Lemma 10.3} as follows. Let $S=\bigoplus_{v\in X}M_{\cP_v}\otimes\chi_v$; set $p=\sum_{\alpha\in\cP_X}\epsilon_{\alpha,\alpha}\otimes\epsilon_{\alpha,\alpha}\otimes\chi_{r(\alpha)}\in\Gamma_{\cP_X}S$. Then 
for $T:=pM_{\cP_X}Sp$, the map
\begin{equation}\label{map:stopmsp}
S\to T,\ \ \epsilon_{\alpha,\beta}\otimes\chi_{r(\alpha)}\mapsto \epsilon_{\alpha,\beta}\otimes\epsilon_{\alpha,\beta}\otimes\chi_{r(\alpha)}
\end{equation}
is a $G$-equivariant isomorphism. Let 
$$u=\sum_{\alpha\in\cP_X}\epsilon_{\alpha,\alpha}\otimes\epsilon_{r(\alpha),\alpha}\otimes\chi_{r(\alpha)}, \ u^*=\sum_{\alpha\in\cP_X}\epsilon_{\alpha,\alpha}\otimes\epsilon_{\alpha,r(\alpha)}\otimes\chi_{r(\alpha)}\in\Gamma_{\cP_X}S.$$ 
Observe that $u$ and $u^*$ are fixed by $G$ and that $u^*u=p$. Put $R=\bigoplus_{v\in  X} \epsilon_{v,v}\otimes \ell\chi_v\subset S$; then $\ad(u):T\to M_{\cP_X}R$, $t\mapsto utu^*$ is a $G$-equivariant homomorphism; by the $G$-equivariant version of \cite{classinvo}*{Lemma 8.11}, the composite of $\ad(u)$ with the inclusion $M_{\cP_X}R\subset T$ equals the identity in $kk_G$. Hence the composite $\zeta$ of $\ad(u)$ with \eqref{map:stopmsp} is a split monomorphism in $kk_G$. Observe that \eqref{map:eliota} is the composite of the isomorphism $f:\ell^{(X)}\to R$, $f(\chi_v)=\epsilon_{v,v}\otimes\chi_v$, with the inclusion $R\subset S$. Consider the composite $\jmath:\ell^{(X)}\to M_{\cP_X}\ell^{(X)}$ of $\iota_X:\ell^{(X)}\to M_X\ell^{(X)}$ with the inclusion $M_X\ell^{(X)}\subset M_{\cP_X}\ell^{(X)}$. The latter maps are $kk_G$-equivalences, by Lemma \ref{lem:mxlx} and $G$-stability, so $\jmath$ is one too. One checks that $\zeta{|R}\circ f=M_{\cP_X}f\circ \jmath$. Hence $\zeta$ is an isomorphism in $kk_G$ and therefore $\iota$ is one also. 
\end{proof}
\begin{prop}\label{prop:kkkge} Let $(G,E,\phi_c)$ be a twisted EP-tuple. Then the homogeneous homomorphism of $G$-graded algebras
\[
q^G:(\ell^{(\reg(E))})\rtimes G\to \cK(G,E,\phi_c),\ \ q^G(\chi_v\rtimes g)=q_vg.
\]
is a $kk_{G_{\gr}}$-equivalence.
\end{prop}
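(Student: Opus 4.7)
The plan is to factor $q^G$ as a composition of two $kk_{G_{\gr}}$-equivalences: one obtained by applying the crossed product functor to the stabilization map $\iota$ of Proposition \ref{prop:stable}, and the other being the algebra isomorphism $\upsilon$ of Proposition \ref{prop:kge} viewed as a $G$-homogeneous map. The equivalence $-\rtimes G: kk_G \simeq kk_{G_{\gr}}$ recalled from \cite{kkg}*{Theorem 7.6} in Subsection \ref{subsec:prelikk} bridges the two $kk$-categories.

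First I would take $X = \reg(E)$ in Proposition \ref{prop:stable}; this set is $G$-stable because $G$ acts on $E$ by graph automorphisms and hence preserves the valency of each vertex. The proposition gives a $kk_G$-equivalence
\[
\iota: \ell^{(\reg(E))} \lra \bigoplus_{v \in \reg(E)} M_{\cP_v} \otimes \chi_v.
\]
Applying the crossed product functor $-\rtimes G: G-\aha \to G_{\gr}-\aha$, which by \cite{kkg}*{Theorem 7.6} descends to a triangulated equivalence $kk_G \simeq kk_{G_{\gr}}$, yields a $kk_{G_{\gr}}$-equivalence $\iota \rtimes G$. I would then compose with $\upsilon$ from Proposition \ref{prop:kge}. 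A direct comparison of gradings shows that $\upsilon$ is homogeneous: on the source, $\epsilon_{\alpha,\beta} \rtimes g$ lies in degree $g$ by the crossed product convention; on the target, Corollary \ref{coro:kge} places $\alpha(q_v g)(g^{-1}(\beta))^*$ in degree $g$ as well. Hence $\upsilon$ is an isomorphism in $G_{\gr}-\aha$, and in particular a $kk_{G_{\gr}}$-equivalence.

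It then remains to identify the composition $\upsilon \circ (\iota \rtimes G)$ with $q^G$ on generators. For $v \in \reg(E)$ and $g \in G$ we have $(\iota \rtimes G)(\chi_v \rtimes g) = \epsilon_{v,v} \rtimes g$ and therefore
\[
\upsilon\bigl((\iota \rtimes G)(\chi_v \rtimes g)\bigr) = v (q_v g) (g^{-1}(v))^*.
\]
This collapses to $q_v g = q^G(\chi_v \rtimes g)$ after a short calculation with the Cohn relations \eqref{eq:cohn0}--\eqref{eq:cohn2}: the left factor $v$ is absorbed because every edge $e$ appearing in \eqref{eq:qv} satisfies $s(e)=v$, and the right factor $g^{-1}(v)$ is absorbed using $(vg) \cdot g^{-1}(v) = vg$ from \eqref{eq:cohn1} together with $s(g^{-1}(e)) = g^{-1}(v)$ for each summand in \eqref{eq:qv}. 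There is no genuine obstacle in the proof; the main points to watch are the $G$-stability of $\reg(E)$, the homogeneity of $\upsilon$, and the correct bookkeeping of the Cohn relations in the final identification of the composite with $q^G$.
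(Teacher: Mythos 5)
Your proposal is correct and follows essentially the same route as the paper: factor $q^G$ as $\upsilon$ composed with the crossed product of the stabilization map $\iota$ of Proposition \ref{prop:stable} (with $X=\reg(E)$), use the equivalence $kk_G\simeq kk_{G_{\gr}}$ to transfer the $kk_G$-equivalence across the crossed product, and conclude since $\upsilon$ is a $G$-graded algebra isomorphism. The only cosmetic difference is that the paper interposes the obvious identification $\bigoplus_{v}M_{\cP_v}\otimes\chi_v\cong\bigoplus_{v}M_{\cP_v}$ before applying $\rtimes G$, and leaves the generator computation $v(q_vg)(g^{-1}(v))^*=q_vg$ implicit, which you carry out explicitly.
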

\begin{proof} Let $X=\reg(E)$, $\iota$ as in \eqref{map:eliota} and $\iota'$ the composite of $\iota$ with the obvious isomorphism $\bigoplus_{v\in X}M_{\cP_v}\otimes\chi_v\cong \bigoplus_{v\in X}M_{\cP_v}$. One checks that $q^G$ is the composite of the maps $\iota'\rtimes G$ and $\upsilon$ of Proposition \ref{prop:kge}. Then $j_G(\iota')$ is an isomorphism by Proposition \ref{prop:stable}, whence $j_{G_{\gr}}(\iota'\rtimes G)$ is an isomorphism. This implies that $j_{G_{\gr}}(q^G)$ is an isomorphism, since $\upsilon$ is a $G$-graded algebra isomorphism. 
\end{proof}

\subsection{The Cohn algebra \topdf{$C(G,E,\phi_c)$}{C(G,E,phic)}}

The purpose of this subsection is to prove the following.

\begin{thm}\label{thm:cohnkk}
Let $(G,E,\phi_c)$ be a twisted EP-tuple. Then the algebra homomorphism
\[
\varphi:\ell^{(E^0)}\rtimes G\to C(G,E,\phi_c),\ \ \chi_v\rtimes g\mapsto vg
\]
is a $kk$-equivalence. 
\end{thm}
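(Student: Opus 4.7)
The plan is a reduction-to-finite-subgraphs argument followed by an induction on the number of edge orbits, in parallel with the proof of Proposition~\ref{prop:kkkge} but for the full Cohn algebra rather than its ideal $\cK(G,E,\phi_c)$.

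First I would reduce to the case where both $E^0$ and $E^1/G$ are finite. By Corollary~\ref{coro:cohnpres}, the $\ell$-basis $\cB$ of $C(G,E,\phi_c)$ restricts naturally to each finite $G$-invariant full subgraph $F\subseteq E$, giving $\ell$-split inclusions $C(G,F,\phi_c|_F)\hookrightarrow C(G,E,\phi_c)$ whose filtered colimit recovers $C(G,E,\phi_c)$. The same holds for $\ell^{(E^0)}\rtimes G$, and $\varphi$ is compatible with these colimits. Since $kk$ commutes with countable filtered colimits along $\ell$-split inclusions, it suffices to treat the case in which $E^0$ and $E^1/G$ are both finite.

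Second, in the finite case I would argue by induction on the number of $G$-orbits of edges. The base case $E^1=\emptyset$ is trivial, as $\varphi$ is then the identity. For the inductive step, let $E$ be obtained from a proper $G$-invariant subgraph $E'$ by adjoining a single edge-orbit $\cO=G\cdot e_0$; one needs to show that the inclusion $C(G,E',\phi_c|_{E'})\hookrightarrow C(G,E,\phi_c)$ is a $kk$-equivalence. The approach is to consider the two-sided ideal $\cJ\triangleleft C(G,E,\phi_c)$ generated by the ``new'' elements $\{ee^*:e\in\cO\}$; using the basis description $\cB$ and an argument parallel to Proposition~\ref{prop:kkkge}, one would identify $\cJ$ with a matrix algebra over a crossed product built from $C(G,E',\phi_c|_{E'})$, which is inductively $kk$-equivalent to $\ell^{(E^0)}\rtimes G$. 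A six-term $kk$-sequence analysis of the resulting Toeplitz-type extension then yields the desired equivalence.

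The main technical obstacle is precisely this inductive step: making the Toeplitz-type analysis work in the twisted equivariant algebraic setting requires careful handling of the cocycle $c$ and the $G$-action, as well as an explicit identification of the connecting map in the $kk$-triangle as a $kk$-isomorphism. A cleaner alternative, suggested by Section~\ref{sec:uniloc}, would be to realise $C(G,E,\phi_c)$ directly as a tensor algebra over $\ell^{(E^0)}\rtimes G$ modulo relations that are $kk$-invisible, and to invoke the general fact that the inclusion of the base into a tensor algebra is a $kk$-equivalence. Either route ultimately reduces Theorem~\ref{thm:cohnkk} to the classical algebraic statement that adjoining Cuntz--Toeplitz partial isometries to an algebra preserves its $kk$-class.
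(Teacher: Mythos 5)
Your proposal has two genuine gaps, either of which is fatal as written. First, the reduction to finite graphs rests on the claim that ``$kk$ commutes with countable filtered colimits along $\ell$-split inclusions.'' No such continuity holds for the universal bivariant theory $j:\aha\to kk$ of \cite{ct}: even if each $\varphi_F$ is invertible in $kk$, the inverses live in $kk(C(G,F,\phi_c|_F),\ell^{(F^0)}\rtimes G)$ and there is no mechanism for assembling them into an inverse of $\varphi$, since $kk(-,B)$ is not continuous in the first variable. The paper never makes this reduction; its argument treats arbitrary $E$ directly (introducing the algebra $C^m(G,E)$ with formal elements $m_v$ for infinite emitters precisely to handle non-row-finite graphs). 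Second, the inductive step collapses at the level of ring theory: if $\cJ\triangleleft C(G,E,\phi_c)$ is the two-sided ideal generated by $\{ee^*:e\in\cO\}$, then $e^*(ee^*)e=(e^*e)(e^*e)=r(e)$, so $\cJ$ contains the vertices $r(e)$ for all $e\in\cO$ and hence a large part of $C(G,E',\phi_c|_{E'})$. The quotient is therefore not the Cohn algebra of the smaller graph, and $\cJ$ cannot be ``a matrix algebra over a crossed product built from $C(G,E',\phi_c|_{E'})$'' --- indeed, if it were nonzero in $kk$ of that size, a split six-term analysis would give $j(C(G,E,\phi_c))\cong j(\ell^{(E^0)}\rtimes G)^{\oplus 2}$, contradicting the statement you are trying to prove. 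Your ``cleaner alternative'' is circular for the same reason: $C(G,E,\phi_c)$ is not a tensor algebra (the relation $e^*f=\delta_{e,f}r(e)$ is genuinely non-free), and the assertion that the Cuntz--Toeplitz relations are ``$kk$-invisible'' \emph{is} the content of the theorem, not a general fact one can invoke.

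For comparison, the paper's proof is a quasi-homomorphism argument adapting \cite{cm1}*{Theorem 4.2}. A left inverse of $j(\varphi)$ is produced from the quasi-homomorphism $(\can,\xi):C(G,E)\rightrightarrows C^m(G,E)\vartriangleright\hat{\cK}(G,E)$ together with the identification $\hat{\cK}(G,E)\cong(\bigoplus_v M_{\cP_v})\rtimes G$ and the stability results of Proposition \ref{prop:stable}; the two-sided inverse is obtained by building the bimodule $\fA\subset M_{\cP}C(G,E)$, the semidirect product $C^m(G,E)\ltimes\fA$ and its quotient $D$, and exhibiting explicit polynomial homotopies ($H$ and $H^+$) between the relevant quasi-homomorphisms. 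The cocycle $c$ enters through the modified formula \eqref{eq:rhovg} for $\rho(vg)$ and the relation \eqref{eq:mvg}; none of this is visible from an extension-by-one-orbit induction. If you want to salvage your outline, the place to start is not a Toeplitz ideal but the quasi-homomorphism calculus.
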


The proof of Theorem \ref{thm:cohnkk} will be given at the end of the subsection. 
We remark that, in the case when $G$ is trivial, Theorem \ref{thm:cohnkk} specializes to  \cite{cm1}*{Theorem 4.2}. We will adapt the argument of \cite{cm1}*{Theorem 4.2} to prove Theorem \ref{thm:cohnkk}.   

We abbreviate $C(G,E)=C(G,E,\phi_c)$ throughout. Let $\sink(E)\,\inf(E)\subset E^0$ be the subsets of sinks and of infinite emitters. For $v\in E^0\setminus\inf(E)$, consider the following element of $C(G,E)$
\begin{equation}\label{eq:mv}
    m_v=\left\{\begin{matrix} \sum_{s(e)=v}ee^* & \text{ if }v\in\reg(E)\\
                                            0 & \text{ if } v\in\sink(E)
        \end{matrix}\right.
\end{equation}
We define $C^m(G,E)$ as the result of adding an indeterminate $m_v$ for each $v\in\inf(E)$, subject to the relations \cite{cm1}*{Identities 4.4} plus the additional relation 
\begin{equation}\label{eq:mvg}
(g(v)g)m_v=m_{g(v)}g(v)g.
\end{equation}
We remark that \eqref{eq:mvg} is already satisfied in $C(G,E)$,  by all $v\in E^0\setminus\inf(E)$. Thus  in $C^m(G,E)$, \eqref{eq:mvg} holds for all $v\in E^0$. We shall abbreviate $m_vg=m_v(vg)$. The elements $q_v=v-m_v\in C^m(G,E)$ generate an ideal 
\[
\hat{\cK}(G,E)=\cK(G,E)\oplus \bigoplus_{v\in \inf(E)}\mspan\{\alpha q_vg\beta^*: \ \ g\in G,\ \ \alpha\in\cP_v,\ \ \beta\in \cP_{g^{-1}(v)}\}.
\]
One checks that the isomorphism $\upsilon$ of Proposition \ref{prop:kge} extends to an isomorphism 
\[
\hat{\upsilon}:(\bigoplus_{v\in E^0}\cM_{\cP_v})\rtimes G\iso \hat{\cK}(G,E)
\] 
defined by the same formula as $\upsilon$. We have an algebra homomorphism
\begin{equation}\label{map:hatvarphi}
 \hat{\varphi}:\hat{\cK}(G,E)\to M_{\cP}C(G,E), \ \ \hat{\varphi}(\alpha q_vg\beta^*)=\epsilon_{\alpha,\beta}\otimes v g
\end{equation}

Next we consider an analogue of the homomorphism $\rho$ of \cite{cm1}*{(4.7)}. The map therein goes from $C^m(E)$ to $\Gamma_\cP$; the analogue will be a homomorphism
\begin{equation}\label{map:rho}
\rho:C^m(G,E)\to \Gamma^a_{\cP}.
\end{equation}
We define $\rho(e)$, $\rho(e^*)$ for $e\in E^1$ and $\rho(m_v)$ for $v\in\inf(E)$ exactly as in \cite{cm1}*{Formulas (4.7)} and for $v\in E^0$ and $g\in G$ we set 
\begin{equation}\label{eq:rhovg}
\rho(vg)=\sum_{g(r(\alpha))=v}c(g,\alpha)\epsilon_{g(\alpha),\alpha}.
\end{equation}
One checks that the above prescriptions actually define an algebra homomorphism $\rho:C^m(G,E)\to \Gamma^a_{\cP}$. The latter agrees with that of \cite{cm1} when $G$ is trivial, and is therefore injective in that case, by \cite{cm1}*{Lemma 4.8}. For nontrivial $G$, $\rho$ need not be injective (e.g. if $G$ acts trivially on $E$). 

Next we need an analogue of the algebra $\fA$ of \cite{cm1}*{page 131}, and an action of $C^m(G,E)$ on $\fA$; both are defined in Lemma \ref{lem:fa} below. First we introduce some notation and vocabulary. 
Recall (e.g. from \cite{hig}*{Definition 1.1.1}) that the \emph{multiplier algebra} of an algebra $B$ is the set $\cM(B)$ of all pairs $(L,R)$ of $\ell$-linear maps $L,R:B\to B$ such that $L$ is a right $B$-module homomorphism, $R$ is a left $B$-module homomorphism, and 
\[
R(x)y=xL(y) \ \forall x,y\in B.
\]
Following \cite{hig}, elements of $\cM(B)$ are called \emph{double centralizers}. Addition in $\cM(B)$ is defined pointwise and muliplication is given by composition of functions, as follows 
\[
(L_1,R_1)(L_2, R_2)=(L_1\circ L_2, R_2\circ R_1).
\]
Let $A$ be another algebra. An \emph{$A$-algebra} structure on $B$ is an algebra homomorphism $A\to\cM(B)$, $a\mapsto (L_a,R_a)$. We write 
\[
a\cdot b=L_a(b), \ b\cdot a=R_a(b) \ (\forall a\in A, b\in B). 
\]
Thus we have, for all $a\in A$ and $b,c\in B$
\begin{gather}
(a\cdot b)c=a\cdot(bc),\ (b\cdot a)c=b(a\cdot c),\text{ and } b(c\cdot a)=(bc)\cdot a. 
\end{gather}

The \emph{semi-direct product} $A\ltimes B$ is the algebra that results from equipping the direct sum $A\oplus B$ with the following product
\[
(a,b)(a',b')=(aa',a\cdot b'+b\cdot a'+bb').
\]
\begin{lem}\label{lem:fa}
Consider the following $\ell$-submodule of
$M_{\cP}C(G,E)$
\begin{equation}\label{eq:fa}
\fA:=\mspan\{\epsilon_{\gamma,\theta}\otimes\alpha g\beta^*: \ \ s(\alpha)=r(\gamma), s(\beta)=r(\theta), r(\alpha)=g(r(\beta))\}.
\end{equation}
We have the following.
\item[i)] $\fA\subset M_{\cP}C(G,E)$ is a subalgebra.
\item[ii)] $\fA$ carries a $C^m(G,E)$-algebra structure such that for all $x\in C^m(E)$, $g\in G$, $v\in E^0$ and $\epsilon_{\gamma,\theta}\otimes z\in \fA$,
\begin{gather}
x\cdot (\epsilon_{\gamma,\theta}\otimes y)=(\rho(x)\epsilon_{\gamma,\theta})\otimes y,\ (\epsilon_{\gamma,\theta}\otimes y)\cdot x =(\epsilon_{\gamma,\theta})\rho(x)\otimes y\label{eq:lxrx} \\
 vg\cdot (\epsilon_{\gamma,\theta}\otimes y)=\delta_{v,g(s(\gamma))}\epsilon_{g(\gamma),\theta}\otimes (g(r(\gamma))\phi_c(g,\gamma))y,\label{eq:lvg}\\
 (\epsilon_{\gamma,\theta}\otimes y)\cdot vg=\delta_{v,s(\theta)}\epsilon_{\gamma,g^{-1}(\theta)}\otimes y(r(\theta)\phi_c(g,g^{-1}(\theta)).\label{eq:rvg}
\end{gather}
\item[iii)] Let $a\in \hat{K}(G,E)$ and $\fa\in\fA$. For $\hat{\varphi}$ as in \eqref{map:hatvarphi}, we have
\[
a\cdot \fa=\hat{\varphi}(a)\fa, \ \fa\cdot a=\fa\hat{\varphi}(a).
\]
\end{lem}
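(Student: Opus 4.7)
The plan is to establish the three parts in order, with the bulk of the work going into (ii).

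For (i), I would directly compute the product of two basis elements $\epsilon_{\gamma,\theta}\otimes \alpha g\beta^*$ and $\epsilon_{\gamma',\theta'}\otimes\alpha'g'\beta'^*$ of $\fA$. This product vanishes unless $\theta=\gamma'$, in which case it equals $\epsilon_{\gamma,\theta'}\otimes(\alpha g\beta^*)(\alpha'g'\beta'^*)$. The factor in $C(G,E)$ is, by Proposition \ref{prop:cohnpres}, the image of the semigroup product \eqref{prod:sge}: it is either zero, or $\omega$ times a single element $\alpha''g''\beta''^*\in\cB$. In the nonzero case, tracing through the two branches of \eqref{prod:sge} shows that $s(\alpha'')=r(\gamma)$, $s(\beta''^*)=r(\theta')$ and $r(\alpha'')=g''(r(\beta''))$, so the product lies in $\fA$.

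For (ii) the strategy is to define maps $L,R:\{\text{generators of }C^m(G,E)\}\to\End_\ell(\fA)$ by \eqref{eq:lxrx} on the generators $e,e^*,m_v$ (using $\rho$) and by \eqref{eq:lvg}, \eqref{eq:rvg} on the vertex-group generators $vg$, and then check that each defining relation of $C^m(G,E)$ is satisfied and that each pair $(L_x,R_x)$ is a double centralizer. The relations \eqref{eq:cohn0}, \eqref{eq:cohn1} and the $m_v$-relations from \cite{cm1} involve only untwisted data and follow as in \cite{cm1}*{Proposition 4.6}, using that $\rho$ extended by \eqref{eq:rhovg} still satisfies the relevant identities on the ``$e, e^*, m_v$'' part. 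The relations that genuinely require the twist are \eqref{eq:cohn2} and the new relation \eqref{eq:mvg}; for \eqref{eq:cohn2}, evaluating both sides on a test element $\epsilon_{\gamma,\theta}\otimes y$ reduces, after using the cocycle identities of Lemmas \ref{lem:epextend} and \ref{lem:twistextend} and Remark \ref{rem:phic}, to an equality of products of values of $\phi$ and $c$ along edges. Next I must verify the multiplier axioms $L_x(ab)=L_x(a)b$, $R_x(ab)=aR_x(b)$, and the centralizer identity $R_x(a)b=aL_x(b)$ for $a,b\in\fA$; for the $\rho$-defined generators these reduce to associativity in $\Gamma^a_\cP$ acting on the first tensorand, and for $x=vg$ they follow from \eqref{eq:ggamma} and \eqref{eq:ggamma*} applied to the second tensorand.

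For (iii) it suffices by linearity to take $a=\alpha q_v g\beta^*$ with $v\in E^0$, $\alpha\in\cP_v$, $\beta\in\cP_{g^{-1}(v)}$, and $\fa=\epsilon_{\gamma,\theta}\otimes y$. On the one hand $\hat\varphi(a)=\epsilon_{\alpha,\beta}\otimes vg$, so $\hat\varphi(a)\fa=\delta_{\beta,\gamma}\epsilon_{\alpha,\theta}\otimes (vg)y$, and the last factor can be simplified using \eqref{eq:ggamma}. On the other hand, $a\cdot\fa$ is obtained by iterating the actions of the factors: the edges in $\alpha$ and $\beta^*$ act through $\rho$ as shift operators on the first tensor coordinate, $\rho(q_v)=\rho(v)-\rho(m_v)$ acts as projection onto the component indexed by the vertex $v$, and $g$ acts via \eqref{eq:lvg}. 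Combining gives exactly the same expression as $\hat\varphi(a)\fa$. The formula $\fa\cdot a=\fa\hat\varphi(a)$ is analogous, using \eqref{eq:rvg} and \eqref{eq:ggamma*}.

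The main obstacle is the bookkeeping in step (ii): the value $\rho(vg)$ from \eqref{eq:rhovg} carries its own cocycle factor $c(g,\alpha)$, while the direct formulas \eqref{eq:lvg} and \eqref{eq:rvg} carry $\phi_c$-factors arising from Remark \ref{rem:phic}, and one must ensure these two incarnations of the twist are consistent across all the relations and in all four multiplier-axiom checks. Once this is in place, (i) and (iii) are essentially routine expansions.
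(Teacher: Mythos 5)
Your proposal follows the same route as the paper's proof: part (i) via Proposition \ref{prop:cohnpres} and the product formula \eqref{prod:sge}, part (ii) by using that $\rho$ is a homomorphism to get the multiplier action of $C^m(E)$ and then extending to the generators $vg$ by checking the double-centralizer axioms and the relations involving $vg$ (the paper likewise defers these as ``long and tedious but straightforward''), and part (iii) by reducing to $a=q_vg$ through the factorization of $\hat{\varphi}(\alpha q_vg\beta^*)$ and the action of the edge factors on the first tensorand. You also correctly flag the one genuinely delicate point, namely reconciling the $c(g,\alpha)$ factor in $\rho(vg)$ with the $\phi_c$ factors in \eqref{eq:lvg}--\eqref{eq:rvg}.
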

\begin{proof} Part i) follows from Proposition \ref{prop:cohnpres} using the product formula \eqref{prod:sge} and the definition of $\fA$. Next observe that because $\rho$ is an algebra homomorphism, \eqref{eq:lxrx} defines a homomorphism $C^m(E)\to \cM(\fA)$, $x\mapsto (L_x,R_x)$ where $L_x$ and $R_x$ are left and right multiplication by $\phi(x)\otimes 1$. To prove ii) one shows that this homomorphism extends to $C^m(G,E)$, by sending $vg\in E^0G$ to the pair $(L_{vg},R_{vg})$ of left and right multiplication operators defined by \eqref{eq:lvg} and \eqref{eq:rvg}. This entails first checking that for each $vg\in E^0G$, $(L_{vg},R_{vg})$ is a double centralizer, and then that the relations \eqref{eq:cohn1} and \eqref{eq:cohn2} involving $vg$ are satisfied by $(L_{vg},R_{vg})$ for $vg\in E^0G$. These verifications are long and tedious but straightforward. To prove part iii), observe that if 
$\alpha,\beta$ are paths, and $v\in E^0$ and $g\in G$ are such that $r(\alpha)=v=g(r(\beta))$, then
\begin{align*}
\hat{\phi}(\alpha q_vg\beta^*)=&\epsilon_{\alpha,\beta}\otimes vg\\
=&(\phi(\alpha)\otimes 1)(\epsilon_{v,v}\otimes vg)(\phi(\beta^*)\otimes 1).
\end{align*}
Hence in view of \eqref{eq:lxrx} it suffices to prove iii) for $a=q_vg$ and $\fa=\epsilon_{\gamma,\theta}\otimes\alpha h\beta^*$. This again is a straightforward calculation.  
\end{proof}

\noindent{\em Proof of Theorem \ref{thm:cohnkk}.} We indicate how to modify the proof of \cite{cm1}*{Theorem 4.2} to the present setting. By the argument of Proposition \ref{prop:kkkge}, the algebra homomorphism 
$\hat{\iota}:\ell^{(E^0)}\rtimes G\to \hat{\cK}(G,E)$ is a $kk_{G_{\gr}}$-equivalence, and thus also a $kk$-equivalence. Let
\[
\xi:C(G,E)\to C^m(G,E),\ \ \xi(vg)=m_vg,\ \xi(e)=e m_{r(e)},\ \xi(e^*)=m_{r(e)}e^*.
\]
The map $\xi$ together with the canonical map $\can:C(G,E)\to C^m(G,E)$ form a quasi-homomorphism 
\[
(\can,\xi):C(G,E)\rightrightarrows C^m(G,E)\vartriangleright \hat{\cK}(G,E)
\]
and the argument of \cite{cm1}*{part I of the proof of Theorem 4.2} shows that $j(\hat{\iota})^{-1}\circ j(\can,\xi)$ is left inverse to $\varphi$. Next define
\begin{gather*}
\hat{\iota}_\tau:C(G,E)\to M_{\cP}C(G,E),\ \hat{\iota}_\tau(\alpha g\beta^*)=\epsilon_{s(\alpha),s(\beta)}\otimes \alpha g\beta^*,
\end{gather*}
Remark that 
$$
\hat{\varphi}(q_vg)=\epsilon_{v,g^{-1}(v)}\otimes v g=\hat{\iota}_\tau(vg).
$$
It follows that the analogue of \cite{cm1}*{Diagram 4.16} commutes. Next, we show that $\hat{\iota}_\tau$ is a $kk$-equivalence. As in the proof of Proposition \ref{prop:stable}, we consider the $G$-set with one added fixed point $\cP_\bu=\cP\sqcup\{\bu\}$. Since the inclusion $\inc:M_{\cP}C(G,E)\subset M_{\cP_\bu} C(G,E)$ is a $kk$-equivalence, it suffices to show that the composite $\inc\circ\hat{\iota}_\tau$ is one. For this purpose we consider, for each $v\in E^0$, the matrices $A_v,B_v\in M_{\cP_\bu} C(G,E)[t]$ defined just as in the proof of \cite{cm1}*{Lemma 4.17}, but with $\bu$ substituted for what is called $w$ in the cited reference. One checks that the following prescriptions define a homotopy $H:C(G,E)\to M_{\cP_\bu} C(G,E)[t]$ between $\inc\circ\hat{\iota}_\tau$ and $\iota_\bu$
\begin{gather*}
H(vg)=A_v(\epsilon_{v,g^{-1}(v)}\otimes vg)B_{g^{-1}(v)},\ \ H(e)=A_{s(e)}(\epsilon_{s(e),r(e)}\otimes e)B_{r(e)},\\
H(e^*)=A_{r(e)}(\epsilon_{r(e),s(e)}\otimes e^*)B_{s(e)}.    
\end{gather*}

We have thus come to \cite{cm1}*{Proof of Theorem 4.2, part II}. By Lemma \ref{lem:fa}, we can form the semi-direct product $C^m(G,E)\ltimes \fA$ and consider the $\ell$-submodule 
\[
C^m(G,E)\ltimes \fA\supset J:=\{(x,-\hat{\varphi}(x)): x\in\hat{\cK}(G,E)\}.
\]
Using the fact that $\hat{\varphi}$ is a $*$-homomorphism and part iii) of Lemma \ref{lem:fa}, we obtain that $J$ is a two-sided ideal. Write $D=C^m(G,E)/J$; it is clear that the canonical homomorphisms $\fA\to D\leftarrow C^m(G,E)$ are injective; write $\Upsilon:C^m(G,E)\to D$ for the latter map. We may thus regard $\fA$ as an ideal and $C^m(G,E)$ as a subalgebra of $D$. We can now proceed on to \cite{cm1}*{Proof of Theorem 4.2, part III}. We define homomorphisms $\psi_0,\psi_{1/2},\psi_1:C(G,E)\to D$ just as in the cited reference, with $\xi$, $\can$, $\hat{\iota}_\tau$ and $\Upsilon$ as defined above, so that we have quasi-homomorphisms $(\psi_0,\psi_1),(\psi_0,\psi_{1/2}),(\psi_{1/2},\psi_1):C(G,E)\rightrightarrows D\vartriangleright\fA$. Next, we need an analogue of  \cite{cm1}*{Lemma 4.21}, proving that $j(\psi_0,\psi_{1/2})=0$. One checks that the following prescriptions define an algebra homomorphism  $H^+:C(G,E)\to D[t]$:
\begin{gather*}
H^+(vg)=(m_v g, \epsilon_{v,g^{-1}(v)}\otimes vg), \\ 
H^+(e)=(em_{r(e)},(1-t^2)\epsilon_{s(e),r(e)}\otimes e+t\epsilon_{e,r(e)}\otimes r(e)),\\ H^+(e^*)=(m_{r(e)}e^*,(1-t^2)\epsilon_{r(e),s(e)}\otimes e^*+(2t-t^3)\epsilon_{r(e),e}\otimes r(e)),
\end{gather*}
and that $(H,\psi_{1/2}):C(G,E)\rightrightarrows D[t]\vartriangleright \fA[t]$ is a homotopy $$(\psi_0,\psi_{1/2})\to (\psi_{1/2},\psi_{1/2}).$$ The rest of the proof now proceeds just as in \cite{cm1}.
\qed

\bigskip

\section{Twisted Katsura algebras in \topdf{$kk$}{kk}}\label{sec:kkat}
\numberwithin{equation}{section}

 Let $E$ be a row-finite graph, $A=A_E$ and $\n:E^1\to \N_0$ as in Section \ref{sec:kat}.  Let $v,w\in E^0$ such that $A_{v,w}\ne 0$, and set
\[
m_{v,w}=\sum_{e\in vE^1w}ee^*\in C(\Z,E,\phi_c).
\]
For $0\le i\le A_{v,w}-1$, let $e_i=\n_{v,w}^{-1}(i)\in vE^1w$. Consider the element
\[
u_{v,w}=e_0te^*_{A_{v,w}-1}+\sum_{i=0}^{A_{v,w}-2}e_{i+1}e_i^*\in C(\Z,E,\phi_c).
\]
\begin{lem}\label{lem:uvw}
\item[i)] 
\[
u_{v,w}^{B_{v,w}}=(-1)^{(A_{v,w}-1)B_{v,w}}C_{v,w}^{-1}t\cdot(e_0e_0^*)+\sum_{i=1}^{A_{v,w}-1}t\cdot (e_ie_i^*)
\]
\item[ii)] Let $F\subset E$ be a finite complete subgraph containing $\{v\}\cup r(s^{-1}(\{v\}))$; set $$1_F=\sum_{w\in F^0}w\in C(\Z,E,\phi_c).$$ 
Then the following identity holds in $K_1(C(\Z,F,\phi_c))$
\[
[1_F-m_{v,w}+u_{v,w}]=[1_F-w+(-1)^{(A_{v,w}-1)}wt].
\]
\end{lem}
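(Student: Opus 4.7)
My plan for part (i) is direct computation. Writing $n = A_{v,w}$ and using $u_{v,w} m_{v,w} = m_{v,w} u_{v,w} = u_{v,w}$, it suffices to compute $u_{v,w}^{B_{v,w}} (e_i e_i^*)$ for each $0 \le i \le n-1$. A short induction on $k \ge 0$, using $u_{v,w} e_j = e_{j+1}$ for $0 \le j \le n-2$ and $u_{v,w} e_{n-1} = e_0 t w = e_0 t$ (the last via $wt = tw$, which holds because $\Z$ acts trivially on $E^0$ and $F^0$ is finite, so $t$ lifts to $\sum_{u \in F^0} ut \in C(\Z, F, \phi_c)$), yields $u_{v,w}^{k} e_i = e_{\overline{i+k}}\, t^{q}$, where $i+k = qn + \overline{i+k}$ with $0 \le \overline{i+k} < n$. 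Setting $k = B_{v,w}$ and comparing with the Exel-Pardo computation $t e_i = \sigma(e_i)\, c(t, e_i)\, t^{\psi_{v,w}(1, i)} = e_{\overline{i + B_{v,w}}}\, c(t, e_i)\, t^{\psi_{v,w}(1, i)}$ (from the Cohn relation $(vg)e = \delta_{v, g(s(e))}\, g(e)\, \phi_c(g, e)$ with $g = t$) gives $u_{v,w}^{B_{v,w}} (e_i e_i^*) = c(t, e_i)^{-1}\, t\, (e_i e_i^*)$. Summing over $i$ and substituting $c(t, e_i) = 1$ for $i \ne 0$ and $c(t, e_0) = (-1)^{(n-1) B_{v,w}} C_{v,w}$ from \eqref{map:katwist} yields the stated formula.

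For part (ii), my plan is to invoke Morita equivalence. First I verify that $u_{v,w}$ is a unit in the corner $m_{v,w}\, C(\Z, F, \phi_c)\, m_{v,w}$, with explicit inverse $u_{v,w}^{-1} = e_{n-1}\, t^{-1}\, e_0^* + \sum_{i=0}^{n-2} e_i\, e_{i+1}^*$. The orthogonal idempotents $e_i e_i^*$ with $e_i^* e_i = w$ make $\{e_i e_j^*\}$ matrix units giving an isomorphism $m_{v,w}\, C(\Z, F, \phi_c)\, m_{v,w} \cong M_n(w\, C(\Z, F, \phi_c)\, w)$ via $e_i x e_j^* \leftrightarrow \epsilon_{i,j} \otimes (e_i^* x e_j)$. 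Since $wt \in w\, C(\Z, F, \phi_c)\, w$, rewriting $e_0 t e_{n-1}^* = e_0 (wt) e_{n-1}^*$ shows that the image $U$ of $u_{v,w}$ has nonzero entries $U_{i+1, i} = w$ for $0 \le i \le n-2$ and $U_{0, n-1} = wt$.

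I then factor $U = P D$ with $P$ the cyclic permutation matrix for the $n$-cycle $(0\ 1 \cdots n-1)$ and $D = \diag(w, \dots, w, wt)$. In $K_1(w\, C(\Z, F, \phi_c)\, w)$ this gives $[U] = [P] + [D] = [(-1)^{n-1}] + [wt] = [(-1)^{n-1} wt]$; the identity $[P] = [(-1)^{n-1}]$ is the standard computation obtained by decomposing $P$ into $n-1$ transpositions and reducing each $2 \times 2$ swap to $\diag(-1, 1)$ modulo elementary matrices. To conclude, the corner-inclusion-induced maps $K_1(w\, C(\Z, F, \phi_c)\, w) \to K_1(C(\Z, F, \phi_c))$ and $K_1(m_{v,w}\, C(\Z, F, \phi_c)\, m_{v,w}) \to K_1(C(\Z, F, \phi_c))$ have the form $[a] \mapsto [1_F - p + a]$ (with $p$ the relevant idempotent); they are compatible with the Morita isomorphism because $w$ and $e_0 e_0^* \le m_{v,w}$ are Murray--von Neumann equivalent via $e_0$. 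Chasing yields $[1_F - m_{v,w} + u_{v,w}] = [1_F - w + (-1)^{n-1} wt]$.

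The main technical obstacle is, in part (i), tracking how the twist $c(t, e_0) = (-1)^{(n-1)B_{v,w}} C_{v,w}$ gets absorbed when $t$ is moved past $e_0$ via the EP relations (the twists $c(t, e_i)$ for $i \ne 0$ being trivial by \eqref{map:katwist}); in part (ii), the delicate step is the factorization $U = PD$ and the classical identification of a cyclic $n$-permutation's $K_1$-class as $(-1)^{n-1}$.
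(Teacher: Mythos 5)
Your proof is correct and follows essentially the same route as the paper: part (i) by computing $u_{v,w}^k e_i = e_{\overline{i+k}}\,t^{q(i+k)}$ and comparing with the twisted relation $te_i = e_{\overline{i+B_{v,w}}}\,c(t,e_i)\,t^{\psi_{v,w}(1,i)}$, and part (ii) by identifying the corner $m_{v,w}C(\Z,F,\phi_c)m_{v,w}$ with $M_{A_{v,w}}(wC(\Z,F,\phi_c)w)$ and factoring the image of $u_{v,w}$ as a cyclic permutation matrix times a diagonal matrix with determinant class $(-1)^{A_{v,w}-1}wt$. The only point to watch (which the paper elides as well) is that $B_{v,w}$ may be negative, so the induction giving $u_{v,w}^k e_i$ should also be carried out for negative $k$ using the explicit inverse $u_{v,w}^{-1}$ you wrote down.
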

\begin{proof}

For $m\in\Z$, let $q(m)$ and $\bar{m}$ be the quotient and the remainder under division by $A_{v,w}$. One checks that for any $n\in\N$
\begin{equation}\label{eq:uvwn}
u_{v,w}^n=\sum_{i=0}^{A_{v,w}-1}e_{\ol{i+n}}t^{q(n+i)}e_i.
\end{equation}
Use \eqref{eq:uvwn} at the first step and \eqref{map:katwist} at the second to obtain, for $\psi=\psi_{w,v}$,
\[
u_{v,w}^{B_{v,w}}=\sum_{e\in vE^1w}t(e)t^{\psi(t,e)}e^*=(-1)^{(A_{v,w}-1)B_{v,w}}C_{v,w}^{-1}t\cdot(e_0e_0^*)+\sum_{i=1}^{A_{v,w}-1}t\cdot (e_ie_i^*).
\]
This proves i). Next observe that left multiplication by $u_{v,w}$ defines an automorphism of the projective module $\bigoplus_{i=0}^{A_{v,w}-1}e_ie_i^*C(\Z,F,\phi_c)$. The latter is isomorphic to $P=wC(\Z,F,\phi_c)^{A_{v,w}}$ under the isomorphism defined as left multiplication by $e_i^*$ on the $i$-th summand. Under this isomorphism, $u_{v,w}$ corresponds to the automorphism of $P$ represented by the following matrix with coefficients in $R=wC(\Z,F,\phi_c)w$
\[
\begin{bmatrix} 0&0&\cdots &0&tw\\
                w&0&\cdots &0&0\\
								0&w&\cdots &0&0\\
								\vdots&\vdots& &\vdots&\vdots\\
								0&\cdots&\cdots&w&0
\end{bmatrix}=\begin{bmatrix}tw&0&0&\dots&0\\
                             0& w&0&\dots&0\\
														 0&0&w&\dots&0\\
														 \vdots&\vdots&&\ddots&\vdots\\
														 0&0&0&\cdots &w\end{bmatrix}\cdot \begin{bmatrix} 0&0&\cdots &0&w\\
                w&0&\cdots &0&0\\
								0&w&\cdots &0&0\\
								\vdots&\vdots&\ddots &\vdots&\vdots\\
								0&\cdots&\cdots&w&0\end{bmatrix}
\]

The second matrix is a cyclic permutation matrix with determinant $(-1)^{A_{v,w}-1}w$. Hence the product above is mapped to the class of $(-1)^{A_{v,w}-1}wt$ in $K_1(R)$ and to that of $1-w+(-1)^{A_{v,w}-1}wt$ in $K_1(C(\Z,F,\phi_c))$. Assertion ii) of the lemma is immediate from this. 
\end{proof}
Let $E$ be a graph. We say that a homology theory $H:\aha\to\cT$ is \emph{$E$-stable} if it is $M_X$-stable with respect to a set $X$ of cardinality $\#(E^0\coprod E^1\coprod\N)$. Let $I$ be a set. We say that $H$ is \emph{$E^0$-additive} if first of all direct sums of cardinality $\le \#I$ exist in $\cT$ and second of all the map
\[
\bigoplus_{j\in J}H(A_j)\to H(\bigoplus_{j\in J}A_j)
\]
is an isomorphism for any family of algebras $\{A_j:j\in J\}\subset\aha$ with $\#J\le\# I$. 
\begin{thm}\label{thm:katkkh}
Let $E$ be a row finite graph with reduced incidence matrix $A$,  $B\in \Z^{\reg(E)\times E^0}$ and $C\in \cU(\ell)^{(\reg(E)\times E^0)}$ such that $A_{v,w}=0\Rightarrow B_{v,w}=0$, $C_{v,w}=1$. Let $\cT$ be a triangulated category and $H:\aha\to\cT$ an excisive, homotopy invariant, $E$-stable and $E^0$ additive functor. Consider the matrix
\[
D=\begin{bmatrix} A & 0\\ C & B\end{bmatrix}
\]
Set $C^*\in\cU(\ell)^{E^0\times \reg(E)}$, $C^*_{v,w}=C_{w,v}^{-1}$. Put 
\[
D^*=\begin{bmatrix} A^t & C^*\\ 0 & B^t\end{bmatrix}
\]
Then the Cohn extension of \eqref{intro:cohnext} induces the following distinguished triangle in $\cT$
\[
H(\ell)^{(\reg(E))}\oplus H(\ell)[-1]^{(\reg(E))}\overset{I-D^*}\lra H(\ell)^{(E^0)}\oplus H(\ell)[-1]^{(E^0)}\to H(\cO_{A,B}^C)
\]
\end{thm}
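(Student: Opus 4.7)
The plan is to apply $H$ to the Cohn extension \eqref{intro:cohnext} for the twisted Exel-Pardo tuple $(\Z, E, \phi_c)$ associated to $(A, B, C)$, and to identify both the outer terms and the connecting map. By excisiveness of $H$, the extension yields a distinguished triangle
\[
H(\cK(\Z, E, \phi_c)) \xrightarrow{f} H(C(\Z, E, \phi_c)) \to H(\cO_{A,B}^C).
\]
Proposition \ref{prop:kkkge} and Theorem \ref{thm:cohnkk} provide $kk$-equivalences of the first two algebras with $\ell^{(\reg(E))} \rtimes \Z$ and $\ell^{(E^0)} \rtimes \Z$ respectively. Since $\Z$ acts trivially on $E^0$ in the Katsura setup, both crossed products are just tensor products with $L_1 = \ell[t, t^{-1}]$. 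By universality of $kk$ combined with the $E$-stability and $E^0$-additivity hypotheses, these equivalences descend to $H$-equivalences in $\cT$.

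Next, the split extension $\sigma \hookrightarrow L_1 \xrightarrow{ev_1} \ell$ together with $H(\sigma) \cong H(\ell)[-1]$ give the canonical decomposition $H(L_1) \cong H(\ell) \oplus H(\ell)[-1]$. Combined with $E^0$-additivity this yields, for $X \in \{\reg(E), E^0\}$, the isomorphism
\[
H(\ell^{(X)} \otimes L_1) \cong H(\ell)^{(X)} \oplus H(\ell)[-1]^{(X)},
\]
matching the two left-hand terms of the asserted triangle.

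The heart of the proof is computing $f$ as the $2 \times 2$ block matrix $I - D^*$. Under the above equivalences, $f$ corresponds to the map sending $\chi_v \otimes t^n$ to $q_v t^n$, and its four blocks are obtained by pre- and post-composing with the inclusions and projections coming from the splitting $L_1 \cong \ell \oplus \sigma$. The upper-left block is controlled by the $K_0$-class $[q_v] = [v] - \sum_w A_{v,w} [w]$, which in turn rests on the Cohn identity $[e e^*] = [r(e)]$; this gives $I - A^t$. The lower-left block vanishes because the Murray-von Neumann equivalences $ee^* \sim r(e)$ represent $[q_v]$ purely in the ``constant'' summand $H(\ell^{(E^0)}) \subset H(\ell^{(E^0)} \otimes L_1)$, so the projection onto the $\sigma$-summand is zero.

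The remaining two blocks, the diagonal contribution $B^t$ and the off-diagonal contribution $C^*$ to $D^*$, capture the interaction of multiplication by $t \in L_1$ with the Cohn projections and the Katsura twist \eqref{map:katwist}. The key input is Lemma \ref{lem:uvw}: part (i) identifies $u_{v,w}^{B_{v,w}}$ with $t \cdot m_{v,w}$ up to the monodromy factor $(-1)^{(A_{v,w}-1)B_{v,w}} C_{v,w}^{-1}$ on a single summand, and part (ii) packages this into the $K_1$-identity $[1_F - m_{v,w} + u_{v,w}] = [1_F - w + (-1)^{A_{v,w}-1} wt]$. Separating the resulting $K_1$-class into its ``winding'' part (contributing $B_{v,w}$ to the lower-right block) and its ``unit'' part (contributing $C_{v,w}^{-1} = C^*_{w,v}$ to the upper-right block), and summing over $w$, yields the claimed matricial form. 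The main obstacle is precisely this matricial bookkeeping: converting the $K_1$-identities of Lemma \ref{lem:uvw} into the entries of $D^*$ under the split decomposition of $L_1$, while carefully tracking all signs introduced by $\phi_c$. Once $f = I - D^*$ is established, excision of $H$ assembles everything into the claimed distinguished triangle.
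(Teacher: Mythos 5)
Your proposal follows essentially the same route as the paper's proof: apply $H$ to the Cohn extension, identify the outer terms via Proposition \ref{prop:kkkge} and Theorem \ref{thm:cohnkk} together with the splitting $H(L_1)\cong H(\ell)\oplus H(\ell)[-1]$, compute the left column of blocks as $I-A^t$ over $0$ by the untwisted Leavitt-path argument, and extract the $B^t$ and $C^*$ blocks from the $K_1$-identities of Lemma \ref{lem:uvw}. The only thing left implicit is the explicit verification of the identity $[1_F-m_v+m_vt]=[\prod_{w}(1_F-w+wt)^{B_{v,w}}(1_F-w+C_{v,w}w)]$, which is exactly the ``matricial bookkeeping'' you flag and which the paper carries out in equations \eqref{eq:ellambda}--\eqref{eq:rela}.
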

\begin{proof}
Consider the Laurent polynomial algebra $L_1=\ell[t,t^{-1}]$. By Proposition \ref{prop:stable} and Theorem \ref{thm:cohnkk} we have a triangle in $kk$ of the form
\[
j(L_1^{(\reg(E))})\overset{f}\lra j(L_1^{(E^0)})\to j(\cO_{A,B}^C).
\]
Observe that $L_1$ is the Leavitt path algebra of the graph consisting of a single loop, hence by \cite{cm1}*{Theorem 5.4} we have $j(L_1\otimes R)=j(R)\oplus j(R)[-1]$ for every $R\in
\aha$. For each $v\in E^0$, $w\in \reg(E)$ and $i\in\{0,1\}$, let 
\[
\iota_{v,i}:j(\ell)[-i]\to j(\ell^{\reg(E)})[-i]\subset j(\ell^{\reg(E)})\oplus j(\ell^{\reg(E)})[-1]
\] 
be the inclusion and
\[
p_{v,i}:j(\ell^{(E^0)})\oplus j(\ell^{(E^0)})[-1]\to j(\ell)[-i]
\]
the projection in/onto the $(v,i)$-summand. Because $H$ is homotopy invariant, excisive, and $E$-stable, it factors as $H=\overline{H}\circ j$ with $\overline{H}$ a triangulated functor. Further, in view of the additivity hypothesis on $H$, $H(f)=\ol{H}(M)$ where $M$ is a matrix indexed by $(E^0\times\{0,1\})\times (\reg(E)\times\{0,1\})$, with coefficients
\[
M_{(w,i),(v,j)}=p_{(w,i)}\circ f\circ \iota_{(v,j)}\in kk(\ell[-j],\ell[-i]).
\]
The argument of \cite{cm2}*{Proposition 5.2} shows that 
\[
M_{(w,i),(v,0)}=\delta_{i,0}(\delta_{w,v}-A_{v,w})=D_{(v,0),(w,i)}.
\] 
To compute the coefficients $M_{(w,i),(v,1)}$, proceed as follows. Observe that if $F\subset E$ is any finite complete subgraph containing $s^{-1}\{v\}$, then 
\[
1_F-q_v+q_vt=(1_F-v+vt)(1_F-m_v+m_vt)^{-1}.
\]
Hence it suffices to establish that the following identity holds in $K_1(C(\Z,F,\phi_c))$ for any finite subgraph $F\subset E$ as above
\begin{equation}\label{eq:aproba}
[1_F-m_v+m_vt]=[\prod_{w\in F^0}(1_F-w+wt)^{B_{v,w}}(1_F-w+C_{v,w}w)]. 
\end{equation}\
Thus we may assume that $E=F$ is finite. Let $v,w\in E^0$ such that $A_{v,w}\ne 0$. 
Using the isomorphism $wC(\Z,F,\phi_c)^{A_{v,w}}\cong \bigoplus_{e\in s^{-1}\{v\}}ee^*C(\Z,F,\phi_c)$ as in the proof of Lemma \ref{lem:uvw}, we see that, with notation as in said lemma, for any $\lambda\in \cU(\ell)$ and $1\le i\le A_{v,w}-1$, we have 
\begin{equation}\label{eq:ellambda}
[1-w+\lambda w]=[1-m_{v,w}+\lambda e_ie_i^*+\sum_{j\ne i}e_je_j^*]\in K_1(C(\Z,F,\phi_c)).
\end{equation}
Set $\sigma_{v,w}=1-w+(-1)^{(A_{v,w}-1)}w$. By Lemma \ref{lem:uvw}
\begin{gather}\label{eq:eluvw}
 [(1-w+wt)^{B_{v,w}}]=[\sigma_{v,w}^{B_{v,w}}(1-m_{v,w}+u_{v,w})^{B_{v,w}}]\\
 =[\sigma_{v,w}^{B_{v,w}}(1-m_{v,w}+(-1)^{(A_{v,w}-1)B_{v,w}}C_{v,w}^{-1}t\cdot(e_0e_0^*)+\sum_{i=1}^{A_{v,w}-1}t\cdot(e_ie_i^*))]\nonumber   
\end{gather}
Next use \eqref{eq:ellambda} and \eqref{eq:eluvw} to compute that, in $K_1(C(\Z,F,\phi_c))$, we have
\begin{gather}\label{eq:rela}
[\prod_{w\in E^0}(1-w+wt)^{B_{v,w}}(1-w+C_{v,w}w)]=\\
[\prod_{w\in E^0}\sigma_{v,w}^{B_{v,w}}(1-m_{v,w}+t\cdot((-1)^{(A_{v,w}-1)B_{v,w}}(e_0e_0^*)+\sum_{i=1}^{A_{v,w}-1}e_ie_i^*))]=\nonumber\\
[\prod_{w\in E^0}(1-m_{v,w}+t\cdot m_{v,w})]=[1-m_v+t\cdot m_v]=[1-m_v+m_vt].\nonumber
\end{gather}
\end{proof}
Next we introduce notation and vocabulary that are used in the next two corollaries of Theorem \ref{thm:katkkh}. Consider the semi-direct product
\begin{equation}\label{def:w}
    \fW=\Z\ltimes \ell^*
\end{equation}
Recall that the cup-product makes $K_{\ge 0}(\ell)=\bigoplus_{n\ge 0}K_n(\ell)$ into a graded commutative ring, and $K_{\ge  n}(\ell)$ into an ideal for every $n\in\N_0$. 
Observe that the canonical map
\[
\fW\to K_0(\ell)\oplus K_1(\ell)=K_{\ge 0}(\ell)/K_{\ge 2}(\ell)
\]
is a ring homomorphism. Thus for every unital $R\in\aha$ and $n\in\Z$, the cup product  makes $K_n(R)\oplus K_{n-1}(R)$ into a graded $\fW$-module.
We shall also consider the following $\fW$-modules. 
\begin{align}\label{eq:bfabc}
\BF(A,B,C)=&\coker(I-D^*:\fW^{\reg(E)}\to \fW^{E^0})\\
{\vBF}(A,B,C)=&\coker(I-D:\fW^{E^0}\to \fW^{\reg(E)})\nonumber.
\end{align}
\begin{coro}\label{coro:katkh}
Let $R\in\aha$ and $n\in\Z$. If $R$ is flat over $\ell$, then there is an exact sequence
\begin{multline*}
0\to \BF(A,B,C)\otimes_{\fW}(KH_{n}(R)\oplus KH_{n-1}(R))\to KH_n(\cO_{A,B}^C\otimes R)\\
\to \ker((I-D^*)\otimes_\fW(KH_{n-1}(R)\oplus KH_{n-2}(R))\to 0.    
\end{multline*}

\end{coro}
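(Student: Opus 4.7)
My plan is to specialize Theorem \ref{thm:katkkh} to the homology theory $H:\aha\to\cT$ given by $H(A)=KH_*(A\otimes_\ell R)$, with values in the derived category of graded abelian groups. Homotopy invariance, matricial stability, and $E^0$-additivity of $H$ are inherited from the corresponding properties of homotopy $K$-theory, which is matricially stable and commutes with countable direct sums. The flatness hypothesis on $R$ enters only to ensure excisiveness: for flat $R$ the functor $-\otimes_\ell R$ preserves algebra extensions, so its composition with the excisive functor $KH_*$ remains excisive.

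Applying Theorem \ref{thm:katkkh} to this $H$ then produces a distinguished triangle in $\cT$ whose associated long exact sequence in degree $n$ has the form
\[
\cdots\to M_n^{(\reg(E))}\xrightarrow{I-D^*_n}M_n^{(E^0)}\to KH_n(\cO_{A,B}^C\otimes R)\to M_{n-1}^{(\reg(E))}\xrightarrow{I-D^*_{n-1}}M_{n-1}^{(E^0)}\to\cdots,
\]
where $M_k:=KH_k(R)\oplus KH_{k-1}(R)$ (the drop of one degree coming from the $[-1]$-shift is fixed by the convention $\hom_{kk}(j(\ell)[-k],j(\ell))=KH_k(\ell)$). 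Cutting this long exact sequence at degree $n$ immediately gives the short exact sequence
\[
0\to\coker(I-D^*_n)\to KH_n(\cO_{A,B}^C\otimes R)\to\ker(I-D^*_{n-1})\to 0.
\]

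The remaining task is to rewrite the outer terms as the $\fW$-tensor expressions appearing in the statement. The key point is that $M_k$ carries a $\fW$-module structure with the integer part acting diagonally and with $\lambda\in\ell^*$ sending $(x,y)$ to $(\lambda\cup y,0)$, squaring to zero as demanded by $\fW=\Z\ltimes\ell^*$. Because the $A^t$- and $B^t$-blocks of $D^*$ have entries in $\Z\subset\fW$ while the $C^*$-block has entries in $\ell^*\subset\fW$ (via the ring maps $\Z\to KH_0(\ell)$ and $\cU(\ell)\to KH_1(\ell)$ used in Theorem \ref{thm:katkkh}), the map $I-D^*_k$ is precisely $(I-D^*)\otimes_\fW\id_{M_k}$ under the canonical identification $M_k^{(X)}=\fW^{(X)}\otimes_\fW M_k$. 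Right exactness of $-\otimes_\fW M_n$ applied to the defining presentation $\fW^{(\reg(E))}\xrightarrow{I-D^*}\fW^{(E^0)}\to\BF(A,B,C)\to 0$ then identifies the cokernel with $\BF(A,B,C)\otimes_\fW M_n$, while the kernel term is already in the required form $\ker((I-D^*)\otimes_\fW M_{n-1})$. The main technical point will be confirming that the cup-product action of coefficients from $\Z$ and $\ell^*$ on $KH_*(R)$ indeed reproduces the matrix action of $I-D^*$ degree-by-degree, but this is essentially built into Theorem \ref{thm:katkkh} through the naturality of the two canonical ring homomorphisms above.
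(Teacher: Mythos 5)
Your proposal is correct and follows essentially the same route as the paper, which applies Theorem \ref{thm:katkkh} to the spectrum-valued functor $KH(-\otimes R):\aha\to\ho(\spt)$ (using flatness of $R$ exactly as you do, to keep the composite excisive) and then reads off the short exact sequences of the corollary from the long exact sequence of stable homotopy groups. The only cosmetic difference is your choice of target category: the triangulated input to the theorem should be the homotopy category of spectra, with $KH_*$ extracted afterwards by taking homotopy groups, rather than a ``derived category of graded abelian groups''; your subsequent identification of the cokernel with $\BF(A,B,C)\otimes_\fW(KH_n(R)\oplus KH_{n-1}(R))$ via right exactness of the tensor product is exactly what the paper leaves implicit.
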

\begin{proof} The homotopy $K$-theory spectrum defines a functor $KH:\aha\to\ho(\spt)$ to the homotopy category of spectra. Applying Theorem \ref{thm:katkkh} to the functor $KH(-\otimes R)$ and taking stable homotopy groups one gets a long exact sequence that can be expressed as the collection of the exact sequences of the corollary  for $n\in\Z$.
\end{proof}

\begin{rem}\label{rem:katkh}
The flatness of $R$ is needed in Corollary \ref{coro:katkh} to guarantee that $\otimes R$ preserves algebra extensions and so extends to a triangulated functor $kk\to kk$. One may also consider a variant of $j$, $j^s:\aha\to kk^s$  that is excisive only with respect to those extensions that admit an $\ell$-linear splitting, so that $\otimes R$ extends to $kk$ for any $R$. The analogue of Corollary \ref{coro:katkh} then holds for $j^s$ whenever the Cohn extension is $\ell$-linearly split, e.g. when the conditions of Corollary \ref{coro:basisunion} are satisfied.
\end{rem}

\begin{coro}\label{coro:katkkh}
Assume that $E^0$ is finite. Then there is a distinguished triangle in $kk$
\[
j(\ell)^{\reg(E)}\oplus j(\ell)[-1]^{reg(E)}\overset{I-D^*}\lra j(\ell)^{E^0}\oplus j(\ell)[-1]^{E^0}\to j(\cO_{A,B}^C).
\]
\end{coro}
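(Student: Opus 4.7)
The plan is to derive the corollary as an immediate specialization of Theorem \ref{thm:katkkh} applied to the universal functor $H = j\colon \aha \to kk$ itself, once the hypotheses of the theorem are verified in the present setting.

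First I would record that $j$ is, by construction (see Subsection \ref{subsec:prelikk}), excisive, homotopy invariant, and matricially stable. Under the standing hypothesis that $E$ is row-finite (imposed throughout Section \ref{sec:kat}) together with the new assumption that $E^0$ is finite, the edge set $E^1$ is also finite. Hence the set $E^0 \coprod E^1 \coprod \N$ that governs $E$-stability has countable cardinality, so $E$-stability of $j$ is a consequence of $M_\N$-stability, which $j$ possesses.

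The remaining hypothesis is $E^0$-additivity. Since $E^0$ is finite, this reduces to the preservation of finite direct sums. This is automatic for any excisive functor into an additive category: the split algebra extension $0 \to \ell \to \ell^{n+1} \to \ell^n \to 0$ yields a split distinguished triangle in $kk$, and induction gives $j(\ell^{E^0}) \cong j(\ell)^{E^0}$, and similarly $j(\ell^{\reg(E)}) \cong j(\ell)^{\reg(E)}$ since $\reg(E) \subset E^0$ is finite as well.

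With all hypotheses of Theorem \ref{thm:katkkh} verified for $H = j$, the theorem applied to $\cT = kk$ produces exactly the distinguished triangle claimed in the corollary; the superscript $(\reg(E))$ and $(E^0)$ direct sums of that theorem coincide with the finite biproducts in the statement because both index sets are finite. I do not foresee any genuine obstacle: the only non-formal input is Theorem \ref{thm:katkkh}, and the verification of its hypotheses for $j$ amounts to the bookkeeping described above.
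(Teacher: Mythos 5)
Your proposal is correct and follows exactly the paper's route: the paper's proof is the single line ``Apply Theorem \ref{thm:katkkh} to the functor $j:\aha\to kk$,'' and your verification of $E$-stability and finite additivity for $j$ is just the (sound) bookkeeping the paper leaves implicit.
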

\begin{proof} Apply Theorem \ref{thm:katkkh} to the functor $j:\aha\to kk$.
\end{proof}

\begin{coro}
Let $R\in\aha$. There is an exact sequence
\begin{multline*}
0\to{\vBF}(A,B,C)\otimes_{\fW}(KH_{2}(R)\oplus KH_1(R))\to kk(\cO_{A,B}^C,R)\\
\to\hom_{\fW}(\BF(A,B,C),KH_1(R)\oplus KH_0(R))\to 0.    
\end{multline*}

\end{coro}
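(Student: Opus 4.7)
The plan is to apply the cohomological functor $kk(-,R)$ to the distinguished triangle of Corollary \ref{coro:katkkh} and extract the desired short exact sequence by identifying the kernel and cokernel of the induced maps with the $\hom_\fW$ and $\otimes_\fW$ expressions in the statement.

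Applying $kk(-,R)$ to the triangle
\[
j(\ell)^{\reg(E)}\oplus j(\ell)[-1]^{\reg(E)}\xrightarrow{I-D^*} j(\ell)^{E^0}\oplus j(\ell)[-1]^{E^0}\to j(\cO_{A,B}^C),
\]
using $kk(j(\ell)[-n],R)=KH_n(R)$ and the additivity of $kk$ on finite coproducts, I obtain a long exact sequence of abelian groups
\begin{multline*}
(KH_2(R)\oplus KH_1(R))^{E^0} \xrightarrow{\phi_1} (KH_2(R)\oplus KH_1(R))^{\reg(E)} \to \\
kk(\cO_{A,B}^C,R) \to (KH_1(R)\oplus KH_0(R))^{E^0} \xrightarrow{\phi_0} (KH_1(R)\oplus KH_0(R))^{\reg(E)},
\end{multline*}
where the $\phi_i$ are $\fW$-linear (with $\fW$ acting via the cup product on $K$-theory) and come from pre-composition with $I-D^*$. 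Extracting the three-term fragment centered at $kk(\cO_{A,B}^C,R)$ produces the short exact sequence
\[
0\to \coker(\phi_1)\to kk(\cO_{A,B}^C,R)\to \ker(\phi_0)\to 0.
\]

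For the identification of $\ker(\phi_0)$: under the canonical identification $\hom_\fW(\fW^X,N)=N^X$, the map $\phi_0$ agrees with $\hom_\fW(I-D^*,KH_1(R)\oplus KH_0(R))$. Applying the left exact functor $\hom_\fW(-,KH_1(R)\oplus KH_0(R))$ to the presentation $\fW^{\reg(E)}\xrightarrow{I-D^*}\fW^{E^0}\to \BF(A,B,C)\to 0$ then yields $\ker(\phi_0)=\hom_\fW(\BF(A,B,C),KH_1(R)\oplus KH_0(R))$.

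For the identification of $\coker(\phi_1)$, the key observation is the involution $(I-D^*)^{\ast\ast}=I-D$, where the $\ast$-operation combines transpose with $K_1$-inversion of entries. This yields a $\fW$-module isomorphism identifying $\phi_1$, up to an automorphism of the ambient free $\fW$-modules, with $(I-D)\otimes\id$ acting on $(KH_2(R)\oplus KH_1(R))^\bullet$. Right exactness of $-\otimes_\fW N$ applied to the presentation $\fW^{E^0}\xrightarrow{I-D}\fW^{\reg(E)}\to \vBF(A,B,C)\to 0$ then gives $\coker(\phi_1)=\vBF(A,B,C)\otimes_\fW(KH_2(R)\oplus KH_1(R))$. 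The main obstacle in the proof is this last step: one must carefully track the combinatorics of the $\ast$-operation on matrices over $\fW$ (transpose composed with multiplicative inversion of the $\ell^*$-entries in the $C$-block) and verify that the resulting sign discrepancy is absorbed by a $\fW$-linear automorphism of the free modules, so that $(I-D^*)^\ast$ and $I-D$ induce canonically isomorphic cokernels after tensoring with $KH_2(R)\oplus KH_1(R)$.
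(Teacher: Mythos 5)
Your argument is correct and is the intended one: the paper states this corollary without proof, and the evident derivation is exactly yours --- apply the cohomological functor $kk(-,R)$ to the triangle of Corollary \ref{coro:katkkh}, extract the three-term fragment around $kk(\cO_{A,B}^C,R)$, and identify the outer terms via left-exactness of $\hom_\fW(-,N)$ and right-exactness of $-\otimes_\fW N$ applied to the free presentations of $\BF(A,B,C)$ and $\vBF(A,B,C)$, just as the paper does on the homological side in Corollary \ref{coro:katkh}. The step you flag as delicate does close up: the honest transpose of $D^*$ replaces the $C$-block by its entrywise inverse, which as a matrix over $\fW=\Z\ltimes\ell^*$ is the additive negative of $C$, and conjugation by the block-diagonal automorphism $\diag(I,-I)$ carries $I-(D^*)^t$ to $I-D$, so the two cokernels agree after tensoring with $KH_2(R)\oplus KH_1(R)$.
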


\begin{thm}\label{thm:katkat}
Let $n\ge 1$, $M,N\in M_n(\Z)$ and $P\in M_n(\cU(\ell))$, and let $R\in\aha$ such that there is a distinguished triangle in $kk$
\begin{equation}\label{tri:kat}
\xymatrix{j(\ell)^n\oplus j(\ell)[-1]^n\ar[rr]^{\begin{bmatrix}M&P\\ 0&N\end{bmatrix}}&& j(\ell)^n\oplus j(\ell)[-1]^n\ar[r]& j(R)}.
\end{equation}
Then there exist matrices $A\in M_{2n}(\N_0)$, $B\in M_{2n}(\Z)$ and $C\in M_{2n}(\cU(\ell))$ such that $(A,B)$ is KSPI and an isomorphism
\[
j(\cO_{A,B}^C)\cong j(R).
\] 
\end{thm}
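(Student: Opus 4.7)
The strategy is to transfer Katsura's combinatorial construction \cite{kat}*{Proposition 3.2} from the $C^*$-algebraic setting to our algebraic setting, and to use the twist $C$ to absorb the off-diagonal block $P$---which is algebraically meaningful because $kk(j(\ell)[-1],j(\ell))$ contains $\cU(\ell)$, although the $C^*$-analogue $KK_1^{\top}(\C)$ vanishes.

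First, I would invoke the combinatorial content of \cite{kat}*{Proposition 3.2} applied to the pair $(M,N)$ of integer matrices. This produces a KSPI pair $A\in M_{2n}(\N_0)$, $B\in M_{2n}(\Z)$ together with matrices $U_M,V_M,U_N,V_N\in \Gl_{2n}(\Z)$ such that
\[
U_M(I-A^t)V_M=M\oplus I_n,\qquad U_N(I-B^t)V_N=N\oplus I_n.
\]
Katsura's construction is a doubling trick: he enlarges the size from $n$ to $2n$ via suitable block decompositions so that $A$ simultaneously acquires strong connectivity, at least two loops at every vertex, and (up to a further adjustment) $A_{v,w}>0$ for all $v,w$, while $B$ is normalized so that $B_{v,v}=1$. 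The entire argument is integer-matrix combinatorics and transfers to our setting verbatim. Since $A$ may be taken with strictly positive entries, the compatibility constraint \eqref{intro:a0c1} becomes vacuous, so any $C\in M_{2n}(\cU(\ell))$ will be admissible.

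Next, for such $(A,B)$ and an unspecified $C$, Corollary \ref{coro:katkkh} gives a distinguished triangle
\[
j(\ell)^{2n}\oplus j(\ell)[-1]^{2n}\xrightarrow{I-D^*}j(\ell)^{2n}\oplus j(\ell)[-1]^{2n}\to j(\cO_{A,B}^C),
\]
where $I-D^*=\begin{bmatrix}I-A^t & -C^*\\ 0 & I-B^t\end{bmatrix}$. Stabilizing the given triangle by direct summing with the identity triangle of $j(\ell)^n\oplus j(\ell)[-1]^n$ produces a triangle with the same cofiber $j(R)$ but with $2n\times 2n$ map
\[
g'=\begin{bmatrix}M\oplus I_n & P\oplus 0\\ 0 & N\oplus I_n\end{bmatrix}.
\]
Conjugating $I-D^*$ on the left by $\diag(U_M,U_N)$ and on the right by $\diag(V_M,V_N)$---both invertible in $kk$---produces the map $\begin{bmatrix}M\oplus I_n & -U_MC^*V_N\\ 0 & N\oplus I_n\end{bmatrix}$. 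Setting $C^*=-U_M^{-1}(P\oplus 0)V_N^{-1}$, computed via the $\Z$-biaction of $M_{2n}(\Z)$ on $M_{2n}(\cU(\ell))$ by integer powers, yields a well-defined matrix in $M_{2n}(\cU(\ell))$; the corresponding $C$ (with $C_{w,v}=(C^*_{v,w})^{-1}$) then achieves $-U_MC^*V_N=P\oplus 0$. The two triangles thereby become isomorphic, and we conclude $j(\cO_{A,B}^C)\cong j(R)$.

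The main obstacle is the first step, Katsura's doubling: simultaneously arranging strong connectivity, two loops per vertex, the normalization $B_{v,v}=1$, the compatibility \eqref{intro:a0b0}, and the prescribed $\Gl_{2n}(\Z)$-equivalences of $I-A^t$ and $I-B^t$ is delicate. Once this combinatorial input is granted, the remaining $kk$-level manipulation is routine, with the twist $C$ absorbing the additional off-diagonal datum $P$ through inversion of the integer matrices $U_M$ and $V_N$.
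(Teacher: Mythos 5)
Your proposal is correct and follows essentially the same route as the paper: stabilize the given triangle by summing with a trivial one, import the $\Gl_{2n}(\Z)$-equivalences from Katsura's Lemma 3.1 to produce the KSPI pair $(A,B)$, and let those integer matrices act on the off-diagonal block $P\oplus 0$ (inside $M_{2n}(K_1(\ell))=M_{2n}(\cU(\ell))$) to define $C$. The paper merely packages your four matrices $U_M,V_M,U_N,V_N$ into a single pair of block-diagonal $4n\times 4n$ matrices $U,V$ and defines $A,B,C$ at once by the identity $UEV=\left[\begin{smallmatrix}I-A^t& C^*\\ 0& I-B^t\end{smallmatrix}\right]$.
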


\begin{proof} Let $I_m\in M_m(\Z)$ be the identity matrix. The matrix 
\[
E=\begin{bmatrix}M& 0& P & 0\\
0& I_n&0&0\\
0&0&N&0\\
0&0&0&I_n\end{bmatrix}
\]
defines an endomorphism of $j(\ell)^n\oplus j(\ell)[-1]^n$ in $kk$, and 
the direct sum of \eqref{tri:kat} with the trivial triangle
\[
\xymatrix{j(\ell)^n\oplus j(\ell)[-1]^n\ar[r]^{I_{2n}}& j(\ell)^n\oplus j(\ell)[-1]^n\ar[r]&0}
\]
is isomorphic to a triangle
\begin{equation}\label{tri:kat2}
\xymatrix{j(\ell)^{2n}\oplus j(\ell)^{2n}\ar[r]^{E}& j(\ell)^{2n}\oplus j(\ell)^{2n}\ar[r]&j(R).}
\end{equation}

Define matrices $X$ and $Y$ in $M_n(\Z)$ as in the proof of \cite{kat}*{Lemma 3.1}, taking into account that the matrices named $A'$ and $B'$ are what we call $M$ and $N$ here.  Let
\[
U=\begin{bmatrix}I_n& I_n& 0\\
                  0& I_n& 0\\
									0& 0& I_{2n}\end{bmatrix},\ \ V=\begin{bmatrix}0&-I_n&0&0\\ I_n&Y&0&0\\
																																 0 & 0 &0&-I_n\\
																																 0&0& -I_n&0\end{bmatrix}\in M_{2n}(\Z).
\]
Define $A,B\in M_{2n}(\Z)$ and $C\in M_{2n}(\cU(\ell))$ by the identity
\[
\begin{bmatrix}I_{2n}-A^t& C^*\\
               0& I_{2n}-B^t\end{bmatrix}=UEV.
\]
Observe that the matrices $A^t$ and $B^t$ are called $A$ and $B$ in \cite{kat}*{Lemma 3.1}; in particular $(A,B)$ is KSPI. Moreover, the matrices $U$ and $V$ induce an isomorphism of triangles between \eqref{tri:kat2} and the triangle of Corollary \ref{coro:katkkh} associated to the matrices $A,B$ and $C$ we have just defined. In particular, $j(R)\cong j(\cO_{A,B}^C)$. 
\end{proof}

\begin{coro}\label{coro:katkat} 
Let $\ell$ be either a field or a PID and let $R\in\aha$ such that there is a distinguished triangle in $kk$
\[
\xymatrix{(j(\ell)\oplus j(\ell)[-1])^n\ar[r]^f& (j(\ell)\oplus j(\ell)[-1])^n\ar[r]& j(R).}
\]
Then there exist matrices $A,B,C$ such that $(A,B)$ is KSPI and $j(R)\cong j(\cO_{A,B}^C)$. If moreover $\ell\supset\Q$ is a field, or if $\ell$ is any field and the pair $(A,B)$ satisfies the condition of Proposition \ref{prop:katsimp}, then $\cO_{A,B}^C$ is simple purely infinite. 
\end{coro}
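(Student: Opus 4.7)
The plan is to reduce the statement to Theorem \ref{thm:katkat} by computing the block-matricial form of the map $f$ in the given triangle, and then invoke Theorem \ref{thm:katpis} or Proposition \ref{prop:katsimp} for the simplicity claim.

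First I would analyze $f$ as an element of $kk\bigl((j(\ell)\oplus j(\ell)[-1])^n,(j(\ell)\oplus j(\ell)[-1])^n\bigr)$. Writing $f$ as a $2\times 2$ block matrix with respect to the decomposition in the source and target, the four blocks have entries in $kk(\ell,\ell)$, $kk(\ell[-1],\ell)$, $kk(\ell,\ell[-1])$, and $kk(\ell[-1],\ell[-1])$ respectively. By the recollection made after diagram \eqref{intro:mapf}, under the hypothesis that $\ell$ is a field or a PID the canonical maps $\Z\to KH_0(\ell)=kk(\ell,\ell)=kk(\ell[-1],\ell[-1])$ and $\cU(\ell)\to KH_1(\ell)=kk(\ell[-1],\ell)$ are isomorphisms, while $kk(\ell,\ell[-1])=KH_{-1}(\ell)=K_{-1}(\ell)=0$. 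Hence $f$ has the form $\begin{bmatrix}M&P\\0&N\end{bmatrix}$ with $M,N\in M_n(\Z)$ and $P\in M_n(\cU(\ell))$, precisely the shape assumed in the hypothesis of Theorem \ref{thm:katkat}.

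Applying Theorem \ref{thm:katkat} to this triangle then produces matrices $A\in M_{2n}(\N_0)$, $B\in M_{2n}(\Z)$ and $C\in M_{2n}(\cU(\ell))$ such that $(A,B)$ is KSPI and $j(\cO_{A,B}^C)\cong j(R)$, which is the first assertion.

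For the final assertion, I would invoke the simplicity results already proved. If $\ell\supset\Q$ is a field, then Theorem \ref{thm:katpis} (equivalently Corollary \ref{coro:introsimp}) applies directly to the KSPI triple $(A,B,C)$ and shows that $\cO_{A,B}^C$ is simple purely infinite. If instead $\ell$ is an arbitrary field and the pair $(A,B)$ satisfies the additional Hausdorffness condition of Proposition \ref{prop:katsimp}, then that proposition yields the same conclusion. The only point that needs a brief comment is that the KSPI construction coming out of Theorem \ref{thm:katkat} indeed produces a KSPI pair in the sense used by Theorem \ref{thm:katpis}/Proposition \ref{prop:katsimp}, which is transparent from the proof of Theorem \ref{thm:katkat} (it quotes \cite{kat}*{Lemma 3.1} verbatim). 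There is essentially no obstacle here; the only delicate point of the whole argument is the identification of the off-diagonal blocks, for which one must be careful that it is the vanishing of $K_{-1}(\ell)$ over fields and PIDs that forces the lower-left block to be zero and thereby puts $f$ in the triangular form required by Theorem \ref{thm:katkat}.
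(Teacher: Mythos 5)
Your proposal is correct and follows essentially the same route as the paper: identify $kk(\ell,\ell)=kk(\ell[-1],\ell[-1])=K_0(\ell)=\Z$, $kk(\ell[-1],\ell)=K_1(\ell)=\ell^*$ and $kk(\ell,\ell[-1])=K_{-1}(\ell)=0$ for $\ell$ a field or PID, conclude that $f$ has the upper-triangular block form of Theorem \ref{thm:katkat}, and then cite that theorem together with Theorem \ref{thm:katpis} and Proposition \ref{prop:katsimp} for the simplicity assertions. Your additional remarks on additivity and on the KSPI property of the output of Theorem \ref{thm:katkat} are harmless elaborations of what the paper leaves implicit.
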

\begin{proof}
If $\ell$ is a field or a $PID$, $kk(\ell,\ell)=kk(\ell[-1],\ell[-1])=K_0(\ell)=\Z$, $kk(\ell[-1],\ell)=K_1(\ell)=\ell^*$ and $kk(\ell,\ell[-1])=K_{-1}(\ell)=0$. Thus $f$ must be as in Theorem \ref{thm:katkat}. Thus the first assertion follows from said theorem. The second assertion follows from Theorem \ref{thm:katpis} and Proposition \ref{prop:katsimp}. 
\end{proof}
\section{\topdf{$K$}{K}-regularity via the twisted Laurent polynomial picture}\label{sec:twistlaure}
\numberwithin{equation}{section}
Let $E$ be a row-finite graph and let $(G,E,\phi_c)$ be a twisted $EP$-tuple. Consider the $\Z$-grading $L(G,E,\phi_c)=\bigoplus_{n\in\Z}L(G,E,\phi_c)_n $,
\[
L(G,E,\phi_c)_n=\mspan\{\alpha g\beta^*: |\alpha|-|\beta|=n\}.
\] 
For each $n\ge 0$, let 
\begin{multline*}
L_{0,n}(G,E,\phi_c)=\mspan\{\alpha g\beta^*:r(\alpha)=g(r(\beta))\in\sink(E),\ \ |\alpha|=|\beta|\le n\}\\
+\mspan\{\alpha g\beta^*:r(\alpha)=g(r(\beta))\in\reg(E),\ \ |\alpha|=|\beta|\le n\}.
\end{multline*}
We have $L(G,E,\phi_c)_0=\bigcup_{n\ge 0}L_{0,n}(G,E,\phi_c)$. Observe that setting $G=\{1\}$ we recover the usual grading of $L(E)$ and the usual filtration of $L(E)_0$.  Like in the Leavitt path algebra case \cite{abc}*{Section 5} when $E$ is finite without sources, we may also regard $L(G,E,\phi_c)$ as a corner skew Laurent polynomial ring in the sense of \cite{fracskewmon}, as follows. Pick an edge $e_v\in r^{-1}\{v\}$ for each $v\in E^0$. Then the elements of $L(G,E,\phi_c)$
\[
t_+=\sum_{v\in E^0}e_v, \text{ and } t_-=t_+^*
\] 
are homogeneous of degrees $1$ and $-1$, respectively, and satisfy $t_-t_+=1$. Hence 
\[
p=\psi(1)\in L(G,E,\phi_c)_0.
\]
is an idempotent, and the map 
\begin{equation}\label{map:corneriso}
\psi:L(G,E,\phi_c)_0\to L(G,E,\phi_c)_0,\ \ \psi(x)=t_+xt_-
\end{equation}
is an algebra monomorphism; its image is the corner $pL(G,E,\phi_c)_0p$. Hence by \cite{fracskewmon}*{Lemma 2.4} we may regard $L(G,E,\phi_c)$ as a Laurent polynomial ring twisted by the corner isomorphism $\psi$; using the notation of \cite{fracskewmon}, we write
\begin{equation}\label{eq:lppic}
L(G,E,\phi_c)=L(G,E,\phi_c)_0[t_+,t_-;\psi]. 
\end{equation}
In the next Lemma we use the following notation. For a graph $E$, a vertex $v\in E^0$ and $m\ge 0$, we put
\[
\cP_{v,m}=\{\alpha\in\cP(E)_v\colon |\alpha|=m\}.
\]
\begin{stan}
 From this point on, in all (twisted) $EP$-triples we consider, the group $G$ will be assumed to act trivially on $E^0$. 
\end{stan}

\begin{nota}
Let $v\in E^0$; regard $v$ as an element of $L(G,E,\phi_c)$. Write
\[
I_v=\ann_{\ell[G]}(v),\, R_v=\ell[G]/I_v
\]
for the annihilator ideal and the quotient ring. 
\end{nota}

\begin{lem}\label{lem:L0n}
Let $(G,E,\phi_c)$ be a twisted EP-tuple with $E$ row-finite. Assume that $G$ acts trivially on $E^0$. Put
\[
\cM_n(G,E):=(\bigoplus_{v\in\sink(E)}\bigoplus_{m=0}^nM_{\cP_{v,m}}\ell[G])\oplus (\bigoplus_{v\in\reg(E)}M_{\cP_{v,n}}\ell[G])
\]
\item[i)] There is a surjective algebra homomorphism 
\[
 \pi_n:\cM_n(G,E)
\onto L(G,E,\phi_c)_{0,n}, \ \
\pi_n(\epsilon_{\alpha,\beta}g)=\alpha g\beta^*.
\]
\item[ii)] $I_v=0$ for all $v\in\sink(E)$ and 
\[
\ker(\pi_n)=\bigoplus_{v\in\reg(E)}M_{\cP_{v,n}}I_v.
\]
Hence we have
\[
L(G,E,\phi_c)_{0,n}\cong (\bigoplus_{v\in\sink(E)}\bigoplus_{m=0}^nM_{\cP_{v,m}}R_v)\oplus (\bigoplus_{v\in\reg(E)}M_{\cP_{v,n}}R_v).
\]
\item[iii)] Under the isomorphism above the inclusion $L(G,E,\phi_c)_{0,n}\subset L(G,E,\phi_c)_{0,n+1}$ identifies with the inclusion
\[
\bigoplus_{v\in\sink(E)}\bigoplus_{m=0}^nM_{\cP_{v,m}}R_v\subset \bigoplus_{v\in\sink(E)}\bigoplus_{m=0}^{n+1}M_{\cP_{v,m}}R_v
\]
on the first summand and is induced by the map 
\begin{gather*}
\bigoplus_{v\in\reg(E)}M_{\cP_{v,n}}R_v\to (\bigoplus_{v\in\sink(E)}M_{\cP(n+1,v)}R_v)\oplus (\bigoplus_{v\in\reg(E)}M_{\cP(n+1,v)}R_v)\\
\epsilon_{\alpha,\beta}g\mapsto \sum_{s(e)=r(\alpha)}\epsilon_{\alpha g(e),\beta e}\phi_c(g,e).
\end{gather*}
on the second. 
\end{lem}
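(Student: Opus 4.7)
The plan is to prove (i), (ii), and (iii) in order, using throughout that $G$ acts trivially on $E^0$: the constraint $r(\alpha)=g(r(\beta))$ simplifies to $r(\alpha)=r(\beta)$, and $vg=gv$ in $L(G,E,\phi_c)$ for every $v\in E^0$ and $g\in G$.

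For (i), I would define $\pi_n$ blockwise by the stated formula. Multiplicativity on a single block $M_{\cP_{v,m}}\ell[G]$ follows from \eqref{eq:plaincohn} together with the vertex-commutation rule. Products across distinct blocks vanish: sinks emit no edges, so $\gamma=\beta\gamma_1$ with $|\gamma_1|>0$ is impossible when $r(\beta)$ is a sink, and the only blocks of $\cM_n(G,E)$ at levels $m\ne n$ are sink blocks. Surjectivity follows by iterative rewriting: using $v=\sum_{s(e)=v}ee^*$ in $L(G,E,\phi_c)$ for $v\in\reg(E)$ together with the identity $ge=g(e)\phi_c(g,e)$ (a consequence of \eqref{eq:ggamma} in the trivial vertex action setting), any $\alpha g\beta^*$ with $v=r(\beta)\in\reg(E)$ and $|\beta|<n$ equals $\sum_{s(e)=v}\alpha g(e)\phi_c(g,e)(\beta e)^*$; iterating and collecting sink-terms as they appear writes every generator of $L_{0,n}$ as an image of $\pi_n$.

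For (ii), I first prove $I_v=0$ for $v\in\sink(E)$. A relation $\sum_g\lambda_g(vg)=0$ in $L(G,E,\phi_c)$ means $\sum_g\lambda_g(vg)\in\cK(G,E,\phi_c)$. Expanding each generator $\alpha q_wh\beta^*$ of the $\cB'$-basis of Proposition \ref{prop:kge} in the $\cB$-basis of Corollary \ref{coro:cohnpres} via $q_w=w-\sum_{s(e)=w}ee^*$, every resulting $\cB$-term either has the form $\alpha(wh)\beta^*$ with $w\in\reg(E)$ or has positive path-length on both sides; in particular no term $vh$ with $v\in\sink(E)$ can occur, and linear independence of $\cB$ forces $\lambda_g=0$. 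The containment $\bigoplus_{v\in\reg(E)}M_{\cP_{v,n}}I_v\subseteq\ker\pi_n$ is immediate. For the reverse inclusion, decompose $y\in\ker\pi_n$ blockwise as $y=\sum_{v,m,\alpha,\beta}\epsilon_{\alpha,\beta}x^{v,m}_{\alpha,\beta}$, fix a block $(v_0,m_0)$ of $\cM_n(G,E)$ and paths $\alpha_0,\beta_0\in\cP_{v_0,m_0}$. The product $\alpha_0^*\pi_n(y)\beta_0$ receives contributions only from $(v,m,\alpha,\beta)$ with $\alpha$ path-comparable to $\alpha_0$ and $\beta$ to $\beta_0$; since the only blocks of $\cM_n(G,E)$ at levels $m\ne n$ have $v$ a sink, and since sinks have no outgoing edges, any proper prefix-or-extension relation would force a sink to start an edge, so the only surviving case is $(v,m,\alpha,\beta)=(v_0,m_0,\alpha_0,\beta_0)$. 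What remains asserts that the $\ell[G]$-element $x^{v_0,m_0}_{\alpha_0,\beta_0}$ annihilates $v_0$: the sink case is then handled by the first part, and the regular case $m_0=n$ yields $x^{v_0,n}_{\alpha_0,\beta_0}\in I_{v_0}$.

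For (iii), the sink-to-sink restriction of the inclusion $L_{0,n}\subset L_{0,n+1}$ is the identity on matching summands. On the regular summand of $L_{0,n}$ at level $n$, the identity
\begin{equation*}
\alpha g\beta^*=\sum_{s(e)=r(\alpha)}\alpha g(e)\phi_c(g,e)(\beta e)^*
\end{equation*}
from the proof of (i) reads back through the level-$(n+1)$ isomorphism of (ii) as the stated formula $\epsilon_{\alpha,\beta}g\mapsto\sum_{s(e)=r(\alpha)}\epsilon_{\alpha g(e),\beta e}\phi_c(g,e)$. The main obstacle is the kernel identification in (ii): the inverse-operation trick $\alpha_0^*(-)\beta_0$ works only because sinks emit no edges (so they cannot be interior vertices of any nontrivial path), and the proof that $I_v=0$ for sinks requires the explicit compatibility between the $\cB$-basis and the $\cB'$-basis via the expansion of $q_v$.
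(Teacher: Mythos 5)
Your proof is correct and follows essentially the same route as the paper's: the orthogonality relation $\beta^*\alpha=\delta_{\alpha,\beta}r(\alpha)$ on the relevant set of paths, the identification of $\ker\pi_n$ by multiplying on the left by $\alpha_0^*$ and on the right by $\beta_0$, the vanishing of $I_v$ for sinks via the expansion of the $\cB'$-basis of $\cK(G,E,\phi_c)$ in the $\cB$-basis, and CK2 for part (iii). You simply spell out in more detail the steps the paper declares clear or straightforward (surjectivity of $\pi_n$, the cross-block vanishing, and the explicit form of the inclusion in (iii)).
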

\begin{proof} It is clear that the $\ell$-linear map $\pi_n$ is surjective. Observe that  
\begin{equation}\label{eq:prod=delta}
\alpha,\beta\in \left(\bigcup_{v\in\sink(E), 1\le m\le n}\cP_{v,m}\cup\bigcup_{v\in\reg(E)}\cP_{v,n}\right)\Rightarrow \beta^*\alpha=\delta_{\alpha,\beta}r(\alpha).
\end{equation}
It follows from this that $\pi_n$ is a ring homomorphism, proving i). Next let $x=\sum_{\alpha,\beta} \epsilon_{\alpha,\beta}x_{\alpha,\beta}\in\ker(\pi_n)$. Then 
\begin{equation}\label{eq:alphabeta}
\pi_n(x)=\sum_{\alpha,\beta}\alpha x_{\alpha,\beta}\beta^*=0.
\end{equation} 
Taking \eqref{eq:prod=delta} into account and  multiplying \eqref{eq:alphabeta} by $\alpha^*$ on the left and by $\beta$ on the right, we obtain that \eqref{eq:alphabeta} holds if and only if
\[
x_{\alpha,\beta}\in I_{r(\alpha)} 
\]
holds in $L(G,E,\phi_c)$ for all $\alpha,\beta$ for which $x_{\alpha,\beta}$ is defined. Observe that if we regard a vertex $v\in E^0$ as an element of $C(G,E,\phi_c)$, then 
$I_v=\{x\in\ell[G]\,\colon\, vx\in\cK(G,E,\phi_c)\}$. Recall $\cK(G,E,\phi_c)$ is spanned by the elements
\[
\alpha q_wg\beta^*=\alpha g\beta^*-\sum_{s(e)=w}\alpha e\phi(g,g^{-1}(e))(\beta g^{-1}(e))^*
\]
with $w\in\reg(E)$. The above is the unique expression of the element of the left as a linear combination of the the basis $\cB$ of $C(G,E,\phi_c)$, and contains no basis elements of the form $vh$ with $v\in\sink(E)$ and $h\in G$. This proves that $I_v=0$ for every $v\in\sink(E)$.
Thus part ii) is proved. Part iii) is a straightforward application of the Cuntz-Krieger relation $CK2$. 
\end{proof}

Let $v\in\reg(E)$ and $w\in E^0$ such that $A_{v,w}\ne 0$; set 
\begin{equation}\label{eq:xvw}
X_{v,w}=\bigoplus_{e\in vE^1w}\ell e\otimes R_w
\end{equation}
Observe that the $\ell$-linear map
\begin{equation}\label{map:xtolge}
X_{v,w}\to L(G,E,\phi_c),\ \ e\otimes x\mapsto ex
\end{equation}
is injective, and that its image is a left $R_v$-submodule. In Lemma \ref{lem:flat1} below we regard $X_{v,w}$ as a left $R_v$-module via the the map \eqref{map:xtolge}. 
 
\begin{lem}\label{lem:flat1}
Let $E$ be a finite graph with incidence matrix $A$. Assume that $G$ acts trivially on $E^0$. The following are equivalent for $n\ge 0$. 
\item[i)] $L(G,E,\phi_c)_{0,n+1}$ is a flat left $L(G,E,\phi_c)_{0,n}$-module. 
\item[ii)] For every $(v,w)\in \reg(E)\times E^0$ such that $A_{v,w}\ne 0$, the left $R_v$-module \eqref{eq:xvw} is flat. 
\end{lem}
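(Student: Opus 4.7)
The plan is to reduce the flatness question to a block-by-block analysis using the decomposition of Lemma \ref{lem:L0n}. Because $E$ is finite, both $L(G,E,\phi_c)_{0,n}$ and $L(G,E,\phi_c)_{0,n+1}$ are unital rings and decompose as finite direct sums of matrix algebras of the form $M_{\cP_{v,m}}R_v$ over unital rings $R_v$. Since $L_{0,n}$ is a direct sum of such blocks $A_i$, flatness of $L_{0,n+1}$ as a left $L_{0,n}$-module is equivalent to flatness of $e_i L_{0,n+1}$ over each $A_i$, where $e_i$ denotes the unit of $A_i$. Morita equivalence between $A_i=M_{\cP_{v,m}}R_v$ and $R_v$ then reduces this further to asking that the row $\epsilon_{\alpha_0,\alpha_0}\cdot L_{0,n+1}$ be flat as a left $R_v$-module for some (equivalently, any) $\alpha_0\in \cP_{v,m}$, where $R_v$ acts via its image in the corner $\epsilon_{\alpha_0,\alpha_0}L_{0,n}\epsilon_{\alpha_0,\alpha_0}\cong R_v$.

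Next I would treat the sink and regular blocks separately. For $v\in\sink(E)$ with $m\le n$, the inclusion $L_{0,n}\hookrightarrow L_{0,n+1}$ is the identity on this summand; $\epsilon_{\alpha_0,\alpha_0}\cdot L_{0,n+1}$ is then a free $R_v$-module and this case is automatic. The substance of the argument is in the regular case: for $v\in\reg(E)$ and $\alpha_0\in\cP_{v,n}$, part iii) of Lemma \ref{lem:L0n} says that the image of $\epsilon_{\alpha_0,\alpha_0}$ in $L_{0,n+1}$ is $\bar e:=\sum_{s(e)=v}\epsilon_{\alpha_0 e,\alpha_0 e}$. Reading off the matrix-block structure of $L_{0,n+1}$, one obtains the decomposition
\[
\bar e\cdot L_{0,n+1}\;=\;\bigoplus_{w\,:\,A_{v,w}\ne 0}\bigoplus_{\beta\in\cP_{w,n+1}}\Bigl(\bigoplus_{e\in vE^1w}\epsilon_{\alpha_0 e,\beta}R_w\Bigr).
\]

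The key identification is that each inner summand $\bigoplus_{e\in vE^1w}\epsilon_{\alpha_0 e,\beta}R_w$ is isomorphic to $X_{v,w}$ as a left $R_v$-module via $\epsilon_{\alpha_0 e,\beta}r\mapsto e\otimes r$. Indeed, using the formula of Lemma \ref{lem:L0n} iii) once more, left multiplication by $g\in R_v$ sends $\epsilon_{\alpha_0 e,\beta}r$ to $\epsilon_{\alpha_0 g(e),\beta}\phi_c(g,e)r$, which corresponds to $g(e)\otimes\phi_c(g,e)r$ and exactly matches the $R_v$-action on $X_{v,w}$ coming from the embedding \eqref{map:xtolge} into $L(G,E,\phi_c)$. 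Therefore, as left $R_v$-modules, $\bar e\cdot L_{0,n+1}$ is isomorphic to $\bigoplus_{w\,:\,A_{v,w}\ne 0}X_{v,w}^{(\cP_{w,n+1})}$; each set $\cP_{w,n+1}$ is nonempty precisely when $A_{v,w}\ne 0$, so this sum is flat over $R_v$ if and only if every $X_{v,w}$ is, completing the equivalence. The main point requiring care is verifying that the $R_v$-module structure on a column of $\bar e\cdot L_{0,n+1}$ coincides with the one on $X_{v,w}$; everything else is routine manipulation with matrix algebras and Morita equivalence.
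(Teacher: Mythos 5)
Your proposal is correct and follows essentially the same route as the paper: reduce via the central orthogonal idempotents of the block decomposition of Lemma \ref{lem:L0n}, dispose of the sink blocks, and identify the regular blocks with copies of $X_{v,w}$ using the transition formula of Lemma \ref{lem:L0n} iii) together with Morita invariance of flatness. The only cosmetic difference is that you cut down to the corner $\epsilon_{\alpha_0,\alpha_0}L_{0,n}\epsilon_{\alpha_0,\alpha_0}\cong R_v$ first, whereas the paper identifies each column $X_{v,w,\gamma}$ with $\ell^{\cP_{v,n}}\otimes X_{v,w}$ over the full matrix algebra; both arguments (like the paper's) implicitly use that $\cP_{v,n}\neq\emptyset$, which holds in the intended application to graphs without sources.
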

\begin{proof} If $M$ is a left module over a unital ring $R$, $m\ge 1$ and $p_1,\dots,p_m\in R$ are central orthogonal idempotents such that $\sum_{i=1}^mp_i=1$, then $M$ is flat over $R$ if and only if $p_iM$ is a flat $ Rp_i$-module for all $i$. We apply this with $R=L_{0,n}$, $M=L_{0,n+1}$ and the orthogonal idempotents that correspond to the identity matrices of each of the matrix algebras in the direct sum decomposition of Lemma \ref{lem:L0n} and using the identification of the inclusion $L(G,E,\phi_c)_{0,n}\subset L(G,E,\phi_c)_{0,n+1}$ given therein. If $v\in\sink(E)$ and $m\le n$, then under the identification of Lemma \ref{lem:L0n}, $\id_{\cP_{v,m}}\cdot L(G,E,\phi_c)_{0,n+1}=M_{\cP_{v,m}}R_v$, which is flat over itself. If $v\in \reg(E)$, then 
\[
\id_{\cP_{v,n}}\cdot L(G,E,\phi_c)_{0,n+1}=\bigoplus_{\{w|A_{v,w}\ne 0\}}\sum_{\{e\in vE^1w, \alpha\in \cP_{v,n}\}}\epsilon_{\alpha e,\alpha e}M_{\cP(w,n+1)}R_w.
\]
One checks that each of the summands 
\begin{equation}\label{eq:elmodu}
\sum_{\{e\in vE^1w, \alpha\in \cP_{v,n}\}}\epsilon_{\alpha e,\alpha e}M_{\cP(w,n+1)}R_w
\end{equation}
is a left $M_{\cP_{v,n}}R_v$-submodule. Hence $L(G,E,\phi_c)_{0,n}\subset L(G,E,\phi_c)_{0,n+1}$ is flat if and only if \eqref{eq:elmodu} is flat for every $w\in E^0$ such that $A_{v,w}\ne 0$. Moreover \eqref{eq:elmodu} decomposes as direct sum, indexed by $\gamma\in\cP(w,n+1)$, of the $\cM_{P(n,v)}R_v$-submodules
\[
X_{v,w,\gamma}=\bigoplus_{\{\alpha\in\cP_{v,n}, e\in vE^1w\}}\epsilon_{\alpha e,\gamma}R_w. 
\]
So again the flatness of $L(G,E,\phi_c)_{0,n+1}$ over $L(G,E,\phi_c)_{0,n}$ boils down to that of each of the $X_{v,w,\gamma}$. Equip $\ell^{\cP(v,n)}=\ell^{\cP(v,n)\times\{1\}}$ with its canonical left $M_{\cP_{v,n}}$-module structure and view $\ell^{\cP(v,n)}\otimes X_{v,w}$  as a module over $M_{\cP_{v,n}}R_v=M_{\cP_{v,n}}\otimes R_v$ in the obvious way. One checks that
\[
X_{v,w,\gamma}\to \ell^{\cP_{v,n}}\otimes X_{v,w},\ \ \epsilon_{\alpha e,\gamma}x\mapsto \alpha\otimes e\otimes x 
\] 
is an isomorphism of left $M_{\cP_{v,n}}R_v$-modules. Since the $M_{\cP_{v,n}}$-module $\ell^{\cP_{v,n}}$ is projective, whence flat, we get that $X_{v,w,\gamma}$ is flat over $M_{\cP_{v,n}}\otimes R_v$ if and only if $X_{v,w}$ is flat over $R_v$. This concludes the proof. 
\end{proof}

\begin{lem}\label{lem:iv=iw=0}
Let $(G,E,\phi_c)$ be a twisted $EP$-tuple satisfying the conditions of Lemma  \ref{lem:flat1} and let $(v,w)\in\reg(E)\times E^0$. Assume that $I_v=I_w=0$. Then $X_{v,w}$ is a flat left $R_v=\ell[G]$-module if and only if $\ell[G]/\ann_{\ell[G]}(e)$ is $\ell[G]$-flat for all $e\in vE^1w$.
\end{lem}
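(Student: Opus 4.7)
The plan is to decompose $X_{v,w}$ orbit-by-orbit under the $G$-action on $vE^1w$, which is well-defined since $G$ acts trivially on $E^0$. Write $vE^1w = \bigsqcup_O O$ as a disjoint union of $G$-orbits; the corresponding decomposition $X_{v,w} = \bigoplus_O Y_O$, with $Y_O = \bigoplus_{e\in O}\ell e\otimes \ell[G]$, is one of left $\ell[G]$-submodules, so $X_{v,w}$ is flat iff each $Y_O$ is. Moreover, for $e' = g(e)$ in the same orbit as $e$, the identity $e' = (g\cdot e)\phi_c(g,e)^{-1}$ in $L$ yields $\ann_{\ell[G]}(e') = \ann_{\ell[G]}(e)\,g^{-1}$, and right multiplication by $g^{-1}$ gives a left $\ell[G]$-module isomorphism $\ell[G]/\ann_{\ell[G]}(e) \iso \ell[G]/\ann_{\ell[G]}(e')$. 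Hence the flatness of $\ell[G]/\ann_{\ell[G]}(e)$ depends only on the orbit of $e$, and it suffices to prove, for each orbit $O$ with a chosen representative $e_O$, that $Y_O$ is $\ell[G]$-flat iff $M:=\ell[G]/\ann_{\ell[G]}(e_O)$ is.

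Fix such an $O$. The key observation is that the left $\ell[G]$-action on $Y_O$ is \emph{monomial} in the $\ell$-basis $\{f\otimes g : f\in O,\ g\in G\}$: factoring out the scalar part of $\phi_c(h,f) = c(h,f)\phi(h,f)$ one obtains
\[
h\cdot (f\otimes g) \;=\; c(h,f)\cdot\bigl(h(f)\otimes \phi(h,f)\,g\bigr),
\]
so the action permutes the basis up to scalars in $\cU(\ell)$. This yields a decomposition of $Y_O$ as a direct sum of left $\ell[G]$-submodules $N_{(f,g)} := \ell[G]\cdot(f\otimes g)$ indexed by the orbits of the induced $G$-action on $O\times G$, $h\cdot(f,g) = (h(f),\phi(h,f)g)$. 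Two observations then identify each $N_{(f,g)}$ with $M$: right multiplication by $g\in \cU(\ell[G])$ is a left $\ell[G]$-module automorphism of $X_{v,w}$ carrying $N_{(f,1)}$ onto $N_{(f,g)}$; and by the injectivity of the map $X_{v,w}\to L$, $e\otimes x\mapsto ex$ (asserted in the paper just before the lemma), we have $N_{(f,1)} \cong \ell[G]/\ann_{X_{v,w}}(f\otimes 1) = \ell[G]/\ann_{\ell[G]}(f)$. Combined with the orbit isomorphism from the first paragraph, this gives $N_{(f,g)}\cong M$ for every $(f,g)$.

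Consequently $Y_O$ is a direct sum of copies of $M$; since flatness is stable under arbitrary direct sums and direct summands, $Y_O$ is $\ell[G]$-flat iff $M$ is, and the lemma follows.

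The principal difficulty is setting up and exploiting the monomial orbit structure of $Y_O$: once this is in place, the injectivity of $X_{v,w}\hookrightarrow L$ does all the remaining work by identifying $\ann_{X_{v,w}}(f\otimes 1)$ with $\ann_{\ell[G]}(f)$, and the orbit-equivalence $\ell[G]/\ann_{\ell[G]}(f)\cong M$ finishes the argument. Everything else — the orbit decomposition, the direct-summand structure, and the separation of the $c$-scalar from $\phi_c$ — is routine bookkeeping.
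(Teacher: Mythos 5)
Your proposal is correct and follows essentially the same route as the paper: the paper also decomposes $X_{v,w}$ according to the orbits of the action $h\cdot(e,g)=(h(e),\phi(h,e)g)$ on $vE^1w\times G$, identifies the orbit piece of $(e,1)$ with $\ell[G]/\ann_{\ell[G]}(e)$, and uses right multiplication by group elements to identify the remaining orbit pieces. Your two-stage decomposition (first by orbits of $vE^1w$, then within each $Y_O$) and the explicit remark that the action is monomial in the basis are just a finer bookkeeping of the same argument.
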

\begin{proof}
For $g\in G$ and $(e,h)\in vE^1w\times G$, set
\begin{equation}\label{map:actphi}
g\cdot(e,h)=(g(e), \phi(g,e)h).
\end{equation}
One checks, using that $\phi$ is a cocycle, that \eqref{map:actphi} defines a left action of $G$ on $vE^1w\times G$, and thus a linear $G$-action on 
\[
\ell[vE^1w\times G]=X_{v,w}.
\]
For the identification above we have used our hypothesis that $I_w=0$. The decomposition of $vE^1w\times G$ into $G$-orbits gives a corresponding direct sum decomposition of $R_v=\ell[G]$ -modules 
\[X_{v,w}=\bigoplus_{K\in (vE^1w\times G)/G} \ell[K] \]
Let $e\in vE^1w$, $h\in G$, and $K_{e,h}$ the orbit of $(e,h)$. Observe that right multiplication by $h$ gives an $\ell[G]$-module isomorphism $\ell[K_{e,1}]\cong \ell[K_{e,h}]$. Moreover we have an isomorphism of left $\ell[G]$-modules
\begin{equation}\label{map:flatquot}
\ell[G]/\ann_{\ell[G]}(e)\cong \ell[K_{(e,1)}].
\end{equation}
Summing up, $X_{v,w}$ is flat if and only if \eqref{map:flatquot} is flat for all $e\in vE^1w$, as we had to prove.
\end{proof}
 
\begin{thm}\label{thm:Kreg}
Let $(G,E,\phi_c)$ be a twisted $EP$-tuple. Assume that $E$ is row-finite and that $G$ acts trivially on $E^0$. Further assume that 
$R_v$ is regular supercoherent for every $v\in E^0$ and that condition (ii) of Lemma \ref{lem:flat1} is satisfied. Then $L(G,E,\phi_c)$ is $K$-regular.
\end{thm}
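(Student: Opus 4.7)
The plan is to combine three ingredients: a reduction to the case of finite $E$ without sources, the corner skew Laurent polynomial description \eqref{eq:lppic} together with a Bass--Heller--Swan-type result for such rings from \cite{fracskewmon}, and the explicit filtration of $L(G,E,\phi_c)_0$ provided by Lemmas \ref{lem:L0n} and \ref{lem:flat1}.

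First I would reduce to $E$ finite without sources. Writing an arbitrary row-finite $E$ as a filtered union of finite complete subgraphs (with a standard desourcification adjoining an infinite tail at each source, compatible with the trivial $G$-action on $E^0$), one presents $L(G,E,\phi_c)$ as a filtered colimit of twisted EP-algebras over finite sourceless graphs. Because Quillen $K$-theory commutes with filtered colimits of rings while polynomial extensions commute with arbitrary colimits, $K$-regularity of each term in this colimit implies $K$-regularity of $L(G,E,\phi_c)$.

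For $E$ finite without sources, \eqref{eq:lppic} identifies $L(G,E,\phi_c)$ with $L(G,E,\phi_c)_0[t_+,t_-;\psi]$. Applying the $K$-theory exact sequence for corner skew Laurent polynomial rings from \cite{fracskewmon} to every polynomial extension $L(G,E,\phi_c)_0[s_1,\ldots,s_r]$ simultaneously (which is permissible because $\psi$ is $\ell$-linear), $K$-regularity of $L(G,E,\phi_c)$ reduces to that of $L(G,E,\phi_c)_0$ together with its corner $pL(G,E,\phi_c)_0p$. For the former, Lemma \ref{lem:L0n} writes $L(G,E,\phi_c)_0 = \varinjlim_n L_{0,n}$ as a union of finite direct sums of (possibly infinite) matrix algebras over the $R_v$, and Lemma \ref{lem:flat1} together with hypothesis (ii) ensures the transitions are flat. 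Since each $R_v$ is regular supercoherent, each finite matrix algebra over $R_v$ is regular supercoherent (by Morita invariance), and so is each infinite matrix algebra $M_{\cP_{v,m}}R_v$, as a filtered colimit of finite ones. Hence every $L_{0,n}$ is regular supercoherent and in particular $K$-regular. Once more invoking that Quillen $K$-theory commutes with filtered colimits while polynomial extension commutes with all colimits, one deduces $K$-regularity of $L(G,E,\phi_c)_0$; the same argument handles the corner $pL(G,E,\phi_c)_0p$, completing the proof in the finite case.

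The main obstacle I expect is carrying out the desourcification step compatibly with the twisted EP-structure: adjoining tails must produce a tuple $(G,E',\phi'_{c'})$ still satisfying hypothesis (ii), and $L(G,E,\phi_c)$ must appear as the desired filtered colimit of algebras of the resulting form. The remaining steps are applications of standard preservation properties of $K$-regularity under Morita equivalence, filtered colimits, and corner rings.
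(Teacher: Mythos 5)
Your overall architecture (reduce to finite sourceless $E$, use the corner skew Laurent picture \eqref{eq:lppic}, and feed in the filtration of $L(G,E,\phi_c)_0$ from Lemmas \ref{lem:L0n} and \ref{lem:flat1}) matches the paper's, but there are two genuine gaps.

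The more serious one is in the corner skew Laurent step. The $K$-theory long exact sequence for $S[t_+,t_-;\psi]$ (which is \cite{abc}*{Theorems 3.6 and 8.4}, not \cite{fracskewmon}) contains twisted nil terms $NK_n(S,\psi)_\pm$, whereas the corresponding $KH$-sequence does not. So $K$-regularity of $L(G,E,\phi_c)$ does \emph{not} reduce to $K$-regularity of $S=L(G,E,\phi_c)_0$ and its corner: you must also kill $NK_*(S,\psi)_\pm$, and $K$-regularity of $S$ is not sufficient for that. This is precisely where the flatness hypothesis (ii) of Lemma \ref{lem:flat1} earns its keep: it lets you apply Gersten's theorem (colimits of regular supercoherent rings along unital \emph{flat} maps are regular supercoherent) to conclude that $S=\bigcup_n L_{0,n}$ is regular \emph{supercoherent}, not merely $K$-regular; one then shows, as in \cite{abc}*{Proposition 7.1}, that the unitalization of $S[\psi^{-1}]$ is regular supercoherent, whence $NK_*(S,\psi)_\pm=0$ by \cite{abc}*{Lemma 7.2}. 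Your proposal establishes regular supercoherence of each $L_{0,n}$ but then downgrades to $K$-regularity of the colimit (for which flatness is irrelevant, since $K$-theory commutes with filtered colimits anyway), and never addresses the nil terms; as written the argument does not close.

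The second gap is the reduction to finite sourceless graphs by adjoining infinite tails. A finite complete subgraph of a graph with an adjoined infinite head still has a source (the last vertex of the truncated tail), and the full desingularized graph is infinite; so $L(G,E,\phi_c)$ is not presented as a filtered colimit of algebras of \emph{finite sourceless} graphs this way, and moreover adding a head changes the algebra only up to Morita equivalence, not isomorphism. The paper goes the other way: first write $L(G,E,\phi_c)=\colim_F L(G,F,\phi_c|_F)$ over finite complete subgraphs, and then, for finite $F$, \emph{delete} sources one at a time, observing that $(1-v)L(G,F,\phi_c)(1-v)$ is a full corner isomorphic to the algebra of the restricted tuple, so that $K$-regularity transfers by Morita invariance. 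That route avoids the compatibility issues you flag and is the one you should substitute for the desourcification step.
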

\begin{proof} Assume first that $E$ is finite without sources. Put $S=L(G,E,\phi_c)_0$; let $\psi:S\to S$ be the corner isomorphism of \eqref{map:corneriso} and $B=S[\psi^{-1}]$ the colimit of the inductive system 
\[
\xymatrix{S\ar[r]^\psi&S\ar[r]^\psi&S\ar[r]^\psi&\dots},
\]
$\tilde{B}=B\oplus\ell$ its unitalization and $\tilde{\psi}:\tilde{B}\to\tilde{B}$ the induced unital automorphism. Let $NK_*(S,\psi)_\pm=NK_*(\tilde{B},\tilde{\psi})$, the twisted nil-$K$-theory groups. Note that because $B$ is a filtering colimit of unital rings, it satisfies excision in $K$-theory, and thus these nil-$K$-groups are the same as those defined in \cite{abc}*{Notation 3.4.1}. By \eqref{eq:lppic} and \cite{abc}*{Theorems 3.6 and 8.4} the comparison map $K_*(L(G,E,\phi_c))\to KH_*(L(G,E,\phi_c))$ fits into a map of long exact sequences
\begin{equation}
\xymatrix{K_n(S)\ar[r]^(.25){1-\psi}\ar[d]& K_n(S)\oplus NK_n(S,\psi)_+\oplus NK_n(S,\psi)_-\ar[d]\ar[r]&K_n(L(G,E,\phi_c))\ar[d]\ar[r]&K_{n-1}(S)\ar[d]\\
KH_n(S)\ar[r]^{1-\psi}& KH_n(S)\ar[r]&KH_n(L(G,E,\phi_c))\ar[r]&KH_{n-1}(S)}
\end{equation}
Because by hypothesis, the $R_v$ are regular supercoherent for all $v$, so is any finite sum of finite matrix rings over them; in particular $S_n=L(G,E,\phi_c)_{0,n}$ is regular supercoherent for all $n$. By \cite{gersten}*{Proposition 1.6}, the colimit of an inductive system of regular supercoherent rings with unital flat transition maps is regular supercoherent.  Hence $S$ is regular supercoherent by Lemma \ref{lem:flat1}. In particular the comparison map $K_n(S[t_1,\dots,t_p])\to KH_n(S[t_1,\dots,t_p])$ is an isomorphism for all $n$ and $p$.  Now the argument of \cite{abc}*{Proposition 7.1} applies verbatim to show that $\tilde{B}$ is regular supercoherent. Thus $NK_*(S,\psi)_\pm=NK_*(\tilde{B},\tilde{\psi})_\pm=0$ by \cite{abc}*{Lemma 7.2}. It follows that the comparison map $K_*(L(G,E,\phi_c))\to KH_*(L(G,E,\phi_c))$ is an isomorphism. Substituting $\ell[t_1,\dots,t_m]$ for $\ell$ we get that $K_*(L(G,E, \phi_c)[t_1,\dots,t_m])\to KH_*(L(G,E,\phi_c)[t_1,\dots,t_m])$
is an isomorphism for all $m$. This proves that $L(G,E,\phi_c)$ is $K$-regular whenever $E$ is finite without sources and the hypothesis of the theorem are satisfied. Let now $E$ be a finite graph, $v\in E^0$ a source, and $E_{|v}$ the graph obtained from $E$ upon removing $v$. Then $1-v$ is a full idempotent of $L(E)$ and therefore also of $L(G,E,\phi_c)$. Furthermore, the corner $C=(1-v)L(G,E,\phi_c)(1-v)$ is the span of elements $\alpha g\beta^*$ with $g\in G$ and $\alpha,\beta\in\cP(E_{|v})$; it follows that $C\cong L(G,E_{|v},\phi_c{|E_{|v}})$ is the algebra of the $EP$-tuple obtained from $(G,E,\phi_c)$ upon restricting $\phi$ and $c$ to $G\times E_{|v}^1$. Repeating this process a finite number of times we end up with a finite graph $E'$ without sources such that $L(G,E',\phi_c{|E'})$ is isomorphic to a full corner of $L(G,E,\phi_c)$. By what we have already proved, $L(G,E', \phi_{|E'})$ is $K$-regular; therefore the same is true of $L(G,E,\phi_c)$, since the latter is Morita equivalent to the former. This proves the theorem for finite $E$. If now $E$ is any row-finite graph and $F\subset E$ a finite complete subgraph, then the algebra $L(G,F,\phi_c{|F})$ of the restriction $EP$-tuple is isomorphic to the subalgebra of $L(G,E,\phi_c)$  linearly spanned by the elements $\alpha g\beta^*$ with $\alpha,\beta\in F$ and $g\in G$. Hence $L(G,E,\phi_c)=\colim_F L(G,F,{\phi_c}{|F})$ where the colimit runs over all finite complete subgraphs. Thus $L(G,E,\phi_c)$ is $K$-regular because each $L(G,F,{\phi_c}{|F})$ is, by what we have already proved. 
\end{proof}

\begin{coro}\label{coro:kregpseudo}
Let $(G,E,\phi_c)$ be a twisted $EP$-tuple as in Theorem \ref{thm:Kreg}. If $(G,E,\phi_c)$ is pseudo-free, then $L(G,E,\phi_c)$ is $K$-regular. 
\end{coro}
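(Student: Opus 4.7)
The plan is to deduce the corollary directly from Theorem \ref{thm:Kreg} by checking that its two structural hypotheses---regular-supercoherence of each $R_v$, and condition (ii) of Lemma \ref{lem:flat1}---follow automatically once $\ell[G]$ is regular supercoherent and $(G,E,\phi_c)$ is pseudo-free. These two hypotheses will reduce, respectively, to $I_v=0$ for every vertex $v$, and to $\ann_{\ell[G]}(e)=0$ for every edge $e$; both vanishings are consequences of pseudo-freeness.

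First I would show that $I_v=0$ for every $v\in E^0$. For $v\in\sink(E)$ this is Lemma \ref{lem:L0n}(ii). For $v\in\reg(E)$, pseudo-freeness gives $\reg(E)=\reg(E)_1$, so Corollary \ref{coro:basisunion} provides an $\ell$-basis of $L(G,E,\phi_c)$ lifting to the set $\cB''$ described by \eqref{eq:b''pseudo}. A length-counting argument shows $vg\in\cB''$ for every $g\in G$: indeed $vg$ corresponds to $\alpha=\beta=v$ of length zero, whereas the excluded template $\alpha' e_v\phi(g,g^{-1}(e_v))(\beta' g^{-1}(e_v))^*$ always has source and range paths of length $\geq 1$. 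Thus $\{vg:g\in G\}$ is $\ell$-linearly independent in $L(G,E,\phi_c)$, whence $I_v=0$ and $R_v=\ell[G]$, which is regular supercoherent by hypothesis.

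Next I would verify condition (ii) of Lemma \ref{lem:flat1}: that $X_{v,w}$ is flat as a left $R_v$-module for every pair $(v,w)\in\reg(E)\times E^0$ with $A_{v,w}\neq 0$. Having just shown $I_v=I_w=0$, Lemma \ref{lem:iv=iw=0} reduces this to the flatness of $\ell[G]/\ann_{\ell[G]}(e)$ over $\ell[G]$ for every $e\in vE^1w$. The proof of that lemma identifies $\ann_{\ell[G]}(e)$ with the stabilizer of $(e,1)$ under the action \eqref{map:actphi}, namely $\{g\in G:g(e)=e,\ \phi(g,e)=1\}$; by Lemma \ref{lem:ann=0} this is exactly the set of elements strongly fixing $e$, which is $\{1\}$ by pseudo-freeness. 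Hence $\ell[G]/\ann_{\ell[G]}(e)\cong\ell[G]$ is free (and a fortiori flat) over itself.

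With both hypotheses of Theorem \ref{thm:Kreg} verified, the $K$-regularity of $L(G,E,\phi_c)$ follows. I do not anticipate a substantive obstacle; the whole argument is structural, with the only mildly delicate point being the length-counting verification that $vg\in\cB''$, which is precisely what makes pseudo-freeness the right hypothesis to force both $I_v=0$ and the triviality of the relevant stabilizers.
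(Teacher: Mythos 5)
Your proposal is correct and follows essentially the same route as the paper: reduce to Theorem \ref{thm:Kreg} by showing $I_v=0$ for all $v$ (via Corollary \ref{coro:basisunion} and \eqref{eq:b''pseudo}) and then, through Lemma \ref{lem:iv=iw=0}, that $\ann_{\ell[G]}(e)=0$ for every edge $e$. The only (harmless) difference is in the last step: the paper passes through the explicit vanishing condition \eqref{eq:escero} and the annihilator of $ee^*$ in the Cohn algebra before invoking Lemma \ref{lem:ann=0}, whereas you argue directly that triviality of the strong stabilizer makes the orbit map $g\mapsto(g(e),\phi(g,e))$ injective, so the kernel of $\ell[G]\to\ell[K_{(e,1)}]$ (which, as an ideal, \emph{corresponds to} rather than ``is'' the stabilizer) vanishes.
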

\begin{proof} Because $(G,E,\phi_c)$ is pseudo-free, we have $I_v=0$ for all $v\in E^0$, by Corollary \ref{coro:basisunion}. Hence in view of Lemma \ref{lem:iv=iw=0} it suffices to show that
$\ann_{\ell[G]}(e)=0$ for all $e\in E^1$. Let $e\in E^1$ and $K=K_{(e,1)}$ as in the proof of Lemma \ref{lem:iv=iw=0}. It follows from Corollary \ref{coro:basisunion} and the identity \eqref{eq:b''pseudo} that $x=\sum_{g\in G}\lambda_gg \in \ann_{\ell[G]}(e)$ if and only if for every $(f,h)\in K$ we have
\begin{equation}\label{eq:escero}
0=\sum_{\{g: g(e),\phi(g,e)=(f,h)\}}\lambda_{g}c(g,e).    
\end{equation}
Next observe that if $x$ satisfies \eqref{eq:escero} then it annihilates $ee^*\in C(G,E,\phi_c)$. By Lemma \ref{lem:ann=0} and pseudo-freeness, this implies that $x=0$. Hence $\ann_{\ell[G](e)=0}$ for all $e\in E^1$, concluding the proof. 
\end{proof}

\begin{ex}\label{ex:unflat}
Let $\ell$ be a Noetherian domain and let $(\Z,E,\phi_c)$ be a twisted EP-tuple where $\Z$ acts trivially on $E^0$. Assume $(\Z,E,\phi_c)$ is partially pseudo-free but not pseudo-free. Write $L=\ell[t,t^{-1}]$; then \eqref{eq:b''pseudo} maps injectively to a basis of $L(\Z,E,\phi_c)$. Since the latter basis contains $vg$ for all $g\in G$, they are $\ell$-linearly independent, so we have $R_v=L$ for all $v\in E^0$. By Lemma \ref{lem:ann=0} there is an edge $e$ and a nonzero element $x\in L$ that annihilates $ee^*\in C(\Z,E,\phi_c)$. Then $x$ also annihilates $e\in L(\Z,E,\phi_c)$, so $J:=\ann_{L}(e)\ne 0$. Moreover $J\ne L$, by Proposition \ref{prop:basisunion}. Because we are assuming that $\ell$ is Noetherian, the same is true of $L$, and thus $L/J$ is flat if and only if it is projective, by Villamayor's theorem. Hence the flatness of $L/J$ would imply that $L$ is a decomposable $L$-module, which would contradict the hypothesis that $\ell$ is a domain. So $L/J$ is not flat, and thus neither is $X_{s(e),r(e)}$, by Lemma \ref{lem:iv=iw=0}. By  Lemma \ref{lem:flat1}, this implies that $L(\Z,E,\phi_c)_{0,n+1}$ is not  flat over $L(\Z,E,\phi_c)_{0,n}$ for any $n\ge 0$.
\end{ex}

\section{\topdf{$K$}{K}-regularity of twisted Katsura algebras}
\begin{lem}\label{lem:flat2} Let $(A,B,C)$ and $E$ be as in Section \ref{sec:kat} above and let $(v,w)\in\reg(E)\times E^0$ such that $A_{v,w}\ne 0$. Let 
$X_{v,w}$ be the $R_v$-module of \eqref{eq:xvw}.
\item[i)] If $B_{v,w}\ne 0$, then $X_{v,w}$ is flat.
\item[ii)] If $B_{v,w}=0$ and there exists $w'$ such that $B_{v,w'}\ne 0$, then $X_{v,w}$ is not flat.
\item[iii)] Suppose $B_{v,w}=0$ for all $w\in r(s^{-1}\{v\})$. Futher assume that $C_{v,w}-C_{v,w'}$ and $C_{v,w}-1$ are either zero or invertible in $\ell$ for all $w, w'\in r(s^{-1}\{v\})$. Then $X_{v,w}$ is flat for all $w\in r(s^{-1}\{v\})$.
\end{lem}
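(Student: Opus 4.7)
The plan is to determine, in each case, the structure of $R_v$ and then describe $X_{v,w}$ as a left $R_v$-module via the explicit formula
\[
t\cdot(e\otimes x)=\sigma(e)\otimes c(t,e)\,\bar t^{\psi(1,\n(e))}\,x
\]
dictated by the Exel--Pardo relations and the Katsura conventions of Section \ref{sec:kat} (here $\bar t$ denotes the image of $t$ in $R_w$). A preliminary observation is that for any twisted Katsura tuple one has $\reg(E)=\reg(E)_0\sqcup\reg(E)_1$: a vertex $v$ lies in $\reg(E)_1$ precisely when $B_{v,w''}\neq 0$ for some $w''\in r(s^{-1}\{v\})$, and in $\reg(E)_0$ precisely when every such $B_{v,w''}$ vanishes. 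Corollary \ref{coro:basisunion} then forces $vg\in\cB''$ for all $g\in G$ whenever $v\in\reg(E)_1$, so that $I_v=0$ and $R_v=L_1:=\ell[t,t^{-1}]$; while for $v\in\reg(E)_0$ the relation $v=\sum_{s(e)=v}ee^*$, combined with $\sigma=\id$ and $\phi(t,e)=1$ on $s^{-1}\{v\}$, yields $vt^k=\sum_{s(e)=v}c(t,e)^k\,ee^*$, and linear independence in $\cB''$ identifies $I_v$ with the ideal of $L_1$ of Laurent polynomials vanishing on the finite set $S_v=\{c(t,e):e\in s^{-1}\{v\}\}$.

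For part (i), the hypothesis $B_{v,w}\neq 0$ places $v$ in $\reg(E)_1$, so $R_v=L_1$. Fix a $\sigma$-orbit representative $e_0\in vE^1w$ and set $d=\gcd(A_{v,w},B_{v,w})$, so that $\sigma$ has period $A_{v,w}/d$ at $e_0$. Writing $k=m+\tilde k\cdot A_{v,w}/d$ and using the cocycle identity $\psi(k,\n(e_0))=\psi(m,\n(e_0))+\tilde k\cdot B_{v,w}/d$, the equation $\sum_k\lambda_k\,t^k\cdot(e_0\otimes 1)=0$ splits according to the $\sigma$-orbit residue $m$ and, after factoring out the unit $\bar t^{\psi(m,\n(e_0))}$, reduces to the linear independence of the powers $\bar t^{\tilde k\cdot B_{v,w}/d}$ in $R_w$ (which holds because $B_{v,w}/d\neq 0$). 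Hence each $L_1\cdot(e_0\otimes 1)$ is free of rank one, and running through a transversal of $\sigma$-orbits exhibits $X_{v,w}$ as a direct sum of free cyclic $L_1$-modules, giving freeness (of rank $|B_{v,w}|$) and in particular flatness.

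For part (ii), since $B_{v,w'}\neq 0$ for some $w'$, again $R_v=L_1$; but the vanishing $B_{v,w}=0$ forces $\sigma=\id$ and $\phi(t,e)=1$ for every $e\in vE^1w$, so the $t$-action on each summand $\ell e\otimes R_w$ of $X_{v,w}$ is scalar multiplication by the unit $c(t,e)\in\cU(\ell)$. That summand is thus a nonzero $L_1$-module annihilated by the regular non-unit $t-c(t,e)$, and so fails to be flat over the PID $L_1$; consequently $X_{v,w}$ is not flat.

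For part (iii), $v\in\reg(E)_0$, and $R_v=L_1/I_v$ for the ideal $I_v$ described above. The assumption on the entries of $C$ ensures that distinct elements of $S_v$ have pairwise invertible differences; the Chinese Remainder Theorem therefore gives $I_v=\prod_{\mu\in S_v}(t-\mu)\,L_1$ and a ring isomorphism $R_v\cong\ell^{|S_v|}$. On each summand $\ell e\otimes R_w$ of $X_{v,w}$ the element $t$ acts as the scalar $c(t,e)\in S_v$, so $R_v$ acts through the projection to the single factor indexed by $c(t,e)$; that summand is therefore a module over $\ell$ (a field, by the standing hypothesis of this section), hence flat over that factor, and so flat over the product ring $R_v$. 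Thus $X_{v,w}$ is a direct sum of flat $R_v$-modules and is itself flat. The main technical point is the linear independence argument in (i): when $w\in\sink(E)\cup\reg(E)_1$ it is immediate from Lemma \ref{lem:L0n}(ii), which gives $R_w=L_1$, but the remaining case requires the same kind of analysis of $I_w$ that was used above for $I_v$.
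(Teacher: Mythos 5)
Your preliminary analysis of $R_v$ (via the decomposition $\reg(E)=\reg(E)_0\sqcup\reg(E)_1$ and Corollary \ref{coro:basisunion}) and your treatments of parts (ii) and (iii) are correct and essentially coincide with the paper's: for (ii) the paper exhibits the regular non-unit $f(t)=(t-1)(t-C_{v,w})$ annihilating $X_{v,w}$ and computes $\tor_1^{L_1}(L_1/fL_1,X_{v,w})=X_{v,w}\neq 0$, which is the same torsion-over-a-PID observation you make; for (iii) both arguments run the Chinese Remainder Theorem to split $R_v$ into a product of copies of $\ell$ and view each summand of $X_{v,w}$ as a module over a single factor.

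Part (i), however, has a genuine gap. Each cyclic module $L_1\cdot(e_j\otimes 1)$ is indeed free of rank one, but the $d=\gcd(A_{v,w},B_{v,w})$ cyclic modules generated by a transversal of $\sigma$-orbits do not exhaust $X_{v,w}$: inside a summand $\ell e_i\otimes R_w$ the module $L_1\cdot(e_j\otimes 1)$ only reaches the powers $\bar t^{q}$ with $q$ in one congruence class modulo $B_{v,w}/d$. Already for $A_{v,w}=1$, $B_{v,w}=2$ one has $\sigma=\id$, a single orbit, and $L_1\cdot(e_0\otimes 1)=e_0\otimes\ell[t^2,t^{-2}]\subsetneq X_{v,w}$. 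Your own bookkeeping betrays the problem: $d$ free cyclic summands would give rank $d=\gcd(A_{v,w},B_{v,w})$, not the asserted $|B_{v,w}|$. The paper avoids the orbit decomposition altogether: with $r,q$ the remainder and quotient modulo $a=A_{v,w}$, it observes that $t$ sends the $\ell$-basis element $e_{r(n)}\otimes t^{q(n)}$ to a unit multiple of $e_{r(n+b)}\otimes t^{q(n+b)}$, so that $X_{v,w}$ is free on the $|b|$ generators $e_{r(n)}\otimes t^{q(n)}$, $0\le n<|b|$; your argument should be repaired by using these generators. Separately, your closing deferral of the case $w\in\reg(E)_0$ is not a routine loose end: there $R_w$ is a proper, finite-dimensional quotient of $L_1$, the powers $\bar t^{\tilde k B_{v,w}/d}$ are not linearly independent, and $X_{v,w}$ becomes a torsion $L_1$-module, so no variant of the freeness argument can go through. (The paper's proof of (i) also tacitly assumes $R_w=L_1$ when it declares $e_{r(n)}\otimes t^{q(n)}$, $n\in\Z$, an $\ell$-basis of $X_{v,w}$, so this point needs to be addressed in the statement or its application, not merely in your write-up.)
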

\begin{proof}
Set $a=A_{v,w}$, $b=B_{v,w}$, $c=C_{v,w}$, $X=X_{v,w}$ and $L=\ell[t,t^{-1}]$. For each $0\le i<a$, write $e_i$ for the unique edge $e\in vE^1w$ with $\n(e)=i$. If $b=0$, then $te_0\otimes x=ce_0\otimes x$ 
and $te_i\otimes x=e_i\otimes x$ for all $x\in L$ and $1\le i<a$. Hence the monic polynomial $f(t)=(t-1)(t-c)$ annihilates $X$. Because $f(t)$ is not a zero divisor in $L$, 
\[
L\overset{f(t)}{\lra}L
\]
is a free resolution of $L/f(t)L$. Hence $\tor_1^L(L/(t-1)(t-c),X)=X\ne 0$. In particular, $X$ is not flat over $L$. In the situation of ii), $I_v=0$, so $R_v=L$, and thus $X$ is not flat over $R_v$. In the situation of iii),
If $b\ne 0$, let $c_1,\dots,c_r$ be the distinct elements of the set $\{1\}\cup\{C_{v,w}:w\in r(s^{-1}(\{v\}))\}$. Then
\[
R_v\cong L/\prod_{i=1}^r(t-c_i)L\cong \bigoplus_{i=1}^r\ell[t]/(t-c_i)\ell[t],
\]
and $X$ is a direct sum of copies of some of the summands in the decompositon above. Hence it is a projective $R_v$-module and in particular it is flat.

Next assume that $b\ne 0$; note that $R_v=L$ in this case. Let $r:\Z\to\{0,\dots,a-1\}$ and $q:\Z\to\Z$ be the remainder and the quotient function in the division by $a$. Let $L_{b,c}$ be the $\ell$-module $L$ equipped with the following $\Z$-action 
\[
t\cdot_bx=ct^bx.
\]
Then $L_{b,c}$ is free with basis $1,t,\dots,t^{|b|-1}$ and 
\[
L_{b,c}\to X\ \ t^n\mapsto e_{r(n)}\otimes t^{q(n)}
\]
is an isomorphism of left $L$-modules. Hence $X$ is a free left $L$-module. 
\end{proof}

\begin{prop}\label{prop:KatKreg}
Let $\ell$ be regular supercoherent and $(A,B,C)$ a twisted Katsura triple. If either of the following holds  then $\cO_{A,B}^C$ is $K$-regular. 
\item[i)] $B_{v,w}=0\iff A_{v,w}=0$.
\item[ii)] $\ell$ is a field and if $v\in\reg(E)$ is such that $B_{v,w}=0$ for some $w\in r(s^{-1}\{v\})$, then $B_{v,w'}=0$ for all
$w'\in r(s^{-1}\{v\})$. 
\end{prop}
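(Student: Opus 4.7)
The strategy is to reduce to Theorem \ref{thm:Kreg}. For a twisted Katsura triple the base graph $E$ is row-finite and $G=\Z$ acts trivially on $E^0$, so the standing hypotheses of that theorem are met; what remains is to verify that (a) $R_v$ is regular supercoherent for every $v\in E^0$, and (b) the flatness condition (ii) of Lemma \ref{lem:flat1} is satisfied. For sinks we have $I_v=0$ by Lemma \ref{lem:L0n}, so $R_v=\ell[t,t^{-1}]$, which is regular supercoherent whenever $\ell$ is.

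In case (i) I would first observe that $(\Z,E,\phi_c)$ is pseudo-free. Using the explicit Katsura formulas for $\sigma$ and $\phi$, a direct computation shows that $t^n$ strongly fixes an edge $e\in vE^1w$ if and only if $n\,B_{v,w}=0$; under the hypothesis of case (i), $B_{v,w}\ne 0$ whenever $A_{v,w}\ne 0$, which forces $n=0$. Then Lemma \ref{lem:ann=0} gives pseudo-freeness, and Corollary \ref{coro:kregpseudo} delivers the $K$-regularity of $\cO_{A,B}^C$, using that $\ell[\Z]=\ell[t,t^{-1}]$ is regular supercoherent as $\ell$ is.

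In case (ii) I would appeal directly to Theorem \ref{thm:Kreg}. The flatness condition splits into two subcases for each regular $v$: if $B_{v,w}\ne 0$ for every $w\in r(s^{-1}\{v\})$ then Lemma \ref{lem:flat2}(i) applies edge by edge; otherwise the hypothesis forces $B_{v,w}=0$ for all such $w$, and Lemma \ref{lem:flat2}(iii) applies, with the condition on $C$ automatic because every nonzero element of a field is invertible. The remaining task is to show $R_v$ is regular supercoherent for regular $v$. When all $B_{v,w}\ne 0$, partial pseudo-freeness at $v$ forces $I_v=0$; when all $B_{v,w}=0$, I would use that $t$ strongly fixes every edge at $v$ to deduce the identity $vt=\sum_{e\in vE^1}c(t,e)\,ee^*$ in $L(\Z,E,\phi_c)$, from which a short argument identifies $I_v$ as the principal ideal of $\ell[t,t^{-1}]$ generated by $\prod_i(t-c_i)$, where the $c_i$ are the distinct values taken by $c(t,-)$ on $vE^1$. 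Since the $c_i$ are distinct, $R_v\cong\prod_i\ell$ is a product of copies of the field $\ell$, hence semisimple and regular supercoherent. The main obstacle I anticipate is precisely this last computation of $I_v$; once it is in hand, all the hypotheses of Theorem \ref{thm:Kreg} are verified and the proposition follows.
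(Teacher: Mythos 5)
Your proposal is correct, and its backbone --- verifying the hypotheses of Theorem \ref{thm:Kreg}, namely regular supercoherence of each $R_v$ and flatness of the modules $X_{v,w}$ --- is exactly the paper's, whose entire proof reads ``Immediate from Lemma \ref{lem:flat2} and Theorem \ref{thm:Kreg}.'' The one genuine divergence is case (i): the paper obtains flatness there from Lemma \ref{lem:flat2}(i), where $X_{v,w}$ is shown to be free over $R_v=\ell[t,t^{-1}]$ whenever $B_{v,w}\ne 0$, whereas you first check that the tuple is pseudo-free (your computation that $t^n$ strongly fixes $e\in vE^1w$ iff $nB_{v,w}=0$ is right) and then invoke Corollary \ref{coro:kregpseudo}; both routes are sound, and yours has the small virtue of making the pseudo-freeness of these Katsura tuples explicit, at the cost of passing through Lemma \ref{lem:ann=0} rather than a direct freeness argument. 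For case (ii) you follow the paper's path through Lemma \ref{lem:flat2}(i) and (iii), and the computation you flag as the main obstacle --- identifying $I_v=\bigcap_{e\in s^{-1}\{v\}}\ann_{\ell[t,t^{-1}]}(ee^*)=\prod_i(t-c_i)\,\ell[t,t^{-1}]$ when all $B_{v,w}$ vanish, so that $R_v$ is a finite product of copies of the field $\ell$ --- is precisely what the paper establishes inside the proof of Lemma \ref{lem:flat2}(iii) (in fact your description of the set of roots $c_i$ as the values actually taken by $c(t,-)$ on $vE^1$ is slightly more careful than the paper's, which always adjoins $1$). So the proposal is a complete and correct rendering of the intended argument, with only a cosmetic variation in case (i).
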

\begin{proof} Immediate from Lemma \ref{lem:flat2} and Theorem \ref{thm:Kreg}.
\end{proof}
\begin{rem}\label{rem:KatKreg} By \cite{ep}*{Lemma 18.5} condition i) of Proposition \ref{prop:KatKreg} is equivalent to the condition that the $EP$-tuple $(\Z,E,\phi)$ associated to $(A,B)$ in Section \ref{sec:kat} be pseudo-free. If condition ii) of the same proposition is satisfied, then any vertex $v$ with $B_{v,w}=0$ for some $w\in r(s^{-1}\{v\})$ is a \emph{$B$-sink} in the sense of \cite{nyort}*{page 2248}.
\end{rem}

\begin{coro}\label{coro:KatKreg}
Assume either of the conditions of Proposition \ref{prop:KatKreg} is satisfied. Then for any $n\in\Z$ there is a short exact sequence
\begin{multline*}
0\to \BF(A,B,C)\otimes_{\fW}(K_{n}(\ell)\oplus K_{n-1}(\ell))\to K_n(\cO_{A,B}^C)\\
\to \ker((I-D^*)\otimes_\fW(K_{n-1}(\ell)\oplus KH_{n-2}(\ell)\to 0.    
\end{multline*}
\end{coro}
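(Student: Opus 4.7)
The plan is to derive this from the $KH$-exact sequence in Corollary \ref{coro:katkh} by means of the comparison map $K_*\to KH_*$, using that both $\ell$ and $\cO_{A,B}^C$ are $K$-regular under the standing hypotheses.

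First, I would apply Corollary \ref{coro:katkh} in the trivial case $R=\ell$ (which is flat over itself), producing the short exact sequence
\[
0\to \BF(A,B,C)\otimes_\fW(KH_n(\ell)\oplus KH_{n-1}(\ell))\to KH_n(\cO_{A,B}^C)\to \ker((I-D^*)\otimes_\fW(KH_{n-1}(\ell)\oplus KH_{n-2}(\ell)))\to 0
\]
for every $n\in\Z$. This is the $KH$-analogue of the sequence claimed; only the labels $KH$ need to be replaced by $K$.

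Next, I would invoke Proposition \ref{prop:KatKreg}: under either condition (i) or (ii) the algebra $\cO_{A,B}^C$ is $K$-regular, so the comparison map $K_n(\cO_{A,B}^C)\to KH_n(\cO_{A,B}^C)$ is an isomorphism for every $n\in\Z$. Simultaneously, both hypotheses of Proposition \ref{prop:KatKreg} force $\ell$ to be regular supercoherent (in case (ii), $\ell$ is a field, which is regular), and any regular supercoherent ring is $K$-regular by \cite{gersten} or by Quillen's fundamental theorem. Hence the comparison map also yields isomorphisms $K_m(\ell)\cong KH_m(\ell)$ for all $m\in\Z$.

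Finally, substituting these isomorphisms into the $KH$-sequence above produces the desired short exact sequence in $K$-theory. The only subtle point is to observe that the left-hand and right-hand $\fW$-module constructions $\BF(A,B,C)\otimes_\fW(-)$ and $\ker((I-D^*)\otimes_\fW(-))$ are functorial in the pair of abelian groups $(K_m(\ell)\oplus K_{m-1}(\ell))$ viewed as graded $\fW$-modules via the cup product, so the natural isomorphism $K_*(\ell)\cong KH_*(\ell)$ induced by the comparison map is compatible with the $\fW$-module structure entering in \eqref{eq:bfabc}; this is immediate from the naturality of the cup product. No step here poses a real obstacle; the content of the corollary is entirely in Corollary \ref{coro:katkh} and Proposition \ref{prop:KatKreg}, and the present proof is the formal unwinding.
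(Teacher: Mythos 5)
Your proposal is correct and is essentially the paper's own proof, which simply cites Proposition \ref{prop:KatKreg} and Corollary \ref{coro:katkh}; you have supplied the routine unwinding (take $R=\ell$ in Corollary \ref{coro:katkh}, then use $K$-regularity of $\cO_{A,B}^C$ and of $\ell$ to replace $KH$ by $K$ throughout). The remark about compatibility of the comparison map with the $\fW$-module structure is a sensible point of care but, as you say, poses no obstacle.
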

\begin{proof} Immediate from Proposition \ref{prop:KatKreg} and Corollary \ref{coro:katkh}.
    
\end{proof}
\section{Ring theoretic regularity: the universal localization picture}\label{sec:uniloc}
Let $(G,E,\phi_c)$ be a twisted EP-tuple. In this section we assume that $E$ is finite and that $G$ acts trivially on $E^0$. We show that  $L(G,E,\phi_c)$ can be interpreted as the universal localization of an algebra $P(G,E,\phi_c)$. We use this to give sufficient conditions that guarantee that $L(G,E,\phi_c)$
is \emph{regular} in the sense that every (right) $S$-module has finite projective dimension, and apply this to the case of twisted Katsura algebras.

For each $(v,w)\in\reg(E)\times E^0$, let $X_{v,w}$ be as in \eqref{eq:xvw}. Regard $X_{v,w}$ as an $(R_v,R_w)$-bimodule, with the left $R_v$-module structure induced via the map \eqref{map:xtolge} and the obvious right $R_w$-module structure. Set 
\begin{equation}\label{R&X}
R=\bigoplus_{v\in E^0}R_v,\ \ X=\bigoplus_{v,w}X_{v,w}.
\end{equation}
Observe that $X$ is an $R$-bimodule; let 
\begin{equation}
P(G,E,\phi_c):=T_RX
\end{equation}
be the tensor algebra. Since by definition $R$ and $X$ are a subalgebra and an $R$-sub-bimodule of $L(G,E,\phi_c)$, we have a canonical algebra homomorphism
\begin{equation}\label{map:pgetolge}
P(G,E,\phi_c)\to L(G,E,\phi_c)
\end{equation}

\begin{lem}\label{lem:pgetolge}
The homomorphism \eqref{map:pgetolge} is injective. 
\end{lem}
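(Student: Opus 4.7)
The plan is to exploit the natural $\N_0$-grading on $T_RX$ coming from tensor powers and compare it with the $\Z$-grading on $L(G,E,\phi_c)$ used in Section~\ref{sec:twistlaure}. Since $G$ acts trivially on $E^0$ and $E$ is (row-)finite, each $X_{v,w}$ is a free right $R_w$-module with basis $vE^1w$. Using this repeatedly I would show by induction on $n\ge 1$ that
\[
(X^{\otimes_R n})_{v,u}\;=\;\bigoplus_{\alpha\in vE^{(n)}u}\alpha\otimes R_u
\]
is a free right $R_u$-module, where $vE^{(n)}u$ denotes the set of paths of length $n$ from $v$ to $u$. The contraction that makes this work is precisely that in $X_{v,w}\otimes_{R_w}X_{w,u}$ the right $R_w$-factor in $X_{v,w}$ gets absorbed into the left action on $X_{w,u}$ and then pushed all the way to the right-most slot. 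Consequently, choosing for each $v$ an $\ell$-basis $\cB_v$ of $R_v$, the degree-$n$ homogeneous component of $P(G,E,\phi_c)$ has the $\ell$-linear basis $\{\alpha\otimes r:|\alpha|=n,\,r\in\cB_{r(\alpha)}\}$.

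Next I would verify that the canonical map \eqref{map:pgetolge} respects gradings: it sends $\alpha\otimes\tilde r$ to $\alpha\tilde r\in L(G,E,\phi_c)_{|\alpha|}$, so that the $\N_0$-grading of $P(G,E,\phi_c)$ maps into the non-negative part of the $\Z$-grading of $L(G,E,\phi_c)$. The map is well defined on the quotient $R_{r(\alpha)}=\ell[G]/I_{r(\alpha)}$ because if $\tilde r\in I_{r(\alpha)}=\ann_{\ell[G]}(r(\alpha))$ then $\alpha\tilde r=\alpha r(\alpha)\tilde r=0$ in $L(G,E,\phi_c)$. Injectivity of \eqref{map:pgetolge} therefore reduces to injectivity on each homogeneous component.

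The degree-wise injectivity I would handle as follows. In degree $0$, the map is $R_v\to vL(G,E,\phi_c)v$, $r\mapsto vr$, which is injective directly from the definition $I_v=\ann_{\ell[G]}(v)$. In degree $n\ge 1$, suppose $\sum_{\alpha:|\alpha|=n}\alpha\tilde r_\alpha=0$ in $L(G,E,\phi_c)$. For any path $\beta$ with $|\beta|=n$, identity \eqref{eq:plaincohn} gives $\beta^*\alpha=\delta_{\alpha,\beta}\,r(\beta)$ (two paths of the same length are either equal or incomparable in the prefix order). Multiplying on the left by $\beta^*$ thus yields $r(\beta)\tilde r_\beta=0$ in $L(G,E,\phi_c)$, so $\tilde r_\beta\in I_{r(\beta)}$ and the corresponding class vanishes in $R_{r(\beta)}$. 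Hence the original element is zero in $P(G,E,\phi_c)$.

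I do not foresee a real obstacle; the only mildly delicate point is the explicit identification of $(X^{\otimes_R n})_{v,u}$ as a free right $R_u$-module on paths, as opposed to a more complicated mixture of edges and group elements. Once this is in place the rest is a routine projection via left multiplication by $\beta^*$.
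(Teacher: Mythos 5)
Your proof is correct. Both you and the paper start from the same structural observation: $T^n_RX$ is a free right $R$-module on the paths of length $n$, so injectivity of \eqref{map:pgetolge} reduces to showing that $\sum_\alpha \alpha x_\alpha=0$ in $L(G,E,\phi_c)$ forces each $x_\alpha=0$ in $R_{r(\alpha)}$. Where you genuinely diverge is in how the individual coefficients are extracted. The paper lifts each $x_\alpha$ to $y_\alpha\in\ell[G]$, reads the vanishing as $\sum_\alpha\alpha y_\alpha\in\cK(G,E,\phi_c)$ inside the Cohn algebra, and then invokes the explicit $\ell$-linear bases $\cB$ of $C(G,E,\phi_c)$ and $\cB'$ of $\cK(G,E,\phi_c)$ (Corollary \ref{coro:cohnpres} and Proposition \ref{prop:kge}) to conclude $y_\alpha\in I_{r(\alpha)}$. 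You instead restrict to a homogeneous component (legitimate, since the map is graded and the relations defining $L(G,E,\phi_c)$ are homogeneous), multiply on the left by $\beta^*$, and use \eqref{eq:plaincohn} --- for paths of equal length $\beta^*\alpha=\delta_{\alpha,\beta}\,r(\beta)$ --- to isolate $r(\beta)x_\beta=0$ in $L(G,E,\phi_c)$, which is precisely the statement that $x_\beta=0$ in $R_{r(\beta)}=\ell[G]/\ann_{\ell[G]}(r(\beta))$, by the very definition of $I_{r(\beta)}$ as an annihilator computed in $L(G,E,\phi_c)$. Your route bypasses the basis machinery entirely and is arguably more economical; the paper's basis argument has the minor advantage of handling paths of mixed lengths in one stroke, with no need for the grading reduction. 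Two small points of hygiene: the freeness of $X_{v,w}$ as a right $R_w$-module is immediate from the definition \eqref{eq:xvw} (only the right structure matters for your induction, so the ``mixture of edges and group elements'' you worry about never enters), and $\tilde r_\beta$ already lives in the quotient $R_{r(\beta)}$, so ``the corresponding class vanishes'' should be read as: any lift of $\tilde r_\beta$ lies in $I_{r(\beta)}$, hence $\tilde r_\beta=0$.
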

\begin{proof}
By definition, $R$ and $X$ are included in $L(G,E,\phi_c)$. Moreover, the $n$-fold tensor product $T^n_RX$ is freely generated as a right $R$-module by the paths of length $n$ in $E$. Thus it suffices to show that if $\cF\subset\cP(E)$ is a finite subset and $x_{\alpha}\in R_{r(\alpha)}$ for all $\alpha\in\cF$, then
\begin{equation}\label{eq:pgetolge}
\sum_{\alpha}\alpha x_\alpha=0
\end{equation}
implies that each $x_\alpha=0$. Choose a lift $y_{\alpha}\in\ell[G]$ for each $\alpha\in\cF$; then \eqref{eq:pgetolge} implies that $\sum_{\alpha\in\cF}\alpha y_\alpha\in \cK(G,E,\phi_c)$. If follows from this and the fact that $\cB$ and $\cB'$ are $\ell$-linear basis of $C(G,E,\phi_c)$ and $\cK(G,E,\phi_c)$, that $y_{\alpha}\in I_{r(\alpha)}$ so that $x_{\alpha,\beta}=0$ for each $\alpha\in\cF$.
\end{proof}
                                                  
For each regular vertex $v$, let
\[
\sigma_v:\bigoplus_{s(e)=v}r(e)P(G,E,\phi_c)\to vP(G,E,\phi_c),\ \ \sigma_v(r(e)a)=ea.
\]
Let $\Sigma=\{\sigma_v:v\in \reg(E)\}$ and let $P(G,E,\phi_c)_\Sigma$ be the universal localization. Observe that scalar extension along \eqref{map:pgetolge} inverts the elements of $\Sigma$; hence we have a canonical algebra homomorphism
\begin{equation}\label{map:pgestolge}
P(G,E,\phi_c)_\Sigma\to L(G,E,\phi_c).
\end{equation}

\begin{lem}\label{lem:pgestolge}
The algebra homomorphism \eqref{map:pgestolge} is an isomorphism. 
\end{lem}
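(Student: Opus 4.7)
The plan is to construct an explicit inverse to the canonical map \eqref{map:pgestolge}, using the generators-and-relations presentation of $L(G,E,\phi_c)$ given in Section \ref{subsec:cohn}.

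First, I would extract the ghost edges $e^{*}$ from the universal property of localization. Since $\sigma_v$ is inverted in $P_\Sigma$ for every $v\in\reg(E)$, the inverse $\sigma_v^{-1}\colon vP_\Sigma\to\bigoplus_{s(e)=v}r(e)P_\Sigma$ has coordinate components that define elements $e^{*}\in r(e)P_\Sigma v$ satisfying
\[
\sum_{s(e)=v}ee^{*}=v,\qquad e^{*}f=\delta_{e,f}r(e)\ \text{ for }\ s(e)=s(f).
\]
For edges with $s(e)\ne s(f)$ the relation $e^{*}f=0$ is automatic from orthogonality of the vertex idempotents of $R\subset P\subset P_\Sigma$.

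Second, I would define an algebra homomorphism $\chi\colon C(G,E,\phi_c)\to P(G,E,\phi_c)_\Sigma$ on Cohn generators by sending $v,vg,e,eg$ to their tautological images in $P\subset P_\Sigma$ (via Lemma \ref{lem:pgetolge}), and $e^{*},\ ge^{*}$ to $e^{*}_{s(e)}$ and $(r(e)g)e^{*}_{s(e)}$ respectively. The relations \eqref{eq:cohn0}--\eqref{eq:cohn2} that do not involve $e^{*}$ hold in $P$ by construction of $T_RX$ and the $R$-bimodule structure on $X$ coming from the formula $(vg)e=\delta_{v,s(e)}g(e)\phi_c(g,e)$. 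The $e^{*}$-relations from \eqref{eq:cohn1} are exactly the two displayed identities above. The twisted cocycle identity $e^{*}vg=\delta_{v,s(e)}\phi_c(g,g^{-1}(e))(g^{-1}(e))^{*}$ from \eqref{eq:cohn2} is obtained by multiplying $\sigma_{s(e)}\sigma_{s(e)}^{-1}=v$ by $g$ on the right, rewriting via $(vg)e=g(e)\phi_c(g,e)$, and using uniqueness of $\sigma_{s(e)}^{-1}$; this is where the cocycle identity for $\phi_c$ from Lemma \ref{lem:twistextend} is used. Once these are checked, $\chi(q_v)=v-\sum_{s(e)=v}ee^{*}=0$ and analogously $\chi(q_vg)=0$, so $\chi$ descends to $\bar\chi\colon L(G,E,\phi_c)\to P(G,E,\phi_c)_\Sigma$.

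Third, I would verify that $\bar\chi$ and \eqref{map:pgestolge} are mutually inverse. The composite \eqref{map:pgestolge}$\,\circ\,\bar\chi$ fixes every Cohn generator of $L(G,E,\phi_c)$ and therefore equals the identity. For the other composite, note that \eqref{map:pgestolge} extends the embedding of Lemma \ref{lem:pgetolge}, so $\bar\chi\,\circ\,$\eqref{map:pgestolge} is the identity on the image of $P$, and the image of $\sigma_v^{-1}$ is mapped back to $\sigma_v^{-1}$ because its coordinates are precisely the elements $e^{*}$ used to define $\bar\chi$; by the universal property of the localization, this is enough to conclude $\bar\chi\,\circ\,$\eqref{map:pgestolge}$=\id_{P_\Sigma}$.

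The main obstacle is the verification of the twisted Cohn--Exel--Pardo relations involving $e^{*}$ and $ge^{*}$, particularly \eqref{eq:cohn2}; everything else is formal. This verification is not deep but requires carefully tracking the cocycle $\phi_c$ using the extended cocycle identities of Lemmas \ref{lem:epextend} and \ref{lem:twistextend}, together with the fact that the two-sided inverse of $\sigma_v$ in $P_\Sigma$ is unique.
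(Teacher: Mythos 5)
Your proposal is correct and matches the paper's own argument: the paper likewise realizes $P(G,E,\phi_c)_\Sigma$ as $P(G,E,\phi_c)$ with adjoined elements $y_e$ satisfying $\sum_{s(e)=v}ey_e=v$ and $y_ef=\delta_{e,f}r(e)$, checks that the only nontrivial Cohn relation is the last identity of \eqref{eq:cohn2} (verified by inserting $s(e)=\sum_{s(f)=s(e)}fy_f$ and using $gf=g(f)\phi_c(g,f)$, exactly your computation read in a slightly different order), and then descends through $C(G,E,\phi_c)$ to get the inverse map. The only cosmetic difference is that you spell out the mutual-inverse verification, which the paper leaves implicit.
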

\begin{proof}
The algebra $P(G,E,\phi_c)_{\Sigma}$ is obtained from $P(G,E,\phi_c)$ upon adjoining an element $y_e$ for each $e\in E^1$ so that $r(e)y_es(e)=y_e$ and so that the matrices
$M_v=\sum_{s(e)=v}\epsilon_{v,e}e$ and $N_v=\sum_{s(e),v}\epsilon_{e,v}y_e$ satisfy $M_vN_v=\epsilon_{v,v}v$ and $N_vM_v=\sum_{s(e)=v}\epsilon_{e,e}r(e)$. The homomorphism \eqref{map:pgestolge} is the inclusion on $P(G,E,\phi_c)$ and sends $y_e\mapsto e^*$; to prove it is an isomorphism it suffices to show that the $y_e$ satisfy the same relations \eqref{eq:cohn0}, \eqref{eq:cohn1} and \eqref{eq:cohn2} as the $e^*$. This is clear for all but the last identity of \eqref{eq:cohn2}. Moreover, we have 
\begin{gather*}
y_eg=y_egs(e)=y_eg\sum_{s(f)=s(e)}fy_f=y_e\sum_{s(f)=s(e)}g(f)\phi(g,f)y_f=\\
y_e\sum_{s(f)=s(e)}f\phi(g,g^{-1}f)y_{g^{-1}(f)}=r(e)\phi(g,g^{-1}e)y_{g^{-1}(e)}=\phi(g,g^{-1}e)y_{g^{-1}(e)}.
\end{gather*}
Thus the $y_e$  satisfy all the required identities for the existence of homomorphism of $P(G,E,\phi_c)$-algebras $C(G,E,\phi_c)\to P(G,E,\phi_c)_\Sigma$ mapping $e^*\mapsto y_e$. Furthermore, the identity $M_vN_v=\epsilon_{v,v}v$ implies that the latter induces a homomorphism $L(G,E,\phi_c)\to P(G,E,\phi_c)_\Sigma$ inverse to \eqref{map:pgestolge}.
\end{proof}

\begin{lem}\label{lem:pgetolgeflat}
The inclusion $P(G,E,\phi_c)\subset L(G,E,\phi_c)$ makes $L(G,E,\phi_c)$ into a flat left $P(G,E,\phi_c)$-module. 
\end{lem}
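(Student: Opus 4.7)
The plan is to present $L := L(G,E,\phi_c)$ as the cokernel of an injection between free left $P$-modules, where $P := P(G,E,\phi_c)$, and to verify flatness by a direct $\Tor_1$ computation.

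First I would verify that the Cohn algebra splits as a left $P$-module via
\[
C(G,E,\phi_c) = \bigoplus_{\beta\in\cP(E)} P\beta^*,
\]
where each summand $P\beta^*$ is isomorphic to the projective left ideal $Pr(\beta)$ under $p\mapsto p\beta^*$. Injectivity of each factor uses $\beta^*\beta = r(\beta)$ (so $p\beta^* = 0$ implies $p = p r(\beta) = p\beta^*\beta = 0$), while surjectivity together with directness of the sum follow from the basis $\cB$ of Corollary \ref{coro:cohnpres}, which partitions into disjoint blocks $\{\alpha g\beta^*\}$ indexed by $\beta$.

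Next I would identify $\cK := \cK(G,E,\phi_c)$ inside this decomposition. Using $q_v = v - \sum_{e\in s^{-1}\{v\}} ee^*$ together with the Cohn identity $ge = g(e)\phi_c(g,e)$ and the reindexing $e\leftrightarrow g^{-1}(e)$, each generator of $\cK$ from Proposition \ref{prop:kge} simplifies to
\[
\alpha q_v g\beta^* = (\alpha g)\beta^* - \sum_{e\in s^{-1}\{v\}} (\alpha g e)(\beta e)^*.
\]
This shows $\cK$ is the image of the $P$-linear map
\[
\kappa : K := \bigoplus_{\beta\in\cP_{\reg}} Pr(\beta) \lra C(G,E,\phi_c), \qquad 1_\beta \mapsto \beta^* - \sum_{e\in s^{-1}\{r(\beta)\}} e(\beta e)^*,
\]
where $\cP_{\reg} := \{\beta\in\cP(E):r(\beta)\in\reg(E)\}$, since the displayed formula reads $\alpha q_v g\beta^* = (\alpha g)\cdot\kappa(1_\beta)$. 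A straightforward induction on $|\beta|$ shows $\kappa$ is injective: if $\sum_\beta p_\beta\kappa(1_\beta) = 0$, then for $v\in\reg(E)$ the vertex-summand forces $p_v = 0$, and for $\beta' = \beta e$ the $\beta'$-summand reads $p_{\beta'} - p_\beta\cdot e = 0$, which vanishes by induction. Since both $K$ and $C(G,E,\phi_c)$ are direct sums of the finitely generated projective modules $Pr(\beta)$, one thereby obtains a projective resolution
\[
0 \to K \stackrel{\kappa}{\lra} C(G,E,\phi_c) \lra L \to 0
\]
of $L$ as a left $P$-module.

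For flatness it suffices to verify that $\Tor_1^P(L,M) = 0$ for every right $P$-module $M$, i.e., that $M\otimes_P\kappa$ is injective, and the very same path-length induction applies verbatim to the tensored map. I expect the main obstacle to be the identification in the middle step: matching the naive expansion $-\alpha e\,\phi_c(g,g^{-1}(e))(\beta g^{-1}(e))^*$ that comes directly from expanding $\alpha q_v g\beta^*$ with the term $-(\alpha g e)(\beta e)^*$ on the $\kappa$ side requires a careful reindexing and use of the twisted identity $ge = g(e)\phi_c(g,e)$ together with Lemma \ref{lem:twistextend}, and this compatibility is essentially the reason that the cocycle $c$ is harmless for the splitting.
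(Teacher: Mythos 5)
Your overall strategy --- realize $L:=L(G,E,\phi_c)$ as the cokernel of a ``unitriangular'' injection between direct sums of the projective left modules $Pr(\beta)$, and observe that the triangularity makes the injection survive $-\otimes_P M$ --- is a legitimate route and quite different from the paper's, which instead uses that $P:=P(G,E,\phi_c)\subset L$ is a ring epimorphism (being a universal localization, by Lemma \ref{lem:pgestolge}) and invokes Stenstr\"om's flatness criterion for perfect right localizations. However, as written your first step contains a genuine error. The decomposition $C(G,E,\phi_c)=\bigoplus_{\beta}P\beta^*$ does not make sense, let alone hold: $P$ is \emph{not} a subalgebra of the Cohn algebra. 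By construction $R_v=\ell[G]/I_v$ where $I_v=\ann_{\ell[G]}(v)$ is computed in $L$, so $P$ only embeds in $L$. The subalgebra of $C(G,E,\phi_c)$ spanned by the elements $\alpha g$ is the untruncated twisted path algebra $\tilde P$ (with $\ell[G]$ in place of each $R_v$), and one does have $C(G,E,\phi_c)=\bigoplus_\beta\tilde P\beta^*\cong\bigoplus_\beta\tilde Pr(\beta)$ by Corollary \ref{coro:cohnpres}. But $\tilde P\onto P$ has kernel $\bigoplus_\alpha\alpha I_{r(\alpha)}$, which is nonzero exactly in the non--pseudo-free situations this section is meant to cover (e.g.\ Katsura data with some $B_{v,w}=0$, where $R_v$ is a proper quotient of $\ell[t,t^{-1}]$). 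Your argument as stated therefore produces a resolution of $L$ over $\tilde P$ and proves flatness over $\tilde P$, which does not imply flatness over the quotient $P$.

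The gap is repairable, and the repair preserves your key idea. Replace $C(G,E,\phi_c)$ by $F:=\bigoplus_{\beta\in\cP(E)}Pr(\beta)$, which surjects onto $L$ by $p\cdot 1_\beta\mapsto p\beta^*$ (this is where $P\subset L$ is used). Since $\ker\bigl(C(G,E,\phi_c)\to F\bigr)=\bigoplus_\alpha\alpha I_{r(\alpha)}\cdot\bigoplus_\beta(-)\beta^*$ is contained in $\cK(G,E,\phi_c)$, the kernel of $F\onto L$ is the image of $\cK(G,E,\phi_c)$ in $F$; your identity $\alpha q_v g\beta^*=(\alpha g)\cdot\bigl(\beta^*-\sum_e e(\beta e)^*\bigr)$, together with the basis $\cB'$ of Proposition \ref{prop:kge}, then shows this image equals $\im\bigl(\kappa:\bigoplus_{\beta\in\cP_{\reg}}Pr(\beta)\to F\bigr)$ with $\kappa(1_\beta)=1_\beta-\sum_e e\cdot 1_{\beta e}$. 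Your path-length induction, applied componentwise in $F$ rather than in $C(G,E,\phi_c)$, does show that $\kappa$ and $M\otimes_P\kappa$ are injective for every right $P$-module $M$, giving $\Tor_1^P(M,L)=0$. So the proposal is salvageable, but the step identifying the Cohn algebra with a direct sum of projective $P$-modules must be corrected before the rest can stand; note also that, once repaired, your argument yields the stronger conclusion that $L$ has projective dimension $\le 1$ over $P$, which the paper's localization argument does not directly give.
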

\begin{proof}
It follows from Lemma \ref{lem:pgestolge} that $P(G,E,\phi_c)\subset L(G,E,\phi_c)$ is a ring epimorphism. Hence by \cite{stenstrom}*{Theorem 2.1}, it suffices to find, for each $x\in L(G,E,\phi_c)$, a finite subset $\cF\subset \cP(E)$ such that 

\begin{gather}\label{condi1}
\forall \gamma\in\cF, \ \ x\gamma\in P(G,E,\phi_c)\\
\sum_{\gamma\in\cF}\gamma P(G,E,\phi_c)=P(G,E,\phi_c). \label{condi2}
\end{gather}
Any element $x\in L(G,E,\phi_c)$ can be written as a finite linear combination 
\begin{equation}\label{eq:pgetolgeflat}
x=\sum_{\alpha,\beta}\alpha x_{\alpha,\beta}\beta^*=\sum_{\beta}(\sum_{\alpha}\alpha x_{\alpha,\beta})\beta^*
\end{equation}
with each $x_{\alpha,\beta}\in R_{r(\alpha)}$ and $r(\beta)=r(\alpha)$. Let $\cF'\subset \cP(E)$ be the set of all those paths $\beta$ such that $\beta^*$ appears in \eqref{eq:pgetolgeflat} with a nonzero coefficient. Using $CK2$ we may arrange that there is an $n$ such all $\beta\in\cF'$ with $r(\beta)\in\sink(E)$ 
have length $\le n$, and all those with $r(\beta)\in\reg(E)$ have length $n$. Hence for all $\beta\in\cF'$
\[
x\beta=\sum_{\alpha}\alpha x_{\alpha,\beta}\in P(G,E,\phi_c).
\]
Let 
$$
\cP(E)\supset \cF=\{\gamma,\colon\, r(\gamma)\in\reg(E),\, \|\gamma|=n\}\cup
\{\gamma,\colon\, r(\gamma)\in\sink(E),\, \|\gamma|\le n\}
$$ 
then $\cF\supset\cF'$ and $x\gamma=0$ for all $\gamma\in\cF\setminus\cF'$. Hence $\cF$ satisfies \eqref{condi1}. Moreover $\sum_{\gamma\in\cF}\gamma\gamma^*=1$, so \eqref{condi2} is also satisied. 
\end{proof}

\begin{coro}\label{coro:pgetolgeflat}
If $E$ is finite and $P(G,E,\phi_c)$ is either right regular or right coherent, then the same is true of $L(G,E,\phi_c)$. 
\end{coro}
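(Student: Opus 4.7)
The plan is to exploit the fact that, by Lemma \ref{lem:pgestolge}, $L=L(G,E,\phi_c)$ is the universal localization $P_\Sigma$ of $P=P(G,E,\phi_c)$, so that $P\to L$ is a ring epimorphism, together with Lemma \ref{lem:pgetolgeflat}, which asserts that $L$ is flat as a left $P$-module. From these two facts I would extract the standard consequences: for any right $L$-module $M$, the natural map $M\otimes_P L\to M$ is an isomorphism, and for any projective right $P$-module $Q$, the $L$-module $Q\otimes_P L$ is projective.

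For the regularity part, given a right $L$-module $M$ I would view it as a right $P$-module by restriction along $P\to L$, take a projective resolution $Q_\bullet\to M$ of finite length over $P$ (available since $P$ is right regular), and then apply $-\otimes_P L$. By flatness the result is still exact, by the remarks above it is a projective resolution of $M\otimes_P L=M$ over $L$, and it has the same finite length, so $\mathrm{pd}_L M\le\mathrm{pd}_P M<\infty$.

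For the coherence part, I would start with a finitely generated right ideal $I\subseteq L$ with generators $x_1,\dots,x_n$, and use the construction in the proof of Lemma \ref{lem:pgetolgeflat} to produce a single finite set $\cF\subset\cP(E)$ (take $n$ large enough to work for all the $x_i$ simultaneously) such that $x_i\gamma\in P$ for every $i$ and every $\gamma\in\cF$, and $\sum_{\gamma\in\cF}\gamma P=P$. The right $P$-ideal $J:=\sum_{i,\gamma}x_i\gamma P$ is then finitely generated and satisfies $JL=\sum_i x_iL=I$. By coherence of $P$ the ideal $J$ is finitely presented, say $P^a\to P^b\to J\to 0$ is exact; tensoring with $L$ over $P$ is exact (flatness) and the natural map $J\otimes_P L\to JL=I$ is an isomorphism (since $J\otimes_P L\hookrightarrow P\otimes_P L=L$ by flatness, with image $JL$). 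This gives a finite presentation $L^a\to L^b\to I\to 0$, proving that $L$ is right coherent.

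The only nonroutine point is the simultaneous lifting of finitely many elements of $L$ into $P$; this is not an obstacle since the explicit $\cF$ built in the proof of Lemma \ref{lem:pgetolgeflat} depends only on an upper bound for the lengths of the paths appearing in the denominators of the $x_i$, so a common $\cF$ works for any finite collection.
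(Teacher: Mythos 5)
Your proof is correct, and it rests on exactly the same two pillars as the paper's — Lemma \ref{lem:pgestolge} (so that $P\to L$ is a ring epimorphism) and Lemma \ref{lem:pgetolgeflat} (flatness of ${}_PL$) — but it takes a genuinely more self-contained route. The paper instead packages these two facts as the statement that $P\subset L$ is a \emph{perfect right localization} in the sense of Stenstr\"om and then quotes a chain of results from that book: the localization functor with respect to the associated Gabriel topology is exact and essentially surjective, preserves projectivity, and transfers coherence. You unwind all of this by hand: for regularity, restricting a right $L$-module to $P$, taking a finite projective resolution there, and applying $-\otimes_P L$ (exact by flatness, carries projectives to projectives, and returns $M$ itself because $P\to L$ is an epimorphism); for coherence, descending a finitely generated right ideal $I\subseteq L$ to a finitely generated right ideal $J\subseteq P$ with $J\otimes_P L\cong JL=I$ and tensoring up a finite presentation. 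The one place where you do extra work not needed in the paper's version is the simultaneous choice of the finite set $\cF$ for several elements of $L$; your observation that the $\cF$ constructed in the proof of Lemma \ref{lem:pgetolgeflat} depends only on a bound for the path lengths involved is exactly right, so this is not a gap. What each approach buys: the paper's is shorter on the page but outsources the content to external citations; yours is longer but transparent about precisely which properties of the inclusion $P\subset L$ (namely, being a flat epimorphism) are used, and would survive unchanged in any setting where those two properties hold.
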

\begin{proof} By Lemmas \ref{lem:pgestolge} and \ref{lem:pgetolgeflat}, $P(G,E,\phi_c)\subset L(G,E,\phi_c)$ is a perfect right localization in the sense of \cite{stenstrom}*{Definition on page 229}. Hence by \cite{stenstrom}*{Theorem 2.1 (b)} the family $\fF$ of all right ideals $\fa\subset P(G,E,\phi_c)$ such that $\fa L(G,E,\phi_c)=L(G,E,\phi_c)$ is a Gabriel topology, and $L(G,E,\phi_c)=P(G,E,\phi_c)_\fF$ is the localization with respect to $\fF$. It then follows from \cite{stenstrom}*{Corollary 1.10 of Chapter IX and Proposition 3.4 of Chapter XI} that localization of right $P(G,E,\phi_c)$-modules with respect to $\fF$ is exact and essentially surjective. Moreover, it preserves projectivity by \cite{stenstrom}*{Proposition 1.11 of Chapter IX}. It follows that $L(G,E,\phi_c)$ is regular whenever $P(G,E,\phi_c)$ is. If $P(G,E,\phi_c)$ is right coherent then $L(G,E,\phi_c)$ is right coherent by \cite{stenstrom}*{Proposition 3.12 of Chapter XI}.
\end{proof}

The following two lemmas are probably well-known. I came to them together with my colleague Marco Farinati after a fruitful discussion. 

\begin{lem}\label{lem:hhreg}
Let $S$ be a unital ring containing a semisimple commutative ring $k$, and such that $S$ has left projective dimension $d$ as an $S^e:=S\otimes_kS^{\op}$-module. Then $S$ and $S^e$ have (both right and left) global projective dimensions $\le d$ and $\le 2d$, respectively. In particular, both $S$ and $S^e$ are regular. 
\end{lem}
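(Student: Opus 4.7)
The strategy is to use the given projective $S^e$-resolution $P_\bullet\to S$ of length $d$ in two steps: first to bound the global dimension of $S$ by $d$, and then, after constructing an analogous resolution of $S^e$ over $(S^e)^e$, to bound the global dimension of $S^e$ by $2d$.

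For the first step, I would begin by observing that because $k$ is semisimple, every $k$-module is projective over $k$; in particular $S^{\op}$ is $k$-projective, so $S^e=S\otimes_k S^{\op}$ is a direct summand of a free left $S$-module and likewise of a free right $S$-module. Hence any projective $S^e$-module is projective (and \emph{a fortiori} flat) both as a left and as a right $S$-module. Given a left $S$-module $M$, the complex $P_\bullet\otimes_S M$ is therefore a resolution of $S\otimes_S M=M$, and each $P_i\otimes_S M$ is a direct summand of $(S\otimes_k S^{\op})^{(J)}\otimes_S M\cong (S\otimes_k M)^{(J)}$, which is $S$-projective because $M$ is $k$-projective. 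This yields $\mathrm{pd}_S(M)\le d$, and the symmetric argument bounds the right global dimension; hence the left and right global dimensions of $S$ are both $\le d$.

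For the second step, I would exploit the swap isomorphism $(S^{\op})^e\cong S^e$, so that $P_\bullet\to S$ can simultaneously be regarded as a projective $(S^{\op})^e$-resolution of $S^{\op}$ of length $d$. The double complex $P_\bullet\otimes_k P_\bullet$ then consists (up to a canonical permutation of the tensor factors) of projective $(S^e)^e$-modules in every bidegree, and its total complex has length $\le 2d$. Since $k$ is semisimple, all modules in sight are $k$-flat, and the Künneth formula identifies the homology of the total complex with $S\otimes_k S^{\op}=S^e$ concentrated in degree zero. Applying the argument of the first step with $S$ replaced by $S^e$ and $P_\bullet$ replaced by $\mathrm{Tot}(P_\bullet\otimes_k P_\bullet)$ then delivers the bound $\mathrm{gl.dim}(S^e)\le 2d$. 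Regularity of both $S$ and $S^e$ is then immediate, since finite global dimension means that every module has finite projective dimension.

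The main obstacle I anticipate is the bookkeeping of the left/right and $k$-algebra tensor factors when passing from $P_\bullet\otimes_k P_\bullet$ to a resolution over $(S^e)^e=S\otimes_k S^{\op}\otimes_k S^{\op}\otimes_k S$: one must verify that $P_i\otimes_k P_j$ really carries a projective $(S^e)^e$-module structure under the correct rearrangement of factors, and that the Künneth identification respects this structure. Beyond that, the semisimplicity of $k$ kills all higher $\mathrm{Tor}^k$ groups and makes the restriction-of-scalars-preserves-projectives argument routine.
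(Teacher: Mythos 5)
Your argument is correct, and it splits naturally into a part that matches the paper and a part that takes a different route. For the bound $\operatorname{gl.dim}(S)\le d$, your argument is essentially the paper's: the paper truncates the bar resolution at the $d$-th syzygy to get a length-$d$ complex of bimodules $Q_\bu\fib S$ that is contractible as a one-sided complex, and observes that $Q_\bu\otimes_SM$ is then an $S$-projective resolution of $M$ because each $Q_m\otimes_SM$ is induced from a $k$-module; you instead take an arbitrary length-$d$ projective $S^e$-resolution $P_\bu$ and replace the explicit contraction by the observation that a bounded exact complex of right-flat modules stays exact under $\otimes_SM$ (which does require the small dimension-shifting argument that all syzygies of such a complex are flat, not just the terms -- worth a sentence, but standard). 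Where you genuinely diverge is the bound for $S^e$: the paper never resolves $S^e$ over $(S^e)^e$; it takes an arbitrary left $S^e$-module $M$ and shows directly that the total complex of $Q_\bu\otimes_SM\otimes_SQ_\bu$ is an $S^e$-projective resolution of $M$ of length $2d$. You instead form $\operatorname{Tot}(P_\bu\otimes_kP_\bu)$, identify it via the swap $(S^e)^{\op}\cong S^e$ and the K\"unneth formula (with all $\Tor^k$ vanishing by semisimplicity) as a length-$2d$ projective $(S^e)^e$-resolution of $S^e$, and then bootstrap by re-applying your first step to the ring $S^e$. Both are valid; your version buys a cleaner ``Hochschild dimension doubles under enveloping'' statement at the cost of the $(S^e)^e$-module bookkeeping you flag, while the paper's version avoids K\"unneth entirely but needs the one-sided splitness of $Q_\bu$ twice, once on each side of $M$.
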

\begin{proof}
Let $(S\otimes_k\bar{S}^{\otimes_k\bu}\otimes_kS,b')\fib S$ be the bar resolution. The hypothesis means that 
\[
\Omega^dS=\ker (b':S\otimes_k\bar{S}^{\otimes_k d-1}\otimes_kS\to S\otimes_k\bar{S}^{\otimes_k d-2}\otimes_kS)
\] 
is a projective $S\otimes_kS^{\op}$-module. Consider the truncated bar resolution
\[
Q_m=\left\{\begin{matrix} S\otimes_k \bar{S}^{\otimes_k m}\otimes_k S &\text{ if } 0\le m\le d-1\\
                          \Omega^dS &\text{ if } m=n\\
													 0 & \text{ if } m>d\end{matrix}\right.
\]
Observe that $Q_\bu$ is both right and left split; so tensoring it over $S$ on either side with an $S$-module $M$ yields a resolution $P\fib M$ of length $d$ such that each $P_m$ is a scalar extension of a $k$-module, and therefore projective, as $k$ is semisimple. Thus both the right and the left global dimension of $S$ are $\le d$. Next observe that $S^e$ is isomorphic to its opposite ring via the flip $s\otimes t\mapsto t\otimes s$; in particular its left and right global dimensions coincide. Let $M$ be a left $S^e$-module; then $Q_\bu\otimes_SM\otimes_SQ_\bu$ is the total complex of a bicomplex whose $m$-th row is an $S^e$-projective resolution of $M\otimes_SQ_m$. Hence the composite $Q_\bu\otimes_SM\otimes_SQ_\bu\fib M\otimes_SQ_\bu\fib M$ is a quasi-isomorphism. This completes the proof that the left global dimension of $S^e$ is $\le 2d$, since $Q_\bu\otimes_SM\otimes_SQ_\bu$ has length $2d$. 
 \end{proof}

\begin{lem}\label{lem:treg}
Let $S$ and $k$ be as in Lemma \ref{lem:hhreg} and let $M$ be a left $S\otimes_kS^{\op}$-module. If either $M_S$ or ${}_SM$ is projective, then the tensor algebra $T=T_S(M)$ has both right and left global dimension $\le 2d+1$. In particular, $T$ is regular. 
\end{lem}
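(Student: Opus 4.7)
The plan is to bound the projective dimension of $T$ as a $T^e := T\otimes_k T^{\op}$-module, and then invoke Lemma~\ref{lem:hhreg} applied to $T$ (which contains the semisimple commutative ring $k$) to convert this into a uniform bound on both the left and right global dimensions of $T$.

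First I would use the fundamental short exact sequence of $T^e$-modules attached to a tensor algebra,
\[
0 \to T\otimes_S M\otimes_S T \to T\otimes_S T \overset{\mu}{\lra} T \to 0,
\]
where $\mu$ is multiplication and the left-hand map sends $t_1\otimes m\otimes t_2$ to $t_1m\otimes t_2-t_1\otimes mt_2$. This reduces the task to bounding the $T^e$-projective dimensions of the two middle terms. Using the canonical $T^e$-module isomorphisms $T\otimes_S T\cong T^e\otimes_{S^e}S$ and $T\otimes_S M\otimes_S T\cong T^e\otimes_{S^e}M$, one is reduced to lifting $S^e$-projective resolutions of $S$ and of $M$ along the functor $T^e\otimes_{S^e}(-)=T\otimes_S(-)\otimes_S T$, which sends $S^e$-projectives to $T^e$-projectives. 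By Lemma~\ref{lem:hhreg} such resolutions exist of lengths at most $d$ and $2d$; in fact the ones built in its proof are the truncated bar complex $Q_\bu$ with $Q_n=S\otimes_k\bar S^{\otimes n}\otimes_k S$, and its two-sided product $Q_\bu\otimes_S M\otimes_S Q_\bu$.

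The main obstacle is showing that applying $T\otimes_S(-)\otimes_S T$ preserves exactness, since our hypothesis yields $S$-flatness of $T$ only on one side. In Case A ($M_S$ projective) each $M^{\otimes_S n}$ is right $S$-projective, so $T$ is right $S$-flat and tensoring on the left by $T\otimes_S(-)$ preserves exactness; the intermediate terms $T\otimes_S Q_n$ and $T\otimes_S Q_i\otimes_S M$ are in turn right $S$-projective, because right $S$-projectivity of $M$ together with semisimplicity of $k$ propagates through the $k$-tensor expressions $S\otimes_k\bar S^{\otimes n}\otimes_k S$ and their analogues with $M$ inserted; hence tensoring on the right with $T$ also preserves exactness, via the vanishing of $\operatorname{Tor}^S$ against right $S$-flat arguments. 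Case B is symmetric. It remains to observe that the resulting terms $T\otimes_k\bar S^{\otimes i}\otimes_k T$ and $T\otimes_k\bar S^{\otimes i}\otimes_k M\otimes_k\bar S^{\otimes j}\otimes_k T$ are $T^e$-projective: $k$ being semisimple makes $\bar S^{\otimes i}$ and $\bar S^{\otimes i}\otimes_k M\otimes_k\bar S^{\otimes j}$ $k$-projective, so each is a retract of a free $T^e$-module of the form $T^e\otimes_k V$.

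Combining these bounds with the fundamental sequence yields $\operatorname{pd}_{T^e}(T)\le\max(d,\,2d+1)=2d+1$. Lemma~\ref{lem:hhreg} applied to $T$ then gives that both the left and right global dimensions of $T$ are at most $2d+1$; in particular every $T$-module has finite projective dimension, so $T$ is regular.
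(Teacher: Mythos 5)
Your proof is correct and follows essentially the same strategy as the paper's: bound the projective dimension of $T$ as a $T\otimes_kT^{\op}$-module via the standard bimodule exact sequence of the tensor algebra, lift $S\otimes_kS^{\op}$-projective resolutions of $S$ and $M$ along $T\otimes_S(-)\otimes_ST$ using one-sided projectivity of $M$ (and hence of $T$) over $S$ to preserve exactness, and then invoke Lemma \ref{lem:hhreg} applied to $T$. The only cosmetic differences are that the paper works with the Cuntz--Quillen relative cotangent sequence $0\to T\otimes_S\Omega^1S\otimes_ST\to\Omega^1T\to T\otimes_SM\otimes_ST\to 0$ rather than your spliced sequence $0\to T\otimes_SM\otimes_ST\to T\otimes_ST\to T\to 0$, and it justifies exactness preservation by the one-sided splitness of the (truncated) bar resolution where you use $\operatorname{Tor}$-vanishing against flat terms; both yield the same bound $2d+1$.
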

\begin{proof}
We shall assume that $M$ is right projective. In view of Lemma \ref{lem:hhreg} it suffices to show that the $T\otimes_kT^{\op}$-module $T$ has projective dimension $\le 2d$. Let $\Omega^\bu$ be as in the proof of Lemma \ref{lem:hhreg}. Consider the relative cotangent sequence \cite{cq1}*{Corollary 2.10}
\begin{equation}\label{seq:cotan}
0\to T\otimes_S\Omega^1S\otimes_ST\to \Omega^1T\to T\otimes_SM\otimes_ST\to 0.
\end{equation}
It suffices to show that the bimodules on the left and right of the sequence above have projective dimension $\le 2d$.  
An appropriate truncation of the bar resolution provides an $S\otimes_kS^{\op}$-projective resolution $P_\bu\fib\Omega^1_S$ of length $\le d-1$  which is both right and left split. Hence  $P_\bu\otimes_ST\to \Omega^1S\otimes_ST$ is a quasi-isomorphism because $P_\bu$ is right split, and  $T\otimes_S P_\bu\otimes_S T\to T\otimes_S\Omega^1S\otimes_ST$ is a quasi-isomorphism because $T$ is right projective. Moreover $T\otimes_S P_\bu\otimes_S T=T\otimes_kT^{\op}\otimes_{S\otimes_kS^{\op}}P_\bu$ is projective because $P_\bu$ is. So the first term from the left in the exact sequence \eqref{seq:cotan} has projective dimension at most $d-1$.  Next we consider the last term of \eqref{seq:cotan}. By Lemma \ref{lem:hhreg}, there is an $S^e$-projective resolution
$P'_\bu\fib M$ of length $\le 2d$, which is split both as a complex of right and of left modules. Then $P'_\bu\otimes_ST\to M\otimes_ST$ is quasi-isomorphism, and therefore so is $T\otimes_S P'_\bu\otimes_ST\to T\otimes_SM\otimes_ST$, since $T_S$ is projective. This concludes the proof.
\end{proof}

\begin{prop}\label{prop:lgereg} Let $(G,E,\phi_c)$ be a twisted EP tuple. Assume that $E$ is finite and regular and that $G$ acts trivially on $E^0$. Further assume that for every $v\in E^0$, $R_v$ contains a field $k_v$ and has finite projective dimension as a left $R_v\otimes_kR_v^{\op}$-module. Then $L(G,E,\phi_c)$ is a regular ring. 
\end{prop}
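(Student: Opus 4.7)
The plan is to leverage the tensor-algebra presentation $P(G,E,\phi_c) = T_R X$ of Lemma~\ref{lem:pgestolge} and pass regularity to $L(G,E,\phi_c)$ through Corollary~\ref{coro:pgetolgeflat}; the new input, relative to those earlier results, is to assemble the local hypotheses on the $R_v$ into a single enveloping-algebra hypothesis on $R$ so that Lemmas~\ref{lem:hhreg} and \ref{lem:treg} become applicable.

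First I would set $k = \prod_{v\in E^0} k_v$. Since $E^0$ is finite this is a finite product of fields, so in particular a semisimple commutative ring, and $R = \bigoplus_{v\in E^0} R_v$ is naturally a unital $k$-algebra, with the primitive idempotent $e_v\in k$ acting as the identity on $R_v$ and annihilating $R_w$ for $w\neq v$. The key computation is then that these idempotents force $R\otimes_k R^{\op} = \bigoplus_v R_v\otimes_{k_v} R_v^{\op}$, because for $v\neq w$ and any $r\in R_v$, $s\in R_w$ we have $r\otimes s = r\otimes e_v s = r e_v\otimes s$, where $e_v s = 0$ in $R_w$. Under the matching decomposition of $R$ as a left $R\otimes_k R^{\op}$-module, the hypothesis that each $R_v$ has finite projective dimension over $R_v\otimes_{k_v} R_v^{\op}$ yields that $R$ has finite projective dimension as an $R\otimes_k R^{\op}$-module, namely the maximum of the local projective dimensions.

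Next I would verify the projectivity hypothesis in Lemma~\ref{lem:treg}: the bimodule $X = \bigoplus_{v,w} X_{v,w}$ of \eqref{R&X} is projective (in fact free of finite rank) as a right $R$-module, since each $X_{v,w}$ is free over $R_w$ with basis $vE^1w$, and $R_w$ is a direct summand of $R$. Here finiteness of $E$ guarantees that the direct sum is finite. With these two ingredients in place, Lemma~\ref{lem:treg} gives that $P(G,E,\phi_c) = T_R X$ has finite global dimension on both sides, hence in particular is right regular.

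Finally I would appeal to Corollary~\ref{coro:pgetolgeflat} to transport right regularity from $P(G,E,\phi_c)$ to $L(G,E,\phi_c)$. The only step requiring any genuine computation is the identification $R\otimes_k R^{\op} = \bigoplus_v R_v\otimes_{k_v} R_v^{\op}$; I expect this will be the main (but still mild) technical obstacle, since the other steps consist of invoking the machinery already developed in Lemmas~\ref{lem:hhreg}, \ref{lem:treg}, \ref{lem:pgestolge} and Corollary~\ref{coro:pgetolgeflat}.
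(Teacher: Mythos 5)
Your proposal is correct and follows the paper's own argument exactly: the paper also takes $k$ to be the (finite) product of the $k_v$, deduces that $R$ has finite projective dimension over $R\otimes_k R^{\op}$, notes that $X$ is right projective, and then invokes Lemma \ref{lem:treg} followed by Corollary \ref{coro:pgetolgeflat}. The only difference is that you spell out the decomposition $R\otimes_k R^{\op}\cong\bigoplus_v R_v\otimes_{k_v}R_v^{\op}$, which the paper leaves implicit (and note that $X$ is merely projective, not free, as a right $R$-module, since each $X_{v,w}$ is free over the summand $R_w$ rather than over $R$ itself — but projectivity is all Lemma \ref{lem:treg} requires).
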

\begin{proof} Let $k=\bigoplus_{v\in E^0}k_v$; then $k$ is semisimple, and the hypothesis implies that $R$ has finite projective dimension as an $R\otimes_kR^{\op}$-module. Since $X$ is right-projective, Lemma \ref{lem:treg} tells us that $P(G,E,\phi_c)$ is regular. Hence $L(G,E,\phi_c)$ is regular by Corollary \ref{coro:pgetolgeflat}.  
\end{proof}

\begin{coro}\label{coro:lgereg}
Let $\ell$ be a field and let $(A,B,C)$ be a twisted Katsura triple. Then $\cO_{A,B}^C$ is a regular ring.
\end{coro}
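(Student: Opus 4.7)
The plan is to apply Proposition \ref{prop:lgereg} to the twisted EP-tuple $(\Z, E, \phi_c)$ underlying $\cO_{A,B}^C = L(\Z, E, \phi_c)$. Since a twisted Katsura triple consists of finite square matrices, $E = E_A$ is finite with $\reg(E) = E^0$, so $E$ is finite and regular; moreover $G = \Z$ acts trivially on $E^0$ by construction. Hence the first two hypotheses of Proposition \ref{prop:lgereg} hold. It remains to verify that for each $v \in E^0$ the ring $R_v = \ell[t,t^{-1}]/I_v$ contains a field $k_v$ and has finite projective dimension as an $R_v \otimes_{k_v} R_v^{\mathrm{op}}$-module.

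For every $v$ I would take $k_v = \ell$. Since $\ell[t,t^{-1}]$ is a PID, $I_v = (f_v(t))$ is principal. If $f_v = 0$ then $R_v = \ell[t,t^{-1}]$ is essentially smooth of relative dimension $1$ over $\ell$, with Hochschild dimension $1$ via the standard resolution
\[
0 \to R_v \otimes_\ell R_v \xrightarrow{\,t\otimes 1 - 1\otimes t\,} R_v \otimes_\ell R_v \to R_v \to 0.
\]
If $f_v \neq 0$, the crux is to prove that $f_v$ is a squarefree polynomial with roots in $\ell^{\times}$, so that $R_v$ is a finite product of copies of $\ell$ and is therefore $\ell$-separable, with Hochschild dimension $0$.

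To establish squarefreeness of $f_v$, I would split according to whether some edge $e \in s^{-1}\{v\}$ satisfies $B_{v,r(e)} \neq 0$. If so, Remark \ref{rem:ggammas1} gives $vt^n = \sum_{s(f)=v} \sigma^n(f)\,\phi_c(t^n,f)\,f^*$, and the presence of a nonzero $B$-value forces, via the path-length grading, the appearance of basis elements $\alpha t^k \beta^*$ from Corollary \ref{coro:basisunion} with $|\alpha| \neq |\beta|$; an independence argument in that basis then rules out any nontrivial relation $v f(t) = 0$, forcing $I_v = 0$. In the complementary case, $\sigma$ and $\phi(t,\cdot)$ restrict trivially on $s^{-1}\{v\}$, so $vt^n = \sum_e \lambda_e^n\, ee^*$ with $\lambda_e \in \{1, C_{v,r(e)}\}$; then $I_v$ is the kernel of the evaluation map $f(t) \mapsto (f(\lambda_e))_e$, generated by $\prod_\lambda (t-\lambda)$ taken over the distinct values of $\lambda_e$, which is squarefree.

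The main obstacle is the first case: verifying that no unexpected cancellation can occur in $\sum_n a_n v t^n$. This reduces to tracking, along the basis of Corollary \ref{coro:basisunion}, the path components of $\sigma^n(f)\,\phi_c(t^n,f)\,f^*$ and confirming that distinct exponents $n$ contribute to $\ell$-linearly independent basis elements as soon as one shift $B_{v,r(e)}$ is nonzero.
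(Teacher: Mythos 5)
Your strategy is the same as the paper's: the proof in the text also consists of feeding the structure of the rings $R_v$ into Proposition \ref{prop:lgereg}, except that instead of recomputing $R_v$ it quotes the proof of Lemma \ref{lem:flat2}, where it is shown that $R_v$ is either $\ell[t,t^{-1}]$ or a finite product of copies of $\ell$ --- precisely the dichotomy you arrive at. Your verification that these two rings satisfy the hypotheses of Proposition \ref{prop:lgereg} (Hochschild dimension $1$, resp.\ separability) is correct.

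Two remarks on the step you flag as the main obstacle. First, it is not actually an obstacle: for a Katsura tuple one checks that $\nabla_e$ is injective exactly when $B_{s(e),r(e)}\neq 0$, and that $\im(\nabla_e)=\{(e,1)\}$ when $B_{s(e),r(e)}=0$; hence $\reg(E)=\reg(E)_0\sqcup\reg(E)_1$, with $v\in\reg(E)_1$ if and only if some edge out of $v$ carries a nonzero $B$-entry. For such $v$ the elements $vt^n$, $n\in\Z$, all belong to the basis $\cB"$ of Corollary \ref{coro:basisunion} (they are not removed in forming $\cB"=\cB\setminus\cA$, since $v\notin\reg(E)_0$ and they contain no edges), so they are linearly independent in $L(\Z,E,\phi_c)$ and $I_v=0$ at once; this is the same observation as in Example \ref{ex:unflat}. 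No expansion of $vt^n$ and no cancellation analysis is needed. Second, the mechanism you sketch for that expansion is wrong as stated: $vt^n$ is homogeneous of degree $0$ for the path-length grading, so its expansion $\sum_{s(f)=v}\sigma^n(f)\phi_c(t^n,f)f^*$ involves only basis elements $\alpha t^k\beta^*$ with $|\alpha|=|\beta|=1$; what separates distinct values of $n$ is the group exponent $k$ (and the edge $\sigma^n(f)$), never a discrepancy of path lengths. With that step replaced by the appeal to Corollary \ref{coro:basisunion}, your argument coincides with the paper's.
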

\begin{proof}
Let $v\in E^0$ and let $L=\ell[t,t^{-1}]$. As shown in the proof of Lemma \ref{lem:flat2}, $R_v$ is either $L$ or a product of copies of $\ell$. Thus $R_v$ satisfies the hypothesis of Proposition \ref{prop:lgereg}.
\end{proof}

\begin{bibdiv}  
\begin{biblist}
\comment{
\bib{quest}{article}{
   author={Abrams, G.},
   author={\'{A}nh, P. N.},
   author={Louly, A.},
   author={Pardo, E.},
   title={The classification question for Leavitt path algebras},
   journal={J. Algebra},
   volume={320},
   date={2008},
   number={5},
   pages={1983--2026},
   issn={0021-8693},
   review={\MR{2437640}},
   doi={10.1016/j.jalgebra.2008.05.020},
}
}
\bib{lpabook}{book}{
author={Abrams, Gene},
author={Ara, Pere},
author={Siles Molina, Mercedes},
title={Leavitt path algebras}, 
date={2017},
series={Lecture Notes in Math.},
volume={2008},
publisher={Springer},
doi={$10.1007/978-1-4471-7344-1$},
}
 \comment{\bib{flow}{article}{
   author={Abrams, Gene},
   author={Louly, Adel},
   author={Pardo, Enrique},
   author={Smith, Christopher},
   title={Flow invariants in the classification of Leavitt path algebras},
   journal={J. Algebra},
   volume={333},
   date={2011},
   pages={202--231},
   issn={0021-8693},
   review={\MR{2785945}},
}
}
\bib{abc}{article}{
   author={Ara, Pere},
   author={Brustenga, Miquel},
   author={Corti\~nas, Guillermo},
   title={$K$-theory of Leavitt path algebras},
   journal={M\"unster J. Math.},
   volume={2},
   date={2009},
   pages={5--33},
   issn={1867-5778},
   review={\MR{2545605}},

}
\comment{
\bib{actenso}{article}{
 AUTHOR = {Ara, Pere},
author={Corti\~{n}as, Guillermo},
     TITLE = {Tensor products of {L}eavitt path algebras},
   JOURNAL = {Proc. Amer. Math. Soc.},
    VOLUME = {141},
      YEAR = {2013},
    NUMBER = {8},
     PAGES = {2629--2639},
      ISSN = {0002-9939},
       DOI = {10.1090/S0002-9939-2013-11561-3},
       URL = {https://doi.org/10.1090/S0002-9939-2013-11561-3},}}
			
\bib{fracskewmon}{article}{
Author = {Ara, Pere},
author={Gonz{\'a}lez-Barroso, M. A.},
author={Goodearl, K. R.},
author={Pardo, E.},
 Title = {Fractional skew monoid rings.},
 Journal = {J. Algebra},
 ISSN = {0021-8693},
 Volume = {278},
 Number = {1},
 Pages = {104--126},
 Year = {2004},
 DOI = {10.1016/j.jalgebra.2004.03.009},
 URL = {citeseerx.ist.psu.edu/viewdoc/summary?doi=10.1.1.239.7848},
}
			
			\bib{steintwist}{article}{
AUTHOR = {Armstrong, Becky},
author={Clark, Lisa Orloff},
author={Courtney, Kristin},
author={Lin, Ying-Fen},
author={McCormick, Kathryn},
author={Ramagge, Jacqui},
     TITLE = {Twisted {S}teinberg algebras},
   JOURNAL = {J. Pure Appl. Algebra},
    VOLUME = {226},
      YEAR = {2022},
    NUMBER = {3},
     PAGES = {Paper No. 106853, 33},
      ISSN = {0022-4049},
  review= {\MR{4289721}},
       DOI = {10.1016/j.jpaa.2021.106853},
       URL = {https://doi.org/10.1016/j.jpaa.2021.106853},
}
\bib{resteintwist}{article}{
   author={Armstrong, Becky},
   author={de Castro, Gilles G.},
   author={Clark, Lisa Orloff},
   author={Courtney, Kristin},
   author={Lin, Ying-Fen},
   author={McCormick, Kathryn},
   author={Ramagge, Jacqui},
   author={Sims, Aidan},
   author={Steinberg, Benjamin},
   title={Reconstruction of twisted Steinberg algebras},
   journal={Int. Math. Res. Not. IMRN},
   date={2023},
   number={3},
   pages={2474--2542},
   issn={1073-7928},
   review={\MR{4565618}},
   doi={10.1093/imrn/rnab291},
}
\bib{arcor2}{article}{
author={Arnone, Guido},
author={Corti\~nas, Guillermo},
title={Graded $K$-theory and Leavitt path algebras},
journal={J.of Algebraic Combinatorics},
year={2022},
url={https://doi.org/10.1007/s10801-022-01184-5},
}
\comment{
\bib{black}{book}{
AUTHOR = {Blackadar, Bruce},
     TITLE = {{$K$}-theory for operator algebras},
    SERIES = {Mathematical Sciences Research Institute Publications},
    VOLUME = {5},
 PUBLISHER = {Springer-Verlag, New York},
      YEAR = {1986},
     PAGES = {viii+338},
      ISBN = {0-387-96391-X},
  review = {\MR{859867}},
       URL = {http://dx.doi.org/10.1007/978-1-4613-9572-0},
}
\bib{choolalu}{article}{
AUTHOR = {Choo, K. G.},
author={Lam, K. Y.},
author={Luft, E.},
     TITLE = {On free product of rings and the coherence property},
 BOOKTITLE = {Algebraic {$K$}-theory, {II}: ``{C}lassical'' algebraic
              {$K$}-theory and connections with arithmetic ({P}roc. {C}onf.,
              {B}attelle {M}emorial {I}nst., {S}eattle, {W}ash., 1972)},
    SERIES = {Lecture Notes in Math., Vol. 342},
     PAGES = {135--143},
 PUBLISHER = {Springer, Berlin},
      YEAR = {1973},}
			
\bib{orlosimp}{article}{
 AUTHOR = {Clark, Lisa Orloff},
author={Edie-Michell, Cain},
     TITLE = {Uniqueness theorems for {S}teinberg algebras},
   JOURNAL = {Algebr. Represent. Theory},
    VOLUME = {18},
      YEAR = {2015},
    NUMBER = {4},
     PAGES = {907--916},
      ISSN = {1386-923X},
  review = {\MR{3372123}},
       DOI = {10.1007/s10468-015-9522-2},
       URL = {https://doi.org/10.1007/s10468-015-9522-2},
}}
\bib{cep}{article}{
aUTHOR = {Orloff Clark, Lisa},
author={Exel, Ruy},
author={Pardo, Enrique},
     TITLE = {A generalized uniqueness theorem and the graded ideal
              structure of {S}teinberg algebras},
   JOURNAL = {Forum Math.},
    VOLUME = {30},
      YEAR = {2018},
    NUMBER = {3},
     PAGES = {533--552},
      ISSN = {0933-7741},
  review= {\MR{3794898}},
       DOI = {10.1515/forum-2016-0197},
       URL = {https://doi.org/10.1515/forum-2016-0197},
}
\bib{nonhau}{article}{

aUTHOR = {Clark, Lisa Orloff},
author={Exel, Ruy},
author={Pardo, Enrique},
author={Sims, Aidan},
author={Starling, Charles},
title={Simplicity of algebras associated to non-Hausdorff groupoids},
journal={	Trans. Amer. Math. Soc},
volume={372},
year={2019}, 
pages={3669--3712},
}
\bib{moritagpd}{article}{
AUTHOR = {Clark, Lisa Orloff},
author={Sims, Aidan},
title={Equivalent groupoids have Morita equivalent Steinberg algebras},
journal={J. Pure Appl. Algebra},
volume={219},
NUMBER = {6},
pages={2062--2075},
review={\MR{3299719}},
doi={10.1016/j.jpaa.2014.07.023},
}

\bib{friendly}{article}{
   author={Corti\~nas, Guillermo},
   title={Algebraic v. topological $K$-theory: a friendly match},
   conference={
      title={Topics in algebraic and topological $K$-theory},
   },
   book={
      series={Lecture Notes in Math.},
      volume={2008},
      publisher={Springer, Berlin},
   },
   date={2011},
   pages={103--165},
   review={\MR{2762555}},
}
\bib{hcsmall}{article}{
  AUTHOR = {Corti\~{n}as, Guillermo},
     TITLE = {Cyclic homology, tight crossed products, and small
              stabilizations},
   JOURNAL = {J. Noncommut. Geom.},
    VOLUME = {8},
      YEAR = {2014},
    NUMBER = {4},
     PAGES = {1191--1223},
      ISSN = {1661-6952},
       DOI = {10.4171/JNCG/184},
       URL = {https://doi.org/10.4171/JNCG/184},
}

\bib{classinvo}{article}{
author={Corti\~nas, Guillermo},
  TITLE = {Classifying {L}eavitt path algebras up to involution
              preserving homotopy},
   JOURNAL = {Math. Ann.},
    VOLUME = {386},
      YEAR = {2023},
    NUMBER = {3-4},
     PAGES = {2107--2157},
       DOI = {10.1007/s00208-022-02436-2},
       URL = {https://doi.org/10.1007/s00208-022-02436-2},
}

\bib{notas}{book}{
author={Corti\~nas, Guillermo},
title={\'Algebra $II+1/2$},
publisher={Departamento de Matem\'atica, Facultad de Ciencias Exactas y Naturales, Universidad de Buenos Aires}, 
series={Cursos y Seminarios de Matem\'atica, Serie B},
volume={13},
}

\bib{cm1}{article}{
author={Corti\~nas, Guillermo},
author={Montero, Diego},
title={Algebraic bivariant $K$-theory and Leavitt path algebras},
journal={J. Noncommut. Geom.},
volume={15},
date={2021},
pages={113-146},
doi={https://doi.org/10.1142/S0219498823500846},
}
\bib{cm2}{article}{
author={Corti\~nas, Guillermo},
author={Montero, Diego},
title={Homotopy classification of Leavitt path algebras},
journal={Adv. Math.},
volume={362},
date={2020}
}
\comment{
\bib{wicris}{article}{
author={Corti\~nas, Guillermo},
author={Phillips, N. Christopher},
title={Algebraic $K$-theory and properly infinite $C^*$-algebras},
eprint={arXiv:1402.3197},
}}
\bib{ct}{article}{
    AUTHOR = {Cortiñas, Guillermo},
    author={Thom, Andreas},
     TITLE = {Bivariant algebraic {$K$}-theory},
  JOURNAL = {J. Reine Angew. Math.},
    VOLUME = {610},
      YEAR = {2007},
     PAGES = {71--123},
      ISSN = {0075-4102},
       DOI = {10.1515/CRELLE.2007.068},
       URL = {https://doi.org/10.1515/CRELLE.2007.068},
}
\comment{
\bib{acta}{article}{
 AUTHOR = {Corti\~{n}as, Guillermo},
author={Thom, Andreas},
     TITLE = {Algebraic geometry of topological spaces {I}},
   JOURNAL = {Acta Math.},
    VOLUME = {209},
      YEAR = {2012},
    NUMBER = {1},
     PAGES = {83--131},
      ISSN = {0001-5962},
       DOI = {10.1007/s11511-012-0082-6},
       URL = {https://doi.org/10.1007/s11511-012-0082-6},
}
\bib{cv}{article}{
author={Corti\~nas, Guillermo},
author={Vega, Santiago},
title={Hermitian bivariant $K$-theory and Karoubi's fundamental theorem},
journal={J. Pure Appl. Algebra},
 VOLUME = {226},
      YEAR = {2022},
    NUMBER = {12},
     PAGES = {Paper No. 107124},
		  DOI = {10.1016/j.jpaa.2022.107124},
       URL = {https://doi.org/10.1016/j.jpaa.2022.107124},
}}
\bib{kkg}{article}{
author={Ellis, Eugenia},
     TITLE = {Equivariant algebraic {$kk$}-theory and adjointness theorems},
   JOURNAL = {J. Algebra},
    VOLUME = {398},
      YEAR = {2014},
     PAGES = {200--226},
       DOI = {10.1016/j.jalgebra.2013.09.023},
       URL = {https://doi.org/10.1016/j.jalgebra.2013.09.023},
}
\bib{exel}{article}{
 AUTHOR = {Exel, Ruy},
     TITLE = {Inverse semigroups and combinatorial {$C^\ast$}-algebras},
   JOURNAL = {Bull. Braz. Math. Soc. (N.S.)},
    VOLUME = {39},
      YEAR = {2008},
    NUMBER = {2},
     PAGES = {191--313},
      ISSN = {1678-7544},
 review = {\MR{2419901}},
       DOI = {10.1007/s00574-008-0080-7},
       URL = {https://doi.org/10.1007/s00574-008-0080-7},
			}
\bib{cq1}{article}{
AUTHOR = {Cuntz, Joachim},
author={Quillen, Daniel},
     TITLE = {Algebra extensions and nonsingularity},
   JOURNAL = {J. Amer. Math. Soc.},
    VOLUME = {8},
      YEAR = {1995},
    NUMBER = {2},
     PAGES = {251--289},
      ISSN = {0894-0347},
       DOI = {10.2307/2152819},
       URL = {https://doi.org/10.2307/2152819},
			}
\bib{ep}{article}{
AUTHOR = {Exel, Ruy},
author={Pardo, Enrique},
     TITLE = {Self-similar graphs, a unified treatment of {K}atsura and
              {N}ekrashevych {$\rm C^*$}-algebras},
   JOURNAL = {Adv. Math.},
    VOLUME = {306},
      YEAR = {2017},
     PAGES = {1046--1129},
      ISSN = {0001-8708},
       DOI = {10.1016/j.aim.2016.10.030},
       URL = {https://doi.org/10.1016/j.aim.2016.10.030},
}
\bib{eps}{article}{
author={Exel, Ruy},
author={Pardo, Enrique},
author={Starling, Charles},
title={$C^*$-algebras of self-similar graphs over arbitrary graphs},
eprint={http://www.arxiv.org/abs/1807.01686},
}
\bib{gersten}{article}{
 Author = {Gersten, S. M.},
 Title = {K-theory of free rings},
 Journal = {Commun. Algebra},
 ISSN = {0092-7872},
 Volume = {1},
 Pages = {39--64},
 Year = {1974},
 DOI = {10.1080/00927877408548608},
}
\comment{
\bib{harris}{article}{
AUTHOR = {Harris, Morton E.},
     TITLE = {Some results on coherent rings},
   JOURNAL = {Proc. Amer. Math. Soc.},
    VOLUME = {17},
      YEAR = {1966},
     PAGES = {474--479},
      ISSN = {0002-9939},
       DOI = {10.2307/2035194},
       URL = {https://doi.org/10.2307/2035194},
}}
\bib{hazep}{article}{
AUTHOR = {Hazrat, Roozbeh},
author={Pask, David},
author={Sierakowski, Adam},
author={Sims, Aidan},
     TITLE = {An algebraic analogue of {E}xel-{P}ardo {$C^*$}-algebras},
   JOURNAL = {Algebr. Represent. Theory},
    VOLUME = {24},
      YEAR = {2021},
    NUMBER = {4},
     PAGES = {877--909},
      ISSN = {1386-923X},
       DOI = {10.1007/s10468-020-09973-x},
       URL = {https://doi.org/10.1007/s10468-020-09973-x},
			}

\bib{hig}{article}{
    AUTHOR = {Higson, Nigel},
     TITLE = {Algebraic {$K$}-theory of stable {$C^*$}-algebras},
   JOURNAL = {Adv. in Math.},
    VOLUME = {67},
      YEAR = {1988},
    NUMBER = {1},
     PAGES = {140},
       DOI = {10.1016/0001-8708(88)90034-5},
       URL = {https://doi.org/10.1016/0001-8708(88)90034-5},
}   
\comment{
\bib{kamiput}{article}{
author={Kaminker, Jerome},
author={Putnam, Ian},
title={$K$-theoretic duality for shifts of finite type},
journal={Commun. Math. Phys.},
volume={187},
year={1997},
pages={505--541},
}
\bib{kardisc}{article}{
 AUTHOR = {Karoubi, Max},
     TITLE = {Homologie de groupes discrets associ\'{e}s \`a des alg\`ebres
              d'op\'{e}rateurs},
      NOTE = {With an appendix in English by Wilberd van der Kallen},
   JOURNAL = {J. Operator Theory},
    VOLUME = {15},
      YEAR = {1986},
    NUMBER = {1},
     PAGES = {109--161},
      ISSN = {0379-4024},
}
\bib{kv1}{article}{
    AUTHOR = {Karoubi, Max},
    author={Villamayor, Orlando},
		TITLE = {{$K$}-th\'{e}orie alg\'{e}brique et {$K$}-th\'{e}orie topologique. {I}},
   JOURNAL = {Math. Scand.},
    VOLUME = {28},
      YEAR = {1971},
     PAGES = {265--307 (1972)},
      ISSN = {0025-5521},
       DOI = {10.7146/math.scand.a-11024},
       URL = {https://doi.org/10.7146/math.scand.a-11024},
			}
	}		
\bib{kat}{article}{
AUTHOR = {Katsura, Takeshi},
     TITLE = {A construction of actions on {K}irchberg algebras which induce
              given actions on their {$K$}-groups},
   JOURNAL = {J. Reine Angew. Math.},
    VOLUME = {617},
      YEAR = {2008},
     PAGES = {27--65},
      ISSN = {0075-4102},
       DOI = {10.1515/CRELLE.2008.025},
       URL = {https://doi.org/10.1515/CRELLE.2008.025},
			}
\bib{nyort}{article}{
   author={Nyland, Petter},
   author={Ortega, Eduard},
   title={Katsura-Exel-Pardo groupoids and the AH conjecture},
   journal={J. Lond. Math. Soc. (2)},
   volume={104},
   date={2021},
   number={5},
   pages={2240--2259},
   issn={0024-6107},
   review={\MR{4368675}},
   doi={10.1112/jlms.12496},
}
   \comment{
\bib{ralf}{article}{
 AUTHOR = {Meyer, Ralf},
     TITLE = {Universal coefficient theorems and assembly maps in
              {$KK$}-theory},
 BOOKTITLE = {Topics in algebraic and topological {$K$}-theory},
    SERIES = {Lecture Notes in Math.},
    VOLUME = {2008},
     PAGES = {45--102},
 PUBLISHER = {Springer, Berlin},
      YEAR = {2011},
       DOI = {10.1007/978-3-642-15708-0\_2},
       URL = {https://doi.org/10.1007/978-3-642-15708-0_2},
}
\bib{chris}{article}{
AUTHOR = {Phillips, N. Christopher},
     TITLE = {A classification theorem for nuclear purely infinite simple
              {$C^*$}-algebras},
   JOURNAL = {Doc. Math.},
    VOLUME = {5},
      YEAR = {2000},
     PAGES = {49--114},
      ISSN = {1431-0635},
}
\bib{riehl}{book}{
author={Riehl, Emily},
title={Category theory in context},
publisher={Dover},
series={Aurora: Modern Math Originals},
isbn={048680903X},
date={2016},
}
\bib{ror}{article}{
  title={Classification of Cuntz-Krieger algebras},
  author={R{\o}rdam, Mikael},
  journal={K-theory},
  volume={9},
  number={1},
  pages={31--58},
  year={1995},
  publisher={Springer}
}
\bib{rostro}{book}{
AUTHOR = {R\o rdam, Mikael},
author={ St\o rmer, Erling},
     TITLE = {Classification of nuclear {$C^*$}-algebras. {E}ntropy in
              operator algebras},
    SERIES = {Encyclopaedia of Mathematical Sciences},
    VOLUME = {126},
      NOTE = {Operator Algebras and Non-commutative Geometry, 7},
 PUBLISHER = {Springer-Verlag, Berlin},
      YEAR = {2002},
     PAGES = {x+198},
      ISBN = {3-540-42305-X},
       DOI = {10.1007/978-3-662-04825-2},
       URL = {https://doi.org/10.1007/978-3-662-04825-2},
}
\bib{rosop}{article}{
AUTHOR = {Rosenberg, Jonathan},
     TITLE = {The algebraic {$K$}-theory of operator algebras},
   JOURNAL = {$K$-Theory},
    VOLUME = {12},
      YEAR = {1997},
    NUMBER = {1},
     PAGES = {75--99},
      ISSN = {0920-3036},
       DOI = {10.1023/A:1007736420938},
       URL = {https://doi.org/10.1023/A:1007736420938},
			}
\bib{roshand}{article}{
 AUTHOR = {Rosenberg, Jonathan},
     TITLE = {Comparison between algebraic and topological {$K$}-theory for
              {B}anach algebras and {$C^*$}-algebras},
 BOOKTITLE = {Handbook of {$K$}-theory. {V}ol. 1, 2},
     PAGES = {843--874},
 PUBLISHER = {Springer, Berlin},
      YEAR = {2005},
       DOI = {10.1007/978-3-540-27855-9\_16},
       URL = {https://doi.org/10.1007/978-3-540-27855-9_16},
}}
\bib{steinappr}{article}{
AUTHOR = {Steinberg, Benjamin},
     TITLE = {A groupoid approach to discrete inverse semigroup algebras},
   JOURNAL = {Adv. Math.},
    VOLUME = {223},
      YEAR = {2010},
    NUMBER = {2},
     PAGES = {689--727},
      ISSN = {0001-8708},
  review = {\MR{2565546}},
       DOI = {10.1016/j.aim.2009.09.001},
       URL = {https://doi.org/10.1016/j.aim.2009.09.001},
}
\bib{steinprisimp}{article}{
author = {Steinberg, Benjamin},
     TITLE = {Simplicity, primitivity and semiprimitivity of \'{e}tale groupoid
              algebras with applications to inverse semigroup algebras},
   JOURNAL = {J. Pure Appl. Algebra},
    VOLUME = {220},
      YEAR = {2016},
    NUMBER = {3},
     PAGES = {1035--1054},
      ISSN = {0022-4049},
  review= {\MR{3414406}},
       DOI = {10.1016/j.jpaa.2015.08.006},
       URL = {https://doi.org/10.1016/j.jpaa.2015.08.006},
			}
   \bib{steiszak}{article}{
   author={Steinberg, Benjamin},
   author={Szak\'{a}cs, N\'{o}ra},
   title={Simplicity of inverse semigroup and \'{e}tale groupoid algebras},
   journal={Adv. Math.},
   volume={380},
   date={2021},
   pages={Paper No. 107611, 55},
   issn={0001-8708},
   review={\MR{4205706}},
   doi={10.1016/j.aim.2021.107611},
}
\bib{stenstrom}{book}{
Author = {Stenstr{\"o}m, Bo},
 Title = {Rings of quotients. {An} introduction to methods of ring theory},
 Series = {Grundlehren Math. Wiss.},
 ISSN = {0072-7830},
 Volume = {217},
 Year = {1975},
 Publisher = {Springer, Cham},
}
\comment{
\bib{sw}{article}{
AUTHOR = {Suslin, Andrei A.},
author={Wodzicki, Mariusz},
     TITLE = {Excision in algebraic {$K$}-theory},
   JOURNAL = {Ann. of Math. (2)},
    VOLUME = {136},
      YEAR = {1992},
    NUMBER = {1},
     PAGES = {51--122},
      ISSN = {0003-486X},
       DOI = {10.2307/2946546},
       URL = {https://doi.org/10.2307/2946546},
}}
\bib{web}{article}{
 AUTHOR = {Webster, Samuel B. G.},
     TITLE = {The path space of a directed graph},
   JOURNAL = {Proc. Amer. Math. Soc.},
    VOLUME = {142},
      YEAR = {2014},
    NUMBER = {1},
     PAGES = {213--225},
      ISSN = {0002-9939},
  review = {\MR{3119197}},
       DOI = {10.1090/S0002-9939-2013-11755-7},
       URL = {https://doi.org/10.1090/S0002-9939-2013-11755-7},}

\comment{
\bib{kh}{article}{
   author={Weibel, Charles A.},
   title={Homotopy algebraic $K$-theory},
   conference={
      title={Algebraic $K$-theory and algebraic number theory (Honolulu, HI,
      1987)},
   },
   book={
      series={Contemp. Math.},
      volume={83},
      publisher={Amer. Math. Soc.},
      place={Providence, RI},
   },
   date={1989},
   pages={461--488},
   review={\MR{991991 (90d:18006)}},
}
}
\end{biblist}
\end{bibdiv}

\end{document}